\DeclareMathOperator{\cHom}{\mathscr{H}\text{\kern -3pt {\calligra\large om}}\,}
\newenvironment{psmallmat}
  {\left(\begin{smallmatrix}}
  {\end{smallmatrix}\right)}
\theoremstyle{plain}
\newcommand{\id}{\operatorname{id}}
\newcommand{\pr}{\operatorname{pr}}
\newcommand{\ev}{\operatorname{ev}}
\newcommand{\Sh}{\operatorname{Sh}}
\newcommand{\res}{\operatorname{res}}
\newcommand{\Hom}{\operatorname{Hom}}
\newcommand{\End}{\operatorname{End}}
\newcommand{\Ext}{\operatorname{Ext}}
\newcommand{\Mod}{\operatorname{Mod}}
\newcommand{\ind}{\operatorname{ind}}
\newcommand{\triv}{\operatorname{triv}}
\newcommand{\bbF}{\mathbb{F}}
\newcommand{\bbQ}{\mathbb{Q}}
\newcommand{\bbZ}{\mathbb{Z}}
\newcommand{\aff}{\operatorname{aff}}
\newcommand{\Ad}{\textnormal{Ad}}
\newcommand{\sm}[4]{\left(\begin{smallmatrix} #1 & #2 \\ #3 & #4 \end{smallmatrix}\right)}
\newcommand{\chara}{\operatorname{char}}
\newcommand{\cores}{\operatorname{cores}}
\newcommand{\val}{\operatorname{val}}
\def\c{\mathbf{ c}}
\def\x{{\mathbf{x}}}
\def\D{\EuScript{D}}
\def\P{\EuScript{P}}
\def\DPP{{{}_{\P}\D_\P}}
\def\U{\EuScript{U}}
\def\V{\EuScript{V}}
\def\W{\EuScript{W}}
\def\X{{\mathbf X}}
\def\anti{\EuScript J}
\def\trace{\EuScript{S}}
\def\chara{{\rm char}}
\def\Oo{\mathfrak{O}}
\def\Aa{\mathscr{A}}
\def\Ker{{\rm Ker}}
\def\WP{{{\mathfrak W}_{\EuScript P}}}
\newcommand*{\longhookrightarrow}{\ensuremath{\lhook\joinrel\relbar\joinrel\rightarrow}}
\newcommand*{\longtwoheadrightarrow}{\ensuremath{\relbar\joinrel\twoheadrightarrow}}
\def\lp{\langle}
\def\rp{\rangle}
\newtheorem{theorem}{Theorem}[section]
\newtheorem{corollary}[theorem]{Corollary}
\newtheorem{lemma}[theorem]{Lemma}
\newtheorem{sublemma}[theorem]{Sublemma}
\newtheorem{proposition}[theorem]{Proposition}
\newtheorem*{theorem*}{Theorem}
\newtheorem*{proposition*}{Proposition}
\theoremstyle{remark}
\newtheorem{example}[theorem]{Example}
\newtheorem{definition}[theorem]{Definition}
\newtheorem{notn}[theorem]{Notation}
\theoremstyle{definition}
\newtheorem{remark}[theorem]{Remark}
\title{Parahoric Hecke Ext-algebras in characteristic $p$}
\author{}
\date{\today}
\author{Karol Kozio{\l}}
\address{Department of Mathematics, CUNY Baruch College, 55 Lexington Ave, New York, NY 10010 USA} 
\email{karol.koziol@baruch.cuny.edu}
\author{Rachel Ollivier}
\address{University of British Columbia,  1984 Mathematics Road, Vancouver, BC V6T 1Z2, Canada}
\email{ollivier@math.ubc.ca}
\author{Jacob Stockton}
\address{University of British Columbia,  1984 Mathematics Road, Vancouver, BC V6T 1Z2, Canada}
\email{jstockton@math.ubc.ca}
\subjclass[2010]{20C08 (primary), 16E30, 20J06, 22D35, 22E50 (secondary)}
\def\@tocline#1#2#3#4#5#6#7{\relax
  \ifnum #1>\c@tocdepth 
  \else
    \par \addpenalty\@secpenalty\addvspace{#2}%
    \begingroup \hyphenpenalty\@M
    \@ifempty{#4}{%
      \@tempdima\csname r@tocindent\number#1\endcsname\relax
    }{%
      \@tempdima#4\relax
    }%
    \parindent\z@ \leftskip#3\relax \advance\leftskip\@tempdima\relax
    \rightskip\@pnumwidth plus4em \parfillskip-\@pnumwidth
    #5\leavevmode\hskip-\@tempdima
      \ifcase #1
       \or\or \hskip 1em \or \hskip 2em \else \hskip 3em \fi%
      #6\nobreak\relax
    \dotfill\hbox to\@pnumwidth{\@tocpagenum{#7}}\par
    \nobreak
    \endgroup
  \fi}
\begin{document}

\maketitle

\begin{abstract}
Let $\mathfrak{F}$ be a nonarchimedean local field of residual characteristic $p$, and let $G$ denote the group of $\mathfrak{F}$-points of a connected reductive group over $\mathfrak{F}$.  For an open compact subgroup $\EuScript{U}$ of $G$ and a unital commutative ring $k$, we let $\mathbf{X}_{\EuScript{U}}$ denote the space of compactly supported $k$-valued functions on $G/\EuScript{U}$.  Building on work of Ollivier--Schneider, we investigate the graded $\textnormal{Ext}$-algebra $E_{\EuScript{U}}^* := \textnormal{Ext}_G^*(\mathbf{X}_{\EuScript{U}},\mathbf{X}_{\EuScript{U}})^{\textrm{op}}$.  In particular, we describe the Yoneda product, an involutive anti-automorphism, and (when $k$ is a field of characteristic $p$ and $\EuScript{U}$ has no $p$-torsion) a duality operation.  We allow for the reductive group to be non-split, and for the open compact subgroup $\EuScript{U}$ to be non-pro-$p$.

Specializing further to the case $G = \textrm{SL}_2(\mathbb{Q}_p)$ with $p \geq 5$ and a coefficient field of characteristic $p$, we obtain more precise results when $\EuScript{U}$ is equal to an Iwahori subgroup $J$ or a hyperspecial maximal compact subgroup $K$.  In particular, we compute the structure of $E_J^*$ as an $E_J^0$-bimodule, obtain an explicit description of the center $\mathcal{Z}(E_J^*)$ of $E_J^*$, and construct a surjective morphism of algebras $\mathcal{Z}(E_J^*) \longrightarrow E_K^*$ (analogous to the compatibility between Bernstein and Satake isomorphisms in characteristic 0).  From this we deduce the (somewhat surprising) fact that $E_K^*$ is not graded-commutative, contrary to what happens for almost all $\ell$-modular characteristics.  
\end{abstract}

\setcounter{tocdepth}{4}

\tableofcontents

\section{Introduction}

In this introduction,  $\mathfrak{F}$ denotes a finite extension of $\mathbb Q_p$ with ring of integers $\mathfrak O$ and residue field $\mathbb F_q$, and $G$ the group of $\mathfrak{F}$-rational points of a connected reductive group $\mathbf{G}$ over $\mathfrak{F}$.  Given an open compact subgroup $\U$ of $G$ and a ring of coefficients $k$, we consider the Hecke $\Ext$-algebra
\[ E^*_\U:= \Ext^*_G(\X_\U, \X_\U)^{\textnormal{op}} := \left(\bigoplus_{i \in \bbZ} \Ext^i_G(\X_\U, \X_\U)\right)^{\textnormal{op}}\ \]
where $\X_\U$ denotes the compact induction of the trivial $k$-character of $\U$.  This algebra, which is also known in the literature as  a \emph{local derived Hecke algebra},  has appeared in recent years in  connection to the Langlands program.  Let us highlight some of its features:

\begin{enumerate}
\item In the work of Schneider (\cite{SDGA}), the  ring  $k$ is a field of characteristic $p$ and $\U$ is a  pro-$p$ Iwahori subgroup $I$ of $G$. The $\Ext$-algebra $E^*_I$ is then the cohomology algebra of a natural differential graded algebra (the \emph{Hecke DGA}) whose category of DG modules is shown (under certain conditions) to be equivalent to the derived category of smooth representations of $G$ on $k$-vector spaces.   When $\mathbf{G}$  is split over $\mathfrak{F}$, a general framework to study $E_I^*$ was developed in \cite{Ext}. Explicit computations when $G={\rm SL}_2(\mathbb Q_p)$, $p\geq 5$ can be found in \cite{OS2}, \cite{OS4} and \cite{bodon:thesis}, together with applications to the $I$-cohomology of smooth irreducible $k$-representations of $G$. These computations are also a crucial ingredient for the construction in \cite{ArSch} of localizing stable  subcategories in the  category of smooth $k$-representations  of ${\rm SL}_2(\mathbb Q_p)$.

\item \label{intro-2}In the work of Venkatesh (\cite{Ven}), the ring $k$  is $\mathbb Z/\ell^r\mathbb Z$ where $q\equiv1\bmod \ell^r$ and  $\U$ is  a maximal compact subgroup $K$. The \emph{degree increasing} action of the spherical Hecke $\Ext$-algebra  $E^*_K $ on the cohomology of a locally symmetric space is studied  in order to explain why certain Hecke eigensystems can occur in several different cohomological degrees. This action is related to certain motivic cohomology groups (see \cite{Ven}, \cite{PVen}).  Furthermore, when $\mathbf{G}$  is split over $\mathfrak{F}$ and $\ell$ does not divide the order of the Weyl group, there is a derived Satake isomorphism between $E_K^*$ and the Weyl-invariants of the spherical $\Ext$-algebra of the torus (\cite[\S3]{Ven}). This ensures that $E_K^*$ is graded-commutative.

Changing the level of the arithmetic manifold leads to considering the $\Ext$-algebra attached to other open compact subgroups: in the above context, the case when  $\U$ is  an Iwahori subgroup $J$ is also considered and some features of the connection between $E_K^*$ and $E_J^*$ are described in \cite[\S 4]{Ven}.

Lastly,  it is proved in \cite{Gehr} that $E_K^*$ is graded-commutative under the weaker assumption that $q\equiv 1\bmod \ell$ (and $\ell$ does not divide the order of the Weyl group). This uses a derived version of the Gelfand trick associated to the pair $(G,K)$.  
\end{enumerate}

Returning to the general setting given at the beginning of this introduction, the goal of this article is to examine structural properties of the algebras $E_\U^*$ for arbitrary open compact subgroups $\U$.  In order to do this, we examine the interplay between $E^*_\U$ and $E^*_\V$, where $\U\subseteq\V$ are two open compact subgroups of $G$.  In particular, we consider the natural $G$-equivariant maps $\iota_{\V, \U}: \X_\V  \longhookrightarrow  \X_\U$ and $ \pi_{\U, \V}:\X_\U  \longtwoheadrightarrow  \X_\V $
mapping respectively the characteristic function of $\V$ to itself (viewed as an element of $\X_\U$), and the characteristic function of $\U$ to the characteristic function of $\V$.  By suitable Yoneda compositions,  $\iota_{\V, \U}$  and $ \pi_{\U, \V}$  give rise to maps on cohomology
\begin{equation}\label{compaintro}
R_{\V,\U}^*\::\:  E_\V^*\longrightarrow  E^*_\U\qquad \text{ and }\qquad C_{\U,\V}^*\::\:  E^*_\U\longrightarrow E_\V^* \ .
\end{equation}

We study in Subsection \ref{subsec:Ext} various features of $E^*_\U$ (cup product, explicit description of the Yoneda product, involution).  Imposing the additional requirement that $k$ is a field of characteristic $p$ (rather than just a coefficient ring), we show in Subsection \ref{subsec:Ext-padic} how to extend various duality properties of Poincar\'e groups to general open compact subgroups $\U$, which may not be pro-$p$.  This requires some elements of Bruhat--Tits theory (namely, Moy--Prasad filtrations), which we provide in Appendix \ref{app:orientation} joint with David Schwein.  This allows us to deduce duality properties and results on the structure of the top cohomology space of $E_\U^*$.  We note that the features of $E_\U^*$ described in Subsections \ref{subsec:Ext} and \ref{subsec:Ext-padic} were described in \cite{Ext} for $E_I^*$ when $\mathbf G$ is $\mathfrak F$-split.  Proceeding further, we  establish  compatibility statements which  show how these features
 transfer back and forth between $E_\U^*$ and $E_\V^*$  via the maps $R_{\V,\U}^*$ and $C_{\U,\V}^*$ (see Subsection \ref{subsubsec:compa} and Proposition \ref{lemma:projfor}).  We mention the following example which justifies the chosen notation  for the  maps: we have cup products on  $E_\U^*$ and  on $E_\V^*$, which are compatible with each other in the sense that we have the following projection formula:
\[ C_{\U,\V}^{i+j}(A\cup R^{ j}_{\V,\U}(B))=C_{\U,\V}^i(A)\cup B \]
(here $A\in E^i_\U$ and $B\in  E^j_\V$).

We are particularly interested in the case when $\U$ and $\V$ are parahoric subgroups, which we explore when  $\mathbf G$ is $\mathfrak F$-split in Subsection \ref{sec:parah}.  Specializing the setting even further, we apply these results in Section \ref{sec:SL2}
to the case when $G= {\rm SL}_2(\mathfrak F)$. We let $K$ denote the maximal compact subgroup ${\rm SL}_2(\mathfrak O)$ and $J\subseteq K$ the upper Iwahori subgroup with pro-$p$ Sylow subgroup $I$. Here the field of coefficients $k$ has characteristic $p$.
When $\mathfrak F = \mathbb Q_p$ and $p\geq 5$, we use the explicit description of $E_I^*$ in \cite{OS4} and our compatibility maps \eqref{compaintro} to deduce the following results
\begin{enumerate}[i.]
 \item the structure of $E_J^*$ as an $E_J^0$-bimodule as well as the description of the Yoneda product in  this algebra (Subsection \ref{subsubsec:EJ}), 	
 \item  the explicit description of the center $\mathcal Z(E_J^*)$ of $E_J^*$  (by which we do \emph{not} mean the graded center, see \eqref{dcenter}) and the fact that $E_J^*$ is finitely generated over $\mathcal Z(E_J^*)$,
\item \label{intro-iii}  the existence of a surjective  homomorphism of $k$-algebras $\mathcal Z(E_J^*) \longrightarrow E_K^*$ with  a kernel of dimension $2$ (Proposition \ref{prop:squeeze}) which moreover splits.
\end{enumerate}
Point \ref{intro-iii}  implies  in particular that when  $G={\rm SL}_2(\mathbb Q_p)$ and $p\geq 5$, the spherical Hecke $\Ext$-algebra is commutative. In fact, we find an element in $E^1_K$ whose square is nonzero in $E^2_K$ (Remark \ref{rema:square}), which implies $E_K^*$ is not graded-commutative, contrary to the case of $\bbZ/\ell^r\bbZ$-coefficients discussed above.  We do not know in general if $E_K^*$ is commutative. However, in Subsection \ref{subsec:nonzerosq} we generalize Remark \ref{rema:square} by direct calculation of Frattini quotients and Yoneda products in $E_K^*$ (that is to say, without relying on previous results from \cite{OS4}) to deduce:
\begin{enumerate}[i.]
\setcounter{enumi}{3}
 \item  when $G={\rm SL}_2(\mathfrak F)$  where $\mathfrak F/\mathbb Q_p$ is unramified with $p$ odd and $q \neq 3$, the existence of an element
 in $E^1_K$ whose square is nonzero in $E^2_K$. In particular, $E_K^*$ is not graded-commutative.
\end{enumerate}

We return to the general context of the beginning of the introduction, and assume that $\mathbf G$ is $\mathfrak F$-split. If $k=\mathbb{C}$, the Satake isomorphism says  the spherical Hecke algebra  $E^0_K$ is  isomorphic to the algebra  of the Weyl-invariants $\mathbb{C}[X_*]^{W_0}$ of the cocharacter lattice of a split torus of $\mathbf G$. By a result of Bernstein, the latter is also isomorphic to the center $\mathcal{Z}(E_J^0)$ of the Iwahori--Hecke algebra. The Bernstein and Satake isomorphisms are compatible in the sense that there is a natural map inducing the isomorphism of algebras $\mathcal{Z}(E_J^0)\longrightarrow E^0_K$ 
 (\cite{Haines},  \cite{Dat}).  
 When $k$ has characteristic $p$,  there is also  a Satake isomorphism (\cite{Her}) and a natural isomorphism $\mathcal{Z}(E_J^0)\longrightarrow E^0_K$  (\cite{Ollcompa}). 
 In both cases, the definition of the map $\mathcal{Z}(E_J^0) \longrightarrow E^0_K$ is algebraically natural (it relies on the $C_{J,K}^0$ map introduced above). In contrast, the inverse map is more complicated and sometimes easier to describe geometrically. When $k=\mathbb C$, it can be realized  using nearby cycles of perverse sheaves on the Beilinson--Drinfeld Grassmanian (\cite{Gaitsgory}); when $k$ has characteristic $p$, this method has been adapted using mod $p$ perverse sheaves in \cite{Cass}.

The isomorphism between the  (usual) spherical Hecke algebra and the center of the (usual) Iwahori--Hecke  algebra is  what motivated us  to compare $E_K^*$ and $\mathcal Z(E_J^*)$. Contrary to the situation in item \eqref{intro-2} above,  there is no derived Satake isomorphism for $E_K^*$ when $k$  is a field of  characteristic $p$ (see \cite{ron}).
 However, in the case of ${\rm SL}_2(\mathbb Q_p)$, $p\geq 5$,  point \ref{intro-iii} above ensures the existence of an injective homomorphism of algebras $E_K^*\longrightarrow \mathcal{Z}(E_J^*)$ which goes in the same direction as the  \emph{more interesting} map $ E^0_K\longrightarrow \mathcal{Z}(E_J^0)$ in the picture above.

\subsection*{Acknowledgements}  We thank the anonymous referee for his/her thorough reading and helpful remarks.  During the preparation of this article, KK was supported by NSF grant DMS-2310225 and a PSC-CUNY Trad B award, and RO was supported by NSERC Discovery grant, the Fondation des Sciences Math\'ematiques de Paris and the Ecole Normale Sup\'erieure in Paris.

\section{Hecke $\Ext$-algebras}

\subsection{Hecke algebras relative to an arbitrary open compact subgroup}\label{subsec:Hecke}

\subsubsection{Generalities} 
\label{subsubsec:general}

Throughout this discussion, we fix a locally profinite group $G$ and a unital commutative ring $k$, which will serve as our ring of coefficients.  We let $\Mod(G)$ denote the abelian category of smooth $G$-representations over $k$.  For $\U$ an open compact subgroup of $G$, and $(\sigma, V_\sigma)$ a smooth representation of $\U$ over $k$, we consider the compact induction $\ind_\U^G(\sigma)$: it is the space of compactly supported functions 
 $f: G\longrightarrow  V_\sigma \text{ such that } f(gu)=\sigma(u^{-1})(f(g))$ for all $(u,g)\in \U\times G$. The action of $g\in G$ is given by $(g, f)\longmapsto f(g^{-1}{}_-)\ .$  In particular, taking $\sigma = 1$, the trivial $\U$-representation, gives the compact induction 
 \[\mathbf{X}_\U := \ind_\U^G(1),\] 
 which can be seen as the space $k[G/\U]$ of  compactly supported functions $G\longrightarrow k$ which are constant on the left cosets mod $\U$. It lies in $\Mod(G)$.

For $Y$ a compact subset of $G$ which is right invariant under $\U$,  the characteristic function $\chara_Y$ is an element of $\X_\U$. The Hecke $k$-algebra of $\U$ is defined as
\[H _\U:= \End_{k[G ]}(\mathbf{X}_\U)^{\mathrm{op}}\ .\]
We often will identify $H_\U$ with $\X_\U^\U $, the submodule of $\U $-fixed vectors, via the map $h  \longmapsto h(\mathrm{char}_\U)$.  (In fact, this isomorphism is equivariant for the right $H_\U$-action; see the next paragraph.)  The endomorphism algebra $H_\U$ is then seen as the convolution $k$-algebra $(k[\U\backslash G/\U], \cdot )$ where
\begin{equation}\label{convo} (\tau\cdot \tau')({}_-)=\sum_{x\in G/\U }\tau(x) \tau'(x^{-1}{}_-)=\sum_{\U \backslash G \ni x}\tau(_-x^{-1}) \tau'(x) \ ,\end{equation}
with $\tau,\tau' \in k[\U\backslash G/\U] \cong \X_\U^\U$.  
We will usually adhere to the following notational convention: whenever we view an element of $H_\U$ as an endomorphism we use the letter $h$, while when we view an element of $H_\U$ as a $\U$-bi-invariant function we use the letter $\tau$.

The space $\X_\U$ is tautologically a right $H_\U$-module: for $\phi\in \X_\U$ and $h\in H_\U$, the right action of $h$ on $\phi$ is simply given by $(\phi, h)\longmapsto h(\phi)$. Under the isomorphism $H_\U \cong \X_\U^\U \cong k[\U\backslash G/\U]$, this action translates into the following right action of $H_\U$ on $\X_\U=k[G/\U]$:
\[(\phi\cdot \tau')({}_-) := \sum_{x\in G/\U}\phi(x)  \tau'(x^{-1}{}_-)=\sum_{\U\backslash G \ni x} \phi(_-x^{-1})\tau'(x) \ .\]

We have the $\U$-equivariant decomposition
\begin{equation}\label{f:Xbruhat}
 \X_\U = \bigoplus_{g\in \U \backslash G/\U} \mathbf{X}_\U(g)  \quad\text{with}\quad  \mathbf{X}_\U(g) := \ind_\U^{\U g\U}(1) \ ,
\end{equation}
and where $\ind_{\U}^{\U g \U}(1)$ denotes the subspace of $\X_\U$ consisting of functions with support contained in $\U g\U$.  In particular, we have $\mathbf{X}(g)^\U = k \tau^\U_g$ where 
 \[\tau^\U_g := \mathrm{char}_{\U g \U} \ ,\] 
 and hence $H_\U \cong \X_\U^\U = \bigoplus_{g\in \U \backslash G/\U} k \tau^\U_g \ $ as a  $k$-module.
We use $h_g^\U:\X_\U \longrightarrow \X_\U$ to denote the associated $G$-equivariant morphism.  For $g\in G$, we will also use the notation 
\[\U_g:= g \U g^{-1}\cap \U \ .\] 
It obviously only depends on the left coset $g\U$.

More generally, if $\V$ is another open compact subgroup of $G$ and $\sigma$ is representation of $\U$, we may consider the smooth $\V$-representation $\ind_{\U}^{\V g \U}(\sigma)$ consisting of those functions in $\ind_{\U}^G(\sigma)$ with support contained in $\V g\U$.  We then have an isomorphism of $\V$-representations
\begin{eqnarray}
\label{f:doublecosetisom}
\rho_g: \ind_\U^{\V g \U}(\sigma) & \stackrel{\sim}{\longrightarrow} & \ind_{\V\cap g \U g^{-1}}^\V(g_*(\sigma|_{\U \cap g^{-1}\V g}))\\
 f & \longmapsto & \rho_g(f), \notag
\end{eqnarray}
 where $(\rho_g(f))(_-) =  f(_- g)$.  For a representation $\sigma'$ of $\U \cap g^{-1} \V g$, we use the notation $g_*\sigma'$ to denote the representation of $\V \cap g\U g^{-1}$ consisting of the same underlying module as $\sigma'$, but where the action of $v \in \V \cap g\U g^{-1}$ is given by $g^{-1}vg \in \U \cap g^{-1}\V g$.

The algebra $H_\U$ always possesses a distinguished character $\chi_{\triv}^\U:H_\U \longrightarrow k$, called the trivial character and defined by
\begin{equation}\label{trivU}\chi_{\triv}^\U(\tau^\U_g) = [ \U : \U_g] \ . \end{equation}
\begin{lemma}
  The above character is well-defined, that is, the relation $\chi_{\triv}^{\U}(\tau^{\U}_g)\chi_{\triv}^{\U}(\tau^{\U}_{g'}) = \chi_{\triv}^{\U}(\tau^{\U}_{g}\tau^{\U}_{g'})$ holds for all $g,g' \in G$.  
\end{lemma}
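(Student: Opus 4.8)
The plan is to realize $\chi_{\triv}^\U$ as the scalar by which $H_\U$ acts on a natural one-dimensional quotient of $\X_\U$, namely the trivial $G$-representation. Concretely, consider the augmentation map $\epsilon\colon \X_\U = k[G/\U] \longrightarrow k$ sending every basis vector $g\U$ to $1$; equivalently $\epsilon(\phi) = \sum_{x\in G/\U}\phi(x)$. This is a $G$-equivariant surjection onto the trivial representation, so its kernel is a $G$-subrepresentation, and hence stable under the right $H_\U$-action on $\X_\U$ described in the excerpt. Therefore $H_\U$ acts on the quotient $k$ by a genuine algebra character $\chi$, i.e. $\chi(\tau\tau') = \chi(\tau)\chi(\tau')$ automatically. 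The only thing left is to identify $\chi$ with the formula \eqref{trivU}.

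The key computation is thus to evaluate $\epsilon(\mathrm{char}_\U \cdot \tau^\U_g)$ and compare with $\epsilon(\mathrm{char}_\U) = 1$. Using the right-action formula $(\phi\cdot\tau')(\,{}_-) = \sum_{x\in G/\U}\phi(x)\tau'(x^{-1}{}_-)$ with $\phi = \mathrm{char}_\U$, only the term $x = 1\cdot\U$ survives, giving $(\mathrm{char}_\U\cdot\tau^\U_g)(\,{}_-) = \tau^\U_g(\,{}_-) = \mathrm{char}_{\U g\U}$. Hence $\epsilon(\mathrm{char}_\U\cdot\tau^\U_g)$ equals the number of left cosets of $\U$ contained in $\U g\U$, which is exactly $[\U g\U : \U] = [\U : \U \cap g^{-1}\U g] = [\U : g\U g^{-1}\cap \U] = [\U:\U_g]$ (the second equality via $u\U \mapsto gu g^{-1}\cdot(g\U g^{-1}\cap\U)$). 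So $\chi(\tau^\U_g) = [\U:\U_g] = \chi_{\triv}^\U(\tau^\U_g)$ on all of $H_\U$, and since $\chi$ is multiplicative, so is $\chi_{\triv}^\U$; the desired identity $\chi_{\triv}^\U(\tau^\U_g)\chi_{\triv}^\U(\tau^\U_{g'}) = \chi_{\triv}^\U(\tau^\U_g\tau^\U_{g'})$ follows.

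I do not expect a serious obstacle here: the argument is essentially formal once one spots that $\chi_{\triv}^\U$ is the action on the trivial quotient. The one point deserving a line of care is checking that $\epsilon$ is right-$H_\U$-equivariant for the convolution action (or, equivalently, $\ker\epsilon$ is $H_\U$-stable) — this is immediate from $G$-equivariance of $\epsilon$ together with the fact that the $H_\U$-action on $\X_\U$ commutes with the $G$-action (it is induced by $G$-equivariant endomorphisms $h_g^\U$). A purely combinatorial alternative, should one prefer to avoid the representation-theoretic framing, is to expand both sides of $\tau^\U_g\tau^\U_{g'}$ via \eqref{convo}, write $\tau^\U_g\tau^\U_{g'} = \sum_{g''}c_{g,g'}^{g''}\tau^\U_{g''}$ with $c_{g,g'}^{g''}\in\bbZ_{\geq 0}$, and count left $\U$-cosets in $\U g\U g'\U$ two ways; but the quotient-representation argument is cleaner and I would present that one.
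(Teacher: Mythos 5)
Your plan reproduces the paper's proof in substance: both realize $\chi_{\triv}^\U$ as the $H_\U$-action on a one-dimensional $k$-module attached to the trivial $G$-representation, so that multiplicativity comes for free from the module axioms, and then identify the scalar by a short computation with $\chara_\U$. The paper uses the left $H_\U$-module structure on $k^\U\cong\Hom_G(\X_\U,k)$ supplied by Frobenius reciprocity (precomposition of $\phi_v$ by $h$); you use the right $H_\U$-module structure on the quotient $\X_\U/\ker\epsilon$. These are mirror images of the same argument and both land on $[\U:\U_g]$.

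That said, the one step you flag as deserving care does not follow from the reasons you give. The two facts you cite --- $\epsilon$ is $G$-equivariant and the $H_\U$-action commutes with the $G$-action --- only say that $\ker\epsilon$ is a $G$-subrepresentation and that $H_\U$ acts by $G$-equivariant endomorphisms; a $G$-subrepresentation need not be preserved by every $G$-equivariant endomorphism (take $k\oplus k$ with trivial $G$-action and the swap map). What actually makes the step go through is that $\Hom_G(\X_\U,k)$ is \emph{one-dimensional}, spanned by $\epsilon$ (Frobenius reciprocity, or equivalently: $\X_\U$ is generated by $\chara_\U$ under $G$, so a $G$-map out of $\X_\U$ is determined by its value there). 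Then for every $h\in H_\U$, the composite $\epsilon\circ h$ lies in $\Hom_G(\X_\U,k)$ and is therefore a scalar multiple of $\epsilon$, which forces $h(\ker\epsilon)\subseteq\ker\epsilon$. This one-dimensionality is precisely the ingredient the paper's Frobenius-reciprocity framing builds in from the outset; once you supply it, your proof is complete and agrees with the paper's. Separately, a small bookkeeping slip: the number of left $\U$-cosets inside $\U g\U$ is $[\U:\U\cap g\U g^{-1}]=[\U:\U_g]$, not $[\U:\U\cap g^{-1}\U g]$; the final answer is unaffected, but that intermediate equality only holds when $G$ is unimodular.
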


\begin{proof}   Suppose we are given a smooth representation $V$ of $G$.  By Frobenius reciprocity, the space $V^\U$ is naturally a left $H_\U$-module.  Explicitly, the action of $h\in H_\U$ on $v\in V^\U$ is given by
\[(h, v)\longmapsto  (\phi_v\circ h)(\chara_\U)\]
where $\phi_v: \X_\U \longrightarrow V$ is the $G$-equivariant map sending $\chara_\U$ to $v$.  (The morphism $\phi_v$ corresponds to $v$ under the isomorphisms $\Hom_G(\X_\U,V) \cong \Hom_\U(1,V) \cong V^\U$.)  In particular, if $\tau=\tau_g^\U$ is seen in $k[\U\backslash G/\U]$, it corresponds to the map $h_g^\U: \X_\U \longrightarrow \X_\U$ sending $\chara_\U$ to $\chara_{\U g \U}=\sum_{x\in \U/\U_g} xg\cdot \chara_\U$
 so 
 \[\tau^\U_g\cdot v = (\phi_v\circ h^\U_g)(\chara_\U)=\sum_{x\in \U/\U_g} xg\cdot v \ .\]

Suppose now that $V = k$ is the trivial representation of $G$. By the previous paragraph, the space $k = k^\U$ is naturally a left $H_\U$-module with action of $H_\U$ given by the character $\tau_g^\U\longmapsto [ \U:\U_g ]$.  This verifies the claim.
\end{proof}

\begin{remark}
	The above result is also discussed in \cite[\S I.3.5]{vigneras:book}.  
\end{remark}

\subsubsection{The maps $R_{\V,\U}: H_\V \longrightarrow H_\U$ and $C_{\U,\V}: H_\U \longrightarrow H_\V$\label{subsubsec:RUVCUV}}

Let $\U$ and $\V$ be two open compact subgroups of $G$ such that $\V\supseteq \U$.
We have a natural injective $G$-equivariant map
\begin{eqnarray}
\iota_{\V, \U}:\quad \X_\V & \longhookrightarrow & \X_\U \label{iUV}\\
  \chara_\V & \longmapsto &  \chara_\V=\sum_{v\in \V/\U} \chara_{v\U} = \sum_{v \in \V/\U} v\cdot \chara_\U\ , \notag 
\end{eqnarray}
and a surjective $G$-equivariant map
\begin{eqnarray}
\pi_{\U, \V}:\quad \X_\U & \longtwoheadrightarrow & \X_\V \label{piUV} \\
  \chara_\U & \longmapsto & \chara_\V \ . \notag
\end{eqnarray}
 The following diagram is clearly commutative:

  \begin{center}
\begin{tikzcd}
 H _\V= \Hom_G(\X_\V, \X_\V)^{\rm op} \ar[d, "h\mapsto h(\chara_\V)", "{\rotatebox{90}{$\sim$}}"'] \ar[rrrr, "h\mapsto \iota_{\V, \U}\circ h \circ\pi_{\U, \V}"]&&& &\Hom_G(\X_\U , \X_\U)^{\rm op} = H_\U\ar[d, "h\mapsto h(\chara_\U)", "{\rotatebox{90}{$\sim$}}"'] \\
 (k[\V\backslash G/\V], \: \cdot \:)
\ar[rrrr, "\iota_{\V, \U}", hook]&&&& ( k[\U\backslash G/\U], \: \cdot \:) 
\end{tikzcd}
\end{center}
We denote by
\begin{equation}
 R_{\V, \U} : H_\V  \longhookrightarrow  H _\U \label{f:RVU0}  \qquad
h    \longmapsto   \iota_{\V,\U} \circ h \circ \pi_{\U, \V} 
\end{equation}
the injective map described by the diagram above.
On the other hand,  we have a commutative diagram
 \begin{equation}
 \label{f:CUV0diag}
  \begin{tikzcd}
 H _\U= \Hom_G(\X_\U, \X_\U)^{\rm op} \ar[d, "h\mapsto h(\chara_\U)", "{\rotatebox{90}{$\sim$}}"'] \ar[rrrr, "h\mapsto \pi_{\U,\V}\circ h \circ\iota_{\V,\U}"] &&& & \Hom_G(\X _\V, \X_\V)^{\rm op} = H_\V \ar[d, "h\mapsto h(\chara_\V)", "{\rotatebox{90}{$\sim$}}"'] \\
 (k[\U\backslash G/\U], \: \cdot \:) \ar[rrrr, "\tau^\U_g\mapsto {[}\V_g: \U_g {]}  \tau_g^{\V}"] &&&&   (k[\V\backslash G/\V], \: \cdot \:)  
\end{tikzcd}
\end{equation}   We denote by
\begin{equation}
C_{\U,\V}: H _\U \longrightarrow  H_\V \label{f:CUV0} \qquad
h  \longmapsto  \pi_{\U,\V} \circ h \circ \iota_{\V,\U} 
\end{equation}
 the top map of the diagram.

 \begin{proof}[Proof of the commutativity of \eqref{f:CUV0diag}]
For  $g\in G$ we compute using \eqref{convo} that $(h^\U_g \circ \iota_{\V,\U})(\chara_\V) = \chara_\V\cdot \tau^\U_g\in H_\U$ has support $\V g \U$, and its value at $g$ is equal to the cardinality of the set 
\[ (g\U g^{-1} \U  \cap \V)/\U = (g \U g^{-1} \cap \V)\U/ \U \cong (g \U g^{-1} \cap \V)/ \U_{g} = g((\U\cap \V_{g^{-1}})/ \U_{g^{-1}})g^{-1} \ . \]
Thus
$(h^\U_g \circ \iota_{\V,\U})(\chara_\V) = [ (\U\cap \V_{g^{-1}}) : \U_{g^{-1}} ] \chara_{\V g \U}. $
It is easy to see that $\pi_{\U, \V}(\chara_{\V g \U})$ has support  $\V g \V$ and value at $g$ equal to $[ \V_g: (\V\cap g\U g^{-1}) ] = [ \V_{g^{-1}} :  (\U\cap\V_{g^{-1}})]$.
Thus, we have proved that  $\pi_{\U,\V}\circ h^\U_g \circ \iota_{\V,\U}$ maps $\chara_\U$ to $[\V_{g^{-1}}: \U_{g^{-1}}] \tau_g^\V= [\V_{g}: \U_{g}] \tau_g^\V$. 
\end{proof}

\textbf{Assume now that $[\V: \U]$ is invertible in $k$.}
We define the following element in $H_\U$:
\begin{equation}
e_{\U,\V} := \frac{1}{[\V:\U]} \chara_\V = \frac{1}{[\V:\U]} \sum_{v\in \U\backslash \V/\U}\tau_v^\U \ .\label{f:eUV}
\end{equation}
Using \eqref{convo}, it is easy to see that it is an idempotent element in $H_\U$.

\begin{remark}\phantomsection \label{rema:eUV}

\begin{enumerate}[i.]
\item \label{rema:eUV-i} We have $\chi_{\triv}^\U(e_{\U,\V})=1$.
\item If $\mathcal W$
is an open compact subgroup of $G$ such that
$\mathcal W\supseteq  \V\supseteq \U$ we have  $e_{\U,\mathcal W}= e_{\U,\mathcal W}\cdot e_{\U, \V}= e_{\U,\V}\cdot e_{\U,\mathcal W}$. 

\item $e_{\U,\V} H_\U=k[\V\backslash G/ \U]= \X_\U^\V$. 
For any $f\in \X_\U^\U=H_\U$ we have indeed
\begin{equation}\label{f:comp-l-UV}e_{\U,\V}\cdot f=\frac{1}{[\V:\U]} \sum_{v\in \V/\U } f(v^{-1}{}_-)=:\frac{1}{[\V:\U]} N_{\V/\U}(f)\ .\end{equation}
So $f\in \X_\U^\V$ if and only if $e_{\U,\V} \cdot f= f$.

\item \label{rema:eUV-iv}The restriction of  $\pi_{\U, \V}$ to $\X_\U e_{\U,\V}$ provides an isomophism $\X_\U e_{\U,\V}\stackrel{\sim}{\longrightarrow} \X_\V$.
\end{enumerate}
\end{remark}

\begin{remark} \label{rema:section0UV}  Still assuming that $[\V:\U]$ is invertible in $k$, one easily checks that we have the following relations:
\begin{equation}
\label{piiotaUV-props}
\pi_{\U,\V}\circ \iota_{\V,\U} =  [\V: \U]\id_{\X_\V}, \qquad \iota_{\V,\U} \circ \pi_{\U,\V} = [\V:\U]e_{\U,\V}, \qquad \pi_{\U, \V} \circ e_{\U,\V} = \pi_{\U,\V}, \qquad e_{\U,\V} \circ \iota_{\V,\U} = \iota_{\U,\V}.
\end{equation}
From these equalities, we deduce the following :

\begin{enumerate}[i.]
\item  The map $\frac{1}{[\V:\U]} \iota_{\V,\U}$ is a section for the surjection $\pi_{\U, \V}: \X_\U \longtwoheadrightarrow \X_\V$. 
\item  We have $C_{\U, \V}\circ R_{\V,\U}=[\V:\U]^2\id_{H_\V}$. In particular, $C_{\U,\V}$ is surjective. 
\item The linear map $\frac{1}{[\V:\U]}R_{\V,\U}: H_\V \longrightarrow H_\U $ preserves the product.  Note that it sends the unit $\chara_\V$ to the idempotent  $e_{\U,\V}$.  On the other hand, there exist groups $G \supseteq \V \supseteq \U$ such that no multiple of $C_{\U,\V}$ preserves the product; see the lemma below for an example.
\item \label{rema:section0UV-iv}  Using the last three equalities  in \eqref{piiotaUV-props}, we see that
the restricted maps
\[ H_\V\xrightarrow{\frac{1}{[\V:\U]}R_{\V,\U} }  e_{\U,\V} \, H_\U \, e_{\U,\V} \qquad \text{ and }\qquad e_{\U,\V} \, H_\U \, e_{\U,\V} \xrightarrow{\frac{1}{[\V:\U]}C_{\U,\V} }   H _\V \]
 are isomorphisms of  unital $k$-algebras which are inverse to each other.
\item \label{rema:section0UV-v} Using the last two equalities  in \eqref{piiotaUV-props}, we see that
$C_{\U,\V}(_-)=C_{\U,\V}(e_{\U,\V}\cdot {}_- \cdot e_{\U,\V})$.
\end{enumerate}
\end{remark}

\begin{lemma}
Suppose we are in the setting of Section \ref{sec:SL2}, so that $G = \textnormal{SL}_2(\mathfrak{F}) \supseteq K = \textnormal{SL}_2(\mathfrak{O}) \supseteq I$, where $\mathfrak{F}$ is a finite extension of $\bbQ_p$ and $I$ denotes the ``upper-triangular'' pro-$p$-Iwahori subgroup.  Suppose further that $k$ is a field of characteristic $p$. 
Then no multiple of the map $C_{I,K}: H_I \longrightarrow H_K$ preserves the product.
\end{lemma}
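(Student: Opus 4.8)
The plan is to show that for no $c\in k^{\times}$ is $c\cdot C_{I,K}\colon H_I\to H_K$ multiplicative, by producing a single factorization in $H_I$ that obstructs this for all $c$ at once. Write $q$ for the cardinality of the residue field of $\mathfrak F$ (a power of $p$) and $\mathfrak p$ for the maximal ideal of $\mathfrak O$, so that $[K:I]=|\mathrm{SL}_2(\mathbb F_q)|/|U(\mathbb F_q)|=q^2-1\equiv -1\pmod p$ is a unit in $k$; by the commutative diagram \eqref{f:CUV0diag} we have $C_{I,K}(\tau^I_g)=[K_g:I_g]\,\tau^K_g$ for every $g\in G$, with $K_g=gKg^{-1}\cap K$ and $I_g=gIg^{-1}\cap I$. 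Let $s_1$ be the simple affine reflection lying in the finite Weyl group, with representative $\dot s_1=\sm{0}{-1}{1}{0}\in K$, and $s_0$ the other one, with representative $n_{s_0}=\sm{0}{-\varpi^{-1}}{\varpi}{0}=\sm{\varpi^{-1}}{0}{0}{\varpi}\,\dot s_1$; set $a:=\sm{\varpi}{0}{0}{\varpi^{-1}}$.

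First I would compute that $C_{I,K}(\tau^I_{s_1})=0$: since $\dot s_1\in K$ we have $K_{s_1}=K$, while a direct conjugation shows $I_{s_1}=\dot s_1 I\dot s_1^{-1}\cap I=\{h\in I:h_{12}\in\mathfrak p\}$ has index $q$ in $I$ (it is the kernel of the homomorphism $I\to(\mathbb F_q,+)$, $h\mapsto h_{12}\bmod\mathfrak p$), so $[K_{s_1}:I_{s_1}]=[K:I][I:I_{s_1}]=(q^2-1)q\equiv 0\pmod p$. Second, I would compute $C_{I,K}(\tau^I_{s_0 s_1 s_0})$: the braid relation in $H_I$ (length‑additivity of $s_0s_1s_0$) gives $\tau^I_{s_0}\tau^I_{s_1}\tau^I_{s_0}=\tau^I_{s_0 s_1 s_0}=\chara_{IgI}$ with $g:=n_{s_0}\dot s_1 n_{s_0}=\sm{0}{\varpi^{-2}}{-\varpi^2}{0}=\sm{0}{1}{-1}{0}\,a^2$, so $KgK=Ka^2K$ and $\tau^K_{s_0 s_1 s_0}=\chara_{Ka^2K}$ is a nonzero basis vector of $H_K=k[K\backslash G/K]$. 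Conjugating by $g$ yields $K_g=\{h\in K:h_{21}\in\mathfrak p^4\}$, the preimage of the upper Borel of $\mathrm{SL}_2(\mathfrak O/\mathfrak p^4)$, of index $|\mathbb P^1(\mathfrak O/\mathfrak p^4)|=q^3(q+1)$ in $K$, and $I_g=\{h\in I:h_{21}\in\mathfrak p^4\}$, of index $q^3$ in $I$ (via the congruence filtration); hence $[K_g:I_g]=[K:I][I:I_g]/[K:K_g]=(q^2-1)q^3/(q^3(q+1))=q-1$ and $C_{I,K}(\tau^I_{s_0 s_1 s_0})=(q-1)\,\chara_{Ka^2K}\neq 0$ in $H_K$, as $q-1\equiv -1\pmod p$.

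Then I would conclude: if $c\in k$ and $\phi:=c\cdot C_{I,K}$ satisfies $\phi(xy)=\phi(x)\phi(y)$ for all $x,y\in H_I$, then applying this to the factorization $\tau^I_{s_0 s_1 s_0}=\tau^I_{s_0}\tau^I_{s_1}\tau^I_{s_0}$ gives
\[
c\,(q-1)\,\chara_{Ka^2K}\;=\;\phi(\tau^I_{s_0 s_1 s_0})\;=\;\phi(\tau^I_{s_0})\,\phi(\tau^I_{s_1})\,\phi(\tau^I_{s_0})\;=\;0,
\]
because $\phi(\tau^I_{s_1})=c\cdot C_{I,K}(\tau^I_{s_1})=0$; since $q-1$ is a unit in $k$ and $\chara_{Ka^2K}\neq 0$, this forces $c=0$, which is the assertion.

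I expect the main obstacle to be the bookkeeping in the index computations — tracking which congruence conditions survive conjugation by $n_{s_0}$ and by $g$, and correctly identifying $[I:I_{s_1}]$, $[I:I_g]$ and $[K:K_g]$ — though each reduces to an elementary $\mathrm{SL}_2$ calculation. Conceptually, the phenomenon is that modulo $p$ the map $C_{I,K}$ annihilates the "finite" generator $\tau^I_{s_1}$ (the prime‑to‑$p$ factor $[K:I]$ gets multiplied by the genuine $p$‑power $[I:I_{s_1}]=q$) but not $\tau^I_{s_0 s_1 s_0}$, and no rescaling can repair this, since a multiplicative map must kill any product having $\tau^I_{s_1}$ as a factor; alternatively, $H_K$ is commutative of infinite $k$‑dimension while the $k$‑algebra abelianization of $H_I$ is finite‑dimensional, so $H_K$ cannot be an algebra quotient of $H_I$, whereas a product‑preserving nonzero multiple of the surjection $C_{I,K}$ would (after rescaling by $[K:I]^{-1}$) exhibit it as one.
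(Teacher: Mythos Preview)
Your argument is correct and is essentially the paper's: both exploit the length-additive factorization of $\tau_w^I$ for the length-$3$ element $w\in{}_K\D_K$, observing that $C_{I,K}$ kills the middle factor (the simple reflection in $\mathfrak{W}_K$) but not $\tau_w^I$ itself, so no nonzero scalar can repair multiplicativity. Note, however, that your labels for $s_0$ and $s_1$ are swapped relative to the conventions of Section~\ref{sec:SL2} (there $s_0=s_\alpha$ is the finite reflection and $s_1$ the affine one), and the paper replaces your explicit index computations by an appeal to Proposition~\ref{prop:index}, after first pinning down the scalar $a=[K:I]^{-1}$ via the identity element.
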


\begin{proof}
We freely use notation and results from Section \ref{sec:SL2}. 

Assume by contradiction that $aC_{I,K}$ preserves the product for some $a \in k$.  We first determine $a$.  We have
\begin{flushleft}
$\displaystyle{a[K:I]\textnormal{id}_{\X_K} \stackrel{\eqref{piiotaUV-props}}{=} a(\pi_{I,K} \circ \iota_{K,I})  \stackrel{\eqref{f:CUV0}}{=}   (aC_{I,K})(\textnormal{id}_{\X_I}) =  (aC_{I,K})(\textnormal{id}_{\X_I} \circ \textnormal{id}_{\X_I}) }$
\end{flushleft}
\begin{flushright}
$\displaystyle{ 
= (aC_{I,K})(\textnormal{id}_{\X_I}) \circ (aC_{I,K})(\textnormal{id}_{\X_I}) 
= (a[K:I]\textnormal{id}_{\X_K} ) \circ (a[K:I]\textnormal{id}_{\X_K}) 
= a^2[K:I]^2\textnormal{id}_{\X_K}.}$
\end{flushright}
This forces $a = [K:I]^{-1} = -1$ (by \eqref{f:square}).

On the other hand, let us define $w := s_1s_0s_1 \in {}_K\D_K$ (see \eqref{KDK}), so that $\tau_w^I = \tau_{s_1}^I \tau_{s_0}^I \tau_{s_1}^I$.  Using the explicit description of the set ${}_K\D_K$ we get
\[ C_{I,K}(\tau_w^I)  \stackrel{\eqref{f:CUV0diag}}{=}  [K_w:I_w]\tau_w^K \stackrel{\textnormal{Prop. \ref{prop:index}}}{=}  [K:I] \tau_w^K = -\tau_w^K \ , \]
and analogously
\[ C_{I,K}(\tau_{s_1}^I)C_{I,K}(\tau_{s_0}^I)C_{I,K}(\tau_{s_1}^I)  \stackrel{\eqref{f:CUV0diag}}{=} [K_{s_1}:I_{s_1}]^2[K_{s_0}:I_{s_0}]\tau_{s_1}^K \tau_{s_0}^K \tau_{s_1}^K  \stackrel{\textnormal{Prop. \ref{prop:index}}}{=} 0 \ . \]
The assumption that $-C_{I,K}$ preserves the product implies that these two expressions must be equal, and we arrive at a contradiction.  
\end{proof}

\medskip

We have the trivial character $\chi_{\triv}^\V: H_\V \longrightarrow k$ defined as in \eqref{trivU}. 
Since the map $\frac{1}{[\V:\U]}R_{\V,\U}:H_\V \longrightarrow e_{\U,\V} H_\U e_{\U,\V}$ is a homomorphism of unital algebras, we  also have the character obtained by composition as follows:
\begin{equation}
\label{f:othertrivUV} 
H_\V\xrightarrow{\frac{1}{[\V:\U]}R_{\V,\U} }e_{\U,\V} \, H_\U \, e_{\U,\V} \xrightarrow{\chi^\U_{\triv}}  k \ .
\end{equation}

\begin{lemma}\label{lemma:sametrivUV}
The character \eqref{f:othertrivUV} coincides with $\chi_{\triv}^\V$.
\end{lemma}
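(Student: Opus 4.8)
The plan is to verify the claimed equality by evaluating both characters on the standard basis elements $\tau_g^\V$ of $H_\V$ and checking they agree. Since $\chi_{\triv}^\V(\tau_g^\V) = [\V : \V_g]$ by definition \eqref{trivU}, it suffices to show that the character in \eqref{f:othertrivUV} sends $\tau_g^\V$ to $[\V : \V_g]$ as well. By definition, this character is $\chi_{\triv}^\U$ applied to $\frac{1}{[\V:\U]}R_{\V,\U}(\tau_g^\V)$, so the task reduces to computing $R_{\V,\U}(\tau_g^\V) \in H_\U$ explicitly in the basis $\{\tau_h^\U\}_{h \in \U\backslash G/\U}$, and then applying $\chi_{\triv}^\U$ termwise using \eqref{trivU}.

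First I would use the commutative diagram defining $R_{\V,\U}$ together with the explicit formula $\iota_{\V,\U}(\chara_\V) = \sum_{v \in \V/\U} v \cdot \chara_\U$ from \eqref{iUV}. Writing $\V g \V = \bigsqcup_i \V g_i \U$ as a disjoint union of right $\U$-cosets (equivalently indexed by $\V_g \backslash \V$, since $[\V g \V : \V g \U]$ counts $\V/(\V \cap g\U g^{-1})\cdot$something — I would pin down the exact index bookkeeping here), one sees that $\tau_g^\V = \chara_{\V g \V}$, viewed inside $\X_\V$ and pushed through $\iota_{\V,\U}$, decomposes as a sum of characteristic functions of double cosets $\U h \U$ with appropriate multiplicities. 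The cleanest approach is probably to exploit Remark \ref{rema:eUV} and Remark \ref{rema:section0UV}: since $\frac{1}{[\V:\U]}R_{\V,\U}$ is an algebra isomorphism onto $e_{\U,\V} H_\U e_{\U,\V}$ sending the unit $\chara_\V$ to $e_{\U,\V}$, and since by Remark \ref{rema:eUV}\eqref{rema:eUV-i} we have $\chi_{\triv}^\U(e_{\U,\V}) = 1 = \chi_{\triv}^\V(\chara_\V)$, the two characters at least agree on the identity.

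For the general element, the key observation is that both sides are \emph{characters} (multiplicative linear functionals), and a character of $H_\V$ is determined by a generating set; but since we cannot assume $H_\V$ is generated by convenient elements in this generality, I would instead argue directly. Using \eqref{f:comp-l-UV} and the idempotent relations in \eqref{piiotaUV-props}, compute $\frac{1}{[\V:\U]}R_{\V,\U}(\tau_g^\V) = e_{\U,\V} \cdot \widetilde{\tau_g} \cdot e_{\U,\V}$ where $\widetilde{\tau_g}$ is any lift of $\tau_g^\V$ under $\pi_{\U,\V}$ (e.g. $\chara_{\V g \U}/\text{(suitable index)}$), and then apply $\chi_{\triv}^\U$: since $\chi_{\triv}^\U$ is a character and $\chi_{\triv}^\U(e_{\U,\V}) = 1$, we get $\chi_{\triv}^\U(e_{\U,\V} \widetilde{\tau_g} e_{\U,\V}) = \chi_{\triv}^\U(\widetilde{\tau_g})$, and it remains to evaluate $\chi_{\triv}^\U(\widetilde{\tau_g})$ via \eqref{trivU} and check it equals $[\V:\V_g]$.

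The main obstacle I anticipate is the index bookkeeping: correctly tracking how $[\V:\U]$, the decomposition of $\V g \V$ into $\U$-double cosets, and the various indices $[\U : \U_h]$, $[\V : \V_g]$ fit together so that the final sum telescopes to exactly $[\V : \V_g]$. This is essentially the same kind of computation as in the proof of commutativity of \eqref{f:CUV0diag}, and I would model the argument on that proof, using the coset identities $(\U \cap \V_{g^{-1}})/\U_{g^{-1}}$ that appear there, together with the multiplicativity of $\chi_{\triv}^\U$ (the already-proven Lemma) to avoid having to expand products of double-coset characteristic functions by brute force. An alternative, possibly slicker, route: note that the trivial character $\chi_{\triv}^\V$ is characterized as the action of $H_\V$ on $k^\V = k$ inside the trivial $G$-representation $k$ (as in the proof of the first Lemma), and that the isomorphism $\frac{1}{[\V:\U]}R_{\V,\U}$ onto $e_{\U,\V}H_\U e_{\U,\V}$ is compatible with these module structures on $k = k^\V \cong (k^\U) e_{\U,\V}$; functoriality of Frobenius reciprocity then immediately gives that \eqref{f:othertrivUV} is the $H_\V$-action on trivial-representation-invariants, i.e. $\chi_{\triv}^\V$, sidestepping the index computation entirely. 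I would pursue this representation-theoretic argument first, falling back on the explicit computation only if the compatibility is fiddly to nail down.
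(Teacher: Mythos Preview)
Your representation-theoretic alternative is correct and would work cleanly, but it is not what the paper does. Your direct approach via $R_{\V,\U}$ is overcomplicated and somewhat imprecise (the ``lift $\widetilde{\tau_g}$ under $\pi_{\U,\V}$'' is not a well-defined element of $H_\U$, and the decomposition of $\V g\V$ into $\U$-double cosets is exactly the messy bookkeeping you anticipate).

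The paper's trick, which you miss, is to pass to the \emph{inverse} isomorphism. By Remark~\ref{rema:section0UV}-\ref{rema:section0UV-iv}, showing that $\chi_{\triv}^\U \circ \tfrac{1}{[\V:\U]}R_{\V,\U} = \chi_{\triv}^\V$ is equivalent to showing that $\chi_{\triv}^\V \circ \tfrac{1}{[\V:\U]}C_{\U,\V}$ agrees with $\chi_{\triv}^\U$ on $e_{\U,\V}H_\U e_{\U,\V}$. But $C_{\U,\V}$ has the explicit formula $\tau_g^\U \mapsto [\V_g:\U_g]\tau_g^\V$ already established in \eqref{f:CUV0diag}, and by Remark~\ref{rema:section0UV}-\ref{rema:section0UV-v} one may drop the idempotents inside $C_{\U,\V}$. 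The whole proof then collapses to the single index identity
\[
\frac{[\V_g:\U_g][\V:\V_g]}{[\V:\U]} = [\U:\U_g],
\]
together with $\chi_{\triv}^\U(e_{\U,\V}) = 1$ to restore the idempotents on the $\chi_{\triv}^\U$ side. So rather than expanding $R_{\V,\U}(\tau_g^\V)$ in the $\tau_h^\U$-basis, the paper evaluates the inverse direction on the much simpler generators $e_{\U,\V}\tau_g^\U e_{\U,\V}$.

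Your functorial argument (trivial $G$-representation, compatibility of Frobenius reciprocity with the idempotent) buys conceptual clarity and avoids any index computation; the paper's argument buys brevity once \eqref{f:CUV0diag} is in hand. Both are fine; the direct computation of $R_{\V,\U}(\tau_g^\V)$ is the route to avoid.
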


\begin{proof} Using Remark \ref{rema:section0UV}-\ref{rema:section0UV-iv}, we prove the result by verifying that the composite $e_{\U,\V} H_\U e_{\V, V}\xrightarrow{\frac{1}{[\V:\U]}{C_{\U,\V}}} H_\V\xrightarrow{\chi^\V_{\triv}}  k$ coincides with $\chi^\U_{\triv}|_{e_{\U,\V} \, H_\U \, e_{\U,\V} }$.  Let $g\in G$.  Then by the equalities \eqref{piiotaUV-props}, the formula in \eqref{f:CUV0diag}, and the definition of the trivial character, we have 
\begin{eqnarray*}
\frac{1}{[\V:\U]}(\chi_{\triv}^{\V}\circ C_{\U,\V}) (e_{\U,\V} \, \tau_g^\U e_{\U,\V}) & = & \frac{1}{[\V:\U]}(\chi_{\triv}^{\V}\circ C_{\U,\V}) (\tau_g^\U)  =  \frac{[\V_g: \U_g]}{[\V:\U]} \chi_{\triv}^\V(\tau^\V_g)\\
 & = & \frac{[\V_g:\U_g][\V: \V_g]}{[\V:\U]}
  =  [\U: \U_g]\ . 
\end{eqnarray*}
By Remark \ref{rema:eUV}-\ref{rema:eUV-i}, we have $\chi^\U_{\triv}(e_{\U,\V})=1$, and therefore the above expression is equal to $\chi^\U_{\triv}(e_{\U,\V} \, \tau_g^\U e_{\U,\V})$.
\end{proof}

\subsection{Hecke $\Ext$-algebras relative to an arbitrary open compact subgroup}\label{subsec:Ext}

\subsubsection{Generalities}

We once again assume $k$ is an arbitrary coefficient ring, and note that the category $\Mod(G)$ has enough injectives.  We form the graded $\Ext$-algebra
\begin{equation*}
  E_{\U}^* := \Ext_{\Mod(G)}^*(\X_\U, \X_\U)^{\mathrm{op}} := \left(\bigoplus_{i \in \bbZ} \Ext^i_{\Mod(G)}(\X_\U, \X_\U)\right)^{\textnormal{op}}
\end{equation*}
over $k$, with the multiplication being the (opposite of the) Yoneda product.  By construction, we have $E_{\U}^0 = H_\U$, and therefore each $E_\U^i$ is an $H_\U$-bimodule.  Using Frobenius reciprocity for compact induction and the fact that the restriction functor from $\Mod(G)$ to $\Mod(\U )$ preserves injective objects, we obtain the identification
\begin{equation}\label{f:frob}
  E_\U ^* = \Ext_{\Mod(G)}^*(\X_\U ,\X_\U )^{\mathrm{op}} \cong \Ext_{\Mod(\U)}^*(1, \X_\U|_{\U}) \cong H^*(\U ,\X_\U) \ .
\end{equation}
Noting that the cohomology of profinite groups commutes with arbitrary direct sums (see \cite[Prop. 1.5.1]{NSW}), the $\U $-equivariant decomposition 
\[ \X_\U = \bigoplus_{g\in \U \backslash G/\U } \X_\U (g) \]
induces a decomposition of $k$-modules
\begin{equation}\label{f:H*dec}
  H^*(\U ,\X_\U  ) = \bigoplus_{g\in \U \backslash G/\U } H^*(\U , \X_\U (g)) \cong \bigoplus_{g \in \U\backslash G / \U} H^*(\U_g,k) \,
\end{equation} 
where we recall that $\U _g = \U \cap g \U  g^{-1}$.  The latter isomorphism comes from an application of the Shapiro isomorphism  $\Sh_g^\U: H^*(\U , \X_\U (g) ) \stackrel{\sim}{\longrightarrow} H^*(\U_g, k)$, the explicit description of which we recall now.

Given another open compact subgroup $\V$ of $G$, the Shapiro isomorphism is defined as the composite
\begin{equation}\label{f:Shapirohybrid}
  \Sh_g^{\V,\U} : H^*(\V,\ind_\U^{\V g \U}(1)) \xrightarrow{\; \res \;} H^*(\V\cap g \U g^{-1},\ind_\U^{\V g \U}(1)) \xrightarrow{\; H^*(\V\cap g \U g^{-1},\ev_g) \;} H^*(\V\cap g \U g^{-1},k)
\end{equation} where $\ev_g : \X_\U \longrightarrow k$ denotes the $ g\U g^{-1}$-equivariant map defined by $\ev_g(f) = f(g)$ (or, here, its restriction to $\ind_\U^{\V g \U}(1)$).  The inverse of $\Sh_g^{\V,\U}$ is the map
\begin{equation}\label{f:Shapiro-inversehybrid}
  (\Sh_g^{\V,\U})^{-1} : H^*(\V\cap g \U g^{-1},k) \xrightarrow{\; H^*(\V\cap g \U g^{-1},\mathrm{i}_g) \;} H^*(\V\cap g \U g^{-1},\ind_\U^{\V g \U}(1)) \xrightarrow{\; \cores \;} H^*(\V ,\ind_\U^{\V g \U}(1)) \ ,
\end{equation}  
where $ \mathrm{i}_g : k  \longrightarrow  \X_\U$ is the $g\U g^{-1}$-equivariant map defined by $\mathrm{i}_g(a) = a \chara_{g\U}$ (which here we see as a map $k \longrightarrow \ind_\U^{\V g \U}(1)$).

In particular, when $\U = \V$, we obtain the Shapiro isomorphism 
\begin{equation}\label{f:Shapiro1}
  \Sh^\U_g : H^*(\U,\X_\U (g)) \xrightarrow{\; \res \;} H^*(\U _g,\X_\U (g)) \xrightarrow{\; H^*(\U _g,\ev_g) \;} H^*(\U _g,k)
\end{equation}

\begin{lemma}\label{lemma:shapindep}
 Let $g,g'\in G$ be such that $\U g\U =\U g'\U $ and $\alpha\in \U $ such that $g'\U =\alpha g\U $. Then $\U _{g'}= \alpha \U _g \alpha^{-1}$ and the composite $\Sh^\U_{g'}\circ  ({\Sh^\U_g})^{-1}$ is equal to $\alpha_*$, the map given by conjugation by $\alpha^{-1}$ on cocycles.  Otherwise said, the following diagram is commutative
 \begin{equation}\label{f:shapindep}
  \begin{tikzcd}
H^*(\U,\X_\U (g))  \ar[d, equals] \ar[rrr, "\Sh_g^\U", "\sim"'] && & H^*(\U _g,k)\ar[d, "\alpha_*", "{\rotatebox{90}{$\sim$}}"'] \\
H^*(\U,\X_\U (g'))
\ar[rrr, "\Sh_{g'}^\U", "\sim"'] &&&H^*(\U _{g'},k) 
\end{tikzcd} 
\end{equation} 
\end{lemma}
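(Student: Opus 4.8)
The plan is to unwind both Shapiro isomorphisms explicitly and compare the two composites cocycle-by-cocycle, using that on $\X_\U(g)$ everything is governed by the single evaluation map $\ev_g$ and the single inclusion $\mathrm i_g$. First I would record the elementary group-theoretic fact: if $g'\U = \alpha g\U$ with $\alpha \in \U$, then $g' = \alpha g u$ for some $u \in \U$, and hence $g'\U g'^{-1} = \alpha (g\U g^{-1})\alpha^{-1}$, so intersecting with $\U$ (which $\alpha$ normalizes) gives $\U_{g'} = \alpha \U_g \alpha^{-1}$. This is what makes the conjugation map $\alpha_* : H^*(\U_g,k) \to H^*(\U_{g'},k)$ even make sense.

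Next I would trace the diagram starting from $H^*(\U_g,k)$ and going up via $(\Sh_g^\U)^{-1}$, i.e. the composite $\cores \circ H^*(\U_g,\mathrm i_g)$ of \eqref{f:Shapiro-inversehybrid} with $\V = \U$, then back down via $\Sh_{g'}^\U = H^*(\U_{g'},\ev_{g'})\circ\res$. The key observation is that the two evaluation maps are intertwined by the $\U$-action: since $\ev_{g'}(f) = f(g') = f(\alpha g u) = (\alpha^{-1}\cdot f)(gu) = u^{-1}\cdot\big((\alpha^{-1}\cdot f)(g)\big) = \ev_g(\alpha^{-1}\cdot f)$ (using that the values lie in the trivial module $k$, so the $u$-twist disappears), and similarly $\mathrm i_{g'}(a) = a\,\chara_{g'\U} = a\,\chara_{\alpha g\U} = \alpha\cdot(a\,\chara_{g\U}) = \alpha\cdot\mathrm i_g(a)$, the maps $\ev_g,\mathrm i_g$ are ``$\alpha$-equivariant'' in the appropriate sense. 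Combined with the standard compatibility of $\res$ and $\cores$ under conjugation in group cohomology — conjugation by $\alpha$ sends the $\cores/\res$ pair for $\U_g \subseteq \U$ to that for $\U_{g'} = \alpha\U_g\alpha^{-1} \subseteq \alpha\U\alpha^{-1} = \U$, and conjugation by an inner element of $\U$ acts trivially on $H^*(\U,-)$ — the whole composite collapses to conjugation by $\alpha^{-1}$ on cocycles for $\U_g$, i.e. to $\alpha_*$.

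More explicitly, I would insert a conjugation-by-$\alpha$ isomorphism $c_\alpha$ on $H^*(\U, \X_\U(g))$: because $\alpha \in \U$ this is the identity on $H^*(\U,\X_\U(g))$, but it is simultaneously compatible, via naturality of $\res$ and $\cores$ for the inclusions $\U_g \subseteq \U$ and $\U_{g'} \subseteq \U$ and via the identifications $\ev_{g'}\circ\alpha = \ev_g$, $\alpha\circ\mathrm i_g = \mathrm i_{g'}$ noted above, with conjugation by $\alpha^{-1}$ at the level of $H^*(\U_g,k)$ versus $H^*(\U_{g'},k)$. Splitting the square \eqref{f:shapindep} into the two triangles for $(\Sh_g^\U)^{-1}$ and $\Sh_{g'}^\U$ and inserting $c_\alpha = \id$ in the middle, each triangle commutes by these naturality statements, and the outer square then reads $\Sh_{g'}^\U \circ (\Sh_g^\U)^{-1} = \alpha_*$, as claimed.

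**The main obstacle** is purely bookkeeping: being careful about variances and which direction $\alpha$ versus $\alpha^{-1}$ acts, and checking that the $u \in \U$ appearing in $g' = \alpha g u$ genuinely acts trivially (it does, since it only twists the values of functions by $\sigma(u^{-1}) = 1$ in the trivial-coefficient case). There is no hard analytic or homological input — everything reduces to the elementary functoriality of group cohomology under conjugation and the explicit formulas \eqref{f:Shapiro1} and \eqref{f:Shapiro-inversehybrid}; the statement $\U_{g'} = \alpha\U_g\alpha^{-1}$ is the only genuinely non-tautological ingredient, and it is immediate.
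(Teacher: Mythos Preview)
Your argument is correct. The key identities $\ev_{g'}(f) = \ev_g(\alpha^{-1}\cdot f)$ and $\mathrm i_{g'} = \alpha\cdot\mathrm i_g$, together with the standard compatibility of $\res$ (and $\cores$) with conjugation and the triviality of inner conjugation on $H^*(\U,-)$, do yield $\Sh_{g'}^\U = \alpha_*\circ\Sh_g^\U$ directly. Your bookkeeping about the auxiliary $u\in\U$ is fine: right-$\U$-invariance of functions in $\X_\U$ kills it immediately.

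The paper's proof takes a somewhat different route. It first works with a general smooth $\U$-representation $\sigma$, passes through the isomorphism $\rho_g:\ind_\U^{\U g\U}(\sigma)\stackrel{\sim}{\to}\ind_{\U_g}^\U(g_*(\sigma|_{\U_{g^{-1}}}))$ of \eqref{f:doublecosetisom}, and sets up a diagram involving a ``compatible pair'' $(\varphi,\mathtt f)$ in the sense of \cite[\S I.5]{NSW}; commutativity is then reduced to degree~$0$ by dimension shifting, and finally one specializes to $\sigma = k$ where the induced map $\mathfrak a'$ becomes $\alpha_*$. Your approach is more elementary and direct for the case at hand---no dimension shifting, no passage through the $\rho_g$ reparametrization---while the paper's argument has the virtue of establishing the analogous statement for arbitrary coefficient representations $\sigma$ along the way. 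Both ultimately rest on the same identity $\ev_{g'}\circ(\alpha\cdot{-}) = \ev_g$, so the difference is one of packaging rather than substance.
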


\begin{proof}
Suppose first that $\sigma$ is a smooth $\U$-representation, and let us write $g' = \alpha g \alpha'$ for $\alpha, \alpha' \in \U$.  We note that we have a commutative diagram of $\U$-representations
\begin{center}
\begin{tikzcd}
\ind_{\U}^{\U g\U}(\sigma) \ar[r, "\rho_g", "\sim"'] \ar[d, equals] & \ind_{\U_g}^{\U}(g_*(\sigma|_{\U_{g^{-1}}})) \ar[d, "\mathfrak{a}","{\rotatebox{90}{$\sim$}}"'] \\
\ind_{\U}^{\U g'\U}(\sigma) \ar[r, "\rho_{g'}", "\sim"']  & \ind_{\U_{g'}}^{\U}({g'}_*(\sigma|_{\U_{g'^{-1}}}))
\end{tikzcd}
\end{center}
where $\rho_g$ and $\rho_{g'}$ are as in equation \eqref{f:doublecosetisom}, and $(\mathfrak{a}(f))(u) = \alpha'^{-1}\cdot (f(u\alpha))$ for $f \in \ind_{\U_g}^\U(g_*(\sigma|_{\U_{g^{-1}}})$ and $u \in \U$.  (Here $\alpha'^{-1} \cdot_{-}$ denotes the action of $\alpha'^{-1}$ on $\sigma$ without a twist.)

Next, we claim that the following diagram is commutative:
\begin{equation}
\label{f:commdiag-for-shapiro}
\begin{tikzcd}
H^*\big(\U,\ind_{\U_g}^\U(g_*(\sigma|_{\U_{g^{-1}}}))\big) \ar[rrr, "H^*(\U_g{,} \textnormal{ev}_1) \circ \textnormal{res}"] \ar[d, "H^*(\U{,} \mathfrak{a})"] & &  & H^*(\U_g, g_*(\sigma|_{\U_{g^{-1}}})) \ar[d, "\mathfrak{a}'"] \\
H^*\big(\U,\ind_{\U_{g'}}^\U(g'_*(\sigma|_{\U_{g'^{-1}}}))\big) \ar[rrr, "H^*(\U_{g'}{,} \textnormal{ev}_1) \circ \textnormal{res}"] & & & H^*(\U_{g'}, g'_*(\sigma|_{\U_{g'^{-1}}}))
\end{tikzcd}
\end{equation}
Here, $\mathfrak{a}'$ denotes the map on cohomology induced by the compatible pair of homomorphisms (see \cite[\S I.5]{NSW})
\begin{center}
\begin{tabular}{rclcrcl}
$\varphi: \U_{g'}$ & $\longrightarrow$ & $\U_{g},$ & & $\mathtt{f}: g_*(\sigma|_{\U_{g^{-1}}})$ & $\longrightarrow$ & $g'_*(\sigma|_{\U_{g'^{-1}}})$ \\
$u'$ & $\longmapsto$ & $\alpha^{-1}u'\alpha,$ & & $v$ & $\longmapsto$ & $\alpha'^{-1}\cdot v$
\end{tabular}
\end{center}
where once again $\alpha'^{-1}\cdot_{-}$ denotes the untwisted action of $\U$ on $Y$.  (Recall that ``compatible pair'' means that $\mathtt{f}(\varphi(u')*v) = u' *' \mathtt{f}(v)$, where we now use $*$ and $*'$ to denote the twisted actions.)  By dimension shifting (since $\mathfrak{a}$ and $\mathfrak{a}'$ are functorial in $\sigma$), in order to prove the commutativity of the diagram \eqref{f:commdiag-for-shapiro}, it suffices to assume $* = 0$, in which case the commutativity follows from a straightforward calculation.

Suppose now that $\sigma = k$ is the trivial representation.  Combining the above two commutative diagrams gives 
\begin{center}
  \begin{tikzcd}
H^*(\U,\ind_{\U}^{\U g \U}(1))  \ar[d, equals] \ar[rrr, "\Sh_g^\U", "\sim"'] && & H^*(\U _g,k)\ar[d, "\mathfrak{a'}", "{\rotatebox{90}{$\sim$}}"'] \\
H^*(\U,\ind_{\U}^{\U g' \U}(1))
\ar[rrr, "\Sh_{g'}^\U", "\sim"'] &&&H^*(\U _{g'},k) 
\end{tikzcd} 
\end{center}
and we conclude by observing that for $\sigma = k$, the map $\mathfrak{a}'$ is equal to $\alpha_*$.  
\end{proof}

Let $\trace_\U\in \mathbf X_\U^\vee = \Hom_k(\X_\U,k)$ be the linear map defined by
\begin{equation}\label{f:trace}\trace_\U:= \sum_{g\in G/\U} \ev_g\ .\end{equation}
It is easy to check that $\trace_\U:\X_\U \longrightarrow k$ is left $G$-equivariant  when $k$ is endowed with the trivial action of $G$.

\begin{lemma}\label{lemma:SUtriv}
The map $\trace_\U$ is right $H_\U$-equivariant when $k$ is endowed with the trivial character $\chi^\U_{\triv}$ of $H_\U$.
\end{lemma}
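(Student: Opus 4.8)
The goal is to prove that $\trace_\U \colon \X_\U \longrightarrow k$ intertwines the right $H_\U$-action on $\X_\U$ with the trivial character $\chi^\U_{\triv}$ of $H_\U$; that is, that $\trace_\U(\phi \cdot \tau^\U_g) = \chi^\U_{\triv}(\tau^\U_g)\,\trace_\U(\phi)$ for all $\phi \in \X_\U$ and all $g \in G$.

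The plan is to compute both sides directly using the explicit formula for the right action of $H_\U$ on $\X_\U = k[G/\U]$ recorded in Subsection \ref{subsec:Hecke}, namely $(\phi \cdot \tau^\U_g)(x) = \sum_{y \in \U \backslash G} \phi(xy^{-1})\tau^\U_g(y)$, where $\tau^\U_g = \chara_{\U g \U}$. First I would apply $\trace_\U = \sum_{x \in G/\U} \ev_x$ to $\phi \cdot \tau^\U_g$, obtaining a double sum over $x \in G/\U$ and over the support of $\tau^\U_g$. Using that $\tau^\U_g$ is supported on $\U g \U$ and that the left cosets in $\U g \U$ are of the form $\U x g$ for $x$ ranging over $\U / \U_g$ (this is the decomposition $\chara_{\U g \U} = \sum_{x \in \U/\U_g} xg \cdot \chara_\U$ used in the proof of the well-definedness of $\chi^\U_{\triv}$), I would reindex the sum. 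Because summing $\ev_x$ over $x \in G/\U$ and then translating the argument of $\phi$ by a fixed group element is just a reindexing of $G/\U$, each of the $[\U : \U_g]$ terms in the support of $\tau^\U_g$ contributes exactly $\trace_\U(\phi)$. Hence $\trace_\U(\phi \cdot \tau^\U_g) = [\U : \U_g]\,\trace_\U(\phi) = \chi^\U_{\triv}(\tau^\U_g)\,\trace_\U(\phi)$ by the definition \eqref{trivU}.

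Alternatively — and perhaps more cleanly — I would deduce the statement from Lemma \ref{lemma:SUtriv}'s predecessor (the left $G$-equivariance of $\trace_\U$, already noted) together with Frobenius reciprocity: the map $\trace_\U \in \Hom_G(\X_\U, k)$ corresponds under $\Hom_G(\X_\U, k) \cong k^\U = k$ to some scalar, and the left $H_\U$-module structure on $k = k^\U$ induced by Frobenius reciprocity is, by the computation in the proof of the well-definedness lemma, exactly the trivial character $\chi^\U_{\triv}$. Dualizing (using that right $H_\U$-equivariance of a map $\X_\U \to k$ is the same datum as the map lying in $\Hom_G(\X_\U, k)$ equipped with the induced left $H_\U$-action) then gives the claim immediately. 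I would likely present the direct computation as the main argument since it is self-contained, and mention the conceptual reformulation as a remark.

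The only subtlety — and the step I would be most careful about — is the bookkeeping of left versus right cosets and the direction of the translation in the convolution formula, since $\X_\U$ carries both a left $G$-action and a right $H_\U$-action and the formula \eqref{convo} has two equivalent forms (summing over $G/\U$ or over $\U\backslash G$). One must check that the relevant reindexing genuinely identifies $\sum_{x \in G/\U} \phi(x y^{-1})$ with $\sum_{x \in G/\U}\phi(x) = \trace_\U(\phi)$ for each fixed coset representative $y$ appearing in the support of $\tau^\U_g$, which is valid precisely because right multiplication by $y^{-1}$ permutes $G/\U$. Once that is pinned down, the rest is a routine finite manipulation, so I do not expect any real obstacle beyond this indexing care.
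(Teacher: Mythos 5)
Your alternative approach (via Frobenius reciprocity, identifying $\Hom_G(\X_\U,k)\cong k^\U$ with the $H_\U$-action given by $\chi^\U_{\triv}$) is essentially the paper's proof, which observes that since $\trace_\U$ is left $G$-equivariant and $\X_\U$ is generated by $\chara_\U$ as a $G$-module, it suffices to check the identity on $\chara_\U$; under the identification $H_\U\cong\X_\U^\U$ one has $\chara_\U\cdot\tau^\U_g = \tau^\U_g$, so the computation reduces to $\trace_\U(\tau^\U_g) = [\U g\U:\U] = [\U:\U_g]=\chi^\U_{\triv}(\tau^\U_g)$. That route is correct.

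Your primary direct computation, however, has a genuine gap. You choose the form $(\phi\cdot\tau^\U_g)({}_-) = \sum_{y\in\U\backslash G}\phi({}_-\,y^{-1})\tau^\U_g(y)$ of the convolution, swap the sums, and then claim that for each fixed $y$ the inner sum $\sum_{x\in G/\U}\phi(xy^{-1})$ equals $\trace_\U(\phi)$ ``because right multiplication by $y^{-1}$ permutes $G/\U$.'' But right multiplication by $y^{-1}$, while a bijection of $G$, does \emph{not} descend to a bijection of the coset space $G/\U$: the assignment $x\U\mapsto xy^{-1}\U$ is ill-defined unless $y^{-1}\U y\subseteq\U$. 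Concretely, the quantity $\sum_{x\in G/\U}\phi(xy^{-1})$ genuinely depends on the choice of coset representatives; one can already see this for $G=S_3$, $\U = \{1,(12)\}$, $\phi=\chara_\U$, $y=(123)$, where different choices of representatives for $G/\U$ give $0$ or $1$ for the sum. (The total double sum is of course representative-independent, so the final answer comes out right, but the term-by-term assertion you are relying on is false.) There is also a secondary mismatch: the support of $\tau^\U_g$ parametrised by $\U\backslash G$ has $[\U:\U_{g^{-1}}]$ elements, not $[\U:\U_g]$, and these can differ for a general locally profinite $G$ at this stage of the paper (unimodularity is only invoked later, in Lemma \ref{lemma-JU-triv}).

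The direct computation is easily repaired by using the \emph{other} form of the convolution: $(\phi\cdot\tau^\U_g)({}_-) = \sum_{x\in G/\U}\phi(x)\,\tau^\U_g(x^{-1}{}_-)$. Then after swapping, the inner sum $\sum_{y\in G/\U}\tau^\U_g(x^{-1}y)$ is well-defined (it only depends on $x\U$ and each summand only on $y\U$, since $\tau^\U_g$ is $\U$-bi-invariant), and \emph{left} multiplication by $x^{-1}$ does descend to a bijection of $G/\U$, giving $\sum_{y\in G/\U}\tau^\U_g(x^{-1}y)=\trace_\U(\tau^\U_g)=[\U g\U:\U]=[\U:\U_g]$ for every $x$. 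Summing over $x$ then yields $\trace_\U(\phi\cdot\tau^\U_g)=\chi^\U_{\triv}(\tau^\U_g)\trace_\U(\phi)$ as desired. So: promote your alternative argument to the main one, or switch which form of \eqref{convo} you use in the direct computation.
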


\begin{proof}
Recall that $\chi_{\triv}^\U$ was defined in \eqref{trivU}. Since 
$\X_\U$ is a $(G,H_\U)$-bimodule, since it is generated by $\chara_\U$ under the action of $G$,  and since
$\trace_\U$ is left $G$-equivariant and satisfies $\trace_\U(\chara_\U)=1$,  it is enough to check that $\trace_\U\vert_{H_\U}=\chi_{\triv}^\U$. But for $g\in G$, we have
$\trace_\U(\tau^\U_g)=\sum_{x\in G/\U} \tau^\U_g(x) = [\U g \U :\U] = [ \U:\U_g] = \chi_{\triv}^\U(\tau_g^{\U})\ . $
\end{proof}
We denote by $\trace_\U^i := H^i(\U, \trace_{\U}): H^i(\U, \X_\U) \longrightarrow H^i(\U,k)$ the map on cohomology induced by $\trace_\U$. (When $G$ is a split $p$-adic reductive group and $\U$ a pro-$p$-Iwahori subgroup, this map is introduced (and simply denoted $\trace$) in \cite[\S 7.2]{Ext}.) The following is \cite[Rmk. 7.4]{Ext}, the proof of which goes through immediately in our more general setting.

\begin{remark}
We may decompose $\mathbf \trace_\U = \sum_{g\in \U\backslash G/\U} \trace_{\U,g}$ where $\trace_{\U,g} = \sum_{x\in \U g\U/\U} \ev_x$. Each summand $\trace_{\U,g}: \X_{\U} \longrightarrow k$ is $\U$-equivariant and satisfies $\trace_{\U,g}|_{\X_{\U}(g')} = 0$ if $\U g\U \neq \U g'\U$. Furthermore, the following diagram is commutative:
\begin{equation}\label{f:traceg}
\begin{tikzcd}
                &         H^i(\U,\X_{\U}(g))  \ar[dl, "\Sh^\U_g"']  \ar[dd, "H^i(\U {,} \trace_{\U , g})"] \\ 
  H^i(\U_g, k)     \ar[dr, "\cores_{\U}^{\U_g}"']              \\
                &         H^i(\U,k)                  
\end{tikzcd}
\end{equation}
\end{remark}

\subsubsection{The cup product\label{subsec:cup}}

Next, we copy \cite[\S 3.3]{Ext} with $\X_\U$ instead of $\X$ and define a cup product on $E^*_\U$.  Namely, by multiplying compactly supported functions pointwise, we obtain the $G$-equivariant map
\begin{eqnarray*}
  \mathbf{X}_\U \otimes_k \mathbf{X}_\U  \longrightarrow  \mathbf{X}_\U\, &\qquad
  f \otimes f'  \longmapsto  f f' \ .
\end{eqnarray*}
It gives rise to the cup product
\begin{equation}\label{f:cup}
  H^i(\U , \mathbf{X}_\U) \otimes_k H^j(\U ,\mathbf{X}_\U) \xrightarrow{\; \cup \;} H^{i+j}(\U ,\mathbf{X}_\U) \
\end{equation} which has the property that, for $g\in G$,
\begin{equation}\label{f:orth}
  H^i(\U , \mathbf{X}_\U(g)) \cup H^j(\U ,\mathbf{X}_\U(g')) = 0 \quad\text{whenever}\quad \U g \U\neq \U g'\U \ .
\end{equation} 
On the other hand, since $\ev_{{g}}(f f') = \ev_{{g}}(f) \ev_{{g}}(f')$ and since the cup product is functorial and commutes with cohomological restriction maps, we have a commutative diagram
\begin{equation}\label{f:cup+Sh}
\begin{tikzcd}
     H^i(\U , \mathbf{X}_\U (g)) \otimes_k H^j(\U ,\mathbf{X}_\U (g)) \ar[d, "\Sh_g^\U \otimes \Sh_g^\U"', "\rotatebox{90}{$\sim$}"] \ar[r, "\cup"] & H^{i+j}(\U ,\mathbf{X}_\U (g)) \ar[d, "\Sh^\U_g", "\rotatebox{90}{$\sim$}"'] \\
     H^i(\U _g,k) \otimes_k H^j(\U _g,k) \ar[r, "\cup"] & H^{i+j}(\U_g,k)  
     \end{tikzcd}
\end{equation}
where the bottom row is the usual cup product on the cohomology algebra $H^*(\U _g,k)$.  In particular, we see that the cup product \eqref{f:cup} is graded-commutative.

\subsubsection{The Yoneda product}

Recall that the $\Ext$-algebra $E_\U^* = \Ext_{\textnormal{Mod}(\U)}^*(\X_\U, \X_\U)^{\textnormal{op}}$ is a graded algebra endowed with the (opposite of the) Yoneda product.  We make this product more explicit below.

\begin{proposition}
\label{yoneda-product-U}
Let $\alpha \in H^i(\U,\X_\U(x))$ and $\beta \in H^j(\U, \X_\U(y))$ denote two cohomology classes.  We then have
\[ \alpha\cdot \beta = \sum_{\substack{z \in \U\backslash G /\U \\ \U z \U \subseteq \U x \U\cdot \U y \U}} \gamma_z, \]
where $\gamma_z \in H^{i + j}(\U, \X_\U(z))$ is determined by
\begin{equation}\label{f:yon}\Sh_z(\gamma_z) = \sum_{h \in \U_{x^{-1}} \backslash (x^{-1}\U z \cap \U y \U)/\U_{z^{-1}}} \textnormal{cores}^{\U_z \cap zh^{-1} \U hz^{-1}}_{\U_z}\left(\widetilde{\Gamma}_{z,h}\right)\end{equation}
where 
$\widetilde{\Gamma}_{z,h} := \textnormal{res}^{\U \cap zh^{-1} \U hz^{-1}}_{\U_z \cap zh^{-1} \U hz^{-1}}\left(a_* \Sh_{ {x}}(\alpha)\right)\cup \textnormal{res}^{z\U z^{-1} \cap zh^{-1} \U hz^{-1}}_{\U_z \cap zh^{-1} \U hz^{-1}}\left((a {x}c)_*\Sh_{ {y}}(\beta)\right),$
with $h = c {y}d =  {x}^{-1}a^{-1} z$, where $a,c,d \in \U$. 
\end{proposition}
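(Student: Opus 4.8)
The plan is to follow \cite[\S 3.3]{Ext} \emph{mutatis mutandis}; none of the arguments there use that $\U$ is pro-$p$ or that $\mathbf G$ is split, so only the bookkeeping must be reproduced in the present generality. The first step is to reinterpret the (opposite) Yoneda product as a post-composition map. Under the identifications $E_\U^j = \Ext^j_G(\X_\U,\X_\U)^{\mathrm{op}} \cong H^j(\U,\X_\U)$ of \eqref{f:frob}, the class $\beta$ corresponds to a morphism $\widetilde\beta \colon \X_\U \to \X_\U[j]$ in $D(\Mod(G))$, namely the composite of $\ind_\U^G(\beta)$ (recall $\ind_\U^G$ is exact) with the counit $\ind_\U^G(\X_\U|_\U)[j] \to \X_\U[j]$ of the adjunction $\ind_\U^G \dashv \mathrm{Res}$. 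Since Frobenius reciprocity $\Ext^i_G(\X_\U,-) \cong H^i(\U,(-)|_\U)$ is natural in the second variable and compatible with the triangulated structure, the product $\alpha\cdot\beta \in E_\U^{i+j}$ is precisely the image of $\alpha$ under the map $H^i(\U,\widetilde\beta|_\U)\colon H^i(\U,\X_\U) \to H^{i+j}(\U,\X_\U)$ on cohomology induced by the restriction $\widetilde\beta|_\U\colon \X_\U|_\U \to \X_\U|_\U[j]$. In degree $0$ this recovers the convolution product \eqref{convo}, a first consistency check.

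The second step is to make $\widetilde\beta|_\U$ explicit summand-by-summand along $\X_\U|_\U = \bigoplus_g \X_\U(g)$. Using \eqref{f:doublecosetisom} (with $\V = \U$ and $\sigma = 1$) to write $\X_\U(x) \cong \ind_{\U_x}^\U(1)$ and $\X_\U(z) \cong \ind_{\U_z}^\U(1)$, together with the explicit counit and the Mackey decomposition of $\ind_\U^G(\X_\U(y))|_\U$, the component $\X_\U(x) \to \X_\U(z)[j]$ of $\widetilde\beta|_\U$ becomes---after Frobenius reciprocity for $\U_x \subseteq \U$, then Mackey and Shapiro---a family of classes indexed by $u \in \U_x\backslash\U/\U_z$. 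Only those $u$ with $x^{-1}uz \in \U y\U$ survive (here the support condition on $\beta$ and the hypothesis $\U z\U \subseteq \U x\U\cdot\U y\U$ enter), and the substitution $h = x^{-1}uz = x^{-1}a^{-1}z$ with $a = u^{-1}$ (and $h = cyd$ for suitable $c,d\in\U$) reindexes the surviving cosets by $\U_{x^{-1}}\backslash(x^{-1}\U z\cap\U y\U)/\U_{z^{-1}}$. This same substitution produces the group identities $zh^{-1} = ax$, $\ \U\cap zh^{-1}\U hz^{-1} = a\U_x a^{-1}$, and $\ z\U z^{-1}\cap zh^{-1}\U hz^{-1} = (axc)\U_y(axc)^{-1}$, which are exactly what makes every restriction, corestriction, and conjugation in \eqref{f:yon} well-typed.

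The third step is to identify each $h$-summand. The general formula for composing a Shapiro class in $\Hom_{D(\U)}(1,\ind_{\U_x}^\U(1)[i]) \cong H^i(\U_x,k)$ with one in $\Hom_{D(\U)}(\ind_{\U_x}^\U(1),\ind_{\U_z}^\U(1)[j])$ expresses the composite as a sum over $u$ of corestrictions down to $\U_z$ of cup products of restrictions, with conjugation twists dictated by Lemma \ref{lemma:shapindep} and by \eqref{f:doublecosetisom}. Tracking these twists shows that $\Sh_x(\alpha)$ must be transported by $a_*$ and $\Sh_y(\beta)$ by $(axc)_*$ (the $\rho_x$-identification contributing the factor $x$, Lemma \ref{lemma:shapindep} the factor $c$, the Mackey index the factor $a$); this yields exactly $\widetilde\Gamma_{z,h}$ and then $\Sh_z(\gamma_z) = \sum_h \cores^{\U_z\cap zh^{-1}\U hz^{-1}}_{\U_z}(\widetilde\Gamma_{z,h})$, as in \eqref{f:yon}. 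The one genuinely non-formal point is that the composition formula returns a cup product of cocycles; this is checked by a diagonal-approximation argument, equivalently by compatibility with \eqref{f:cup+Sh}, exactly as in \cite[\S 3.3]{Ext}.

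The main difficulty lies in the bookkeeping of steps two and three: carrying out the nested Mackey decompositions (of $\X_\U|_\U$, then of $\ind_\U^G(\X_\U(y))|_\U$, then of $\ind_{\U_z}^\U(1)|_{\U_x}$), matching the three double-coset index sets, and, most delicately, verifying that all conjugations compose to precisely $a_*$ and $(axc)_*$ and that the corestriction and the two restrictions land on the subgroups appearing in \eqref{f:yon}. The cocycle-level cup product is the only analytic input and, being identical to the corresponding step of \cite[\S 3.3]{Ext}, may simply be quoted.
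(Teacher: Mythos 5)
Your proposal is correct and takes essentially the same approach as the paper, which simply states that the proof of \cite[Prop.\ 5.3]{Ext} carries over to the locally profinite setting; you have unpacked what that means (Yoneda product as post-composition via Frobenius reciprocity, Mackey decomposition along $\U$-orbits, Shapiro + corestriction/restriction/conjugation bookkeeping for each double coset) and correctly noted that no step uses pro-$p$-ness of $\U$ or splitness of $\mathbf G$. One tiny correction: the Yoneda-product computation you are reproducing is \cite[Prop.\ 5.3]{Ext} (the paper's own citation), not \cite[\S 3.3]{Ext} — the latter treats the cup product, which you only need for the final diagonal-approximation step.
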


\begin{proof}
This follows in exactly the same way as the proof of \cite[Prop. 5.3]{Ext}: one easily checks that the cited proof works in our more general locally profinite setting.

\end{proof}

\subsubsection{The anti-involution}
\label{subsec-antiinvolution}

Next, we examine an anti-involution on $E_{\U}^*$.  For $g\in G$, we have $\U_{g^{-1}} = \U \cap g^{-1}\U g = g^{-1}(\U \cap g\U g^{-1}) g = g^{-1}\U_g g$, and hence a linear isomorphism
\begin{equation*}
  (g^{-1})_*:  H^*(\U_g, k)\stackrel{\sim}{\longrightarrow}  H^*(\U_{g^{-1}}, k) \ .
\end{equation*}
 Via the Shapiro isomorphism \eqref{f:Shapiro1}, this induces the linear isomorphism  $\anti_{\U,g}$:
\begin{equation}
\begin{tikzcd}
H^*(\U, \X_\U(g)) \ar[d, "\Sh_g^{\U}"', "{\rotatebox{90}{$\sim$}}"] \ar[rrr, "\anti_{\U,g}", "\sim"'] && & H^*(\U, \X_\U(g^{-1})) \ar[d, "\Sh^{\U}_{g^{-1}}", "{\rotatebox{90}{$\sim$}}"'] \\
H^*(\U_g, k)\ar[rrr, "(g^{-1})_*", "\sim"'] & && H^*(\U_{g^{-1}}, k)  
\end{tikzcd}
\end{equation}
It easily follows from Lemma \ref{lemma:shapindep} that the map $\anti_{\U,g}$ defined above only depends on the double  coset $\U g \U$.
Hence, summing over a system of representatives  of $\U\backslash G/\U$, the maps $(\anti_{\U,g})_{g\in \U\backslash G/\U}$ induce a linear isomorphism
\begin{equation*}
  \anti_\U:= \bigoplus_{g\in \U\backslash G/\U} \anti_{\U,g}:  H^*(\U,\X_\U) \stackrel{\sim}{\longrightarrow} H^*(\U,\X_\U) \ .
\end{equation*}
(When $\U$ is the pro-$p$-Iwahori subgroup of a split $p$-adic reductive group, this is exactly the map  $\anti$ introduced in \cite[\S 6]{Ext}.)
The next lemma follows from a straightforward calculation using the definitions of $\Sh_g^{\U}$ (see \eqref{f:Shapirohybrid} and \eqref{f:Shapiro-inversehybrid}).
\begin{lemma}
\label{anti-inv-on-H}
The restriction of $\anti_{\U}$ to $E^0_{\U} \cong H^0(\U,\X_\U) \cong H_\U$ is given by
$\anti_{\U}(\tau_g^{\U}) = \tau_{g^{-1}}^{\U} \text{ for any $g \in G$. }$
\end{lemma}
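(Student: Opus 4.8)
The plan is to unwind the definition of $\anti_{\U}$ in cohomological degree $0$. Since $\anti_\U = \bigoplus_{g \in \U\backslash G/\U} \anti_{\U,g}$ and, under the identifications $E_\U^0 = H_\U \cong H^0(\U,\X_\U) = \bigoplus_{g \in \U\backslash G/\U} H^0(\U,\X_\U(g))$ coming from \eqref{f:frob} and \eqref{f:H*dec}, the class $\tau_g^\U = \chara_{\U g\U}$ generates the summand $H^0(\U,\X_\U(g)) = \X_\U(g)^\U$ as a $k$-module, it suffices to prove $\anti_{\U,g}(\tau_g^\U) = \tau_{g^{-1}}^\U$ for each $g \in G$. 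I would do this by chasing $\tau_g^\U$ through the three maps whose composite is $\anti_{\U,g}$, namely $(\Sh_{g^{-1}}^\U)^{-1} \circ (g^{-1})_* \circ \Sh_g^\U$.

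First I would evaluate $\Sh_g^\U$ on degree $0$ using \eqref{f:Shapiro1}: the restriction map there is the inclusion $\X_\U(g)^\U \hookrightarrow \X_\U(g)^{\U_g}$, and $H^0(\U_g,\ev_g)$ is evaluation at $g$ (legitimate since $\ev_g$ is $g\U g^{-1}$-equivariant and $\U_g \subseteq g\U g^{-1}$); as $g \in \U g\U$ this gives $\Sh_g^\U(\tau_g^\U) = \ev_g(\chara_{\U g\U}) = 1 \in k = H^0(\U_g,k)$. Next, the conjugation isomorphism $(g^{-1})_*: H^0(\U_g,k) \to H^0(\U_{g^{-1}},k)$ is induced on $H^0$ by the identity of the trivial coefficient module $k$ along $\U_{g^{-1}} \xrightarrow{\sim} \U_g$, $u \mapsto gug^{-1}$, hence is the identity $k \to k$; so $(g^{-1})_*(1) = 1$. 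Finally, I would compute $(\Sh_{g^{-1}}^\U)^{-1}(1)$ via \eqref{f:Shapiro-inversehybrid}: the map $\mathrm{i}_{g^{-1}}$ sends $1$ to $\chara_{g^{-1}\U} \in \X_\U(g^{-1})^{\U_{g^{-1}}}$, and the degree-$0$ corestriction $H^0(\U_{g^{-1}},\X_\U(g^{-1})) \to H^0(\U,\X_\U(g^{-1}))$ is the trace/transfer map $f \mapsto \sum_{v \in \U/\U_{g^{-1}}} v\cdot f$; using the coset decomposition $\U g^{-1}\U = \bigsqcup_{v \in \U/\U_{g^{-1}}} vg^{-1}\U$, this yields $\sum_{v} \chara_{vg^{-1}\U} = \chara_{\U g^{-1}\U} = \tau_{g^{-1}}^\U$. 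Composing the three steps gives $\anti_{\U,g}(\tau_g^\U) = \tau_{g^{-1}}^\U$.

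There is no genuine obstacle here; the argument is, as advertised, a substitution into the formulas \eqref{f:Shapirohybrid} and \eqref{f:Shapiro-inversehybrid} with $\V = \U$. The only points deserving a line of justification are that conjugation acts trivially on $H^0$ of a trivial module, that the degree-$0$ corestriction is the trace map, and that $\U/\U_{g^{-1}}$ parametrises the left $\U$-cosets contained in $\U g^{-1}\U$ — all standard. Summing the identity $\anti_{\U,g}(\tau_g^\U) = \tau_{g^{-1}}^\U$ over a system of representatives of $\U\backslash G/\U$ yields the lemma.
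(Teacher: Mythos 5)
Your proposal is correct and is exactly the calculation the paper has in mind: the paper states only that the lemma ``follows from a straightforward calculation using the definitions of $\Sh_g^{\U}$'' (equations \eqref{f:Shapirohybrid} and \eqref{f:Shapiro-inversehybrid}), and your proof is precisely that calculation, carried out in degree $0$ by chasing $\tau_g^\U$ through $\Sh_g^\U$, the conjugation isomorphism, and $(\Sh_{g^{-1}}^\U)^{-1}$. Each of the three steps (evaluation giving $1$, conjugation acting trivially on $H^0$ of the trivial module, and the degree-$0$ corestriction being the transfer sum $\sum_{v\in\U/\U_{g^{-1}}}\chara_{vg^{-1}\U}=\tau_{g^{-1}}^\U$) is justified correctly.
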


\begin{lemma}
\label{lemma-JU-antiinvolution}
The map $\anti_{\U}$ defines an involutive anti-automorphism on $E_\U^*$: given $\alpha \in H^i(\U,\X_\U)$ and $\beta \in H^j(\U, \X_\U)$, we have
$\anti_{\U}(\alpha\cdot \beta) = (-1)^{ij}\anti_{\U}(\beta)\cdot \anti_{\U}(\alpha).$
\end{lemma}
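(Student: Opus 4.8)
\emph{Involutivity.} I would settle this formally. Since $\anti_\U = \bigoplus_{g \in \U\backslash G/\U} \anti_{\U,g}$ and each $\anti_{\U,g}$ is the conjugation isomorphism $(g^{-1})_*\colon H^*(\U_g,k) \to H^*(\U_{g^{-1}},k)$ transported through the Shapiro isomorphisms $\Sh_g^\U$ and $\Sh_{g^{-1}}^\U$, unwinding the definitions gives $\anti_{\U,g^{-1}} \circ \anti_{\U,g} = (\Sh_g^\U)^{-1} \circ g_* \circ (g^{-1})_* \circ \Sh_g^\U$. As $g_* \circ (g^{-1})_*$ is the identity of $H^*(\U_g,k)$ (the two conjugations compose to the trivial automorphism of $\U_g$), this composite is the identity, so $\anti_\U^2 = \id$.

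\emph{The anti-automorphism property.} By $k$-linearity of $\anti_\U$, $k$-bilinearity of the Yoneda product, and the decomposition $E_\U^* = \bigoplus_g H^*(\U,\X_\U(g))$, it suffices to treat homogeneous classes $\alpha \in H^i(\U,\X_\U(x))$ and $\beta \in H^j(\U,\X_\U(y))$. I would write $\alpha\cdot\beta = \sum_z \gamma_z$ using Proposition \ref{yoneda-product-U}. Passing to inverses turns $\U z\U \subseteq \U x\U\cdot\U y\U$ into $\U z^{-1}\U \subseteq \U y^{-1}\U\cdot\U x^{-1}\U$, and since $\anti_\U(\beta) \in H^j(\U,\X_\U(y^{-1}))$ and $\anti_\U(\alpha) \in H^i(\U,\X_\U(x^{-1}))$, the same proposition expresses $\anti_\U(\beta)\cdot\anti_\U(\alpha) = \sum_z \delta_{z^{-1}}$ over the same set of double cosets, with $\delta_{z^{-1}} \in H^{i+j}(\U,\X_\U(z^{-1}))$. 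Because $\anti_{\U,z}$ carries $H^{i+j}(\U,\X_\U(z))$ into $H^{i+j}(\U,\X_\U(z^{-1}))$, the task reduces to proving $\anti_{\U,z}(\gamma_z) = (-1)^{ij}\,\delta_{z^{-1}}$ for each $z$.

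For this I would apply $\Sh_{z^{-1}}^\U \circ \anti_{\U,z} = (z^{-1})_* \circ \Sh_z^\U$ and use that the conjugation isomorphism $(z^{-1})_*$ commutes with corestriction, restriction and cup products; this rewrites $\Sh_{z^{-1}}^\U(\anti_{\U,z}(\gamma_z))$ as a sum over the index set $\{h\}$ of \eqref{f:yon} whose terms are corestrictions of cup products of conjugates of $\Sh_x(\alpha)$ (degree $i$) and $\Sh_y(\beta)$ (degree $j$). In parallel, \eqref{f:yon} gives $\Sh_{z^{-1}}^\U(\delta_{z^{-1}})$ directly as a similar sum over an index set $\{h'\}$. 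The argument would then be completed in three steps. \textbf{(i)} I would exhibit a bijection between the index sets $\{h\}$ and $\{h'\}$: if $h = cyd = x^{-1}a^{-1}z$ with $a,c,d \in \U$ (so $z = axcyd$), its partner is $h' := c^{-1}hz^{-1} = ydz^{-1}$, which lies in $(y\U z^{-1})\cap(\U x^{-1}\U)$, is well defined on the relevant double-coset quotients, and is returned to $h$ by the symmetric recipe. \textbf{(ii)} I would check that under this bijection the two cup products are formed over the same subgroup $\U_{z^{-1}}\cap d^{-1}y^{-1}\U y d$, and that the conjugation twists applied to $\Sh_x(\alpha)$, resp.\ $\Sh_y(\beta)$, agree on the two sides; in each case this collapses to the single relation $z = axcyd$ built into the choice of $a,c,d$. \textbf{(iii)} Finally, the two cup products then coincide up to the order of their two factors (of degrees $i$ and $j$), so graded-commutativity of the cup product on $H^*(\U_{z^{-1}}\cap d^{-1}y^{-1}\U y d, k)$ --- established in \S\ref{subsec:cup} --- supplies precisely the sign $(-1)^{ij}$. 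Summing over $z$ then yields the claim.

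I expect step \textbf{(ii)} to be the main obstacle: it amounts to tracking the nested restrictions, corestrictions and conjugations carefully enough to see that the ``$\alpha$-factor'' and ``$\beta$-factor'' produced by applying $(z^{-1})_*$ to $\widetilde\Gamma_{z,h}$ coincide, term for term, with the two factors of the corresponding summand of $\anti_\U(\beta)\cdot\anti_\U(\alpha)$. Once the bijection in \textbf{(i)} is fixed this is a finite verification, but it is exactly where the decomposition $h = cyd = x^{-1}a^{-1}z$ furnished by Proposition \ref{yoneda-product-U} must be exploited with care. This is the same computation carried out in \cite[\S 6]{Ext} for $\U$ a pro-$p$-Iwahori subgroup of a split group, and it is insensitive to those extra hypotheses.
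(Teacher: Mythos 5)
Your proposal is correct and follows the same strategy as the paper: the paper's proof of this lemma consists entirely of the remark that the argument of \cite[Prop.~6.1]{Ext} goes through verbatim once one has the explicit product formula of Proposition~\ref{yoneda-product-U}, and your outline --- reduce to homogeneous components, match index sets $\{h\}$ via $h \longmapsto c^{-1}hz^{-1}$, track the conjugation twists, and invoke graded-commutativity of the cup product for the sign --- is precisely the structure of that cited proof, which you yourself flag at the end. The only difference is that you spell out the skeleton the paper leaves to the reference.
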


\begin{proof}
Using the explicit product formula in Proposition \ref{yoneda-product-U}, this follows in exactly the same way as the proof of \cite[Prop. 6.1]{Ext}.
\end{proof}

\begin{lemma}
\label{lemma-JU-triv}
Suppose $G$ is unimodular.  We then have $\chi_{\triv}^{\U} \circ \anti_{\U}|_{E^0_{\U}} = \chi_{\triv}^{\U}$ (as linear maps).
\end{lemma}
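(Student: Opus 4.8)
The statement to prove is that $\chi_{\triv}^{\U} \circ \anti_{\U}|_{E^0_{\U}} = \chi_{\triv}^{\U}$ when $G$ is unimodular. By Lemma~\ref{anti-inv-on-H}, the map $\anti_{\U}$ restricted to $E^0_{\U} \cong H_\U$ sends $\tau_g^\U$ to $\tau_{g^{-1}}^\U$. So the claim reduces to the equality
\[ \chi_{\triv}^{\U}(\tau_{g^{-1}}^\U) = \chi_{\triv}^{\U}(\tau_g^\U) \qquad \text{for all } g \in G, \]
i.e.\ to $[\U : \U_{g^{-1}}] = [\U : \U_g]$.

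**Main step.** The plan is to reinterpret these indices in terms of double-coset cardinalities and invoke unimodularity. Recall that $[\U : \U_g]$ is the number of left cosets in $\U g\U / g\U$, equivalently the cardinality of $\U g\U / \U$; and $[\U:\U_{g^{-1}}]$ is the cardinality of $\U g^{-1}\U/\U$, equivalently of $\U \backslash \U g\U$. So the desired equality is the statement that the double coset $\U g\U$ contains the same number of left $\U$-cosets as right $\U$-cosets. For a unimodular locally profinite group $G$ with Haar measure $\mu$ normalized so that $\mu(\U) = 1$, we have $\mu(\U g\U) = [\U g\U : \U] = $ (number of right $\U$-cosets), because $\U g\U$ is a disjoint union of right cosets each of measure $1$. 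On the other hand, applying the inversion map $x \mapsto x^{-1}$, which preserves Haar measure precisely because $G$ is unimodular, sends $\U g\U$ to $\U g^{-1}\U$ and carries right $\U$-cosets to left $\U$-cosets; hence $\mu(\U g\U) = \mu(\U g^{-1}\U)$, giving $[\U:\U_g] = [\U:\U_{g^{-1}}]$. Alternatively, and perhaps cleaner for inclusion in the paper: the inversion map induces a bijection $\U g \U / \U \longleftrightarrow \U \backslash \U g \U$, and since $G$ is unimodular the number of left cosets in $\U g\U$ equals the number of right cosets (this is the standard fact that $[\U g\U : \U]_{\mathrm{left}} = [\U g\U : \U]_{\mathrm{right}}$ for unimodular $G$, see \cite[\S I.3]{vigneras:book}); combining these two facts yields $|\U g^{-1}\U/\U| = |\U\backslash \U g^{-1}\U| = |\U g\U/\U|$, which is exactly $[\U:\U_{g^{-1}}] = [\U:\U_g]$.

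**Expected obstacle.** There is essentially no obstacle: the only subtlety is to correctly pin down which standard fact about unimodular groups is being used and to cite it appropriately (the equality of left and right coset counts in a double coset, and the measure-preservation of inversion). One should be slightly careful that $\mu(\U g\U)$ is finite — this holds since $\U g\U$ is compact (a continuous image of $\U \times \U$) and $\mu$ is a Radon measure — so all the cardinalities in play are finite integers, and the argument is valid over any coefficient ring $k$ since the relevant quantities $[\U:\U_g]$ are honest integers reduced into $k$.
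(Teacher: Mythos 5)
Your proof is correct and takes essentially the same route as the paper: both reduce via Lemma \ref{anti-inv-on-H} to $[\U g\U:\U] = [\U g^{-1}\U:\U]$ and invoke unimodularity, with the paper simply citing \cite[\S I, Prop. 3.6]{vigneras:book} where you spell out the Haar-measure argument. One small slip: where you write that inversion gives a bijection $\U g\U/\U \longleftrightarrow \U\backslash\U g\U$, it should read $\U g\U/\U \longleftrightarrow \U\backslash\U g^{-1}\U$ (inversion carries $\U g\U$ to $\U g^{-1}\U$); your final displayed chain has it right.
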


\begin{proof}
By Lemma \ref{anti-inv-on-H} and the definition of $\chi_{\triv}^{\U}$ (equation \eqref{trivU}), it suffices to verify $[\U g \U:\U] = [\U g^{-1} \U:\U]$.  This follows from the proof of \cite[\S I, Prop. 3.6]{vigneras:book}.
\end{proof}

\subsubsection{\label{subsubsec:compa}The maps $R_{\V,\U}^*: E_\V^* \longrightarrow E_\U^*$ and $C_{\U,\V}^*: E_\U^* \longrightarrow E_\V^*$}

Let $\U\subseteq \V$ be two open compact subgroups of $G$. We first recall the following:

\begin{remark}\label{rema:coressurj} For   any smooth representation $\sigma$ of $\V$, we recall that 
the composite
\[ H^*(\V, \sigma)\xrightarrow {\res^{\V}_ {\U}} H^*(\U, \sigma)\xrightarrow {\cores^{\U}_ {\V}}
H^*(\V, \sigma) \]
is nothing but the multiplication by the scalar $[\V: \U]$ (\cite[Cor. 1.5.7]{NSW}).  So  if $[\V:\U]$ is invertible in $k$, we have
\begin{equation}\label{f:cosurj}\cores^{\U}_ {\V}:H^*(\U, \sigma)\longrightarrow H^*(\V, \sigma) \quad \text{is surjective, and}\end{equation} 
\begin{equation}\label{f:resinj}\res^{\V}_ {\U}:H^*(\V, \sigma)\longrightarrow H^*(\U, \sigma)\quad \text{ is injective.}\end{equation}
\end{remark}

We define next the homomorphisms of graded algebras 
\[ R_{\V,\U}^*: E_\V^*\longrightarrow E_\U^*,\qquad C_{\U,\V}^*: E_\U^* \longrightarrow E_\V^* \]
which extend
$R_{\V,\U}: H_\V \longrightarrow H_\U$ and  $C_{\U,\V}: H_\U \longrightarrow H_\V$ 
defined in Subsection \ref{subsubsec:RUVCUV}. The maps $\iota_{\V,\U}\in \Hom_G(\X_\V, \X_\U)$ and $\pi_{\U,\V}\in \Hom_G(\X_\U, \X_\V)$ were introduced respectively in \eqref{iUV} and \eqref{piUV}, and we view them respectively as elements of $\Ext^0_{\Mod(G)}(\X_\V, \X_\U)$ and of $\Ext^0_{\Mod(G)}(\X_\U, \X_\V)$.  We  introduce the maps $R_{\V,\U}^*$ and $C_{\U,\V}^*$ defined respectively by
\begin{equation}
\label{f:RVU*}
R_{\V,\U}^*\quad:\quad  E_\V^*=\Ext^*(\X_\V, \X_\V)^{\mathrm{op}}\xrightarrow{f\mapsto \iota_{\V,\U} \cdot f \cdot \pi_{\U,\V}}\Ext^*(\X_\U, \X_\U)^{\mathrm{op}}= E^*_\U\ ,
\end{equation}
\begin{equation}
\label{f:CUV*}
C_{\U,\V}^*\quad:\quad  E^*_\U=\Ext^*(\X_\U, \X_\U)^{\mathrm{op}}\xrightarrow{f\mapsto \pi_{\U,\V} \cdot f \cdot \iota_{\V,\U}}\Ext^*(\X_\V, \X_\V)^{\mathrm{op}}= E_\V^*
\end{equation}
where the symbol $\cdot$ denotes  the Yoneda composition.

\begin{remark}
\label{rema:RPCP*}
Suppose $[\V:\U]$ is invertible in $k$.  Echoing Remark \ref{rema:section0UV}, we have:
\begin{enumerate}[i.]
\item  $C_{\U, \V}^*\circ R_{\V,\U}^*=[\V:\U]^2\id_{E_\V^*}$. In particular, $C^*_{\U,\V}$ is surjective and $R_{\V,\U}^*$ is injective.
\item The restricted maps
\[ E_\V^*\xrightarrow{\frac{1}{[\V:\U]}R^*_{\V,\U} }  e_{\U,\V} \cdot E^*_\U  \cdot e_{\U,\V} \qquad \text{ and }\qquad e_{\U,\V} \cdot E^*_\U \cdot e_{\U,\V} \xrightarrow{\frac{1}{[\V:\U]}C^*_{\U,\V} }   E^* _\V \]
are well-defined and are homomorphisms of  unital $k$-algebras which are inverse to each other. \label{rema:RPCP*-ii}
\item   
$C^*_{\U,\V}(_-)=C^*_{\U,\V}(e_{\U,\V}\cdot{}\,_-\,  \cdot e_{\U,\V})$. \label{rema:RPCP*-iii}
\end{enumerate}
\end{remark}

We give below an explicit description of the maps $R_{\U,\V}^*$ and $C_{\V,\U}^*$  in terms of restriction and corestriction when   $E^*_\U$ (resp., $E^*_\V$) is written as a direct sum as in \eqref{f:H*dec}.   The proofs of the following propositions are given in Appendix \ref{app:RC} (where we first describe the effect of composing and precomposing by $\iota_{\V,\U}$ and $\pi_{\U,\V}$ on cohomology via the Shapiro isomorphism).

\medskip

To state the propositions, let $g\in G$ and $y\in \V g \V$. We choose an expression $y= v g v'$ and notice that $v \V_g v^{-1}= \V_y$. Furthermore, since conjugation by an element of $\V_{g}$ is trivial on $H^*(\V_{g}, k)$,  the map
\begin{eqnarray*}
H^*(\V_{g}, k) & \longrightarrow & H^*(\U_y,k), \\
 a & \longmapsto & \res^{ \V_{y}}_{\U_y} (v_*a)
\end{eqnarray*}
 is well-defined and independent of the choice of $v$.

\begin{proposition}
\label{prop:RP} 
With the notation as above, we have the following commutative diagram:
\begin{equation}
\label{f:defiRP*}
\begin{tikzcd}
E_\V^* \ar[d, equal] \ar[rrrr, "R_{\V,\U}^*"] &&& &E^*_\U \ar[d, equal]\\
H^*(\V, \X_\V) \ar[d, equal] \ar[rr, "H^*(\V{,} \iota_{\V, \U})" ] & & H^*(\V, \X_\U) \ar[rr, "\res^{\V}_{\U}"] && H^*(\U, \X_\U) \ar[d, equal] \\
H^*(\V, \X_\V) \ar[rr, "\res^{\V}_{\U}"] & & H^*(\U, \X_\V) \ar[rr, "\quad H^*(\U{,} \iota_{\V, \U})"] && H^*(\U, \X_\U) \\
H^*(\V, \X_\V(g)) \ar[rrrrd, "\bigoplus_y \res^{\V_y}_{\U_y}\circ\Sh^\V_y"] \ar[d, "\Sh^\V_{g}", "{\rotatebox{90}{$\sim$}}"'] \ar[u, hook] & &&& \bigoplus_{y\in  {\U\backslash \V g\V/\U}} H^*(\U ,\X_\U(y)) \ar[d, "\bigoplus_y \Sh^\U_{y}", "{\rotatebox{90}{$\sim$}}"'] \ar[u, hook]  \\
H^*(\V_{g},k) \ar[rrrr, "a\mapsto \bigoplus_y \res_{\U_y}^{ \V_y}(v_* a)"']  & &&& \bigoplus_{y\in  {\U\backslash \V g\V/\U}} H^*(\U_y, k)
\end{tikzcd}
\end{equation}
where for $y\in \V g\V$ we write $y$ as $y=vg v'$ with $v,v'\in \V$. 
\end{proposition}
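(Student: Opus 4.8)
The plan is to factor $R^*_{\V,\U}$ as a restriction map produced by $\pi_{\U,\V}$ followed by a coefficient map produced by $\iota_{\V,\U}$, and then to track each piece through the Shapiro isomorphisms; the Shapiro part is in the spirit of the computations of \cite{Ext}, carried out here in the more general locally profinite setting.

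First I would handle the rectangles of \eqref{f:defiRP*} lying above the two Shapiro rows. Via \eqref{f:frob}, the identification $E^*_\W\cong H^*(\W,\X_\W)$ is an isomorphism of $\delta$-functors in the coefficient variable (restriction $\Mod(G)\to\Mod(\W)$ preserves injectives, and in degree $0$ the isomorphism sends $\psi$ to $\psi(\chara_\W)$). Under it, precomposition of an extension of $\X_\V$'s by the $G$-equivariant map $\pi_{\U,\V}\colon\X_\U\to\X_\V$ becomes $\res^\V_\U\colon H^*(\V,-)\to H^*(\U,-)$: in degree $0$ it is the inclusion $Y^\V\hookrightarrow Y^\U$, since $\pi_{\U,\V}(\chara_\U)=\chara_\V$, and two morphisms of $\delta$-functors agreeing in degree $0$ coincide; dually, postcomposition by the $G$-equivariant map $\iota_{\V,\U}$ becomes the coefficient map $H^*(-,\iota_{\V,\U})$. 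Since $R^*_{\V,\U}(f)$ is obtained by splicing $\pi_{\U,\V}$ on the right of $f$ and $\iota_{\V,\U}$ on the left (see \eqref{f:RVU*}), this gives $R^*_{\V,\U}=H^*(\U,\iota_{\V,\U})\circ\res^\V_\U$; the other factorization, $\res^\V_\U\circ H^*(\V,\iota_{\V,\U})$, is then just naturality of $\res^\V_\U$ in the coefficient module.

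Next I would turn to the Shapiro bookkeeping. Fix $g$ and $f\in H^i(\V,\X_\V(g))$; write $j_\W\colon k\to\X_\W$ for the $\W$-equivariant map $1\mapsto\chara_\W$ realizing \eqref{f:frob}, and $\ev_x\colon\X_\W\to k$ for evaluation at $x$. Two elementary identities will be the engine: $\pi_{\U,\V}\circ j_\U=j_\V$ (both send $1$ to $\chara_\V$), and $\ev_y\circ\iota_{\V,\U}=\ev_y|_{\X_\V}$ (because $\iota_{\V,\U}$ merely includes one function space into another and preserves values; this also shows that $R^*_{\V,\U}$ carries the $\X_\V(g)$-summand into $\bigoplus_{y\in\U\backslash\V g\V/\U}H^*(\U,\X_\U(y))$). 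Recalling from \eqref{f:Shapiro1} that $\Sh^\U_y$ is "restrict to $\U_y$, then postcompose the coefficients with $\ev_y$", and that $\ev_y$ annihilates every summand $\X_\U(z)$ with $\U z\U\neq\U y\U$, one computes at the level of extension classes that
\[ \Sh^\U_y\big(R^*_{\V,\U}(f)\big)=\ev_y|_{\U_y}\circ f|_{\U_y}\circ j_\V|_{\U_y}\in H^i(\U_y,k),\qquad \Sh^\V_g(f)=\ev_g|_{\V_g}\circ f|_{\V_g}\circ j_\V|_{\V_g}\in H^i(\V_g,k), \]
where $f|_{\U_y}$, $f|_{\V_g}$ denote the restrictions to $\U_y$, $\V_g$ of a representative of $f$ as an extension of $\X_\V$'s by itself. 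It then remains, for $y=vgv'$ with $v,v'\in\V$ (so $v\V_g v^{-1}=\V_y$), to match the first class with $\res^{\V_y}_{\U_y}(v_*\Sh^\V_g(f))$. I would realize the conjugation $v_*\colon H^i(\V_g,k)\to H^i(\V_y,k)$ through the left-translation automorphism $L_v\colon\X_\V\to\X_\V$, $\phi\mapsto v\cdot\phi$, which commutes with every $G$-equivariant endomorphism (in particular with $f$) and intertwines the $\V_g$-action with the $v$-twisted $\V_y$-action; this yields $v_*\Sh^\V_g(f)=(\ev_g\circ L_v^{-1})|_{\V_y}\circ f|_{\V_y}\circ(L_v\circ j_\V)|_{\V_y}$. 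Here $L_v\circ j_\V=j_\V$ since $v\cdot\chara_\V=\chara_{v\V}=\chara_\V$ ($v\in\V$), and $\ev_g\circ L_v^{-1}=\ev_{vg}=\ev_y$ on $\X_\V$ since $vg=yv'^{-1}$ with $v'^{-1}\in\V$ and elements of $\X_\V$ are right $\V$-invariant. Hence $v_*\Sh^\V_g(f)=\ev_y|_{\V_y}\circ f|_{\V_y}\circ j_\V|_{\V_y}$, and restricting this $\V_y$-class to $\U_y\subseteq\V_y$ recovers exactly $\Sh^\U_y(R^*_{\V,\U}(f))$. Independence of the choices of $v,v'$ and of the double-coset representatives follows from Lemma \ref{lemma:shapindep} and the triviality of inner conjugation on group cohomology, while $\Sh^\V_y=v_*\circ\Sh^\V_g$ (Lemma \ref{lemma:shapindep} again) accounts for the diagonal arrow of \eqref{f:defiRP*}.

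The part I expect to be genuinely delicate is the third paragraph: the three "obvious" identities ($\pi_{\U,\V}\circ j_\U=j_\V$, $\ev_y\circ\iota_{\V,\U}=\ev_y|_{\X_\V}$, and $L_v$ commuting with $f$) must be upgraded from statements about maps of $G$-representations to statements about extension classes, i.e.\ realized on injective resolutions, and one must also check at that level that $\Sh^\U_y$ really is "restrict and evaluate at $y$" and does kill the wrong double-coset summands. This is precisely the bookkeeping underlying the analogous computations in \cite{Ext}, which I would verify goes through verbatim; nothing in the passage from the pro-$p$-Iwahori-inside-a-split-group case to arbitrary open compact $\U\subseteq\V$ interferes with it.
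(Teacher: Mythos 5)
Your proposal is correct and follows essentially the same route as the paper: both first reduce $R^*_{\V,\U}$ to $H^*(\U,\iota_{\V,\U})\circ\res^\V_\U$ (you via a $\delta$-functor comparison in the coefficient variable, the paper via the explicit injective-resolution diagrams \eqref{H*iotaP}, \eqref{resPI}), and then track the $\X_\V(g)$-summand through the two Shapiro isomorphisms using the same two key facts, namely $\Sh^\V_y = v_*\circ\Sh^\V_g$ and $\ev^\U_y\circ\iota_{\V,\U} = \ev^\V_y$. The only substantive implementation difference is that you realize the conjugation $v_*$ explicitly at the Yoneda level through the left-translation $L_v$ on $\X_\V$, thereby inlining the relevant piece of Lemma \ref{lemma:shapindep}, whereas the paper cites that lemma as a black box and then unpacks $\Sh^\V_y$ and $\Sh^\U_y$ via naturality of restriction — a purely expository rather than mathematical divergence.
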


\begin{proposition}\label{prop:CP}
Let $g\in G$.  The following diagram commutes:
\begin{equation}
\label{f:defiCP*}
 \begin{tikzcd}
 E_\U^* \ar[d, equals] \ar[rrr, "C_{\U,\V}^*"] &&& E^*_\V \ar[d, equals]\\
H^*(\U, \X_\U)& && H^*(\V, \X_ \V) \\
H^*( \U, \X_\U(g)) \ar[d, "\Sh^ \U_g", "{\rotatebox{90}{$\sim$}}"'] \ar[u, hook] & && \ar[d, "\Sh^ \V_g", "{\rotatebox{90}{$\sim$}}"'] \ar[u, hook]\ H^*( \V , \X_ \V(g)) \\
H^*( \U_g,k)\ar[rrr, "\cores^{ \U_g}_{ \V _g}"] & && H^*( \V _g, k) 
\end{tikzcd}
\end{equation}
\end{proposition}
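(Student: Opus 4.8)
The plan is to unwind the definition of $C_{\U,\V}^*$ as Yoneda pre- and post-composition with $\iota_{\V,\U}$ and $\pi_{\U,\V}$, translate everything through the Frobenius reciprocity identification $E_\U^* \cong H^*(\U,\X_\U)$ of \eqref{f:frob}, and then compute on a single Bruhat cell $\X_\U(g)$ via the Shapiro isomorphism. First I would note that, as announced in the paragraph preceding the proposition, the real content is carried by the auxiliary computations in Appendix \ref{app:RC} describing how Yoneda composition with $\iota_{\V,\U}$ (a degree-$0$ class in $\Ext^0_{\Mod(G)}(\X_\V,\X_\U)$) and with $\pi_{\U,\V}$ (in $\Ext^0_{\Mod(G)}(\X_\U,\X_\V)$) act on $\Ext$-groups after applying Shapiro. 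Concretely, composing with a degree-$0$ morphism of coefficient modules is just functoriality of group cohomology, so $C_{\U,\V}^* = H^*(\U,\pi_{\U,\V}) \circ (\text{corestriction-type map})$ once one moves from $\V$-cohomology to $\U$-cohomology; the only subtlety is keeping track of which Bruhat cell a class lands in and of the restriction/corestriction bookkeeping along $\U_g \subseteq \V_g$.

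The key steps, in order: (1) Recall from \eqref{f:CUV*} that $C_{\U,\V}^*(f) = \pi_{\U,\V}\cdot f \cdot \iota_{\V,\U}$ in the Yoneda algebra, and use the compatibility of Yoneda composition with the identifications $\Ext^*_{\Mod(G)}(\X_\U,\X_\U) \cong H^*(\U,\X_\U)$ to rewrite this as a composite of maps on group cohomology. (2) Observe that precomposition with $\iota_{\V,\U}:\X_\V \hookrightarrow \X_\U$ corresponds, after Frobenius reciprocity, to the corestriction map $\cores^{\U}_{\V}: H^*(\U,\X_\U) \to H^*(\V,\X_\U)$ (this is the content of one of the appendix lemmas; it is the degree-$0$ instance of \eqref{f:Shapiro-inversehybrid}/\eqref{f:CUV0diag}), while postcomposition with $\pi_{\U,\V}:\X_\U \twoheadrightarrow \X_\V$ corresponds to $H^*(\V,\pi_{\U,\V}): H^*(\V,\X_\U) \to H^*(\V,\X_\V)$. (3) Restrict attention to a class supported on a single cell $\X_\U(g)$ and trace it through these maps, using the Shapiro diagrams \eqref{f:Shapiro1} and the compatibility of $\cores$ with the Shapiro isomorphism from diagram \eqref{f:traceg} (and its hybrid version \eqref{f:Shapirohybrid}): the corestriction $\cores^{\U}_{\V}$ on the cell $\X_\U(g)$ becomes, after Shapiro, the corestriction $\cores^{\U_g}_{\V_g}: H^*(\U_g,k) \to H^*(\V_g,k)$, while $\pi_{\U,\V}$ sends $\X_\U(g)$ into $\X_\V(g)$ compatibly with the evaluation maps $\ev_g$ used to define Shapiro. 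Assembling these identifications yields exactly the claimed commutative square $\Sh^\V_g \circ (\text{edge}) = \cores^{\U_g}_{\V_g} \circ \Sh^\U_g$.

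I expect the main obstacle to be step (2)–(3): correctly identifying precomposition-by-$\iota_{\V,\U}$ with a corestriction and checking it is compatible with Shapiro on each Bruhat cell. One has to be careful that $\pi_{\U,\V}$ does not preserve the cell decomposition in a naive way — a function supported on $\U g \U$ maps to one supported on $\V g \V$, which is a union of $\U$-double cosets on the source side — so the matching of cells and the resulting restriction maps $H^*(\V_y,k) \to H^*(\U_y,k)$ (as in Proposition \ref{prop:RP}) must be handled consistently; for $C_{\U,\V}^*$ the direction is reversed and only a single target cell $\X_\V(g)$ appears, which is why the formula is cleaner, but verifying that the cross terms vanish and that the surviving term is precisely $\cores^{\U_g}_{\V_g}$ still requires the explicit formulas \eqref{f:Shapirohybrid} and \eqref{f:Shapiro-inversehybrid} for $\Sh$ and its inverse. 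Since the excerpt defers the proof to Appendix \ref{app:RC}, I would simply invoke those appendix computations once established and assemble the diagram, rather than redo the cocycle-level manipulations here.
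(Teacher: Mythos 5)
Your proposal follows essentially the same route as the paper's Appendix~\ref{app:RC} argument: factor $C_{\U,\V}^*$ as a Yoneda pre/post-composition with $\iota_{\V,\U}$ and $\pi_{\U,\V}$, translate it through Frobenius reciprocity into a corestriction composed with the coefficient map $H^*(-,\pi_{\U,\V})$, and then trace a single Bruhat cell through the hybrid Shapiro isomorphisms, the only nontrivial input being the compatibility of corestriction with Shapiro (Lemma~\ref{lemma:Shcomp}). The one cosmetic difference is the order of factorization---you pass through $\Ext^*(\X_\V,\X_\U)$ with intermediate subgroup $\V\cap g\U g^{-1}$, while the paper passes through $\Ext^*(\X_\U,\X_\V)$ with intermediate subgroup $\U\cap\V_g$---so your intermediate corestriction is $\cores^{\U_g}_{\V\cap g\U g^{-1}}$ rather than the $\cores^{\U_g}_{\U\cap\V_g}$ appearing in \eqref{f:C2}, but either way transitivity of corestriction assembles them to $\cores^{\U_g}_{\V_g}$.
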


\begin{proposition}
\label{lemma:projfor0}

We have the following commutative diagrams:
\begin{equation}
\label{f:projfor}
\begin{tikzcd}
E^i_\V \otimes_kE^j_\V  \ar[d, "R_{\V,\U}^{ j}", , shift left=3.5ex]    \ar[rr,"\cup"] && E^{i+j}_\V \\
 E^i_\U \otimes_k E^j_\U \ar[u, "C_{\U,\V}^i", , shift left=3.5ex] \ar[rr, "\cup"] && E^{i+j}_\U  \ar[u, "C_{\U,\V}^{i+j}"]
 \end{tikzcd}
\end{equation}

\begin{equation} 
\label{f:J+RC*} 
\begin{tikzcd}
E_\V^* \ar[d, "\anti_\V"] \ar[rrr, "R_{\V,\U}^*"] && & E^* _\U \ar[d, "\anti_\U"]\\
E_\V^*  \ar[rrr, "R^*_{\V,\U}"] &&&   E^*_\U \ .
\end{tikzcd}
\qquad\text{ and }\qquad
\begin{tikzcd}
E^*_\U \ar[d, "\anti_\U"] \ar[rrr, "C_{\U\V}^*"] & & & E^*_\V  \ar[d, "\anti_\V"]\\
E^*_\U  \ar[rrr, "C_{\U\V}^*"] & & & E^*_\V  \ .
\end{tikzcd}
\end{equation}

\end{proposition}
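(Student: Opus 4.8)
The three diagrams in Proposition \ref{lemma:projfor0} are all of the same flavour: they assert that the maps $R^*_{\V,\U}$ and $C^*_{\U,\V}$, which are defined by Yoneda pre- and post-composition with the fixed morphisms $\iota_{\V,\U}$ and $\pi_{\U,\V}$, interact well with two other structures on the Ext-algebras (the cup product and the anti-involution $\anti$). The strategy throughout is to reduce each identity to a statement about ordinary group cohomology --- cup products, restriction, corestriction and conjugation maps --- by transporting everything through the Shapiro isomorphism, and then to invoke the explicit descriptions we have already established: Proposition \ref{prop:RP} and Proposition \ref{prop:CP} for the behaviour of $R^*_{\V,\U}$ and $C^*_{\U,\V}$ on the components $H^*(\U_g,k)$, the diagram \eqref{f:cup+Sh} for the compatibility of cup product with Shapiro, and the definition of $\anti_\U$ via $(g^{-1})_*$ together with Lemma \ref{lemma:shapindep}.

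First I would treat the projection formula \eqref{f:projfor}. After decomposing $\X_\V$ and $\X_\U$ along double cosets as in \eqref{f:H*dec}, by bilinearity it suffices to fix $x,y\in G$, take $A\in H^i(\U,\X_\U(x))$ and $B\in H^j(\V,\X_\V(y))$, and check the identity $C^{i+j}_{\U,\V}(A\cup R^j_{\V,\U}(B)) = C^i_{\U,\V}(A)\cup B$ on the summand indexed by each double coset. By \eqref{f:orth} and the analogous orthogonality for $\X_\V$, both sides vanish unless the relevant double cosets are compatible, so one is reduced to a single summand; there, using Proposition \ref{prop:CP} to rewrite $C^*_{\U,\V}$ as a corestriction $\cores^{\U_g}_{\V_g}$ after Shapiro, Proposition \ref{prop:RP} to rewrite $R^*_{\V,\U}(B)$ as a restriction-then-conjugation of the Shapiro image of $B$, and \eqref{f:cup+Sh} to turn the cup products into cup products in $H^*(-,k)$, the desired equality becomes exactly the projection formula for corestriction/restriction in group cohomology, namely $\cores^{\U_g}_{\V_g}\big(a\cup\res^{\V_g}_{\U_g}(b)\big)=\cores^{\U_g}_{\V_g}(a)\cup b$, combined with the naturality of cup product under the conjugation maps. (One must bookkeep which element of $\V$ conjugates $\V_g$ to $\V_y$, but by Lemma \ref{lemma:shapindep} and the remark preceding Proposition \ref{prop:RP} this is independent of choices.)

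For the two squares in \eqref{f:J+RC*} I would argue similarly but more directly from the constructions. For the $R^*_{\V,\U}$-square: pass to components, so fix $g$ and a class $B\in H^*(\V,\X_\V(g))$; apply $\anti_\V$, which by definition replaces $\Sh^\V_g(B)$ by $(g^{-1})_*\Sh^\V_g(B)\in H^*(\V_{g^{-1}},k)$; then apply $R^*_{\V,\U}$ using Proposition \ref{prop:RP}, which restricts and conjugates this class down to the summands indexed by $y\in\U\backslash\V g^{-1}\V/\U$. Going the other way around the square, apply $R^*_{\V,\U}$ first --- landing in the summands indexed by $z\in\U\backslash\V g\V/\U$ --- and then $\anti_\U$, which inverts each $z$ and applies $(z^{-1})_*$. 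The two results agree summand by summand once one checks that the bijection $z\mapsto z^{-1}$ between representatives of $\U\backslash\V g\V/\U$ and $\U\backslash\V g^{-1}\V/\U$ matches the conjugation data, which is a routine manipulation with the identities $\U_{z^{-1}}=z^{-1}\U_z z$ and the compatibility in Lemma \ref{lemma:shapindep}. The $C^*_{\U,\V}$-square is handled the same way using Proposition \ref{prop:CP}: after Shapiro, $C^*_{\U,\V}$ is $\cores^{\U_g}_{\V_g}$ and $\anti$ is $(g^{-1})_*$, so the square reduces to the fact that corestriction commutes with the conjugation isomorphism induced by $g^{-1}$, i.e. $(g^{-1})_*\circ\cores^{\U_g}_{\V_g}=\cores^{\U_{g^{-1}}}_{\V_{g^{-1}}}\circ (g^{-1})_*$, which is immediate from functoriality of corestriction. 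Alternatively, one can deduce the $C^*$-square from the $R^*$-square using $C^*_{\U,\V}\circ R^*_{\V,\U}=[\V:\U]^2\id$ when $[\V:\U]$ is invertible, but since we do not want that hypothesis here the direct argument is preferable.

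The main obstacle, as usual in this circle of ideas, is purely organisational rather than conceptual: keeping track of the various $\U$- and $\V$-conjugates of a double-coset representative and of the element of $\V$ (resp.\ $\U$) implementing each conjugation, so that the several applications of Lemma \ref{lemma:shapindep} and of the remark before Proposition \ref{prop:RP} can be invoked to see that everything is independent of the choices made. Once the components are correctly indexed, each diagram collapses to a standard identity among $\res$, $\cores$, $\cup$ and conjugation in the cohomology of finite (profinite) groups. I would carry out the detailed cocycle-level bookkeeping in Appendix \ref{app:RC}, alongside the proofs of Propositions \ref{prop:RP} and \ref{prop:CP}, and keep the body of the paper at the level of the reduction just sketched.
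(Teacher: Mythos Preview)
Your proposal is correct and follows essentially the same route as the paper's proof: both reduce \eqref{f:projfor} via \eqref{f:orth}, \eqref{f:cup+Sh}, and Propositions \ref{prop:RP}--\ref{prop:CP} to the classical projection formula $\cores^{\U_g}_{\V_g}(a\cup\res^{\V_g}_{\U_g}(b))=\cores^{\U_g}_{\V_g}(a)\cup b$ (this is \cite[Prop.~1.5.3(iv)]{NSW}), and both reduce \eqref{f:J+RC*} to the commutation of restriction (resp.\ corestriction) with conjugation by $g^{-1}$. The only simplification you might adopt from the paper is that once you use \eqref{f:orth} to force $A$ and $B$ to be supported on the same $\U$-double coset $\U g\U$, you may take $y=g$ as the representative in Proposition \ref{prop:RP}, so that the conjugating element $v$ is trivial and the extra bookkeeping you flag (tracking the element of $\V$ implementing each conjugation and invoking Lemma \ref{lemma:shapindep}) disappears entirely from the projection-formula argument.
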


\begin{proof}
We prove \eqref{f:projfor}.  We want to show, for $A\in H^i (\U, \X_\U)$ and $B\in  H^j(\V, \X_\V)$ that
\begin{equation} C_{\U,\V}^{i+j}(A\cup R^{ j}_{\V,\U}(B))=C_{\U,\V}^i(A)\cup B \ .\label{f:projform}\end{equation}
Using \eqref{f:orth}, it is enough to show it for $B\in H^j(\V, \X_\V(g))$ and $A\in H^i(\U , \X_\U(g))$ for some $g\in G$.
By \eqref{f:defiRP*}, the  image under $\Sh_g^\U$ of the component in $H^j(\U, \X_\U(g))$ of  
$R^{ j}_{\V,\U}(B)$  is 
$\res ^{\V_g}_{\U _g} (\Sh^\V_{g}(B)) \ .$
Therefore, by \eqref{f:cup+Sh} the image under $\Sh_g^\U $ of $A\cup R_{\V,\U}^{ j}(B)$ is $\Sh_g^\U (A)\cup  \res ^{\V_g}_{\U _g} (\Sh^\V_{g}(B)) $ and thus
\begin{eqnarray*}
\Sh_g^\V\big(C_{\U,\V}^{i+j}(A\cup R^{ j}_\V(B))\big) & \stackrel{\eqref{f:defiCP*}~ \textnormal{and the above}}{=} & \cores_{\V_g}^{\U _g}\big(\Sh_g^\U (A)\cup  \res ^{\V_g}_{\U _g} (\Sh^\V_{g}(B))\big) \\
 & \stackrel{\textnormal{\cite[Prop. 1.5.3(iv)]{NSW}}}{=} & \cores_{\V_g}^{\U _g}(\Sh_g^\U (A))\cup  \Sh^\V_{g}(B) \\
 & \stackrel{\eqref{f:defiCP*}}{=} & \Sh_g^\V(C_{\U,\V}^i(A)) \cup  \Sh^\V_{g}(B) \\
 & \stackrel{\eqref{f:cup+Sh}}{=} & \Sh_g^\V\big(C_{\U,\V}^i(A)) \cup B\big)
\end{eqnarray*}
This proves the claim using \eqref{f:cup+Sh}.

The first commutative diagram in \eqref{f:J+RC*} follows easily from Subsection \ref{subsec-antiinvolution}, from \eqref{f:defiRP*}, and from the commutation of restriction with conjugation ($g^{-1}_*\circ \res^{\V_g}_{\U_g} = \res_{\U_{g^{-1}}}^{\V_{g^{-1}}}\circ g^{-1}_*$).  The second commutative diagram follows similarly from \eqref{f:defiCP*} and from the commutation of corestriction with conjugation.
\end{proof}

\subsection{Hecke $\Ext$-algebras for $p$-adic reductive groups\label{subsec:Ext-padic}}

We now specialize our choice of group $G$, and endow $E_\U^*$ with several additional pieces of structure.

Let $\mathfrak F$ be a finite extension of $\bbQ_p$ with ring of integers $\mathfrak O$, maximal ideal $\mathfrak M$, and residue field  $\mathbb F_q$ where $q$  is a power of  a prime number $p$. We fix a uniformizer $\pi$ of $\mathfrak O$ and choose the valuation $\val_{\mathfrak{F}}$ on  $\mathfrak{F}$  normalized by $\val_{\mathfrak{F}}(\pi)=1$.

We let $\mathbf{G}_{/\mathfrak{F}}$ denote a connected reductive group defined over $\mathfrak{F}$, and let $\mathbf{S}$ denote a maximal $\mathfrak{F}$-split torus of $\mathbf{G}$.  We use italicized letters to denote groups of $\mathfrak{F}$-valued points, so that $G := \mathbf{G}(\mathfrak{F}), S := \mathbf{S}(\mathfrak{F})$, etc.  Finally, we let $I$ denote a choice of pro-$p$-Iwahori subgroup of $G$, associated to a fixed chamber $C$ of the semisimple Bruhat--Tits building of $G$ contained in the apartment corresponding to $\mathbf{S}$.

\subsubsection{Background on Poincar\'e groups}
\label{poincare-groups}

In the next two subsections we collect several results regarding cohomological properties of open compact subgroups of $G$.  

\medskip

We set $d := \dim_{\bbQ_p}(G)$, and fix throughout this section an open compact subgroup $\U$ of $G$, which \textbf{we assume to be $p$-torsion-free}.  Let $\U_p$ be any choice of pro-$p$ Sylow subgroup of $\U$.  By \cite[Cor. (1)]{serre:cohdim}, we have
\begin{equation}
\label{cohdimU}
\textnormal{cd}_p(\U) = \textnormal{cd}_p(\U_p) = \textnormal{cd}(\U_p) = d < \infty.
\end{equation}
Consequently, by \cite[Thm. V.2.5.8]{lazard}, the pro-$p$ group $\U_p$ is a Poincar\'{e} group in the sense of \cite[\S V.2.5.7]{lazard} or \cite[\S I.4.5]{serre:galoiscoh}.  Precisely, this means that:
\begin{enumerate}
    \item $\dim_{\bbF_p}(H^i(\U_p,\bbF_p))$ is finite for all $i \geq 0$;
    \item $\dim_{\bbF_p}(H^d(\U_p,\bbF_p)) = 1$;
    \item the cup product
    \[ H^i(\U_p,\bbF_p) \otimes H^{d - i}(\U_p,\bbF_p) \longrightarrow H^{d}(\U_p,\bbF_p) \]
    is a non-degenerate bilinear form for all $i \geq 0$.
\end{enumerate}

Poincar\'e groups have an alternate characterization, which we now explain.  Define
\begin{equation}
\label{dualizingmodule}
\mathfrak{I} := \varinjlim_{m} \varinjlim_{\substack{\V  \subseteq G \\ \textnormal{open compact}}} H^d(\V,\bbZ/p^m\bbZ)^\vee,
\end{equation}
where $\vee$ denotes the Pontryagin dual (i.e., if $A$ is a discrete $p$-power torsion abelian group, then $A^\vee := \Hom(A,\bbQ_p/\bbZ_p)$), and the transition maps in the inner direct limit are duals of corestriction maps.  Thus $\mathfrak{I}$ is a torsion $\bbZ_p$-module with a discrete action of $G$.

Since the set of open normal subgroups of $\U_p$ is cofinal in the set of open compact subgroups of $G$, we have
\[ \mathfrak{I}|_{\U_p} \cong \varinjlim_{m} \varinjlim_{\substack{\V  \trianglelefteq \U_p \\ \textnormal{open}}} H^d(\V,\bbZ/p^m\bbZ)^\vee. \]
Thus, the restriction of $\mathfrak{I}$ to $\U_p$ gives the dualizing module of $\U_p$ (see \cite[Thm. 3.4.4]{NSW}).  By \cite[Prop. 3.7.6 and Def. 3.7.1]{NSW}, the fact that $\U_p$ is a Poincar\'e group (in the sense of \cite{lazard} or \cite{serre:galoiscoh}) is equivalent to the following two conditions:
\begin{enumerate}
    \item $\U_p$ is a duality group at $p$ of dimension $d$, that is, for all $i < d$ we have
    $\varinjlim_{\substack{\V \trianglelefteq \U_p \\ \textnormal{open}}}H^i(\V,\bbF_p)^\vee = 0;$
    \item we have $\mathfrak{I} \cong \bbQ_p/\bbZ_p$ as abelian groups.
\end{enumerate}
By equation \eqref{cohdimU} and \cite[Ch. III, \S 7, Exer. 1]{NSW}, we conclude that $\U$ is also a duality group at $p$ of dimension $d$, that is, for all $i < d$, we have 
\[ \varinjlim_{\substack{\V \trianglelefteq \U \\ \textnormal{open}}} H^i(\V,\bbF_p)^\vee = 0. \]
Since $\mathfrak{I}|_{\U}$ is the dualizing module for $\U$ (by the same logic as for $\U_p$), we conclude that $\U$ is a Poincar\'e group at $p$, in the sense of \cite[Def. 3.7.1]{NSW}.

\subsubsection{Duality properties of open compact subgroups}
\label{duality-props}

In order to proceed further, we analyze some properties of the dualizing module $\mathfrak{I}$.

Recall that $\U$ is a $p$-torsion-free open compact subgroup of $G$, and $\mathfrak{I}$ is defined in equation \eqref{dualizingmodule}.  In particular, $\mathfrak{I}|_{\U}$ is the dualizing module of $\U$.  According to the discussion preceding \cite[Thm. 3.4.6]{NSW}, we have a canonically defined trace map
\begin{equation}
\label{def-of-coh-tr}
\textnormal{tr}_{\U} :H^d(\U,\mathfrak{I}) \stackrel{\sim}{\longrightarrow} \bbQ_p/\bbZ_p \ .
\end{equation}
We note that by the proof of step (7) of \cite[\S I.4.5, Prop. 30]{serre:galoiscoh}, the map $\textnormal{tr}_{\U}$ is an isomorphism (the cited step does not require that $\U$ be pro-$p$).

\begin{lemma}
\label{tracecor}
Suppose $\U' \subseteq \U$ is an inclusion of $p$-torsion-free open compact subgroups of $G$.  Then the composition 
\[ H^d(\U',\mathfrak{I}) \xrightarrow{\textnormal{cores}^{\U'}_{\U}} H^d(\U,\mathfrak{I}) \xrightarrow{\textnormal{tr}_{\U}} \bbQ_p/\bbZ_p \]
is equal to $\textnormal{tr}_{\U'}$.  In particular, the maps
\begin{eqnarray*}
\textnormal{cores}^{\U'}_{\U}: H^d(\U',\mathfrak{I}) & \longrightarrow & H^d(\U,\mathfrak{I}), \\
\textnormal{cores}^{\U'}_{\U}: H^d(\U',\mathfrak{I}[p]) & \longrightarrow & H^d(\U,\mathfrak{I}[p]),
\end{eqnarray*}
are isomorphisms, where $\mathfrak{I}[p] := \{v \in \mathfrak{I}: pv = 0\}$.  
\end{lemma}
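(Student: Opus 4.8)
The plan is to treat the two assertions separately, reducing each to the classical facts about pro-$p$ Poincar\'e groups together with the functoriality of the dualizing module. At the outset I would record that $\mathfrak{I}|_{\U}$ and $\mathfrak{I}|_{\U'}$ are the dualizing modules of $\U$ and of $\U'$, compatibly with the inclusion $\U'\subseteq\U$: since $\U'$ is open in $G$, the poset of open subgroups of $G$ contained in $\U'$ is cofinal in the poset of all open subgroups, so the direct limit \eqref{dualizingmodule} may equally be computed over subgroups of $\U'$, and likewise over subgroups of $\U$. By the discussion preceding the lemma, $\textnormal{tr}_{\U}$ and $\textnormal{tr}_{\U'}$ are isomorphisms onto $\bbQ_p/\bbZ_p$; more generally $H^d(\V,\mathfrak{I})\cong\bbQ_p/\bbZ_p$ for every $p$-torsion-free open compact subgroup $\V$ of $G$, and since $\mathfrak{I}\cong\bbQ_p/\bbZ_p$ is $p$-primary, prime-to-$p$ integers act invertibly on all the groups $H^{*}(\V,\mathfrak{I})$.

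For the first assertion I would argue that $\textnormal{tr}_{\U}\circ\cores^{\U'}_{\U}=\textnormal{tr}_{\U'}$ is the compatibility of the canonical trace map with corestriction along open subgroups, which is part of the construction of the dualizing module --- the transition maps in \eqref{dualizingmodule} are the Pontryagin duals of corestriction, and $\textnormal{tr}_{\V}$ is recovered by pairing a class in $H^d(\V,\mathfrak{I})$ against the tautological class coming from the $\V$-term of that limit, so the identity follows from the projection formula for cup products and corestriction (cf.\ \cite[\S3.4]{NSW}); exactly as for the iso-ness used via step~(7) of \cite[\S I.4.5, Prop.~30]{serre:galoiscoh}, nothing here requires the groups to be pro-$p$. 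To keep the dependence on the non-pro-$p$ literature minimal I would also spell out a reduction to the pro-$p$ case. Fix a pro-$p$ Sylow subgroup $\U'_p$ of $\U'$; as a pro-$p$ subgroup of $\U$ it lies in some pro-$p$ Sylow subgroup $\U_p$ of $\U$, so $\U'_p\subseteq\U_p$ with $[\U:\U_p]$ and $[\U':\U'_p]$ prime to $p$. For $\V\in\{\U,\U'\}$ with pro-$p$ Sylow $\V_p$, the relation $\cores^{\V_p}_{\V}\circ\res^{\V}_{\V_p}=[\V:\V_p]\,\textnormal{id}$ from Remark~\ref{rema:coressurj} is multiplication by a unit on the $p$-primary group $H^d(\V,\mathfrak{I})$, so $\res^{\V}_{\V_p}$ realizes $H^d(\V,\mathfrak{I})$ as a nonzero direct summand of the indecomposable group $H^d(\V_p,\mathfrak{I})\cong\bbQ_p/\bbZ_p$, hence is an isomorphism; therefore $\cores^{\V_p}_{\V}$ is an isomorphism, and it is compatible with the trace maps, this last point being again immediate from the construction of $\textnormal{tr}$ for $\V$. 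Plugging this and the classical identity $\textnormal{tr}_{\U_p}\circ\cores^{\U'_p}_{\U_p}=\textnormal{tr}_{\U'_p}$ for pro-$p$ Poincar\'e groups into the transitivity relation $\cores^{\U'}_{\U}\circ\cores^{\U'_p}_{\U'}=\cores^{\U'_p}_{\U}=\cores^{\U_p}_{\U}\circ\cores^{\U'_p}_{\U_p}$, then cancelling the isomorphism $\cores^{\U'_p}_{\U'}$, yields the claim.

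The ``in particular'' clause for coefficients in $\mathfrak{I}$ is then formal, as $\cores^{\U'}_{\U}=\textnormal{tr}_{\U}^{-1}\circ\textnormal{tr}_{\U'}$ is a composite of isomorphisms. For $\mathfrak{I}[p]$ I would first identify $H^d(\V,\mathfrak{I}[p])$ for $\V\in\{\U,\U'\}$. The long exact cohomology sequence of $0\to\mathfrak{I}[p]\to\mathfrak{I}\xrightarrow{\,p\,}\mathfrak{I}\to0$ shows that $H^d(\V,\mathfrak{I}[p])\to H^d(\V,\mathfrak{I})$ has image the $p$-torsion $H^d(\V,\mathfrak{I})[p]\cong\bbZ/p$, so $H^d(\V,\mathfrak{I}[p])\ne0$; on the other hand, restricting to a pro-$p$ Sylow $\V_p$ --- on which $\mathfrak{I}[p]$ is the \emph{trivial} module $\bbF_p$, since $\V_p$ acts on it through a finite group of order prime to $p$ --- and using once more that $\res^{\V}_{\V_p}$ is injective, one sees $H^d(\V,\mathfrak{I}[p])$ embeds in $H^d(\V_p,\bbF_p)\cong\bbF_p$. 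Hence $H^d(\V,\mathfrak{I}[p])\cong\bbF_p$, and the map $H^d(\V,\mathfrak{I}[p])\to H^d(\V,\mathfrak{I})$ is injective. The inclusion $\mathfrak{I}[p]\hookrightarrow\mathfrak{I}$ induces a commutative square whose horizontal arrows are these injective maps (for $\V=\U'$ and $\V=\U$) and whose vertical arrows are $\cores^{\U'}_{\U}$; the right-hand vertical arrow is an isomorphism by the $\mathfrak{I}$-case, so the composite of the top horizontal arrow with it is injective, which forces $\cores^{\U'}_{\U}\colon H^d(\U',\mathfrak{I}[p])\to H^d(\U,\mathfrak{I}[p])$ to be injective, hence an isomorphism between one-dimensional $\bbF_p$-vector spaces.

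The step I expect to be the main obstacle is the first assertion in the absence of a pro-$p$ hypothesis on $\U$ and $\U'$: one must verify that the construction of the trace map and its compatibility with corestriction --- or, along the alternative route, the identification of $\textnormal{tr}_{\V}$ with $\textnormal{tr}_{\V_p}$ under $\cores^{\V_p}_{\V}$ for a pro-$p$ Sylow $\V_p$ of a not-necessarily-pro-$p$ group $\V$ --- really does hold for arbitrary profinite groups of finite $p$-cohomological dimension, in the spirit of the cited step of \cite{serre:galoiscoh}. Everything else (the cofinality remark, the split-injection argument, the dimension count for $H^d(\V,\mathfrak{I}[p])$, and the diagram chase) is routine.
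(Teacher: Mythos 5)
Your primary route for the first assertion --- that $\textnormal{tr}_\U\circ\cores^{\U'}_\U=\textnormal{tr}_{\U'}$ is part of NSW's construction of the trace map via the projection formula, and does not require the groups to be pro-$p$ --- is exactly what the paper does: its proof consists of a citation to \cite[Lem. 4.9]{koziol:functorial} together with the remark that the cited argument does not use the pro-$p$ hypothesis. Your explicit treatment of the $\mathfrak{I}[p]$ case (long exact sequence for multiplication by $p$, the dimension count via restriction to a Sylow subgroup, and the commutative square) is a correct unpacking of that citation.

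The one real problem, which you flag yourself at the end, is that your ``alternative route'' does not actually reduce to the pro-$p$ case. After cancelling the isomorphism $\cores^{\U'_p}_{\U'}$, the transitivity calculation reduces the claim for the pair $(\U',\U)$ to the claims for $(\U_p,\U)$, $(\U'_p,\U')$ and $(\U'_p,\U_p)$. The last of these is the classical pro-$p$ statement, but the first two still have a non-pro-$p$ enclosing group, so the step ``compatible with the trace maps, $\ldots$ immediate from the construction of $\textnormal{tr}_\V$'' is precisely an instance of the assertion being proved rather than an independent input. Once one has verified compatibility of $\textnormal{tr}$ with corestriction for a general Poincar\'e-at-$p$ group $\V$ and an arbitrary open subgroup --- which is what your first route, and the paper's citation, accomplish --- the Sylow detour becomes superfluous.
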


\begin{proof}
This follows in the exact same way as the proof of \cite[Lem. 4.9]{koziol:functorial} (the cited proof does not require that $\U$ and $\U'$ be pro-$p$).
\end{proof}

\begin{lemma}
\label{trivial-action-dualizing}
The group $G$ acts trivially on $\mathfrak{I}[p]$.  
\end{lemma}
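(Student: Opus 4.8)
The plan is to recognise the $G$-action on $\mathfrak I[p]$ as a smooth character valued in $\bbF_p^\times$, to check it is trivial on pro-$p$ subgroups, and then to pin it down completely using the adjoint representation.

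Since $\mathfrak I\cong\bbQ_p/\bbZ_p$ as abelian groups (established above), $\mathfrak I[p]$ is a one-dimensional $\bbF_p$-vector space, so the $G$-action on it is given by a character $\bar\chi\colon G\to\operatorname{Aut}_{\bbF_p}(\mathfrak I[p])=\bbF_p^\times$; it is smooth since $\mathfrak I$ is a discrete $G$-module (each vector of $\mathfrak I[p]$ has open stabiliser), so $\ker(\bar\chi)$ is open and normal of index dividing $p-1$. Restricting to a pro-$p$ Sylow subgroup $\U_p$ of a $p$-torsion-free open compact $\U$ — so that $\mathfrak I|_{\U_p}$ is the dualizing module of the Poincar\'e pro-$p$ group $\U_p$ — one gets $\bar\chi|_{\U_p}=1$, because a continuous homomorphism from a pro-$p$ group to the finite prime-to-$p$ group $\bbF_p^\times$ is trivial; equivalently, $\U_p$ acts on $\mathfrak I|_{\U_p}\cong\bbQ_p/\bbZ_p$ through its dualizing character $\U_p\to\bbZ_p^\times$, whose image lies in the pro-$p$ subgroup $1+p\bbZ_p$ (resp.\ $1+4\bbZ_2$) of $\bbZ_p^\times$ and hence reduces trivially modulo $p$. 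This much does not suffice on its own: a priori $\bar\chi$ could still be nontrivial in non-compact directions, or even on a maximal compact subgroup $\mathbf G(\mathfrak O)$, where a determinant-type character reduced modulo $p$ cannot be excluded by formal means.

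To finish, I would use the explicit description of the dualizing module of a $p$-adic analytic group (\cite{lazard}, \cite[\S3.7]{NSW}), which the Moy--Prasad filtration machinery of Appendix \ref{app:orientation} makes effective in our setting: the $G$-action on $\mathfrak I$ is given by (a power of) the orientation character $g\mapsto\det_{\bbQ_p}\bigl(\operatorname{Ad}(g)\bigr)=N_{\mathfrak F/\bbQ_p}\bigl(\det_{\mathfrak F}\operatorname{Ad}_{\mathbf G}(g)\bigr)$, where $\operatorname{Ad}$ denotes the adjoint action on $\operatorname{Lie}(\mathbf G)$. Now $\det\circ\operatorname{Ad}_{\mathbf G}$ is an algebraic character of the connected reductive group $\mathbf G$, so it is determined by its restriction to a maximal torus, on which it equals the product $\prod_{\alpha\in\Phi}\alpha$ over the set $\Phi$ of absolute roots; since $\Phi=-\Phi$ this product is trivial. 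Hence the orientation character is trivial, a fortiori trivial modulo $p$, giving $\bar\chi=1$ — and in fact $G$ acts trivially on all of $\mathfrak I$. The main obstacle is the identification of the $G$-action on $\mathfrak I$ with the orientation (adjoint-determinant) character, which is where Appendix \ref{app:orientation} is needed; the subsequent vanishing is immediate from the fact that the roots of a reductive group sum to zero, and the pro-$p$ reduction is routine.
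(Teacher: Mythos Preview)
Your overall strategy matches the paper's: both reduce the statement to the vanishing of the adjoint determinant on the Lie algebra, and then verify that vanishing. Your argument that $\det\circ\Ad_{\mathbf G}$ is trivial because $\sum_{\alpha\in\Phi}\alpha=0$ is correct and is a clean variant of the paper's argument, which instead writes $g=zg'$ with $z$ central and $g'\in\mathbf G^{\mathrm{der}}(\overline{\mathbb Q}_p)$ and uses that the derived group is perfect. Your opening observation (that $\bar\chi$ is trivial on any pro-$p$ subgroup) is correct but, as you yourself note, not enough; the paper does not use it.

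There is one genuine gap in your reduction. You assert that Appendix~\ref{app:orientation} identifies the $G$-action on $\mathfrak I$ with the adjoint-determinant character, but Theorem~\ref{mainthm:appendix} is only stated for elements $g$ that \emph{fix a point} $x$ in the building (so that $g$ normalises $G_{x,r}$ and the uniform/Moy--Prasad machinery applies). An arbitrary $g\in G$ need not fix any point --- a translation element such as $\theta=\check\alpha(\pi)$ in $\mathrm{SL}_2$ does not --- so you cannot invoke the appendix directly on all of $G$. The paper closes this gap by an extra combinatorial step you omit: using the Bruhat decomposition $G=IN_G(S)I$, it reduces to the cases $g\in I$ (fixes the barycenter of $C$), $g\in N_G(S)$ stabilising $C$ (same), and $g$ a lift of a simple affine reflection $s_A$ (fixes any point of the corresponding wall). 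In each case one then chooses $r\gg0$ with $G_{x,r}$ torsion-free and contained in the relevant subgroup, pushes $[v]$ down to $H^d(G_{x,r},\bbF_p)^\vee$ via the dual corestriction, and applies Theorem~\ref{mainthm:appendix}. Once you insert this reduction-to-generators step, your proof is complete and essentially the paper's.
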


\begin{proof}
    We first record the following fact: by \cite[Cor. 3.4.7]{NSW}, we have
    \[ \mathfrak{I}[p] = \varinjlim_{\substack{\V \subseteq G \\ \textnormal{open compact}}} H^d(\V,\bbF_p)^\vee. \]

    Now let $[v] \in \mathfrak{I}[p] = \varinjlim_{\V} H^d(\V,\bbF_p)^\vee$ and $g \in G$.  We want to show that $g$ acts trivially on $[v]$.  By the Bruhat decomposition with respect to $I$, it suffices to prove the claim for $g \in I$ and $g \in N_G(S)$.  Further, by \cite[Eqs. (53)]{vigneras:hecke1}, if $g \in N_G(S)$, we may assume either that $g$ stabilizes $C$, or that $g$ is a lift of $s_A$, the simple affine reflection corresponding to a simple affine root $A$.

    \begin{enumerate}
        \item Suppose $[v]$ is represented by $v \in H^d(\V,\bbF_p)^\vee$, and suppose either that $g \in I$ or that $g$ is an element of $N_G(S)$ which stabilizes $C$.  Let $x$ denote the barycenter of $C$ (so that $g\cdot x = x$), and choose a real number $r \gg 0$ such that the Moy--Prasad subgroup of depth $r$ satisfies $G_{x,r} \subseteq \V$ and is torsion-free.  Replacing $v$ by $(\textnormal{cores}_{G_{x,r}}^{\V})^\vee(v)$, we may assume $v \in H^d(G_{x,r},\bbF_p)^\vee$. The result now follows from Theorem \ref{mainthm:appendix}.  
        \item Suppose now that $g\in N_G(S)$ lifts a simple affine reflection $s_A$, and suppose again that $[v]$ is represented by $v \in H^d(\V,\bbF_p)^\vee$.  Let $F \subseteq \overline{C}$ denote a facet fixed by $g$, let $x \in F$ (so that $g\cdot x = x$), and choose a real number $r \gg 0$ such that the Moy--Prasad subgroup of depth $r$ satisfies $G_{x,r} \subseteq \V$ and is torsion-free.  Replacing $v$ by $(\textnormal{cores}_{G_{x,r}}^{\V})^\vee(v)$, we may assume $v \in H^d(G_{x,r},\bbF_p)^\vee$. Exactly as above, the result now follows from Theorem \ref{mainthm:appendix}.  
    \end{enumerate}
\end{proof}

\begin{corollary}
\label{cor:cores-isom}
Suppose $\U' \subseteq \U$ is an inclusion of $p$-torsion-free open compact subgroups of $G$.  Then the corestriction map 
$\textnormal{cores}^{\U'}_{\U}: H^d(\U',\bbF_p)  \longrightarrow  H^d(\U,\bbF_p)$
is an isomorphism.
\end{corollary}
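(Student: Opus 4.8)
The plan is to reduce the statement directly to the two preceding lemmas by identifying the coefficient module $\mathfrak{I}[p]$ with $\bbF_p$ as a $G$-module. First I would recall that, from the discussion in Subsection~\ref{poincare-groups}, we have $\mathfrak{I} \cong \bbQ_p/\bbZ_p$ as abelian groups, so that $\mathfrak{I}[p] = \{v \in \mathfrak{I} : pv = 0\}$ is a one-dimensional $\bbF_p$-vector space. By Lemma~\ref{trivial-action-dualizing}, the group $G$ acts trivially on $\mathfrak{I}[p]$; hence any choice of abelian-group isomorphism $\mathfrak{I}[p] \stackrel{\sim}{\longrightarrow} \bbF_p$ is automatically an isomorphism of discrete $G$-modules, the actions on both sides being trivial. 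The key point is to make this identification \emph{once and for all}, so that it is simultaneously an isomorphism of $\U$-modules and of $\U'$-modules.

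Next I would invoke the functoriality of group cohomology and of corestriction in the coefficient module: the fixed isomorphism $\mathfrak{I}[p] \cong \bbF_p$ induces a commutative square
\[
\begin{tikzcd}
H^d(\U', \mathfrak{I}[p]) \ar[r, "\cores^{\U'}_{\U}"] \ar[d, "\wr"'] & H^d(\U, \mathfrak{I}[p]) \ar[d, "\wr"] \\
H^d(\U', \bbF_p) \ar[r, "\cores^{\U'}_{\U}"] & H^d(\U, \bbF_p)
\end{tikzcd}
\]
in which both vertical arrows are isomorphisms. By Lemma~\ref{tracecor}, the top horizontal arrow is an isomorphism, and therefore so is the bottom horizontal arrow, which is exactly the assertion of the corollary.

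There is essentially no obstacle beyond the inputs already in place; the one subtlety worth flagging is precisely the need for the identification $\mathfrak{I}[p] \cong \bbF_p$ to be compatible with both subgroups at once, which is legitimate because Lemma~\ref{trivial-action-dualizing} provides triviality of the $G$-action globally, not merely after restriction to each $\U$ or $\U'$ separately. All of the genuine content — the computation of the dualizing module $\mathfrak{I}$ and the Bruhat--Tits/Moy--Prasad input used in the proof of Lemma~\ref{trivial-action-dualizing} (through Theorem~\ref{mainthm:appendix}) — has been carried out earlier, so the present corollary is really just a bookkeeping consequence of Lemmas~\ref{tracecor} and~\ref{trivial-action-dualizing}.
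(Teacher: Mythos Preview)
Your proof is correct and is exactly the argument the paper gives: it deduces the corollary from Lemmas~\ref{tracecor} and~\ref{trivial-action-dualizing} together with the identification $\mathfrak{I}[p]\cong\bbF_p$ as abelian groups. The paper states this in one line, while you have spelled out the functoriality square and the reason the identification is $G$-equivariant, but the content is identical.
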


\begin{proof}
This follows from Lemmas \ref{tracecor} and \ref{trivial-action-dualizing} (and the fact that $\mathfrak{I}[p] \cong \bbF_p$ as abelian groups).
\end{proof}

\begin{proposition}
    \label{Uduality}
Suppose $\U$ is a $p$-torsion-free open compact subgroup of $G$.  Then:
    \begin{enumerate}
        \item $\dim_{\bbF_p}(H^i(\U,\bbF_p))$ is finite for all $i \geq 0$;
        \item $\dim_{\bbF_p}(H^d(\U,\bbF_p)) = 1$;
        \item the cup product
        \[ H^i(\U,\bbF_p) \otimes H^{d - i}(\U,\bbF_p) \longrightarrow H^d(\U,\bbF_p) \]
        is a non-degenerate bilinear form for all $i \geq 0$.
    \end{enumerate}
\end{proposition}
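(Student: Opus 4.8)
The plan is to reduce the three claims about $\U$ to the corresponding facts about a pro-$p$ Sylow subgroup $\U_p$, which are already available since $\U_p$ is a Poincar\'e group (as recorded in Subsection \ref{poincare-groups}). The main tool will be the restriction-corestriction mechanism together with the fact, established in Corollary \ref{cor:cores-isom}, that corestriction induces an isomorphism $H^d(\U_p,\bbF_p) \xrightarrow{\sim} H^d(\U,\bbF_p)$. Write $m := [\U:\U_p]$, which is prime to $p$ and hence invertible in $\bbF_p$.

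First I would address (1). For each $i$, the composite $\cores^{\U_p}_{\U}\circ\res^{\U}_{\U_p}$ is multiplication by $m$ on $H^i(\U,\bbF_p)$, which is an isomorphism since $m \in \bbF_p^\times$; hence $\res^{\U}_{\U_p}: H^i(\U,\bbF_p)\longrightarrow H^i(\U_p,\bbF_p)$ is injective. Since $H^i(\U_p,\bbF_p)$ is finite-dimensional (property (1) of Poincar\'e groups), so is $H^i(\U,\bbF_p)$. For (2), Corollary \ref{cor:cores-isom} applied to $\U_p \subseteq \U$ gives $H^d(\U,\bbF_p)\cong H^d(\U_p,\bbF_p)$, which is one-dimensional by property (2) of Poincar\'e groups.

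For (3), the key point is to transport the non-degeneracy of the pairing on $H^*(\U_p,\bbF_p)$ across the restriction map. Let $\operatorname{tr}_{\U_p}$ and $\operatorname{tr}_{\U}$ denote the respective identifications of $H^d(-,\bbF_p)$ with $\bbF_p$ (these exist by (2)); by Corollary \ref{cor:cores-isom} and the compatibility of cup product with corestriction via the projection formula (\cite[Prop. 1.5.3(iv)]{NSW}), one checks that $\cores^{\U_p}_{\U}$ identifies these trace maps. Now fix $i$ and a nonzero class $\alpha \in H^i(\U,\bbF_p)$. Then $\res^{\U}_{\U_p}(\alpha)\neq 0$ in $H^i(\U_p,\bbF_p)$ by the injectivity above, so by property (3) of the Poincar\'e group $\U_p$ there is $\beta' \in H^{d-i}(\U_p,\bbF_p)$ with $\res^{\U}_{\U_p}(\alpha)\cup\beta' \neq 0$. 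Set $\beta := \cores^{\U_p}_{\U}(\beta') \in H^{d-i}(\U,\bbF_p)$. By the projection formula,
\[
\alpha \cup \beta = \alpha \cup \cores^{\U_p}_{\U}(\beta') = \cores^{\U_p}_{\U}\big(\res^{\U}_{\U_p}(\alpha)\cup\beta'\big),
\]
which is nonzero since $\cores^{\U_p}_{\U}$ is an isomorphism on $H^d$. This shows the cup product pairing on the $\U$-cohomology is non-degenerate in the first variable; non-degeneracy in the second variable follows by the symmetric argument (or by graded-commutativity of the cup product, noting $p$ is odd in the cases of interest, or simply by a dimension count using (1) and (2) once one-sided non-degeneracy is known).

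The main obstacle I anticipate is the bookkeeping in part (3): one must be careful that the trace maps $\operatorname{tr}_{\U_p}$ and $\operatorname{tr}_{\U}$ are genuinely compatible under corestriction, rather than differing by a unit scalar that could vary — but this is exactly the content of Lemma \ref{tracecor} and Corollary \ref{cor:cores-isom}, so it is already in hand. Everything else is a formal consequence of the restriction-corestriction identity and the projection formula.
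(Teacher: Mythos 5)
Your proof is correct, but it takes a genuinely different route from the paper. The paper's proof simply adapts the argument of ``(i)\,$\Rightarrow$\,(ii)'' in \cite[Prop.~3.7.6]{NSW} to the non-pro-$p$ case: it appeals to the general duality theorem for Poincar\'e profinite groups, feeding in the fact that $\mathfrak{I}[p]\cong\bbF_p$ \emph{with trivial $\U$-action} (Lemma~\ref{trivial-action-dualizing}) as the dualizing module, so that the abstract duality isomorphism specializes to the cup product pairing on $\bbF_p$-cohomology. Your argument instead reduces everything to the pro-$p$ Sylow subgroup $\U_p$, where Poincar\'e duality is already known by Lazard, and transports it back via the transfer mechanism: injectivity of $\res^\U_{\U_p}$ on $\bbF_p$-cohomology (since $[\U:\U_p]$ is prime to $p$), the isomorphism $\cores^{\U_p}_\U$ on $H^d$ from Corollary~\ref{cor:cores-isom}, and the projection formula $\cores(\res(\alpha)\cup\beta') = \alpha\cup\cores(\beta')$. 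Both approaches ultimately rest on the same key input (Lemma~\ref{trivial-action-dualizing} underlies Corollary~\ref{cor:cores-isom}), but yours is more explicit and self-contained, avoiding a black-box citation of the NSW machinery, at the cost of a slightly longer argument. Two minor remarks: the trace-map compatibility you worry about in part (3) is not actually needed for non-degeneracy (a pairing into a one-dimensional space is non-degenerate independently of any chosen identification of the target with $\bbF_p$; you only use that $\cores^{\U_p}_\U$ is an isomorphism on $H^d$), and graded-commutativity of the cup product handles the second variable for all $p$, so the restriction to odd $p$ in your parenthetical is unnecessary.
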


\begin{proof}
    This follows exactly as in the proof of ``(i) $\Longrightarrow$ (ii)'' in \cite[Prop. 3.7.6]{NSW}, noting that $\U$ being a Poincar\'e group (in the sense of \cite[Def. 3.7.1]{NSW}) implies $\mathfrak{I} \cong \bbQ_p/\bbZ_p$ as abelian groups, while Lemma \ref{trivial-action-dualizing} implies that $\mathfrak{I}[p] \cong \bbF_p$ as $\U$-representations.
\end{proof}

We assume henceforth that $\U$ is $p$-torsion-free.  Given $g \in G$, we have a chain of isomorphisms 
\[ \bbF_p \xrightarrow{\textnormal{tr}^{-1}_{\U_{g^{-1}}}} H^d(\U_{g^{-1}},\mathfrak{I}[p]) \xrightarrow{g_*} H^d(\U_g,\mathfrak{I}[p]) \xrightarrow{\textnormal{tr}_{\U_g}} \bbF_p. \]
(By abuse of notation, we let $\textnormal{tr}_\U$ denote the mod $p$ reduction of the map \eqref{def-of-coh-tr}.)  The resulting element of $\bbF_p^\times = \textnormal{Aut}(\bbF_p)$ is denoted $\chi_{G,\U}(g)$.  

When $\U$ is pro-$p$ and torsion-free, the following two results are contained in \cite[Lem. 2.6]{schneidersorensen}.

\begin{lemma}
    The association $G \ni g \longmapsto \chi_{G,\U}(g) \in \bbF_p^\times$ is a homomorphism.
\end{lemma}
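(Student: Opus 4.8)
The statement asserts that $\chi_{G,\U}: G \longrightarrow \bbF_p^\times$ is a group homomorphism. The plan is to unwind the definition of $\chi_{G,\U}(g)$ and compare $\chi_{G,\U}(gg')$ with $\chi_{G,\U}(g)\chi_{G,\U}(g')$ by stacking the defining chains of isomorphisms and using functoriality of the trace maps under corestriction (Lemma \ref{tracecor}) together with the cocycle-type behaviour of the conjugation maps $g_*$.

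\textbf{Key steps.} First I would recall that, for each $g \in G$, the scalar $\chi_{G,\U}(g) \in \bbF_p^\times = \textnormal{Aut}(\bbF_p)$ is defined as the composite
\[ \bbF_p \xrightarrow{\textnormal{tr}^{-1}_{\U_{g^{-1}}}} H^d(\U_{g^{-1}},\mathfrak{I}[p]) \xrightarrow{g_*} H^d(\U_g,\mathfrak{I}[p]) \xrightarrow{\textnormal{tr}_{\U_g}} \bbF_p, \]
where $\U_g = \U \cap g\U g^{-1}$. The difficulty in composing these for two elements $g, g'$ is that $\U_{g'}$ (the target group of the $g'$-chain) and $\U_{g^{-1}}$ (the source group of the $g$-chain) need not coincide, nor need they agree with $\U_{gg'}$ or $\U_{(gg')^{-1}}$. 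The second step is therefore to pass to a common smaller subgroup: set $\V_0 := \U \cap g\U g^{-1} \cap g'^{-1}\U g' \cap gg'\U(gg')^{-1}$ together with its various conjugates, or more cleanly, work with an auxiliary $p$-torsion-free open compact subgroup contained in all the relevant $\U_{h}$'s, and use Corollary \ref{cor:cores-isom} (corestriction on top cohomology is an isomorphism) to transport the trace maps down. Concretely, by Lemma \ref{tracecor} the trace map is compatible with corestriction: $\textnormal{tr}_{\U} \circ \textnormal{cores}^{\U'}_{\U} = \textnormal{tr}_{\U'}$ for $\U' \subseteq \U$. This lets me replace each $\textnormal{tr}_{\U_h}$ by $\textnormal{tr}$ of a smaller group after corestriction, so that all three chains (for $g$, for $g'$, for $gg'$) live over a single group. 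Third, I would verify that the conjugation maps compose correctly: $(gg')_* = g_* \circ g'_*$ on the relevant cohomology groups (this is the standard functoriality of $H^d(-,\mathfrak{I}[p])$ under the inner automorphisms, using that $G$ acts on $\mathfrak{I}[p]$ — here trivially, by Lemma \ref{trivial-action-dualizing}, which makes the twisting bookkeeping vanish), and that $g_*$ intertwines $\textnormal{cores}$ maps appropriately (conjugation commutes with corestriction, as already used in the proof of \eqref{f:J+RC*}). Assembling these: $\chi_{G,\U}(gg')$ equals the composite $\textnormal{tr} \circ (gg')_* \circ \textnormal{tr}^{-1} = \textnormal{tr} \circ g_* \circ g'_* \circ \textnormal{tr}^{-1}$, and inserting $\textnormal{tr}^{-1} \circ \textnormal{tr} = \textnormal{id}$ in the middle (after descending to the common subgroup so this identity makes sense) splits it as $\chi_{G,\U}(g) \cdot \chi_{G,\U}(g')$.

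\textbf{Main obstacle.} The one genuine bookkeeping point — and the step I expect to require the most care — is handling the mismatch of the subgroups $\U_{g}, \U_{g'}, \U_{gg'}$ and their inverses: one must choose the intermediate $p$-torsion-free open compact subgroups so that all corestriction maps in sight are defined and are isomorphisms on $H^d(-,\mathfrak{I}[p])$, and then check that the resulting diagram of trace maps, conjugations, and corestrictions genuinely commutes. Once the trivial action of $G$ on $\mathfrak{I}[p]$ (Lemma \ref{trivial-action-dualizing}) is invoked, there is no sign or twist to track, so the commutativity reduces to the standard compatibilities of corestriction with conjugation and of the canonical trace with corestriction (Lemma \ref{tracecor}); the argument then follows, for instance, exactly as in \cite[Lem. 2.6]{schneidersorensen}, whose proof goes through verbatim since it does not use that $\U$ is pro-$p$.
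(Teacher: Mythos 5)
Your proposal is correct and takes essentially the same approach as the paper: the paper's proof is a one-line citation ("follows in exactly the same way as the proof of \cite[Lem. 4.11]{koziol:functorial}, using Lemma \ref{tracecor}"), and your sketch correctly unpacks what that argument does — pass to common $p$-torsion-free open compact subgroups, invoke Lemma \ref{tracecor} for compatibility of $\textnormal{tr}$ with corestriction, use that $(gg')_* = g_* \circ g'_*$ and that conjugation commutes with corestriction, and conclude. (The paper cites \cite[Lem.~4.11]{koziol:functorial} rather than \cite[Lem.~2.6]{schneidersorensen}, but this is a cosmetic difference; both are referenced in the surrounding discussion.)
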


\begin{proof}
    This follows in exactly the same way as the proof of \cite[Lem. 4.11]{koziol:functorial}, using Lemma \ref{tracecor} above.
\end{proof}

\begin{lemma}
    The character $\chi_{G,\U}$ is independent of the choice of $p$-torsion-free open compact subgroup $\U$ of $G$.
\end{lemma}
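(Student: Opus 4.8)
The plan is to deduce the statement from Lemmas~\ref{tracecor} and~\ref{trivial-action-dualizing} by a formal computation, after a standard reduction to the case of an inclusion. First I would observe that if $\U$ and $\U'$ are two $p$-torsion-free open compact subgroups of $G$, then $\U \cap \U'$ is again an open compact subgroup, and it is $p$-torsion-free because it is a subgroup of $\U$. Applying the asserted independence to the two inclusions $\U \cap \U' \subseteq \U$ and $\U \cap \U' \subseteq \U'$ then gives $\chi_{G,\U} = \chi_{G,\U\cap\U'} = \chi_{G,\U'}$, so it suffices to treat the case $\U' \subseteq \U$; from now on I assume such an inclusion.

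Fix $g \in G$, set $\U_g := \U \cap g\U g^{-1}$, $\U_{g^{-1}} := \U \cap g^{-1}\U g$, and define $\U'_g, \U'_{g^{-1}}$ analogously for $\U'$. Then $\U'_g \subseteq \U_g$ and $\U'_{g^{-1}} \subseteq \U_{g^{-1}}$, and conjugation by $g$ carries $\U_{g^{-1}}$ onto $\U_g$ and $\U'_{g^{-1}}$ onto $\U'_g$. By Lemma~\ref{tracecor} (in its mod $p$ form), the corestriction maps on $H^d(-,\mathfrak I[p])$ are isomorphisms compatible with the trace maps, in the sense that
\[ \textnormal{tr}_{\U_{g^{-1}}}^{-1} = \cores^{\U'_{g^{-1}}}_{\U_{g^{-1}}} \circ \textnormal{tr}_{\U'_{g^{-1}}}^{-1} \qquad\text{and}\qquad \textnormal{tr}_{\U_g} \circ \cores^{\U'_g}_{\U_g} = \textnormal{tr}_{\U'_g}. \]
Next I would invoke Lemma~\ref{trivial-action-dualizing}: since $G$ acts trivially on $\mathfrak I[p] \cong \bbF_p$, the map $g_*$ occurring in the definition of $\chi_{G,\U}(g)$ is just the ordinary conjugation isomorphism on group cohomology with the constant coefficient module $\bbF_p$, and such conjugation isomorphisms commute with corestriction (see \cite[\S I.5]{NSW}); hence $g_* \circ \cores^{\U'_{g^{-1}}}_{\U_{g^{-1}}} = \cores^{\U'_g}_{\U_g} \circ g_*$. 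Recalling that $\chi_{G,\U}(g)$ is the scalar by which $\textnormal{tr}_{\U_g} \circ g_* \circ \textnormal{tr}_{\U_{g^{-1}}}^{-1}$ acts on $\bbF_p$, the three identities above combine to give
\[ \textnormal{tr}_{\U_g} \circ g_* \circ \textnormal{tr}_{\U_{g^{-1}}}^{-1} = \textnormal{tr}_{\U_g} \circ g_* \circ \cores^{\U'_{g^{-1}}}_{\U_{g^{-1}}} \circ \textnormal{tr}_{\U'_{g^{-1}}}^{-1} = \textnormal{tr}_{\U_g} \circ \cores^{\U'_g}_{\U_g} \circ g_* \circ \textnormal{tr}_{\U'_{g^{-1}}}^{-1} = \textnormal{tr}_{\U'_g} \circ g_* \circ \textnormal{tr}_{\U'_{g^{-1}}}^{-1}, \]
and the right-hand side is multiplication by $\chi_{G,\U'}(g)$. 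Therefore $\chi_{G,\U}(g) = \chi_{G,\U'}(g)$ for all $g \in G$, which proves the claim.

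The only genuine subtlety I anticipate is the compatibility of $g_*$ with corestriction used in the middle display: a priori $g_*$ is induced by a \emph{compatible pair} of homomorphisms involving the nontrivial $G$-action on $\mathfrak I$, for which compatibility with corestriction is not automatic, and it is precisely here that Lemma~\ref{trivial-action-dualizing} is essential, reducing it to the textbook conjugation identity. (Alternatively, the whole lemma can be obtained by rerunning the argument of \cite[Lem.~4.11]{koziol:functorial}, or of \cite[Lem.~2.6]{schneidersorensen} which handles the pro-$p$ case, with Lemma~\ref{tracecor} in place of its analogue there.) Everything else is purely formal.
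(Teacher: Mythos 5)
Your proof is correct and is essentially the paper's proof unpacked: the paper establishes the lemma via a single commutative diagram whose three rows are built from $\U$, $\U\cap\U'$, and $\U'$, with the outer trace triangles commuting by Lemma~\ref{tracecor} and the inner conjugation squares commuting because conjugation commutes with corestriction; your chain of equalities reads this diagram off for the two inclusions $\U\cap\U'\subseteq\U$ and $\U\cap\U'\subseteq\U'$, which is the same reduction the paper carries out in one picture.

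One small correction to your closing caveat: the identity $g_* \circ \cores^{\U'_{g^{-1}}}_{\U_{g^{-1}}} = \cores^{\U'_g}_{\U_g} \circ g_*$ is valid for \emph{any} discrete $G$-module, not just constant ones --- this is exactly \cite[Prop.\ 1.5.6]{NSW} --- so Lemma~\ref{trivial-action-dualizing} is not actually needed to justify that commutation, and the paper's proof accordingly does not invoke it there. Your worry that ``compatibility with corestriction is not automatic'' for a conjugation map built from a nontrivial $G$-module is unfounded; the obstruction you anticipate does not arise. (Lemma~\ref{trivial-action-dualizing} is genuinely used elsewhere, e.g.\ to conclude that $\mathfrak{I}[p]$ is the \emph{trivial} $G$-module so that $\chi_G$ can later be recomputed with $\bbF_p$ coefficients in place of $\mathfrak{I}[p]$.) This does not affect the correctness of your argument, only the attribution of what is doing the work.
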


\begin{proof}
    Suppose $\U'$ is another $p$-torsion-free open compact subgroup, and let $\chi_{G,\U'}$ denote its associated character.  The result follows from the commutativity of the following diagram.  All maps appearing are isomorphisms.  The fact that the corestriction maps are isomorphisms which compose appropriately with the trace maps follows from Lemma \ref{tracecor}.  (We omit decorations on the corestriction maps for readability.)
    \begin{center}
        \begin{tikzcd}
            & & & H^d(\U_{g^{-1}},\mathfrak{I}[p]) \ar[r, "g_*"] \ar[ddlll,"\textnormal{tr}_{\U_{g^{-1}}}"', bend right = 15] & H^d(\U_g,\mathfrak{I}[p]) \ar[rrrdd, "\textnormal{tr}_{\U_g}", bend left=15] & & & \\
            & & & & & & & \\
             \bbF_p & & & H^d(\U_{g^{-1}} \cap \U'_{g^{-1}},\mathfrak{I}[p]) \ar[r, "g_*"] \ar[lll,"\textnormal{tr}_{\U_{g^{-1}} \cap \U_{g^{-1}}'}~~"', near start] \ar[uu, "\textnormal{cores}"] \ar[dd, "\textnormal{cores}"'] & H^d(\U_g \cap \U'_g,\mathfrak{I}[p]) \ar[rrr, "\textnormal{tr}_{\U_g \cap \U'_g}", near start] \ar[uu, "\textnormal{cores}"] \ar[dd, "\textnormal{cores}"'] & & & \bbF_p \\
                         & & & & & & & \\
            &  & & H^d(\U'_{g^{-1}},\mathfrak{I}[p]) \ar[r, "g_*"] \ar[llluu,"\textnormal{tr}_{\U'_{g^{-1}}}", bend left = 15] & H^d(\U'_g,\mathfrak{I}[p]) \ar[rrruu, "\textnormal{tr}_{\U'_g}"', bend right = 15] & & &
        \end{tikzcd}
    \end{center}
\end{proof}

In light of the above lemma, we use the notation $\chi_G$ to denote the character $\chi_{G,\U}$ for any fixed choice of $p$-torsion-free open compact subgroup $\U$ of $G$.  By Lemma \ref{trivial-action-dualizing}, the character $\chi_G$ may equivalently be described by the composition of the isomorphisms
\[ \bbF_p \xrightarrow{\textnormal{tr}^{-1}_{\U_{g^{-1}}}} H^d(\U_{g^{-1}},\bbF_p) \xrightarrow{g_*} H^d(\U_g,\bbF_p) \xrightarrow{\textnormal{tr}_{\U_g}} \bbF_p. \]
for any fixed choice of $p$-torsion-free open compact subgroup $\U$. Hence, by the same argument as in Theorem \ref{mainthm:appendix}, we obtain the following (compare with \cite[Lem. 2.10]{schneidersorensen} and \cite[Cor. 5.2]{kohlhaase}):

\begin{proposition}
\label{prop-chiGtrivial}
    The character $\chi_G$ is trivial.  
\end{proposition}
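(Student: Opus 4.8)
The plan is to show that $\chi_G(g) = 1$ for every $g \in G$ by the same reduction used in the proof of Lemma \ref{trivial-action-dualizing}, together with the key input of Theorem \ref{mainthm:appendix}. First I would fix a $p$-torsion-free open compact subgroup $\U$ of $G$ (which exists, e.g. a sufficiently deep Moy--Prasad subgroup), so that $\chi_G = \chi_{G,\U}$ via the composition $\bbF_p \xrightarrow{\textnormal{tr}^{-1}_{\U_{g^{-1}}}} H^d(\U_{g^{-1}},\bbF_p) \xrightarrow{g_*} H^d(\U_g,\bbF_p) \xrightarrow{\textnormal{tr}_{\U_g}} \bbF_p$. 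Since $\chi_G$ is a homomorphism $G \to \bbF_p^\times$ (by the penultimate lemma) and is independent of the choice of $\U$, it suffices to check triviality on a set of generators of $G$. Exactly as in the proof of Lemma \ref{trivial-action-dualizing}, the Bruhat decomposition with respect to $I$ reduces the problem to the cases $g \in I$ and $g \in N_G(S)$, and by \cite[Eqs. (53)]{vigneras:hecke1} the latter case further reduces to $g$ stabilizing the chamber $C$ or $g$ being a lift of a simple affine reflection $s_A$.

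In each of these cases there is a point $x$ in the closure $\overline{C}$ with $g \cdot x = x$: the barycenter of $C$ when $g \in I$ or $g$ stabilizes $C$, and a point of a $g$-fixed facet $F \subseteq \overline{C}$ when $g$ lifts $s_A$. I would then choose $r \gg 0$ such that the Moy--Prasad subgroup $G_{x,r}$ is $p$-torsion-free and small enough to be used as our reference subgroup $\U$; conjugation by $g$ then carries $G_{x,r}$ to itself (since $g$ fixes $x$, we have $g G_{x,r} g^{-1} = G_{gx,r} = G_{x,r}$, and in particular $\U_g = \U_{g^{-1}} = G_{x,r}$). At this point the definition of $\chi_G(g)$ becomes the composite $\bbF_p \xrightarrow{\textnormal{tr}^{-1}_{G_{x,r}}} H^d(G_{x,r},\bbF_p) \xrightarrow{g_*} H^d(G_{x,r},\bbF_p) \xrightarrow{\textnormal{tr}_{G_{x,r}}} \bbF_p$, which is precisely the quantity computed to be trivial in Theorem \ref{mainthm:appendix} (the orientation character computation carried out in the appendix with David Schwein). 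Invoking that theorem finishes each case.

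The one point requiring a little care is the compatibility of the trace maps $\textnormal{tr}_{G_{x,r}}$ under the change of reference subgroup, i.e. that one really may replace a general $\U$ by $G_{x,r}$: this is exactly the content of Lemma \ref{tracecor} (corestriction commutes with the trace maps, and the relevant corestriction maps on $H^d(-,\mathfrak{I}[p])$ are isomorphisms), combined with the already-established independence of $\chi_G$ from the choice of $p$-torsion-free open compact subgroup. So in effect the whole proof is the sentence: ``the character $\chi_G$ is a homomorphism, independent of $\U$, hence by the Bruhat decomposition and the reduction of \cite[Eqs. (53)]{vigneras:hecke1} it suffices to treat elements fixing a point of $\overline C$, where it is trivial by Theorem \ref{mainthm:appendix}.'' The main (and essentially only) obstacle is Theorem \ref{mainthm:appendix} itself — the explicit identification in the appendix of the $g_*$-action on $H^d(G_{x,r},\bbF_p)$ as trivial, which rests on an analysis of Moy--Prasad filtration quotients and the orientation behavior of the building — but since we are permitted to assume results stated earlier in the excerpt, here it is simply cited.
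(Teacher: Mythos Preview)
Your proposal is correct and follows essentially the same approach as the paper. The paper's proof consists of the single sentence ``by the same argument as in Theorem \ref{mainthm:appendix}'', and what you have written is precisely that argument spelled out: the Bruhat decomposition reduction (as in Lemma \ref{trivial-action-dualizing}), the further reduction via \cite[Eqs.~(53)]{vigneras:hecke1} to elements fixing a point of $\overline{C}$, the replacement of $\U$ by a suitable $G_{x,r}$ using independence of $\chi_G$ from the choice of subgroup, and finally the invocation of Theorem \ref{mainthm:appendix}.
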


We deduce the corollary below, which is a generalization to an arbitrary $\U$ and general (not necessarily split) reductive group $\mathbf G$ of the first part of the statement in \cite[Prop. 7.16]{Ext}.

\begin{corollary}
\label{cor-conjugationcorestriction}
Suppose $\U$ is a $p$-torsion-free open compact subgroup of $G$, and let $g \in G$.  Then the diagram
\begin{center}
\begin{tikzcd}
H^d(\U_{g^{-1}}, \bbF_p) \ar[rr, "g_*"] \ar[dr, "\textnormal{cores}^{\U_{g^{-1}}}_\U"'] & & H^d(\U_g, \bbF_p) \ar[dl, "\textnormal{cores}^{\U_g}_{\U}"]\\ 
& H^d(\U,\bbF_p) & 
\end{tikzcd}
\end{center}
commutes.
\end{corollary}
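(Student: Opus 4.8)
The plan is to deduce Corollary~\ref{cor-conjugationcorestriction} from the fact (Proposition~\ref{prop-chiGtrivial}) that $\chi_G$ is trivial, together with the transitivity of corestriction and its compatibility with conjugation. First I would recall the definition of $\chi_G(g)$: it is the automorphism of $\bbF_p$ obtained as the composite $\textnormal{tr}_{\U_g} \circ g_* \circ \textnormal{tr}^{-1}_{\U_{g^{-1}}}$ on $H^d(-,\bbF_p)$, for any fixed $p$-torsion-free open compact $\U$; by Proposition~\ref{prop-chiGtrivial} this composite is the identity, i.e.\ $\textnormal{tr}_{\U_g} \circ g_* = \textnormal{tr}_{\U_{g^{-1}}}$ as maps $H^d(\U_{g^{-1}},\bbF_p) \to \bbF_p$.

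Next I would invoke the mod-$p$ analogue of Lemma~\ref{tracecor}: for any inclusion $\U'\subseteq\U$ of $p$-torsion-free open compact subgroups, the corestriction map $\textnormal{cores}^{\U'}_\U \colon H^d(\U',\bbF_p)\to H^d(\U,\bbF_p)$ is an isomorphism satisfying $\textnormal{tr}_\U \circ \textnormal{cores}^{\U'}_\U = \textnormal{tr}_{\U'}$. (This is exactly the content of Corollary~\ref{cor:cores-isom} combined with the trace-compatibility half of Lemma~\ref{tracecor}, since $\mathfrak I[p]\cong\bbF_p$ as $G$-modules by Lemma~\ref{trivial-action-dualizing}.) Applying this to the inclusions $\U_g\subseteq\U$ and $\U_{g^{-1}}\subseteq\U$ gives $\textnormal{tr}_{\U_g} = \textnormal{tr}_\U \circ \textnormal{cores}^{\U_g}_\U$ and $\textnormal{tr}_{\U_{g^{-1}}} = \textnormal{tr}_\U \circ \textnormal{cores}^{\U_{g^{-1}}}_\U$. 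Substituting these into the triviality relation $\textnormal{tr}_{\U_g}\circ g_* = \textnormal{tr}_{\U_{g^{-1}}}$ yields
\[
\textnormal{tr}_\U \circ \bigl(\textnormal{cores}^{\U_g}_\U \circ g_*\bigr) = \textnormal{tr}_\U \circ \textnormal{cores}^{\U_{g^{-1}}}_\U
\]
as maps $H^d(\U_{g^{-1}},\bbF_p)\to\bbF_p$.

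Finally, since $\textnormal{tr}_\U \colon H^d(\U,\bbF_p)\stackrel{\sim}{\to}\bbF_p$ is an isomorphism (Proposition~\ref{Uduality}(2) gives $\dim_{\bbF_p}H^d(\U,\bbF_p)=1$, and $\textnormal{tr}_\U$ is the mod-$p$ reduction of the isomorphism in \eqref{def-of-coh-tr}), it is injective, so we may cancel it on the left to conclude $\textnormal{cores}^{\U_g}_\U \circ g_* = \textnormal{cores}^{\U_{g^{-1}}}_\U$, which is precisely the commutativity of the stated triangle. I do not anticipate a genuine obstacle here; the only point requiring a little care is making sure all subgroups involved ($\U$, $\U_g = \U\cap g\U g^{-1}$, $\U_{g^{-1}}$) are indeed $p$-torsion-free so that the results of the previous subsections apply — but this is automatic since they are open subgroups of the $p$-torsion-free group $\U$. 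One should also note that $g_*$ is well-defined as a map $H^d(\U_{g^{-1}},\bbF_p)\to H^d(\U_g,\bbF_p)$ precisely because $g\U_{g^{-1}}g^{-1} = \U_g$, as already observed in Subsection~\ref{subsec-antiinvolution}.
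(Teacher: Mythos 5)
Your proof is correct and is essentially the same as the paper's: both deduce the triangle from the triviality of $\chi_G$ (Proposition~\ref{prop-chiGtrivial}) together with the trace-corestriction compatibility of Lemma~\ref{tracecor} (via Lemma~\ref{trivial-action-dualizing}), then cancel the isomorphism $\textnormal{tr}_\U$. The paper presents this as a single commutative diagram with two inner triangles and an outer triangle, while you phrase it as a chain of equalities of maps, but the content is identical.
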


\begin{proof}
We consider the following diagram:
\begin{center}
\begin{tikzcd}
H^d(\U_{g^{-1}}, \bbF_p) \ar[rr, "g_*"] \ar[dr, "\textnormal{cores}^{\U_{g^{-1}}}_\U"] \ar[ddr, "\textnormal{tr}_{\U_{g^{-1}}}"', bend right]& & H^d(\U_g, \bbF_p) \ar[dl, "\textnormal{cores}^{\U_g}_{\U}"'] \ar[ddl, "\textnormal{tr}_{\U_g}", bend left]\\ 
& H^d(\U,\bbF_p) \ar[d, "\textnormal{tr}_\U"]& \\
& \bbF_p & 
\end{tikzcd}
\end{center}
The two lower triangles commute by Lemmas \ref{tracecor} and \ref{trivial-action-dualizing}, while the outer triangle commutes by Proposition \ref{prop-chiGtrivial}.  Since all maps appearing are isomorphisms, we conclude that the upper triangle commutes as well.
\end{proof}

\begin{remark}\label{rema:Fpk}
Suppose $k$ is a field of characteristic $p$.  By applying the base change $_-\otimes_{\bbF_p}k$ and using the universal coefficient theorem (\cite[Ch. II, \S 3, Exercise]{NSW}), the above results remain valid with $\bbF_p$ replaced by $k$.  In particular, by a further abuse of notation, we shall view $\textnormal{tr}_\U$ as an isomorphism $\textnormal{tr}_\U:H^d(\U,k) \stackrel{\sim}{\longrightarrow} k$.
\end{remark}

\subsubsection{Duality properties of the $\Ext$-algebra}

We now consider the $\Ext$-algebra $E_\U^*$ relative to an open compact subgroup $\U$ of a $p$-adic reductive group $G$.  \textbf{We shall henceforth assume that $k$ is a field.}

Throughout this section, we will let $d := \dim_{\bbQ_p}(G)$ denote the dimension of $G$ as a $p$-adic manifold.  We begin by examining the degrees in which $E_\U^*$ is concentrated.

\begin{lemma}
\label{lemma:EUdegrees}
Let $\ell$ denote the characteristic of $k$.  
\begin{enumerate}
\item Suppose $\ell$ does not divide the pro-order of $\U$.  Then the $\Ext$-algebra $E_\U^*$ is supported only in degree $0$, that is, $E_\U^i \neq 0$ if and only if $i = 0$.
\item Suppose $\ell \neq p$, and that $\ell$ divides the pro-order of $\U$.  Then the $\Ext$-algebra $E_\U^*$ is nonzero in infinitely many non-negative degrees.
\item Suppose $\ell = p$, and that $\U$ is $p$-torsion-free.  Then the $\Ext$-algebra $E^*_{\U}$ is supported in degrees $0$ to $d$.
\end{enumerate}
\end{lemma}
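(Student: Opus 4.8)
The plan is to treat the three cases in turn, using the identification $E_\U^* \cong H^*(\U,\X_\U) \cong \bigoplus_{g \in \U\backslash G/\U} H^*(\U_g,k)$ from \eqref{f:frob} and \eqref{f:H*dec}, so that everything reduces to statements about the continuous cohomology of the open compact subgroups $\U_g = \U \cap g\U g^{-1}$ with coefficients in $k$.

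First I would dispatch case (1). If $\ell$ does not divide the pro-order of $\U$, then it does not divide the pro-order of any $\U_g$ (these are open subgroups of $\U$, hence of finite index, so their pro-orders divide that of $\U$ in the supernatural sense). A profinite group whose pro-order is prime to $\ell$ has trivial continuous cohomology in positive degrees with coefficients in any $k$-vector space, since $H^i(\U_g,k) = \varinjlim_{\V} H^i(\U_g/\V, k)$ over open normal $\V$, and each finite quotient $\U_g/\V$ has order prime to $\ell$, so $H^i(\U_g/\V,k) = 0$ for $i>0$ as $|\U_g/\V|$ is invertible in $k$. Hence $H^i(\U_g,k) = 0$ for all $i > 0$ and all $g$, so $E_\U^i = 0$ for $i > 0$, while $E_\U^0 = H_\U \neq 0$.

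Next, case (3): assume $\ell = p$ and $\U$ is $p$-torsion-free. Then each $\U_g$ is also $p$-torsion-free (being a subgroup of $\U$), and by \eqref{cohdimU} we have $\textnormal{cd}_p(\U_g) = d$ for every $g$. Thus $H^i(\U_g,k) = 0$ for $i > d$, giving $E_\U^i = 0$ for $i > d$; and $E_\U^i = 0$ for $i < 0$ trivially. Non-triviality in degree $d$: take $g = 1$, so $\U_1 = \U$, and invoke Proposition \ref{Uduality}(2) (together with Remark \ref{rema:Fpk}) to see $H^d(\U,k) \cong k \neq 0$, hence $E_\U^d \neq 0$. This shows $E_\U^*$ is supported precisely in degrees $0$ to $d$ (and is nonzero at both ends).

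Finally, case (2): $\ell \neq p$ and $\ell$ divides the pro-order of $\U$. Here the key is to produce, for the contributions of $g=1$, i.e. $H^*(\U,k)$, nonzero classes in infinitely many degrees. Since $\ell$ divides the pro-order of $\U$, there is an open subgroup and hence (passing to a further open normal subgroup, using that $\U$ is profinite) a finite quotient $\U/\V$ whose order is divisible by $\ell$; by Cauchy's theorem this finite group contains an element of order $\ell$, hence a subgroup $C \cong \bbZ/\ell\bbZ$. Pulling back, one obtains a closed subgroup $P \leq \U$ which is pro-$\ell$ with $H^*(P,k) \neq 0$ in all even degrees (for $\ell$ odd) or all degrees (for $\ell = 2$) — indeed any nontrivial pro-$\ell$ group has $\textnormal{cd}_\ell = \infty$ and infinitely many nonzero cohomology groups over $k$, by a theorem of Serre. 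The main obstacle — and the step I would spend the most care on — is transferring this infinitude back to $H^*(\U,k)$ itself, since restriction $H^*(\U,k)\to H^*(P,k)$ need not be surjective or even have large image. I would instead argue via a pro-$\ell$ Sylow subgroup $\U_\ell$ of $\U$ (which exists as $\U$ is profinite), for which the composite $\textnormal{cores}\circ\textnormal{res}$ is multiplication by $[\U:\U_\ell]$, a unit in $k$ since $\ell \nmid [\U:\U_\ell]$; thus $H^i(\U,k)$ is a direct summand of $H^i(\U_\ell,k)$, and it suffices to show $H^i(\U_\ell,k)\neq 0$ for infinitely many $i$. Now $\U_\ell$ is a nontrivial pro-$\ell$ group (nontrivial because $\ell$ divides the pro-order of $\U$), so by \cite[Cor. 7.1.8 or \S III.3]{NSW} (the pro-$\ell$ analogue of Serre's theorem that a pro-$\ell$ group of finite cohomological dimension is torsion-free, combined with the fact that any infinite pro-$\ell$ group has infinite cohomological dimension, and a finite nontrivial $\ell$-group already has cohomology in all degrees) we get $H^i(\U_\ell, k) \neq 0$ for infinitely many $i \geq 0$. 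Since $H^*(\U,k)$ is a direct summand of $H^*(\U_\ell,k)$ in each degree and the projection is split by a scalar, $H^i(\U,k)\neq 0$ for infinitely many $i$, hence $E_\U^i \supseteq H^i(\U_1,k) \neq 0$ for infinitely many $i$, completing case (2).
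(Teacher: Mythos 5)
Parts (1) and (3) of your argument are correct and follow essentially the same route as the paper: for (1) you show $\textnormal{cd}_\ell(\U_g) = 0$ via the prime-to-$\ell$ pro-order, and for (3) you use \eqref{cohdimU} for the vanishing above degree $d$ and Proposition~\ref{Uduality} (via the $g=1$ summand) for non-vanishing in degree $d$.

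Part (2), however, has a genuine gap, and in fact two false steps. First, the split-injection trick goes in the wrong direction. The composite $\cores \circ \res = [\U : \U_\ell]\cdot \mathrm{id}$ shows that $\res : H^i(\U,k) \hookrightarrow H^i(\U_\ell,k)$ is split injective, so $H^i(\U,k)$ is (isomorphic to) a direct summand of $H^i(\U_\ell,k)$. But non-vanishing of $H^i(\U_\ell,k)$ does \emph{not} imply non-vanishing of the summand $H^i(\U,k)$ — the summand can perfectly well be zero. The implication you need is the converse of the one this setup gives. Second, the assertion that ``any nontrivial pro-$\ell$ group has $\textnormal{cd}_\ell = \infty$'' is false: $\bbZ_\ell$ is a nontrivial pro-$\ell$ group with $\textnormal{cd}_\ell(\bbZ_\ell) = 1$ and $H^i(\bbZ_\ell,\bbF_\ell) = 0$ for $i \geq 2$. (Serre's theorem says finite $\textnormal{cd}_\ell$ forces torsion-freeness; the converse fails.) Even if $\U_\ell$ does have torsion, the corestriction argument is unavailable \emph{inside} a pro-$\ell$ group, because the relevant indices are $\ell$-powers, hence zero in $k$.

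The correct route, which is what the paper does, goes through a finite \emph{quotient} rather than a closed subgroup. Pick an open normal pro-$p$ subgroup $\U_+ \trianglelefteq \U$. Since $\ell \neq p$, the same argument as in case (1) gives $\textnormal{cd}_\ell(\U_+) = 0$, so the Hochschild--Serre spectral sequence for $\U_+ \trianglelefteq \U$ degenerates to an honest isomorphism $H^i(\U,k) \cong H^i(\U/\U_+,k)$ for all $i$ (not merely a split injection). The finite group $\U/\U_+$ has order divisible by $\ell$ (because $\U_+$ is pro-$p$ and $\ell$ divides the pro-order of $\U$), hence contains an elementary abelian $\ell$-subgroup of positive rank, and Quillen's theorem then guarantees that $H^*(\U/\U_+,k)$ has positive Krull dimension and so is nonzero in infinitely many degrees. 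Your own second paragraph already identifies the right object ($\U/\U_+$ and the existence of $\bbZ/\ell\bbZ$ inside it); the fix is to stay with the quotient and the spectral sequence isomorphism instead of pulling back to a subgroup and attempting a Sylow transfer.
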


\begin{proof}
We shall repeatedly use the isomorphism coming from equation \eqref{f:H*dec}:
\[ E_\U^* \cong \bigoplus_{g \in \U\backslash G / \U} H^*(\U_g,k)\ . \]
\begin{enumerate}
\item If $\ell$ does not divide the pro-order of $\U$, then the pro-$\ell$-Sylow subgroup of $\U_g$ is trivial.  Proposition 14 and Corollary 1 of \cite[Ch. I, \S 3.3]{serre:galoiscoh} then imply $\textnormal{cd}_\ell(\U_g) = 0$, which gives the claim.
\item Suppose $\ell \neq p$ and that $\ell$ divides the pro-order of $\U$.  Let $\U_+$ denote an open, normal, pro-$p$ subgroup of $\U$.  Since $\textnormal{cd}_\ell(\U_+) = 0$, the Hochschild--Serre spectral sequence
\[ H^i(\U/\U_+, H^j(\U_+,k)) \Longrightarrow H^{i + j}(\U,k) \]
collapses to give
\[ H^i(\U/\U_+,k) \cong H^i(\U,k). \]
It therefore suffices to show that $H^i(\U/\U_+,k) \neq 0$ for infinitely many $i$.  Since the $\ell$ divides the pro-order of $\U$ and since $\U_+$ is pro-$p$, we see that $\ell$ also divides the order of the finite group $\U/\U_+$, and therefore $\U/\U_+$ possesses elementary abelian $\ell$-groups of positive rank.  The desired claim may now be deduced from a result of Quillen; for example, see \cite[Cor. 8.3.3, \S 8.1]{evens}.  
\item It follows from equations \eqref{cohdimU} that $E_{\U}^*$ is supported in degrees $0$ to at most $d$.  To show that the upper bound is sharp, we use the displayed isomorphism above to see that $H^d(\U,k)$ injects into $E_{\U}^d$, and conclude using Proposition \ref{Uduality}.
\end{enumerate}
\end{proof}

Next, we discuss duality properties, invoking the results of the previous section.  Therefore, we assume henceforth that \textbf{the characteristic of $k$ is equal to $p$}.  Supposing further that $\U$ is $p$-torsion-free, we can obtain more refined information about the cup product \eqref{f:cup}.  Namely, recall that the trace map $\trace_\U: \X_\U \longrightarrow k$ was defined in equation \eqref{f:trace}, and we set $\trace_\U^i := H^i(\U,\trace_\U)$.

\begin{lemma}
\label{lemma-XUduality}
Suppose $k$ is of characteristic $p$, and that $\U$ is $p$-torsion-free.  Then, for $0 \leq i \leq d$, the bilinear map defined by the composite 
\[ H^i(\U,\X_\U) \otimes_k H^{d - i}(\U, \X_\U) \stackrel{\cup}{\longrightarrow} H^d(\U,\X_\U) \stackrel{\trace_\U^d}{\longrightarrow} H^d(\U, k) \stackrel{\textnormal{tr}_\U}{\longrightarrow} k \]
is nondegenerate. 
\end{lemma}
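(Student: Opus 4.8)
The plan is to decompose the pairing along the double cosets of $\U\backslash G/\U$ and, on each piece, transport it through the Shapiro isomorphism in order to recognize it as the Poincar\'e duality pairing of the group $\U_g$, which is again $p$-torsion-free.

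First I would use the decomposition \eqref{f:H*dec} together with the orthogonality relation \eqref{f:orth}. Fixing a system of representatives $g$ for $\U\backslash G/\U$, the cup product vanishes on $H^i(\U,\X_\U(g)) \otimes_k H^{d-i}(\U,\X_\U(g'))$ whenever $g,g'$ lie in distinct double cosets, so the bilinear map in the statement is the orthogonal direct sum over $g$ of the ``$g$-block'' pairings
\[ H^i(\U,\X_\U(g)) \otimes_k H^{d-i}(\U,\X_\U(g)) \xrightarrow{\;\cup\;} H^d(\U,\X_\U(g)) \xrightarrow{\;\trace_\U^d\;} H^d(\U,k) \xrightarrow{\;\textnormal{tr}_\U\;} k. \]
Here I use that, by the decomposition $\trace_\U = \sum_{g'} \trace_{\U,g'}$ recorded in the remark containing \eqref{f:traceg} (in which $\trace_{\U,g'}$ kills $\X_\U(g)$ unless $\U g'\U = \U g\U$), the restriction of $\trace_\U^d$ to $H^d(\U,\X_\U(g))$ coincides with $H^d(\U,\trace_{\U,g})$.

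Next I would transport the $g$-block pairing through the Shapiro isomorphism $\Sh_g^\U\colon H^*(\U,\X_\U(g)) \xrightarrow{\sim} H^*(\U_g,k)$. By the commutative diagram \eqref{f:cup+Sh}, the cup product on $H^*(\U,\X_\U(g))$ is carried to the ordinary cup product on $H^*(\U_g,k)$; by the commutative diagram \eqref{f:traceg}, the map $H^d(\U,\trace_{\U,g})$ is carried to $\cores^{\U_g}_{\U}\colon H^d(\U_g,k)\to H^d(\U,k)$. Hence the $g$-block pairing is identified with the composite
\[ H^i(\U_g,k)\otimes_k H^{d-i}(\U_g,k)\xrightarrow{\;\cup\;} H^d(\U_g,k)\xrightarrow{\;\cores^{\U_g}_{\U}\;} H^d(\U,k)\xrightarrow{\;\textnormal{tr}_\U\;} k. \]

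Finally I would conclude as follows. The group $\U_g = \U\cap g\U g^{-1}$ is an open compact subgroup of $G$ contained in $\U$; being a subgroup of a $p$-torsion-free group it is itself $p$-torsion-free, and being open in $G$ it satisfies $\dim_{\bbQ_p}(\U_g)=d$. Therefore Proposition \ref{Uduality}, applied to $\U_g$ in place of $\U$, says that the cup product $H^i(\U_g,k)\otimes_k H^{d-i}(\U_g,k)\to H^d(\U_g,k)$ is a nondegenerate pairing into a one-dimensional space. By Corollary \ref{cor:cores-isom} (with $\bbF_p$ replaced by $k$ as in Remark \ref{rema:Fpk}) the map $\cores^{\U_g}_{\U}$ is an isomorphism, and $\textnormal{tr}_\U$ is an isomorphism by construction; composing a nondegenerate pairing with isomorphisms on the target leaves it nondegenerate, so each $g$-block pairing is nondegenerate. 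Nondegeneracy of the (possibly infinite) orthogonal direct sum then follows formally: if $\alpha=\sum_g\alpha_g\in H^i(\U,\X_\U)$ pairs to zero against all of $H^{d-i}(\U,\X_\U)$, then pairing $\alpha$ against $H^{d-i}(\U,\X_\U(g))$ forces $\alpha_g$ to pair to zero with the whole $g$-block, hence $\alpha_g=0$ for every $g$; the symmetric argument handles the other variable. I do not expect a serious obstacle here: the content is bookkeeping, and the only point requiring care is that Proposition \ref{Uduality} is invoked for the intersections $\U_g$ rather than for $\U$ — which is legitimate since its hypotheses ask only for a $p$-torsion-free open compact subgroup of $G$ — together with checking that the Shapiro isomorphism intertwines the cup products and the trace maps exactly as in \eqref{f:cup+Sh} and \eqref{f:traceg}.
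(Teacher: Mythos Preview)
Your proposal is correct and follows essentially the same approach as the paper: decompose via \eqref{f:H*dec} and \eqref{f:orth} into $g$-blocks, transport each through the Shapiro isomorphism using \eqref{f:cup+Sh} and \eqref{f:traceg}, and then invoke Proposition~\ref{Uduality} (for $\U_g$) together with Corollary~\ref{cor:cores-isom} and Remark~\ref{rema:Fpk}. Your write-up is in fact slightly more explicit than the paper's about why nondegeneracy on each block yields nondegeneracy of the full pairing.
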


\begin{proof}
Let $g \in G$.  We have the following diagram:

\begin{center}
\begin{tikzcd}
     H^i(\U , \mathbf{X}_\U (g)) \otimes_k H^{d - i}(\U ,\mathbf{X}_\U (g)) \ar[dd, "\Sh_g^\U \otimes \Sh_g^\U"', "\rotatebox{90}{$\sim$}"] \ar[r, "\cup"] & H^{d}(\U ,\mathbf{X}_\U (g)) \ar[dd, "\Sh^\U_g", "\rotatebox{90}{$\sim$}"'] \ar[r, "\trace_\U^d"] & H^d(\U, k) \ar[dd, equals] \\
     & & \\
     H^i(\U _g,k) \otimes_k H^{d - i}(\U _g,k) \ar[r, "\cup"] & H^{d}(\U_g,k)  \ar[r, "\textnormal{cores}^{\U_g}_{\U}","\sim"'] & H^d(\U,k)
     \end{tikzcd}
\end{center}
The left square is commutative by the discussion preceding diagram \eqref{f:cup+Sh}, while the right square is commutative by the commutativity of diagram \eqref{f:traceg}.  Moreover, the lower corestriction map is an isomorphism by Corollary \ref{cor:cores-isom} and Remark \ref{rema:Fpk}.  Therefore, by Proposition \ref{Uduality} (and Remark \ref{rema:Fpk}), we see that the lower row gives a nondegenerate pairing, and therefore the same is true of the upper row.  Using the property \eqref{f:orth} finishes the claim.
\end{proof}

When $\U \subseteq \V$, we also have the following compatibilities among the Hecke $\Ext$-algebras $E_\U^*$ and $E_\V^*$ (compare Proposition \ref{lemma:projfor0}):

\begin{proposition}
\label{lemma:projfor}
Suppose $k$ is of characteristic $p$, and that $\U \subseteq \V$ are $p$-torsion-free.  We have the following commutative diagram:
\begin{equation}
\label{f:dualitycompa}
\begin{tikzcd}
  H^i(\V, \mathbf{X}_\V) \otimes_k H^{d - i}(\V,\mathbf{X}_\V)  \ar[d, "R_{\V,\U}^{d - i}", shift left = 7ex]  \ar[rr, "\cup"] && H^{d}(\V,\mathbf{X}_\V) \ar[rr, "\trace_\V^d"] & & H^{d}(\V,k) \ar[r, "{\textnormal{tr}_\V}", "{\sim}"'] & k\\
 H^i(\U, \mathbf{X}_\U) \otimes_k H^{d - i}(\U,\mathbf{X}_\U) \ar[u, "C_{\U,\V}^i", shift left=7ex] \ar[rr, "\cup"] && H^{d}(\U,\mathbf{X}_\U) \ar[u, "C_{\U,\V}^d"] \ar[rr, "\trace_\U^d"] &&H^{d}(\U,k) \ar[r, "{\textnormal{tr}_\U}", "{\sim}"'] \ar[u, "\cores_\V^\U", "{\rotatebox{90}{$\sim$}}"'] & k \ar[u, equals]
\end{tikzcd}
\end{equation}
\end{proposition}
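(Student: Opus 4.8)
The plan is to view the diagram \eqref{f:dualitycompa} as a concatenation of three faces and to check each one separately.

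The leftmost face --- the one whose vertical maps are $C_{\U,\V}^i$ (upward, on the first tensor factor) and $R_{\V,\U}^{d-i}$ (downward, on the second factor) --- amounts to the assertion that, for $A \in H^i(\U,\X_\U)$ and $B \in H^{d-i}(\V,\X_\V)$, the two elements $C_{\U,\V}^d\big(A \cup R_{\V,\U}^{d-i}(B)\big)$ and $C_{\U,\V}^i(A) \cup B$ of $H^d(\V,\X_\V)$ coincide; this is exactly the projection formula \eqref{f:projform} of Proposition \ref{lemma:projfor0}, applied with $j = d-i$. The rightmost face is the identity $\textnormal{tr}_\V \circ \cores_\V^\U = \textnormal{tr}_\U$ of maps $H^d(\U,k) \longrightarrow k$, which follows from Lemma \ref{tracecor} (applied to the inclusion $\U \subseteq \V$, in the role of $\U' \subseteq \U$ there) together with Remark \ref{rema:Fpk}.

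So the only face that requires work is the middle one, namely
\[ \trace_\V^d \circ C_{\U,\V}^d = \cores_\V^\U \circ \trace_\U^d \colon H^d(\U,\X_\U) \longrightarrow H^d(\V,k); \]
even here, the argument is just bookkeeping with (co)restriction maps, not a genuine obstacle. First I would use the decomposition \eqref{f:H*dec} to reduce to checking this equality after restriction to each summand $H^d(\U,\X_\U(g))$, $g \in \U\backslash G/\U$, choosing $g$ to serve simultaneously as a representative of the double coset $\V g\V$. On that summand only the term $\trace_{\U,g}$ of $\trace_\U = \sum_{g'}\trace_{\U,g'}$ contributes, and \eqref{f:traceg} identifies $\trace_\U^d$ with $\cores_\U^{\U_g}\circ\Sh^\U_g$ there, so the right-hand composite becomes $\cores_\V^\U\circ\cores_\U^{\U_g}\circ\Sh^\U_g = \cores_\V^{\U_g}\circ\Sh^\U_g$. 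On the other side, Proposition \ref{prop:CP} tells us that $C_{\U,\V}^d$ carries $H^d(\U,\X_\U(g))$ into $H^d(\V,\X_\V(g))$ and that $\Sh^\V_g\circ C_{\U,\V}^d = \cores^{\U_g}_{\V_g}\circ\Sh^\U_g$ there; applying \eqref{f:traceg} for $\V$, which gives $\trace_\V^d = \cores_\V^{\V_g}\circ\Sh^\V_g$ on $H^d(\V,\X_\V(g))$, yields $\trace_\V^d\circ C_{\U,\V}^d = \cores_\V^{\V_g}\circ\cores^{\U_g}_{\V_g}\circ\Sh^\U_g = \cores_\V^{\U_g}\circ\Sh^\U_g$. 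The two sides agree, the only extra ingredient being transitivity of corestriction along $\U_g \subseteq \U \subseteq \V$ and along $\U_g \subseteq \V_g \subseteq \V$.

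The single point to be careful about is the bookkeeping of double-coset representatives: when $\U g\U \subsetneq \V g\V$, the part of $\X_\U$ supported on $\V g\V$ splits into several summands $\X_\U(g')$, but $g$ itself represents both $\U g\U$ and $\V g\V$, so Proposition \ref{prop:CP} and \eqref{f:traceg} apply verbatim with this common $g$ and no reconciliation between representatives is needed. As a sanity check one may note an alternative, representation-level proof of the middle face: $\trace_\V\circ\pi_{\U,\V} = \trace_\U$ as maps $\X_\U \to k$, and precomposition with $\iota_{\V,\U}$ induces $\cores_\V^\U$ on cohomology, so from $C_{\U,\V}^d(f) = \pi_{\U,\V}\cdot f\cdot\iota_{\V,\U}$ one reads off $\trace_\V^d(C_{\U,\V}^d(f)) = \cores_\V^\U(\trace_\U^d(f))$ directly; I would nonetheless keep the componentwise computation as the main argument, since it relies only on results established earlier in the text.
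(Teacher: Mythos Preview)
Your proof is correct and matches the paper's own argument essentially step for step: the paper also splits the diagram into three faces, invoking \eqref{f:projfor} for the left, Lemma~\ref{tracecor} (via Lemma~\ref{trivial-action-dualizing} and Remark~\ref{rema:Fpk}) for the right, and the description \eqref{f:defiCP*} of $C_{\U,\V}^d$, diagram \eqref{f:traceg}, and transitivity of corestriction for the middle. The only microscopic difference is that the paper explicitly cites Lemma~\ref{trivial-action-dualizing} to pass from $\mathfrak{I}[p]$ to $\bbF_p$ before applying Remark~\ref{rema:Fpk}, whereas you fold that step into the remark.
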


\begin{proof} 
We already have the commutativity of the left-hand square by \eqref{f:projfor} and the commutativity of the right-hand square follows from Lemma \ref{tracecor}, Lemma \ref{trivial-action-dualizing}, and Remark \ref{rema:Fpk}.  The commutativity of the remaining square is a direct consequence of the  explicit description of $C_{\U,\V}^d$ in \eqref{f:defiCP*} and of the transitivity of corestriction and of the commutativity of \eqref{f:traceg}. 
\end{proof}

Next, recall that in Subsection \ref{subsec-antiinvolution} we have defined the anti-involution $\anti_\U$.  The next result shows how $\anti_\U$ interacts with $\trace_\U^i$.

\begin{lemma}
\label{lemma-traceantiinvolution}
Suppose $k$ is of characteristic $p$, and that $\U$ is $p$-torsion-free.  Then we have $\trace_\U^d\circ\anti_\U = \trace_\U^d$ as linear maps $H^d(\U,\X_\U) \longrightarrow H^d(\U,k)$.  
\end{lemma}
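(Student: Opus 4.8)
The plan is to reduce the identity to a single statement about the interaction of corestriction with conjugation on top cohomology, which has already been established as Corollary \ref{cor-conjugationcorestriction}.

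First I would use the decomposition \eqref{f:H*dec}, $H^d(\U,\X_\U) = \bigoplus_{g \in \U\backslash G/\U} H^d(\U,\X_\U(g))$, together with the fact that $\anti_\U = \bigoplus_g \anti_{\U,g}$ carries $H^d(\U,\X_\U(g))$ to $H^d(\U,\X_\U(g^{-1}))$. Since $\trace_\U^d$ and $\trace_\U^d\circ\anti_\U$ are both $k$-linear, it is enough to check the equality after restricting to each summand. So I would fix $g\in G$ and a class $\alpha \in H^d(\U,\X_\U(g))$, and compare $\trace_\U^d(\alpha)$ with $\trace_\U^d(\anti_{\U,g}(\alpha))$, the latter lying a priori in the $g^{-1}$-summand.

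Next I would make both quantities explicit through the Shapiro isomorphism. Writing $\trace_\U = \sum_{g}\trace_{\U,g}$ and using that $\trace_{\U,g}$ annihilates $\X_\U(g')$ whenever $\U g\U\neq \U g'\U$ together with the commutative triangle \eqref{f:traceg}, one gets $\trace_\U^d|_{H^d(\U,\X_\U(g))} = \cores_\U^{\U_g}\circ \Sh_g^\U$ and, likewise, $\trace_\U^d|_{H^d(\U,\X_\U(g^{-1}))} = \cores_\U^{\U_{g^{-1}}}\circ \Sh_{g^{-1}}^\U$. On the other hand, the defining square of $\anti_{\U,g}$ in Subsection \ref{subsec-antiinvolution} says precisely that $\Sh_{g^{-1}}^\U(\anti_{\U,g}(\alpha)) = (g^{-1})_*\big(\Sh_g^\U(\alpha)\big)$. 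Combining these three facts, the claimed equality $\trace_\U^d(\anti_\U(\alpha)) = \trace_\U^d(\alpha)$ on this summand is equivalent to
\[ \cores_\U^{\U_{g^{-1}}}\big((g^{-1})_*(c)\big) = \cores_\U^{\U_g}(c) \qquad\text{for all } c\in H^d(\U_g,k), \]
where $(g^{-1})_*\colon H^d(\U_g,k)\xrightarrow{\sim} H^d(\U_{g^{-1}},k)$ denotes conjugation by $g$.

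Finally, this last identity is exactly Corollary \ref{cor-conjugationcorestriction}, applied with the group element $g^{-1}$ in place of $g$ (so that $\U_{(g^{-1})^{-1}} = \U_g$), after extending scalars from $\bbF_p$ to $k$ as in Remark \ref{rema:Fpk}. Thus the argument is essentially pure bookkeeping: the only substantive input is Corollary \ref{cor-conjugationcorestriction}, which itself ultimately rests on the triviality of the character $\chi_G$ (Proposition \ref{prop-chiGtrivial}) via the Moy--Prasad machinery of the appendix. The one point requiring care — and the only place I anticipate any friction — is keeping straight which conjugation map ($g_*$ versus $(g^{-1})_*$) intervenes at each stage, so that the instance of Corollary \ref{cor-conjugationcorestriction} invoked is the correct one; there is no deeper obstacle.
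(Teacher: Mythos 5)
Your proof is correct and follows essentially the same route as the paper: both restrict to a single $\U\backslash G/\U$ summand via the Shapiro isomorphism, rewrite $\trace_\U^d$ on that summand as $\cores_\U^{\U_g}\circ\Sh_g^\U$ using diagram \eqref{f:traceg}, unwind the defining square of $\anti_{\U,g}$, and then invoke Corollary \ref{cor-conjugationcorestriction} (applied to $g^{-1}$) together with Remark \ref{rema:Fpk}. The paper presents this as one commutative diagram while you spell it out in formulas, but the content and the key input are the same.
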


\begin{proof}
This follows exactly as in the proof of \cite[Cor. 7.17]{Ext}; we briefly give the details.  Given $g \in G$, it suffices to prove the commutativity of the following diagram:
\begin{center}
\begin{tikzcd}
H^d(\U,\X_{\U}(g)) \ar[rr, "\anti_{\U,g}"] \ar[dd, "\textnormal{Sh}_g^{\U}"] \ar[dddd, "H^d(\U{,}\trace_{\U,g})"', bend right=60] & & H^d(\U, \X_{\U}(g^{-1})) \ar[dd, "\textnormal{Sh}_{g^{-1}}^{\U}"'] \ar[dddd, "H^d(\U{,}\trace_{\U,g^{-1}})", bend left=60]\\
 & & \\
H^d(\U_g, k)  \ar[rr, "(g^{-1})_*"]  \ar[dd, "\textnormal{cores}^{\U_{g}}_{\U}"] & & H^d(\U_{g^{-1}}, k) \ar[dd, "\textnormal{cores}^{\U_{g^{-1}}}_{\U}"'] \\
& & \\
H^d(\U, k) \ar[rr, equals] &  & H^d(\U, k)
\end{tikzcd}
\end{center}
The upper square commutes by construction of $\anti_\U$, the outer ``triangles'' commute by diagram \eqref{f:traceg}, and the lower square commutes by Corollary \ref{cor-conjugationcorestriction} and Remark \ref{rema:Fpk}.  
\end{proof}

To proceed further, we recall some general properties of duality, following \cite[\S 7.1]{Ext}.  Given a vector space $Y$, we denote by $Y^\vee$ the $k$-linear dual space $Y^\vee := \Hom_k(Y, k)$.  If $Y$ is a left module over a $k$-algebra $A$, then $Y^\vee$ is naturally a right module over $A$.  Given an anti-involution $j:A\stackrel{\sim}{\longrightarrow} A$, we may twist the action of $A$ on a left module $Y$ by $j$ and thus obtain the right module $Y^j$ via the twisted action $(y,a) \longmapsto j(a)\cdot y$ for $a \in A, y \in Y$.  These statements hold \textit{mutatis mutandis} with ``left'' and ``right'' interchanged, and ${}^jY$ in place of $Y^j$.  If $Y$ is an $A$-bimodule, then we define the twisted $H$-bimodule ${}^jY^j$ the evident way.

Suppose that $Y$ decomposes as $Y = \bigoplus_{d \in \EuScript{D}} Y_d$ for some set $\EuScript{D}$.  We denote by $Y^{\vee, f}$ the so-called finite dual of $Y$, which is defined to be 
\begin{equation}\label{f:finitedual}Y^{\vee,f} := \textnormal{im}\left(\bigoplus_{d \in \EuScript{D}} Y_d^\vee \longrightarrow \prod_{d\in \EuScript{D} } Y_d^\vee = Y^\vee\right).\end{equation}

\begin{remark} \label{etavee}
For a left, resp., right, resp., bi-, $A$-module, the identity map yields an isomorphism of right, resp., left,  resp., bi-, $A$-modules  
\[ ({}^j Y)^\vee = (Y^\vee)^{j},\quad \textrm{resp.,}~ (Y^j)^\vee = {}^j(Y^\vee), \quad \textrm{resp.,}~~({}^j Y^j)^\vee={}^j(Y^\vee)^j. \]
If $Y^{\vee,f}$ is a sub right, resp., left, resp., bi-, $A$-module of $Y^\vee$ then it also yields an isomorphism of left, resp., right, resp., bi-, $A$-modules  
\[ ({}^j Y)^{\vee,f} = (Y^{\vee,f})^{j},\quad \textrm{resp.,}~~ (Y^j)^{\vee,f} = {}^j(Y^{\vee,f}), \quad \textrm{resp.,}~~ ({}^j Y^j)^{\vee,f}={}^j(Y^{\vee,f})^j. \]
\end{remark}

We now follow \cite[\S 7.2.4]{Ext} and apply the above discussion to our setup.  Let us suppose $\U$ is $p$-torsion free.  By Lemma \ref{lemma-XUduality}, the map
\begin{eqnarray}
\label{f:UDelta}
\Delta^i_{\U} : E_{\U}^i = H^i(\U,\mathbf{X}_{\U}) & \longrightarrow & H^{d - i}(\U, \X_{\U})^\vee = (E_{\U}^{d-i})^\vee \\
\alpha & \longmapsto & \textnormal{tr}_{\U} \circ \trace_{\U}^d (\alpha\cup _-)   \notag
\end{eqnarray} 
is injective.  The next result analyzes its $H_\U$-equivariance.

\begin{lemma} 
\label{lemma:duality}
Suppose $k$ is of characteristic $p$, and that $\U$ is $p$-torsion-free.  Then the map $\Delta_\U^i$  yields an injective morphism of $H_\U$-bimodules 
\begin{equation}
  \Delta_\U^i: E_\U^i \longhookrightarrow ({}^{\anti_\U} (E_\U^{d-i})^{\anti_\U})^\vee\  
  \end{equation}  
  with image $({}^{\anti_\U} (E_\U^{d-i})^{\anti_\U})^{\vee,f}$ (relative to the decomposition $E^{d - i}_\U = \bigoplus_{g \in \U \backslash G / \U} H^{d - i}(\U, \X_{\U}(g))$).
\end{lemma}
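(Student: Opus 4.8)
The plan is to combine the nondegeneracy of the pairing from Lemma~\ref{lemma-XUduality} with the $H_\U$-equivariance of its ingredients, which we have already collected. First I would recall that $\Delta_\U^i$ is injective by Lemma~\ref{lemma-XUduality} (the pairing $(\alpha,\beta) \mapsto \textnormal{tr}_\U\circ\trace_\U^d(\alpha\cup\beta)$ is nondegenerate in each variable, and injectivity of $\Delta_\U^i$ is exactly nondegeneracy in the first variable). So the content is the identification of source and target as $H_\U$-bimodules and the determination of the image.

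For the bimodule structure, I would argue as follows. The space $E_\U^i = H^i(\U,\X_\U)$ is an $H_\U$-bimodule because $\X_\U$ is a $(G,H_\U)$-bimodule and the left $H_\U$-action commutes with the $G$-action; similarly for $E_\U^{d-i}$, whose linear dual $(E_\U^{d-i})^\vee$ then carries the transposed bimodule structure. Twisting by the anti-involution $\anti_\U$ (Lemmas~\ref{anti-inv-on-H} and~\ref{lemma-JU-antiinvolution}) turns left into right and vice versa, so ${}^{\anti_\U}(E_\U^{d-i})^{\anti_\U}$ is again an $H_\U$-bimodule and its dual is as well, matching $E_\U^i$. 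To check that $\Delta_\U^i$ respects these structures, fix $h = \tau_g^\U \in H_\U$ and $\alpha \in E_\U^i$, $\beta \in E_\U^{d-i}$; I must show $\Delta_\U^i(h\cdot\alpha)(\beta) = \Delta_\U^i(\alpha)(\beta \cdot \anti_\U(h))$ and the analogous identity on the other side. Unwinding the definitions, both sides reduce to the claim that $\trace_\U^d$ is right $H_\U$-equivariant for the character $\chi_{\triv}^\U$ (Lemma~\ref{lemma:SUtriv}, applied in cohomological degree $d$) together with the identity $\trace_\U^d\circ\anti_\U = \trace_\U^d$ from Lemma~\ref{lemma-traceantiinvolution}; the associativity of the Yoneda product moves $h$ across the cup product (really: the cup product and Yoneda product interact via $\X_\U$ being a bimodule, as in \cite[\S 7.2.4]{Ext}), and $\chi_{\triv}^\U\circ\anti_\U = \chi_{\triv}^\U$ on $E_\U^0$ (Lemma~\ref{lemma-JU-triv}, valid since $G$ is unimodular reductive) reconciles the scalars. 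This is the step most prone to sign/convention errors, and I would follow \cite[\S 7.2.4]{Ext} line by line; it is the main obstacle, though a routine one.

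For the image, I would use the orthogonality \eqref{f:orth}: $H^i(\U,\X_\U(g))$ pairs to zero against $H^{d-i}(\U,\X_\U(g'))$ unless $\U g\U = \U g'\U$, and $\anti_{\U,g}$ sends the $\U g\U$-component to the $\U g^{-1}\U$-component. Hence $\Delta_\U^i$ decomposes as a direct sum over $g \in \U\backslash G/\U$ of maps $H^i(\U,\X_\U(g)) \to H^{d-i}(\U,\X_\U(g^{-1}))^\vee$, each of which is an isomorphism: via the Shapiro isomorphisms $\Sh_g^\U$, $\Sh_{g^{-1}}^\U$, and the commutative squares \eqref{f:cup+Sh} and \eqref{f:traceg}, this map is identified with the pairing $H^i(\U_g,k)\otimes H^{d-i}(\U_g,k)\xrightarrow{\cup} H^d(\U_g,k)\xrightarrow{\cores} H^d(\U,k)\xrightarrow{\textnormal{tr}_\U} k$ composed with $(g^{-1})_*$, which is nondegenerate by Proposition~\ref{Uduality} and Corollary~\ref{cor:cores-isom} (using Remark~\ref{rema:Fpk}), and $H^i(\U_g,k)$ is finite-dimensional. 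Therefore $\Delta_\U^i$ lands in the subspace $\bigoplus_g H^{d-i}(\U,\X_\U(g^{-1}))^\vee$ of $\prod_g H^{d-i}(\U,\X_\U(g))^\vee = (E_\U^{d-i})^\vee$ and surjects onto it; by definition \eqref{f:finitedual} this subspace is exactly the finite dual $(E_\U^{d-i})^{\vee,f}$, and, carrying the $\anti_\U$-twist through Remark~\ref{etavee}, it is $({}^{\anti_\U}(E_\U^{d-i})^{\anti_\U})^{\vee,f}$. This completes the identification.
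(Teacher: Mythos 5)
Your proposal is correct and takes essentially the same approach as the paper, which simply references \cite[Prop.~7.18]{Ext} and the substitutions Lemma~\ref{lemma-JU-antiinvolution} for \cite[Prop.~6.1]{Ext} and Lemma~\ref{lemma-traceantiinvolution} for \cite[Cor.~7.17]{Ext}; you have unfolded that reference in a consistent way and correctly pinpointed that the $H_\U$-equivariance is the delicate part.

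One small slip in the image argument: you wrote that $\Delta_\U^i$ decomposes into maps $H^i(\U,\X_\U(g)) \to H^{d-i}(\U,\X_\U(g^{-1}))^\vee$, with $(g^{-1})_*$ appearing in the identification. In fact, by the orthogonality \eqref{f:orth} the pairing of Lemma~\ref{lemma-XUduality} is block-diagonal with the \emph{same} index: $H^i(\U,\X_\U(g))$ pairs nontrivially only with $H^{d-i}(\U,\X_\U(g))$, so the $g$-component of $\Delta_\U^i$ lands in $H^{d-i}(\U,\X_\U(g))^\vee$, and via a single Shapiro isomorphism $\Sh_g^\U$ on both factors it becomes the perfect pairing $H^i(\U_g,k)\otimes H^{d-i}(\U_g,k)\xrightarrow{\cup} H^d(\U_g,k)\xrightarrow{\cores}H^d(\U,k)\xrightarrow{\textnormal{tr}_\U}k$ with no conjugation by $g^{-1}$ involved. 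The anti-involution $\anti_\U$ (and hence the map $g\mapsto g^{-1}$ on components) only enters when tracking the bimodule \emph{structure}, not the underlying linear decomposition; Remark~\ref{etavee} says the identification $({}^{\anti_\U}(E_\U^{d-i})^{\anti_\U})^{\vee,f} = (E_\U^{d-i})^{\vee,f}$ is the identity on underlying sets. With that correction your conclusion --- that the image is exactly the finite dual --- goes through unchanged.
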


\begin{proof}
The proof is similar to that of \cite[Prop. 7.18]{Ext}.  The fact that the map is injective with image $( E_\U^{d-i})^{\vee,f}$ comes from the proof of Lemma \ref{lemma-XUduality}.  The fact that $\Delta_\U^i$ respects the action of $H_\U$ follows exactly as in \textit{op. cit.}, using Lemma \ref{lemma-JU-antiinvolution} in place of Proposition 6.1 and Lemma \ref{lemma-traceantiinvolution} in place of Corollary 7.17.
\end{proof}

\subsubsection{The top cohomology}

We now obtain some results about the top cohomology $E_\U^d$.  We assume throughout this section that $k$ is of characteristic $p$ and that $\U$ is $p$-torsion-free.

Recall from Lemma \ref{lemma:SUtriv} that we view the one-dimensional vector space $k$ as a $(G,H_\U)$-bimodule, with $G$ acting trivially on the left, and $H_\U$ acting by $\chi_{\triv}^\U$ on the right.

\begin{lemma}\label{lemma:Hdtriv}
Suppose $k$ is of characteristic $p$, and that $\U$ is $p$-torsion-free.  As either a left or right $H_\U$-module, the space $H^d(\U,k)$ is equal to the character $\chi_{\triv}^\U$.
\end{lemma}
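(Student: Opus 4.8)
The idea is to reduce the statement about the $H_\U$-module structure of $H^d(\U,k)$ to a statement that can be checked on the explicit description of the Yoneda product in degree $0$ combined with the decomposition \eqref{f:H*dec}, or alternatively to pin it down via the trace map $\trace_\U$. I would take the second route. By Lemma \ref{lemma:SUtriv}, the map $\trace_\U : \X_\U \longrightarrow k$ is $G$-equivariant on the left and right $H_\U$-equivariant when $k$ carries the character $\chi_{\triv}^\U$; hence the induced map $\trace_\U^d = H^d(\U,\trace_\U) : H^d(\U,\X_\U) \longrightarrow H^d(\U,k)$ is a map of right $H_\U$-modules, where the target $H^d(\U,k)$ is given the right $H_\U$-action coming from $\X_\U$ (not yet known to be $\chi_{\triv}^\U$) and the source is the top piece $E_\U^d$. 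So the plan is first to show $\trace_\U^d$ is surjective, and second to identify the right $H_\U$-action it transports.

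First I would establish surjectivity of $\trace_\U^d$. Using \eqref{f:H*dec} we may work one double coset at a time: for $g = 1$, the summand $H^d(\U,\X_\U(1)) = H^d(\U,k)$ maps to $H^d(\U,k)$ via $\trace_{\U,1}$, and by diagram \eqref{f:traceg} this is $\cores_\U^{\U}$ composed with $\Sh_1^\U$, i.e.\ essentially the identity; so $\trace_\U^d$ restricted to the $g=1$ summand is already an isomorphism $H^d(\U,\X_\U(1)) \xrightarrow{\sim} H^d(\U,k)$ (here one uses $\dim_k H^d(\U,k) = 1$ from Proposition \ref{Uduality} together with Remark \ref{rema:Fpk}). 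In particular $\trace_\U^d$ is surjective, and since $H^d(\U,k)$ is one-dimensional it is just a quotient of $E_\U^d$ as a right $H_\U$-module.

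Next I would compute the right $H_\U$-action on $H^d(\U,k)$ transported through $\trace_\U^d$. Pick the generator $\xi \in H^d(\U,k)$ and lift it to the class $\widetilde\xi$ in the $g=1$ summand $H^d(\U,\X_\U(1))$ with $\Sh_1^\U(\widetilde\xi) = \xi$. The right action of $\tau_g^\U \in H_\U$ on $E_\U^d$ is given by the Yoneda product of Proposition \ref{yoneda-product-U} (with $\alpha = \widetilde\xi$ in degree $d$ at $x=1$ and $\beta = \tau_g^\U$ in degree $0$ at $y = g$); the resulting class lies in the summands indexed by $z$ with $\U z \U \subseteq \U g \U$, i.e.\ only $z = g$, and its Shapiro image is a sum of corestrictions of restrictions of $\Sh_1^\U(\widetilde\xi) = \xi$ cup a degree-$0$ class, indexed by a set whose cardinality is $[\U : \U_g]$ (after unwinding $\U_{x^{-1}}\backslash(x^{-1}\U z \cap \U y\U)/\U_{z^{-1}}$ with $x=1$, $z=y=g$). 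Applying $\trace_\U^d$ and using diagram \eqref{f:traceg} to collapse the Shapiro/corestriction bookkeeping, each of these $[\U:\U_g]$ terms contributes $\xi$, so $\xi \cdot \tau_g^\U = [\U:\U_g]\,\xi = \chi_{\triv}^\U(\tau_g^\U)\,\xi$. This identifies the right $H_\U$-module as $\chi_{\triv}^\U$.

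For the left $H_\U$-module structure I would invoke the anti-involution $\anti_\U$. By Lemma \ref{lemma-JU-antiinvolution}, $\anti_\U$ is an involutive anti-automorphism of $E_\U^*$, so it intertwines the left and right actions; by Lemma \ref{lemma-traceantiinvolution} we have $\trace_\U^d \circ \anti_\U = \trace_\U^d$, so $\anti_\U$ preserves the generator $\xi$ up to the scalar fixed by $\trace_\U^d$, hence acts as the identity on the one-dimensional space $H^d(\U,k)$ (viewed as quotient of $E_\U^d$). Therefore the left action is obtained from the right action of $\tau_g^\U$ by precomposing with $\anti_\U(\tau_g^\U) = \tau_{g^{-1}}^\U$ (Lemma \ref{anti-inv-on-H}), giving left-multiplication by $[\U : \U_{g^{-1}}] = [\U g^{-1}\U : \U] = [\U g \U : \U] = [\U : \U_g]$, the last equality from the proof of \cite[\S I, Prop. 3.6]{vigneras:book} (as in Lemma \ref{lemma-JU-triv}). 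Thus both the left and right $H_\U$-module structures on $H^d(\U,k)$ are the character $\chi_{\triv}^\U$.

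The main obstacle I anticipate is the careful bookkeeping in the Yoneda-product computation: one must verify that with $x = 1$ the indexing set in \eqref{f:yon} really has cardinality $[\U:\U_g]$ and that each $\widetilde\Gamma_{z,h}$, after applying $\trace_\U^d$ and using \eqref{f:traceg}, contributes exactly one copy of $\xi$ (in particular that the relevant cup products in degree $0$ act as the identity on the top class). Everything else is a formal consequence of Lemma \ref{lemma:SUtriv}, Proposition \ref{Uduality}, diagram \eqref{f:traceg}, and the anti-involution results already established.
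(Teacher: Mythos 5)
Your overall strategy --- routing everything through $\trace_\U^d$ and the anti-involution --- can be made to work, but it is considerably more involved than the paper's argument, and it contains a concrete bookkeeping error in the Yoneda computation. For the right action, the paper's proof is a one-liner: the right $H_\U$-module structure on $H^d(\U,k)$ is the one induced by the right $H_\U$-module structure on the coefficient module $k$, which was \emph{declared} to be $\chi_{\triv}^\U$ in the paragraph preceding the lemma (cf.\ Lemma \ref{lemma:SUtriv}); nothing needs to be computed. Your Yoneda calculation is in fact circular once you have invoked the right $H_\U$-equivariance of $\trace_\U^d$ from Lemma \ref{lemma:SUtriv}: equivariance together with $\trace_\U^d(\widetilde\xi) = \xi$ already forces $\xi \cdot \tau_g^\U = \chi_{\triv}^\U(\tau_g^\U)\,\xi$. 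Moreover the cardinality you announce is wrong: with $x = 1$, $z = y = g$ in \eqref{f:yon}, the indexing set $\U_{x^{-1}}\backslash(x^{-1}\U z\cap \U y\U)/\U_{z^{-1}} = \U\backslash \U g/\U_{g^{-1}}$ is a \emph{singleton}, not of size $[\U:\U_g]$. The scalar $[\U:\U_g]$ enters differently, namely because $\trace_\U^d$ sends the single term (via diagram \eqref{f:traceg}) through $\cores_\U^{\U_g}\circ\res^{\U}_{\U_g}$, and this composite is multiplication by $[\U:\U_g]$. So the conclusion is right but the reason you give for it is not.

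For the left action, your route via the anti-involution is correct and is a genuine alternative to the paper's argument: using $\anti_\U(\widetilde\xi) = \widetilde\xi$ (since $\widetilde\xi$ sits in the $g=1$ summand), Lemma \ref{lemma-JU-antiinvolution}, Lemma \ref{lemma-traceantiinvolution}, Lemma \ref{anti-inv-on-H} and Lemma \ref{lemma-JU-triv}, one reduces the left action to the (trivial) right action. The paper instead writes the left action of $\tau_g^\U$ on $H^d(\U,k)$ explicitly as $\cores_\U^{\U_g}\circ g_*\circ\res^\U_{\U_{g^{-1}}}$, applies Corollary \ref{cor-conjugationcorestriction} to replace $\cores_\U^{\U_g}\circ g_*$ by $\cores_\U^{\U_{g^{-1}}}$, and finishes with $\cores\circ\res = [\U:\U_{g^{-1}}]$. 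Both proofs rest on the same nontrivial input (triviality of the mod $p$ orientation character, Corollary \ref{cor-conjugationcorestriction}); you invoke it indirectly through Lemma \ref{lemma-traceantiinvolution}, the paper invokes it directly. The paper's version avoids the surjectivity discussion for $\trace_\U^d$, the Yoneda unwinding, and the duality machinery, so it is the shorter path.
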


\begin{proof}
The statement about the right $H_\U$-action follows by functoriality in the coefficients, and therefore we focus on the left $H_\U$-action.  In this case, the action of $\tau_g^\U$ is given by the composition
\[ H^d(\U,k) \xrightarrow{\textnormal{res}^{\U}_{\U_{g^{-1}}}} H^d(\U_{g^{-1}}, k) \stackrel{g_*}{\longrightarrow} H^d(\U_g,k) \xrightarrow{\textnormal{cores}^{\U_g}_{\U}} H^d(\U,k) \ . \]
By Corollary \ref{cor-conjugationcorestriction} and Remark \ref{rema:Fpk}, this composition is equal to
\[ H^d(\U,k) \xrightarrow{\textnormal{res}^{\U}_{\U_{g^{-1}}}} H^d(\U_{g^{-1}}, k)  \xrightarrow{\textnormal{cores}^{\U_{g^{-1}}}_{\U}} H^d(\U,k) \ , \]
which is equal to multiplication by $[\U : \U_{g^{-1}}] = \chi_{\triv}^{\U}(\tau^{\U}_{g^{-1}}) = \chi_{\triv}^{\U}(\anti_{\U}(\tau^{\U}_{g})) = \chi_{\triv}^{\U}(\tau^{\U}_g)$ (by Lemmas \ref{anti-inv-on-H} and \ref{lemma-JU-triv}).
\end{proof}

\begin{corollary}
\label{cor:top-coh-ses}
Suppose $k$ is of characteristic $p$, and that $\U$ is $p$-torsion-free.  Then the $(G,H_\U)$-equivariant map $\trace_\U: \X_{\U} \longrightarrow k$ induces an exact sequence of $H_\U$-bimodules
\begin{equation}
 \label{cor:top-coh-ses-eqn}
 0 \longrightarrow \ker(\trace_\U^d) \longrightarrow E^d_\U \stackrel{\trace_\U^d}{\longrightarrow} H^d(\U,k) \longrightarrow 0 \ . 
\end{equation}
\end{corollary}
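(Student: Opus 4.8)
The plan is to show that $\trace_\U^d$ is a surjective morphism of $H_\U$-bimodules, from which \eqref{cor:top-coh-ses-eqn} is immediate: the kernel of a surjection of bimodules is automatically a sub-bimodule.

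First I would establish the bimodule-compatibility of $\trace_\U^d$. By Lemma~\ref{lemma:SUtriv}, $\trace_\U\colon \X_\U \longrightarrow k$ is a morphism in $\Mod(G)$ which is moreover right $H_\U$-equivariant, where $k$ carries the character $\chi_{\triv}^\U$; applying the functor $H^d(\U,-)$ and using its functoriality in the coefficients shows that $\trace_\U^d = H^d(\U,\trace_\U)$ respects the right $H_\U$-actions (the coefficient action on $E_\U^d = H^d(\U,\X_\U)$, and $\chi_{\triv}^\U$ on $H^d(\U,k)$). For the left action I would invoke that, under the identification \eqref{f:frob}, the left $H_\U$-module structure on $H^d(\U,V)$ for a smooth $G$-representation $V$ is the ``Hecke operator'' action given by the restriction--conjugation--corestriction formula recalled in the proof of Lemma~\ref{lemma:Hdtriv}; this action is functorial with respect to $G$-equivariant maps in the coefficients, so $\trace_\U^d$, being induced by the $G$-map $\trace_\U$, commutes with it. Since Lemma~\ref{lemma:Hdtriv} identifies both the left and right $H_\U$-module structures on $H^d(\U,k)$ with $\chi_{\triv}^\U$, this shows $\trace_\U^d$ is a morphism of $H_\U$-bimodules.

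For surjectivity I would use the decompositions $E_\U^d = \bigoplus_{g\in \U\backslash G/\U} H^d(\U,\X_\U(g))$ and $\trace_\U = \sum_{g\in\U\backslash G/\U}\trace_{\U,g}$, where $\trace_{\U,g}$ vanishes on $\X_\U(g')$ whenever $\U g\U\neq \U g'\U$. By the commutativity of diagram~\eqref{f:traceg}, the restriction of $\trace_\U^d$ to the summand $H^d(\U,\X_\U(g))$ equals $\cores^{\U_g}_\U\circ \Sh_g^\U$, which is an isomorphism onto $H^d(\U,k)$ by Corollary~\ref{cor:cores-isom} together with Remark~\ref{rema:Fpk}; in fact already the summand indexed by $g=1$ suffices, since there $\U_1 = \U$ and $\trace_\U^d$ restricts to the Shapiro isomorphism $\Sh_1^\U$. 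Hence $\trace_\U^d$ is surjective, and the short exact sequence \eqref{cor:top-coh-ses-eqn} follows.

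The only step that is not purely formal is the first one: one must carefully match the abstract $H_\U$-bimodule structure on $E_\U^d$ coming from the Yoneda product with the explicit one-sided descriptions (coefficient action on the right, Hecke-operator action on the left), so that naturality of $\trace_\U$ yields the bilinearity of $\trace_\U^d$. I do not expect a genuine difficulty here, only bookkeeping with the identifications set up in Subsection~\ref{subsec:Ext}.
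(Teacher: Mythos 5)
Your proof is correct and follows the same route as the paper: the paper's proof consists only of the observation that surjectivity follows from taking $g=1$ in diagram \eqref{f:traceg}, exactly as you note. The bimodule-equivariance of $\trace_\U^d$, which you spell out carefully via Lemma \ref{lemma:SUtriv} and functoriality of the Hecke-operator action, is treated by the paper as implicit in the phrase ``the $(G,H_\U)$-equivariant map $\trace_\U$ induces''; your elaboration of that point is sound bookkeeping but not a departure from the paper's argument.
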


\begin{proof}
It suffices to show that $\trace_\U^d$ is surjective, which follows (for example) from taking $g = 1$ in the diagram \eqref{f:traceg}.
\end{proof}

We now find a sufficient condition for the splitting of the short exact sequence \eqref{cor:top-coh-ses-eqn}.  Let us choose a set $\EuScript{D}$ of double coset representatives of $\U \backslash G / \U$ containing $1$, and define a subset $\EuScript{E} \subseteq \EuScript{D}$ by
\[ \EuScript{E} := \{d \in \EuScript{D}: [\U:\U_d] \neq 0 \quad \textnormal{in $k$}\}\ . \]
Equivalently, we have $d \in \EuScript{E}$ if and only if $\chi_{\triv}^{\U}(\tau^{\U}_d) \neq 0$.

Recall that $E_\U^0 = \bigoplus_{d\in \EuScript{D}}k\tau_{d}^{\U}$, so that $(E^0_\U)^\vee = \prod_{d\in \EuScript{D}}k\tau^{\U,*}_d$, where $\tau^{\U,*}_d$ is the linear form sending $\tau^{\U}_d$ to $1$ and $\tau^{\U}_{d'}$ to $0$ for $d' \neq d$.  Each $\tau_d^{\U,*}$ lies in $(E_\U^0)^{\vee,f}$, and by equation \eqref{f:UDelta}, the element $\tau_d^{\U,*}$ may be characterized by the property
\begin{equation}
\label{eqn:topcohbasis}
\textnormal{tr}_\U\circ \trace_\U^d\left((\Delta_\U^d)^{-1}(\tau_d^{\U,*}) \cup \tau_{d'}^\U\right) = \begin{cases}1 & \textnormal{if $d' = d$}, \\ 0 & \textnormal{if $d' \neq d$}.\end{cases} 
\end{equation}
We define
\[ \varphi := \sum_{d \in \EuScript{E}} [\U:\U_d]\tau_{d}^{\U,*} \in \prod_{d\in \EuScript{D}}k\tau^{\U,*}_d  = (E^0_{\U})^\vee\ . \]

Exactly as in \cite[\S 8]{Ext}, the short exact sequence of $H_\U$-bimodules
\begin{equation}
\label{f:ses-triv}
0 \longrightarrow \ker(\chi_{\triv}^{\U}) \longrightarrow E^0_{\U} \xrightarrow{\chi_{\triv}^{\U}}\chi_{\triv}^{\U}\longrightarrow 0 \ 
\end{equation}
gives a surjection of $H_\U$-bimodules
\begin{equation}
\label{f:phi-in-ker}
(E^0_{\U})^{\vee} \longtwoheadrightarrow \ker(\chi_{\triv}^{\U})^{\vee} \ .
\end{equation}

\begin{lemma}
Suppose $k$ is of characteristic $p$, and that $\U$ is $p$-torsion-free.  Then the kernel of \eqref{f:phi-in-ker} is a one-dimensional vector space with basis given by $\varphi$.  In particular, by dualizing \eqref{f:ses-triv} we see that $H_\U$ acts on $\varphi$ by the character $\chi_{\triv}^{\U,\vee} = \chi_{\triv}^{\U}$.
\end{lemma}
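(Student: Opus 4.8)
The plan is to derive the statement purely formally, by applying the exact functor $\Hom_k(-,k)$ to the short exact sequence of $H_\U$-bimodules \eqref{f:ses-triv}. Since $k$ is a field, dualization is exact, so we obtain a short exact sequence of $H_\U$-bimodules
\[ 0 \longrightarrow (\chi_{\triv}^\U)^\vee \longrightarrow (E_{\U}^0)^\vee \longrightarrow \ker(\chi_{\triv}^\U)^\vee \longrightarrow 0, \]
in which the surjection on the right is by construction the map \eqref{f:phi-in-ker}. Consequently its kernel is the image of the injection $(\chi_{\triv}^\U)^\vee \hookrightarrow (E_{\U}^0)^\vee$, which is a one-dimensional $k$-subspace. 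Thus there are two things to verify: that this image is exactly the line $k\varphi$, and that the bimodule structure it inherits is $\chi_{\triv}^\U$.

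For the first verification: the injection $(\chi_{\triv}^\U)^\vee \hookrightarrow (E_{\U}^0)^\vee$ is dual to the bimodule surjection $\chi_{\triv}^\U : E_\U^0 \longrightarrow \chi_{\triv}^\U$, i.e.\ it is precomposition with $\chi_{\triv}^\U$. Hence a generator of $(\chi_{\triv}^\U)^\vee \cong \Hom_k(k,k)$, namely $\id_k$, is sent to the linear functional $\id_k \circ \chi_{\triv}^\U = \chi_{\triv}^\U$ itself, now regarded as an element of $(E_{\U}^0)^\vee = \prod_{d \in \EuScript{D}} k\tau^{\U,*}_d$. Evaluating on the basis $(\tau_d^\U)_{d\in\EuScript{D}}$ of $E_\U^0$ gives $\tau_d^\U \longmapsto \chi_{\triv}^\U(\tau_d^\U) = [\U:\U_d]$, so this functional is $\sum_{d \in \EuScript{D}} [\U:\U_d]\,\tau^{\U,*}_d$. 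By the very definition of $\EuScript{E}$, the coefficient $[\U:\U_d]$ vanishes in $k$ for every $d \notin \EuScript{E}$, so the sum collapses to $\sum_{d\in\EuScript{E}}[\U:\U_d]\,\tau^{\U,*}_d = \varphi$. Therefore the kernel of \eqref{f:phi-in-ker} equals $k\varphi$, as claimed.

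For the second verification: since the displayed short exact sequence consists of $H_\U$-bimodule maps, the line $k\varphi$ is an $H_\U$-sub-bimodule of $(E_{\U}^0)^\vee$ which is isomorphic, as a bimodule, to $(\chi_{\triv}^\U)^\vee$. It then remains to note that $(\chi_{\triv}^\U)^\vee = \chi_{\triv}^\U$: the $k$-linear dual of a one-dimensional $(H_\U,H_\U)$-bimodule on which both the left and the right actions are given by a single character $\chi$ is again one-dimensional, with left and right actions interchanged; since both were $\chi$ to begin with, the dual bimodule is again $\chi$. Applying this with $\chi = \chi_{\triv}^\U$ (the bimodule appearing in \eqref{f:ses-triv} carries exactly this structure) shows that $H_\U$ acts on $\varphi$ through the character $\chi_{\triv}^{\U,\vee} = \chi_{\triv}^\U$.

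There is no real obstacle here — the whole argument is linear-algebra bookkeeping with duals. The only points that warrant a moment's care are: (i) that the dual in play is the \emph{full} $k$-linear dual $\Hom_k(E_\U^0,k) = \prod_{d\in\EuScript{D}} k\tau^{\U,*}_d$, so that the (possibly infinite) expression $\varphi$ is automatically a legitimate element of $(E_{\U}^0)^\vee$ with no convergence or finiteness issue; and (ii) the short but easy-to-garble check that dualizing the character bimodule $\chi_{\triv}^\U$ returns $\chi_{\triv}^\U$ and not a twist thereof.
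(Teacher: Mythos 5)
Your proof is correct and takes essentially the same approach as the paper: both dualize the short exact sequence \eqref{f:ses-triv} to identify the kernel of \eqref{f:phi-in-ker} with $(\chi_{\triv}^\U)^\vee$ and compute that this line is spanned by $\varphi$. The paper determines the kernel by imposing the vanishing conditions $\psi(\tau_d^\U - [\U:\U_d]\tau_1^\U) = 0$ on an arbitrary element, while you trace the generator $\id_k$ of $(\chi_{\triv}^\U)^\vee$ forward through the dual injection; this is a bookkeeping variant, not a genuinely different route.
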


\begin{proof}
Let $\psi = \sum_{d \in \EuScript{D}} a_d \tau_d^{\U,*} \in \prod_{d\in \EuScript{D}}k \tau^{\U,*}_d$ denote a basis for the kernel of \eqref{f:phi-in-ker}.  Since $\tau_d^{\U} - [\U:\U_d] \tau_1^{\U}$ lies in the kernel of $\chi_{\triv}^{\U}$ for every $d \in \EuScript{D}$, and since the restriction of $\psi$ to $\ker(\chi_{\triv}^{\U})$ is 0, we get
\[ 0 = \psi\left(\tau_d^{\U} - [\U:\U_d] \tau_1^{\U}\right) = a_d - [\U:\U_d]a_1. \]
Hence, we see that $\psi = a_1\varphi$, which gives the claim.
\end{proof}

Suppose now that $\EuScript{E}$ is finite, so that $\varphi \in (E^0_{\U})^{\vee,f}$ (relative to the decomposition $E_\U^0 = \bigoplus_{d\in \EuScript{D}}k\tau_{d}^{\U}$).  The surjection \eqref{f:phi-in-ker} and the above lemma give a short exact sequence of $H_\U$-bimodules
\[ 0 \longrightarrow k\varphi \longrightarrow (E^0_{\U})^{\vee,f} \longrightarrow \ker(\chi_{\triv}^{\U})^{\vee,f} \longrightarrow 0 \ , \]
where $\ker(\chi_{\triv}^{\U})^{\vee,f}$ denotes the image of $(E^0_\U)^{\vee,f} \subseteq (E^0_\U)^\vee$ under the surjection $(E^0_{\U})^\vee \longtwoheadrightarrow \ker(\chi_{\triv}^{\U})^\vee$.

\begin{proposition} \label{prop:decEd}Suppose $k$ is of characteristic $p$, and that $\U$ is $p$-torsion-free.  Suppose further that $\EuScript{E}$ is finite and $\sum_{d\in \EuScript{E}} [\U:\U_d]$ is invertible in $k$.  We then have an isomorphism of $H_\U$-bimodules
\[ E^d_\U\cong H^d(\U,k)\oplus  \ker(\trace_\U^d) \stackrel{\textnormal{Lem.}~\ref{lemma:Hdtriv}}{\cong} \chi_{\triv}^{\U} \oplus  \ker(\trace_\U^d)\ . \]
 \end{proposition}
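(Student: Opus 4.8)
The plan is to exhibit an $H_\U$-bimodule-equivariant splitting of the short exact sequence of $H_\U$-bimodules
\[ 0 \longrightarrow \ker(\trace_\U^d) \longrightarrow E_\U^d \xrightarrow{\ \trace_\U^d\ } H^d(\U,k) \longrightarrow 0 \]
provided by Corollary \ref{cor:top-coh-ses}, and then to rewrite $H^d(\U,k)$ as $\chi_{\triv}^\U$ using Lemma \ref{lemma:Hdtriv}. Because $H^d(\U,k)$ is one-dimensional, such a splitting is the same datum as an element $\eta \in E_\U^d$ that (a) spans a sub-$H_\U$-bimodule on which $H_\U$ acts on both sides through $\chi_{\triv}^\U$, and (b) satisfies $\trace_\U^d(\eta) \neq 0$: one then sends the generator $\textnormal{tr}_\U^{-1}(1)$ of $H^d(\U,k)$ to a suitable scalar multiple of $\eta$.

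For (a) I would set $\eta := (\Delta_\U^d)^{-1}(\varphi)$. Since $\EuScript{E}$ is finite we have $\varphi \in (E_\U^0)^{\vee,f}$, and the lemma immediately preceding the statement shows $k\varphi$ is a sub-$H_\U$-bimodule of $(E_\U^0)^{\vee,f}$ on which $H_\U$ acts on both sides by $\chi_{\triv}^\U$. As $G$ is reductive, hence unimodular, Lemma \ref{lemma-JU-triv} gives $\chi_{\triv}^\U \circ \anti_\U = \chi_{\triv}^\U$, so, transporting along the canonical identifications of Remark \ref{etavee}, the same holds for $k\varphi$ viewed inside $({}^{\anti_\U}(E_\U^0)^{\anti_\U})^{\vee,f}$. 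Applying the inverse of the $H_\U$-bimodule isomorphism $\Delta_\U^d \colon E_\U^d \stackrel{\sim}{\longrightarrow} ({}^{\anti_\U}(E_\U^0)^{\anti_\U})^{\vee,f}$ furnished by Lemma \ref{lemma:duality} (with $i = d$) then exhibits $k\eta$ as a sub-$H_\U$-bimodule of $E_\U^d$ isomorphic to $\chi_{\triv}^\U$.

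The substantive point is (b), which I would establish by computing $\trace_\U^d(\eta)$ explicitly. For a double coset representative $g$ set $\xi_g := (\Delta_\U^d)^{-1}(\tau_g^{\U,*})$; since $\Delta_\U^d$ is block-diagonal with respect to the decomposition \eqref{f:H*dec} (a consequence of the orthogonality \eqref{f:orth}), one has $\xi_g \in H^d(\U,\X_\U(g))$, and hence $\eta = \sum_{d\in\EuScript{E}}[\U:\U_d]\xi_d$. The key observation is that the double coset $\U g\U$ is compact, so that $\tau_g^\U = \chara_{\U g\U}$ is an honest element of $H^0(\U,\X_\U(g))$ which acts as the identity, by pointwise multiplication, on all of $\X_\U(g)$; consequently cupping with $\tau_g^\U$ is the identity on the block $H^*(\U,\X_\U(g))$, so $\xi_g \cup \tau_g^\U = \xi_g$. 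Feeding this into the characterization \eqref{eqn:topcohbasis} of $\xi_g$ yields
\[ \textnormal{tr}_\U\big(\trace_\U^d(\xi_g)\big) = \textnormal{tr}_\U\big(\trace_\U^d(\xi_g \cup \tau_g^\U)\big) = 1, \]
i.e. $\trace_\U^d(\xi_g) = \textnormal{tr}_\U^{-1}(1)$, the \emph{same} generator of the one-dimensional space $H^d(\U,k)$ for every $g$. Summing, $\trace_\U^d(\eta) = \big(\sum_{d\in\EuScript{E}}[\U:\U_d]\big)\,\textnormal{tr}_\U^{-1}(1)$, which is nonzero precisely because $\sum_{d\in\EuScript{E}}[\U:\U_d]$ is invertible in $k$ by hypothesis.

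With (a) and (b) in hand, the $H_\U$-bimodule map $H^d(\U,k) \longrightarrow E_\U^d$ sending $\textnormal{tr}_\U^{-1}(1)$ to $\big(\sum_{d\in\EuScript{E}}[\U:\U_d]\big)^{-1}\eta$ is a section of $\trace_\U^d$, which splits the sequence and gives $E_\U^d \cong H^d(\U,k) \oplus \ker(\trace_\U^d) \cong \chi_{\triv}^\U \oplus \ker(\trace_\U^d)$ as $H_\U$-bimodules. I expect the only genuinely delicate step to be the independence of $g$ in $\trace_\U^d(\xi_g) = \textnormal{tr}_\U^{-1}(1)$: it is precisely this coincidence that converts the invertibility hypothesis into the nonvanishing of $\trace_\U^d(\eta)$, and it rests on the compactness of double cosets together with careful bookkeeping of the cup product on the blocks $H^*(\U,\X_\U(g))$ — everything else being formal manipulation with $\Delta_\U^d$, $\anti_\U$ and the decomposition \eqref{f:H*dec}, along the lines of \cite[\S 8]{Ext}.
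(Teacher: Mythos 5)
Your proposal is correct and follows essentially the same route as the paper's proof: set $\eta := (\Delta_\U^d)^{-1}(\varphi)$, observe that $k\eta$ is a sub-bimodule on which $H_\U$ acts by $\chi_{\triv}^\U$ (via Lemma \ref{lemma:duality} and the lemma preceding the statement), and show $\trace_\U^d(\eta) \neq 0$ by reducing to $\xi_d \cup \tau_d^\U = \xi_d$ (which the paper uses in the form $(\Delta_\U^d)^{-1}(\tau_d^{\U,*}) \cup \tau_d^\U = (\Delta_\U^d)^{-1}(\tau_d^{\U,*})$ inside its computation of $\textnormal{tr}_\U \circ \partial(\varphi)$). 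The only cosmetic difference is that the paper packages the splitting in a commutative diagram of short exact sequences with maps $\partial$ and $\varepsilon$, whereas you construct the section directly; the underlying computation is identical.
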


\begin{proof}
This is identical to the proof of \cite[Prop. 8.6]{Ext}.  Namely, we have a diagram of $H_\U$-bimodules
\begin{center}
\begin{tikzcd}
0 \ar[r] & {}^{\anti_\U}(k\varphi)^{\anti_\U} \ar[d, "\partial"] \ar[r] & ({}^{\anti_\U}(E_\U^0)^{\anti_\U})^{\vee, f} \ar[d, "(\Delta_\U^d)^{-1}", "\rotatebox{90}{$\sim$}"'] \ar[r] & ({}^{\anti_\U}\ker(\chi_{\triv}^{\U})^{\anti_\U})^{\vee,f} \ar[r] & 0 \\
0 & H^d(\U,k)\ar[l] & E^d_{\U}  \ar[l, "\trace_\U^d"'] & \ker(\trace_\U^d) \ar[l] \ar[u, "\varepsilon"] & 0 \ar[l]
\end{tikzcd}
\end{center}
Here $\partial$ and $\varepsilon$ are chosen so as to make the diagram commutative; in particular, we have
\begin{eqnarray*}
\textnormal{tr}_\U \circ\partial(\varphi) & = & \sum_{d \in \EuScript{E}} [\U:\U_d]\textnormal{tr}_\U \circ\trace_\U^d\left((\Delta_\U^d)^{-1}(\tau_d^{\U,*})\right) \\
  & = & \sum_{d \in \EuScript{E}} [\U:\U_d]\textnormal{tr}_\U \circ\trace_\U^d\left((\Delta_\U^d)^{-1}(\tau_d^{\U,*}) \cup \tau_d^{\U}\right) \\
 & = & \sum_{d\in \EuScript{E}} [\U:\U_d]\neq 0  \ .
\end{eqnarray*}
Since $\textnormal{tr}_\U$ is an isomorphism, we conclude that $\partial$ is an isomorphism (since both the domain and codomain are one-dimensional over $k$).  Consequently, $\varepsilon$ is also an isomorphism, and moreover $\partial^{-1}$, composed with the inclusion and $(\Delta_{\U}^d)^{-1}$, gives a splitting of $\trace_\U^d$.  This gives the claim.
\end{proof}

\subsection{Parahoric Hecke $\Ext$-algebras for split groups\label{sec:parah}}

Next, we obtain even more explicit formulas for various products in $E_\U^*$.  For this purpose, we suppose from now on that $\mathbf G$ is $\mathfrak F$-split.

\subsubsection{Bruhat--Tits theory\label{subsec:bt}} 

We now recall the elements of Bruhat--Tits theory which we will use, following \cite[\S I.1]{Sch-St}. See also \cite[\S\S 2, 4]{OS1}.

Let $\mathscr{X}$ (resp.,   $\mathscr X^{\textrm{ext}}$) be the semisimple  (resp., extended)  building of $G$ and  $\rm pr: \mathscr X^{ext} \longrightarrow \mathscr X$  the canonical projection map.  We fix an $\mathfrak{F}$-split maximal torus $\mathbf{T}$ in $\mathbf{G}$, put $T := \mathbf{T}(\mathfrak{F})$, and let $T^0$ denote the maximal compact subgroup of $T$ and $T^1$ its pro-$p$ Sylow subgroup.  The choice of $T$ is equivalent to the choice of  an apartment $\Aa$ in $\mathscr{X}$. We fix a chamber $C$ in $\Aa$  as well as a hyperspecial vertex $x_0$ of $C$.  Associated with each facet $F$ is, in a $G$-equivariant way, a smooth affine $\mathfrak{O}$-group scheme $\mathbf{G}_F$ whose generic fiber is $\mathbf{G}$ and such that $\mathbf{G}_F(\mathfrak{O})$ is the pointwise stabilizer in $G $ of $\pr^{-1}(F)$. Its identity component is denoted by $\mathbf{G}_F^\circ$ so that the reduction $\overline{\mathbf{G}}_F^\circ := \mathbf{G}_F^\circ \times_{\mathfrak{O}}\bbF_q$ is a connected smooth algebraic group over $\bbF_q$. The subgroup $\mathbf{G}_F^\circ(\mathfrak{O})$ of $G $ is open compact, and is called a parahoric subgroup of $G$.  The stabilizer of $x_0$ in $G$ contains a good maximal compact subgroup $K$ of $G$ which in turn contains an Iwahori subgroup $J$ that fixes $C$ pointwise. We have 
\[ \mathbf{G}_C^\circ(\Oo) = J \qquad \textrm{and} \qquad \mathbf{G}_{x_0}^\circ(\Oo) = K \ . \]
(Note that $\mathbf{G}_{x_0}^\circ = \mathbf{G}_{x_0}$ since the special fiber is connected; see \cite[\S 2]{OS1}.)

Denote by $\overline{\mathbf{B}}$ the Borel subgroup of $\overline{\mathbf{G}}_{x_0}$ which is the image of 
the natural morphism $\overline{\mathbf{G}}_C \longrightarrow \overline{\mathbf{G}}_{x_0}$.  Further, we let $\overline{\mathbf{N}}$ denote the unipotent radical of $\overline{\mathbf{B}}$ and $\overline{\mathbf{T}}$ its Levi subgroup. Set 
\[ K^1 :=  \Ker \big(\mathbf{G}_{x_0}(\mathfrak O) \xrightarrow{\textnormal{proj}} \overline{\mathbf{G}}_{x_0} (\mathbb{F}_q) \big) \qquad \textrm{and} \qquad  I := \{g \in K : \textnormal{proj}(g) \in \overline{\mathbf{N}}(\mathbb{F}_q) \} \ . \]
We then have a chain $K^1 \subseteq  I \subseteq  J  \subseteq  K$ of open compact subgroups in $G$ such that
\[ K / K ^1  =   \overline{\mathbf{G}}_{x_0} (\mathbb{F}_q) ~\supseteq~  J / K ^1 = \overline{\mathbf{B}}(\mathbb{F}_q) ~\supseteq~ I/ K ^1 = \overline{\mathbf{N}}(\mathbb{F}_q) \ . \]
The subgroup $I$ is pro-$p$ and is called the pro-$p$ Iwahori subgroup. It is a maximal  pro-$p$ subgroup in $ K $. We have $T\cap J= T^0$ and $T\cap I= T^1$. The quotient $ J /I\cong T^0/T^1$ identifies with $ \overline{\mathbf{T}}(\mathbb{F}_q)$.  In particular, its order is prime-to-$p$.

Consider the associated root datum $(X^*({T }), \Phi, X_*({T }), \check{\Phi})$ where  $X^*({T })$ and $X_*({T })$ denote respectively the set of algebraic characters and cocharacters
of $T$.  Similarly, let $X^*({ Z})$ and $X_*({Z})$ denote respectively the set of algebraic characters and cocharacters of the connected center ${Z}$ of $G$. We note that this root system is reduced by our assumption that the group $\mathbf{G}$ is $\mathfrak F$-split. 
Denote by 
\[ \lp \, {}_- \,, {}_-\, \rp :X_*({T })\times X^*({T })\longrightarrow \mathbb Z \]
the natural perfect pairing. Its $\mathbb{R}$-linear extension is also denoted by
$\lp \, {}_- \,, {}_-\, \rp$.
The vector space
$\mathbb R\otimes _{\mathbb Z}(X_*({T })/X_*({Z})) $  (resp. $ \mathbb R\otimes _{\mathbb Z} X_*({T })$)
considered as an affine space on itself identifies with the standard apartment $\mathscr{A}$ (resp.\ $\mathscr{A}^{\textnormal{ext}}$) of the building $\mathscr{X}$ (resp.\ $\mathscr{X}^{\textnormal{ext}}$).

A root $\alpha \in \Phi$ defines a function on $\mathscr{A}$ which we denote  by $x\longmapsto \alpha(x) $. The choice of chamber $C$ amounts to choosing a subset $\Phi^+$ of $\Phi$ of roots $\alpha$ satisfying $\alpha(C)\geq 0$, and we denote by $\Pi $ a basis for $\Phi$ relative to $\Phi^+$. There is a partial order on $\Phi $ given by $\alpha \preceq \beta$ if and only if $\beta -\alpha $ is a linear combination with (integral) nonnegative coefficients of elements in $\Pi$. To $\alpha\in \Phi$ is associated a coroot $\check\alpha\in\check\Phi$ such that $\lp\check\alpha, \alpha   \rp=2$ and a reflection $s_\alpha$ of $\mathscr{A}$ defined by
\[ s_\alpha: x\longmapsto x- \alpha(x)\check\alpha \mod X_*({Z})\otimes_{\mathbb Z}\mathbb R \ . \]
The subgroup of transformations of $\mathscr{A}$ generated by these reflections identifies with the finite Weyl group $\mathfrak W$, defined to be the quotient $N_G (T)/T$. The group ${\mathfrak W}$ is a Coxeter group generated by the set $S := \{s_\alpha : \alpha \in \Pi\}$, and is endowed with a length function $\ell: \mathfrak{W} \longrightarrow \mathbb{Z}_{\geq 0}$ relative to $S$.

To an element $t\in T $ corresponds a vector $\nu(t) \in \mathbb R\otimes _{\mathbb Z}X_*({T })$ characterized by
\[ \lp\nu(t),\, \chi\rp =-\val_{\mathfrak F}(\chi(t))\]  
for all $\chi\in X^*(T).$  The kernel of $\nu$ is the maximal compact subgroup $T^0$ of $T$. The quotient  $\Lambda := T/T^0$ is a free abelian group  with rank equal to $\dim(T)$, and $\nu$ induces an isomorphism $\Lambda \stackrel{\sim}{\longrightarrow} X_*(T)$. The group $\Lambda$ acts by translation on $\Aa$ via $\nu$.
The extended affine Weyl group $W$ is defined to be the quotient of $N_G (T)/T^0$.  The actions of ${\mathfrak W}$ and $\Lambda$ combine into an action of ${W}$ on $\Aa$ as recalled in \cite[p. 102]{Sch-St}.   The extended affine Weyl group $W$ is then the semi-direct product ${\mathfrak W}\ltimes\Lambda$ (\cite[\S 1.9]{Tits}): ${\mathfrak W}$ identifies with the subgroup of $W$ that fixes any point of $\mathscr{A}^{\textnormal{ext}}$ that lifts $x_0$.

The set of affine roots is ${\Phi_{\aff}} := \Phi\times \mathbb Z ={\Phi_{\aff}^+} \sqcup {\Phi_{\aff}^-}$ where
\[ {\Phi_{\aff}^+}:=\{(\alpha, \mathfrak h) ~:~ \alpha \in\Phi, \,\mathfrak h>0\}\cup\{(\alpha ,0) ~:~ \alpha \in\Phi^+\} \] 
and 
\[ {\Phi_{\aff}^-}:=\{(\alpha, \mathfrak h) ~:~ \alpha \in\Phi, \,\mathfrak h<0\}\cup\{(\alpha ,0) ~:~ \alpha \in\Phi^-\} \ . \]
The extended affine Weyl group $W$ acts on $\Phi_{\aff}$ by 
\[ w\lambda \cdot (\alpha, \mathfrak h) = \left(w(\alpha) ,\,\, \mathfrak h-\lp \lambda, \alpha \rp \right) \ ,\] 
where we denote by $w(\alpha)$ the natural action of ${\mathfrak W}$ on roots.
Denoting by $\Pi_{\min}$ the set of roots which are minimal for the partial order $\preceq$, we define the set of simple affine roots by  
\[ \Pi _{\aff} := \{(\alpha , 0) ~:~ \alpha \in\Pi \}\cup\{(\alpha ,1) ~:~ \alpha \in\Pi _{\min}\} \ . \]
We refer to \cite[\S 4.3]{OS1} for the definition of the reflection $s_A$ on $\Aa$ attached to an affine root $A\in \Phi_{\aff}$. The length function $\ell$ on the Coxeter system ${\mathfrak W}$ extends to $W$ by the formula
\[ \ell(w) = \#\{ A\in \Phi_{\aff}^+ ~ : ~ w(A) \in  \Phi_{\aff}^- \} \]
for $w \in W$.

The affine Weyl group is defined as the subgroup
\[ W _{\aff} := \langle s_A  ~:~ A \in \Phi_{\aff}\rangle \]
of $W $. Let $  S_{\aff} := \{s_A ~:~ A \in \Pi _{\aff}\}$. The pair $(W _{\aff}, S_{\aff})$ is a Coxeter system (\cite[V.3.2 Thm.\ 1(i)]{Bki-LA}), and the length function $\ell$ of $W$ restricted to $W _{\aff}$ coincides with the length function of this Coxeter system. Recall (\cite[\S 1.5]{Lu}) that $W _{\aff}$ is a normal subgroup of $W$:   the set $\Omega $ of elements  with length zero  is an abelian subgroup of $W $ and 
 $W $ admits a semi-direct product decomposition $W = \Omega \ltimes W _{\aff}$.  Further, the length $\ell$ is constant on the double cosets $\Omega w \Omega$ for $w \in W $. In particular $\Omega$ normalizes $S_{\aff}$.

\subsubsection{Parahoric combinatorics}  Let $\P$ be a parahoric subgroup of $G$ containing $J$. Thus, $\P$ is of the form $\P = {\mathbf G}_{F_\P}^\circ(\mathfrak O)$ for $F_\P$ a facet contained in $\overline C$. Attached to this subgroup is the subset $\Pi_\P$ of the affine roots in $\Pi_{\aff}$ taking value zero on $F_\P$, or equivalently the subset $S_\P$ of those reflections in $S_{\aff}$ fixing $F_\P$ pointwise. We let
\[ \Phi_\P := \{ (\alpha, \mathfrak h) \in \Phi_{\aff} ~:~ (\alpha, \mathfrak h)|_{F_\P} = 0\} \ , \] 
\[ \Phi_\P^+ := \Phi_\P \cap \Phi^+_{\aff} \ , \quad  \Phi_{\P}^- : = \Phi_{\P} \cap \Phi^-_{\aff} \ , \]
and
\begin{equation}\label{f:WP}
    {\mathfrak W}_\P := \langle s_{(\alpha, \mathfrak h)} ~:~(\alpha, \mathfrak h)|_{F_\P} = 0\rangle \subseteq W_{\aff} \ . 
\end{equation}
The pair $({\mathfrak W}_\P, S_\P)$ is a Coxeter system with ${\mathfrak W}_{\P}$ being the pointwise stabilizer in $W$ of $\pr^{-1}(F_\P)$ (\cite[V.3.3, IV.1.8 Thm.\ 2(i)]{Bki-LA}), the restriction $\ell|_{{\mathfrak W}_\P}$ coincides with its length function (\cite[IV.1.8 Cor.\ 4]{Bki-LA}), and ${\mathfrak W}_\P$ is finite (\cite[V.3.6 Prop.\ 4]{Bki-LA}).

Next, we recall the Bruhat decomposition relative to the Iwahori subgroup $J$: the group $G$ admits a decomposition
\[ G = \bigsqcup_{w\in W}JwJ \ , \]
where the double coset $JwJ$ is defined as $J\dot{w}J$ for any choice of lift $\dot{w}\in N_G(T)$ of $w$.  (Note that this double coset is independent of the choice of lift $\dot{w}$.)  By \cite[Lem. 4.9]{OS1},  the parahoric subgroup $\P$ admits a decomposition
\[ \P = \bigsqcup_{w \in \mathfrak{W}_\P} JwJ \ . \]

We define the group $\widetilde{W} := N_G(T)/T^1$. In particular, it contains  $T^0/T^1$. We have the exact sequence
\begin{equation} \label{f:esW}
0 \longrightarrow T^0/T^1\longrightarrow \widetilde W\longrightarrow  W\longrightarrow 0\ .
\end{equation}
For any subset $Y$ of $W$ we will denote by $\widetilde Y$ its preimage in $\widetilde W$. Given an element $w\in W$, we denote by $\tilde w$ a lift for $w$ in $\widetilde W$. The length $\ell$ and Bruhat order $\leq$ on $W$ inflate respectively to a length function and order on $\widetilde W$ (see \cite[\S 2.1]{vigneras:hecke1}).

For $w\in W$, we denote by $\dot w$ a lift for $w$ in $N_G(T)$.
The groups $J_{\dot w}$, $\P_{\dot w}$ and $I_{\dot w}$ (defined in Subsection \ref{subsubsec:general}) do not depend on the choice of this lift and we will simply denote them by $J_w$, $\P_{w}$  and $I_w$. 

\begin{lemma}\label{coro25}
Let $v,w\in W$ and $s\in S_{\aff}$ with respective lifts $\tilde v, \tilde w$ and $\tilde s$ in $\widetilde W$. We then have:
\begin{enumerate}
\item $\vert I/I_w\vert= \vert J/J_w\vert = q^{\ell(w)}$; \label{coro25-i}
\item  if $\ell(vw)=\ell(v)+\ell(w)$ then $I \tilde v I \cdot I \tilde w I= I \tilde v \tilde w I$ and  $J v J \cdot J  w J= J  v  w J$ ;\label{coro25-ii}
\item in general, $J uJvJ\subseteq \bigcup _{z\leq u} J zvJ$ and $JuJvJ\subseteq \bigcup _{z\leq v} J u zJ$. \label{coro25-iii}
\end{enumerate}

\end{lemma}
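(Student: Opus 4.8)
These are standard facts of affine Iwahori--Hecke combinatorics; the plan for each is to reduce to a rank-one computation and then propagate it along reduced words. For \textbf{(1)}, I would first treat $I$: using the affine root-subgroup factorization of $I$ (the pro-$p$ refinement of the Iwahori decomposition, in which $I$ is an ordered product of the affine root subgroups indexed by $\Phi_{\aff}^+$, each a group of order $q$, times $T^1$; cf.\ \cite[\S 2]{vigneras:hecke1}), conjugation by a lift $\dot w\in N_G(T)$ and intersection with $I$ identifies $I/I_w$ with the product of those root subgroups indexed by the $A\in\Phi_{\aff}^+$ with $w^{-1}(A)\in\Phi_{\aff}^-$; the number of such $A$ equals $\ell(w^{-1})=\ell(w)$ by the length formula recalled above, giving $|I/I_w|=q^{\ell(w)}$. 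For $J$: since $\dot w$ normalizes $T$ we have $T^0\subseteq J_w$, so $J_w$ surjects onto $J/I\cong T^0/T^1$, and since $J_w\cap I=I_w$ this forces $[J:J_w]=[I:I_w]$. (Alternatively, one may invoke \cite{vigneras:hecke1} or \cite{Sch-St} directly.)

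For \textbf{(2)} I would prove the $\widetilde W$-statement by induction on $\ell(w)$, the case $\ell(w)=0$ being immediate since $\tilde w$ then normalizes $I$. The base case $\ell(w)=1$ --- that is, $I\tilde vI\cdot I\tilde sI=I\tilde v\tilde sI$ whenever $s\in S_{\aff}$ and $\ell(vs)=\ell(v)+1$ --- is the braid relation $T_{\tilde v}T_{\tilde s}=T_{\tilde v\tilde s}$ of the pro-$p$ Iwahori--Hecke ring \cite{vigneras:hecke1}, read off on supports; it may also be proved by hand inside the rank-one parahoric $\EuScript{P}_s=J\sqcup JsJ$ (this decomposition being \cite[Lem. 4.9]{OS1} with $\mathfrak W_{\EuScript{P}_s}=\{1,s\}$), while tracking $T^1$-cosets. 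For the inductive step, pick $s\in S_{\aff}$ with $\ell(ws)=\ell(w)-1$ and set $w_1:=ws$; then $\ell(vw_1)=\ell(v)+\ell(w_1)$ and $\ell((vw_1)s)=\ell(vw_1)+1$, and choosing lifts so that $\tilde w=\widetilde{w_1}\tilde s$ we get
\[ I\tilde vI\cdot I\tilde wI \;=\; I\tilde vI\cdot I\widetilde{w_1}I\cdot I\tilde sI \;=\; I\widetilde{vw_1}I\cdot I\tilde sI \;=\; I\widetilde{vw}I \]
by two applications of the base case and one of the inductive hypothesis. The identity $JvJ\cdot JwJ=JvwJ$ then follows by pushing forward along $\widetilde W\twoheadrightarrow W$ (or by citing Bruhat--Tits / Iwahori--Matsumoto).

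For \textbf{(3)} the key input is the quadratic relation $JsJ\cdot JsJ\subseteq\EuScript{P}_s=J\sqcup JsJ$ for $s\in S_{\aff}$, which together with (2) gives, for any $u\in W$,
\[ JuJ\cdot JsJ\;\subseteq\;JusJ\cup JuJ, \]
the term $JusJ$ being present only when $\ell(us)=\ell(u)+1$ (if $\ell(us)<\ell(u)$, write $u=u_1s$ with $\ell(u_1)<\ell(u)$ and compute $JuJ\cdot JsJ=Ju_1J\cdot(JsJ\cdot JsJ)\subseteq Ju_1J\cup Ju_1sJ$ using (2)). Fixing a reduced expression $v=s_1\cdots s_k$, I would then prove $JuJvJ\subseteq\bigcup_{z\le v}JuzJ$ by induction on $k$: writing $JuJvJ=(JuJ\,s_1\cdots s_{k-1}J)\,s_kJ$, the inductive hypothesis for $s_1\cdots s_{k-1}$ followed by the displayed inclusion lands this set in $\bigcup_{z'\le s_1\cdots s_{k-1}}(Juz'J\cup Juz's_kJ)$; since the subwords of $s_1\cdots s_{k-1}$, together with those subwords right-multiplied by $s_k$, are exactly the elements $\le s_1\cdots s_k=v$ (subword characterization of the Bruhat order, \cite{Bki-LA}), this is contained in $\bigcup_{z\le v}JuzJ$. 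The inclusion $JuJvJ\subseteq\bigcup_{z\le u}JzvJ$ follows symmetrically (induct on $\ell(u)$ via $JsJ\cdot JvJ\subseteq JsvJ\cup JvJ$), or by applying the first to inverses and using $\ell(w)=\ell(w^{-1})$ and $z\le v\Leftrightarrow z^{-1}\le v^{-1}$.

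The only genuinely non-formal steps are the base cases --- the rank-one identities $\EuScript{P}_s=J\sqcup JsJ$, $JsJ\cdot JsJ\subseteq\EuScript{P}_s$, and $I\tilde vI\cdot I\tilde sI=I\tilde v\tilde sI$ for $\ell(vs)=\ell(v)+1$ --- together with the bookkeeping of $T^1$-cosets that the pro-$p$ version demands: it is precisely because $T^0\not\subseteq I$ that one is forced to work with $\widetilde W$ rather than $W$ and to fix lifts so that $\tilde v\tilde w$ is the intended product. Everything else is a formal induction along reduced words.
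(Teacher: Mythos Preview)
Your proposal is correct and follows essentially the same route as the paper, which for (1) and (2) simply cites \cite[Cor.~2.5]{Ext} and for (3) gives the same inductive argument along reduced words with the rank-one identity $JsJvJ\subseteq JsvJ\cup JvJ$ as base case. The only cosmetic difference is that the paper inducts on $\ell(u)$ to prove the first inclusion (invoking the lifting property of the Bruhat order rather than the subword characterization) and leaves the second inclusion implicit, whereas you induct on $\ell(v)$ for the second and deduce the first by inversion.
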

\begin{proof}
The proofs of Points \eqref{coro25-i} and \eqref{coro25-ii} are recalled in \cite[Cor. 2.5, pts. i. and ii.]{Ext} in the case of $I$ (from which the case of $J$ follows easily).  For Point \eqref{coro25-iii},  we verify  $J uJvJ\subseteq \bigcup _{z\leq u} J zvJ$  by induction on $\ell(u)$. When $\ell(u)=0$ the claim is clear.  When $s\in S_{\aff}$ then it is well-known (or follows easily from Point \eqref{coro25-ii} and \cite[Eqn. (20)]{Ext}) that
\[ J s JvJ=\begin{cases} J svJ &\text{if $\ell(sv)=\ell(v)+1$},\cr JvJ\sqcup JsvJ 
&\text{if $\ell(sv)=\ell(v)-1$},\end{cases} \]
so the claim is true when $\ell(u)=0,1$. Now suppose $s$ is such that $\ell(su)=\ell(u)+1$. Then by induction we have
\[ J su J v J=JsJuJvJ\subseteq \bigcup _{z\leq u} JsJ zvJ\subseteq \bigcup _{z\leq u} JszvJ\cup JzvJ \] 
and the claim is proved because for $z$ as above, $z\leq u\leq su$ and
$sz\leq su$ (either $sz\leq z\leq u\leq su$ or $z\leq sz$ in which case $\ell(sz)=\ell(z)+1$ and
$sz\leq su$).
\end{proof}

\begin{remark}\label{rema:square}We have 
$\P/J = \bigsqcup_{w\in \WP} J w J/J$  and  $ JwJ/J \cong J/J_w$ where the latter bijection is given by $jwJ \longmapsto jJ_w$.  By Lemma \ref{coro25}, we have $\vert J/J_w\vert = q^{\ell(w)}$, and since $\WP$ is a subgroup of $W_{\aff}$, the only element of length zero contained therein is $w=1$. Therefore $[\P:J] \equiv 1~ \bmod p$ and 
\begin{equation}
\label{f:square}
[\P:I] \equiv [J:I] \equiv [T^0:T^1]\equiv (-1)^{\textnormal{rk}(\mathbf{T})}~ \bmod p \ .
\end{equation}
\end{remark}

We now construct some distinguished coset representatives for $\mathfrak{W}_\P$ in $W$.  The following results are well-known in the context of finite groups (see \cite[\S\S 2.3, 2.7]{Carter}).  We define 
\begin{equation}\label{representants}
   \D_\P := \{d\in W ~:~ d(\Phi  _\P^+)\subseteq {\Phi}_{\aff}^+\} \ ,
\end{equation}
which is a system of representatives of the left cosets $W / \mathfrak{W}_{\P}$. It satisfies
\begin{equation}\label{additive0}
\ell(du) = \ell(d) + \ell(u)
\end{equation}
for any $u\in \WP$ and $d\in \D_\P$. In particular, $d$ is the unique element with minimal length in $d\mathfrak{W}_\P$ (see \cite[Prop. 4.6]{OS1}).
Note that the set $ \D _\P\subseteq N_G(T)/T^0$ is a system of representatives of the double cosets in $J\backslash G/\P$ and also of $I\backslash G/\P$ (since $T^0\subseteq J\subseteq \P$).

\begin{remark} \label{rema:disId}     
For $d\in \D_\P$, we have $I\cap \P_d= I_d$ and $J\cap \P_d= J_d$.  Indeed, it follows from the first identity of \cite[Prop. 4.13-ii]{OS1} that $I\cap d \P d^{-1}\subseteq d I d^{-1}$, and therefore $I\cap \P_d= I_d$. We then obtain $J\cap \P_d= T^0 (I \cap \P_d)= T^0 I_d= J_d$.
\end{remark}

Next, we define 
\begin{equation}\label{f:DPP}
\DPP := \D_\P\cap \D_\P^{-1} .
\end{equation}

\begin{proposition} \hfill
\label{prop:doublereps}
\begin{enumerate}
\item Each double coset in $ \WP\backslash W/{\mathfrak W}_\P$ contains a unique element in $\DPP$. \label{prop:doublereps-1}
\item If ${d}\in  \DPP$ then $d$ is the unique element of minimal length in its coset $\WP\, d\,\WP$.
\end{enumerate}
\end{proposition}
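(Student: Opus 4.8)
The plan is to reduce both assertions to the classical theory of distinguished double-coset representatives for a Coxeter group, applied to $W_{\aff}$, exploiting the semidirect decomposition $W = \Omega\ltimes W_{\aff}$. Recall from Subsection \ref{subsec:bt} that $\mathfrak{W}_\P = \langle S_\P\rangle$ is a \emph{finite standard} parabolic subgroup of the Coxeter system $(W_{\aff},S_{\aff})$, that $\Omega$ normalises $S_{\aff}$, and that $\ell$ is constant on each double coset $\Omega w\Omega$. Given $w\in W$, write $w = \delta\omega$ with $\delta\in W_{\aff}$ and $\omega\in\Omega$; then $\ell(w)=\ell(\delta)$, and $S_{\P'}:=\omega S_\P\omega^{-1}$ is again contained in $S_{\aff}$ and generates the finite standard parabolic $\mathfrak{W}_{\P'}:=\omega\mathfrak{W}_\P\omega^{-1}$ of $W_{\aff}$ (concretely, $\mathfrak{W}_{\P'}$ is the parahoric Weyl group attached to the facet $\omega\cdot F_\P$, but this is not needed). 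I will use the following facts, which hold for an arbitrary Coxeter group and two standard parabolic subgroups and are recorded in \cite[\S\S 2.3, 2.7]{Carter} (the arguments there are insensitive to finiteness): every double coset in $\mathfrak{W}_\P\backslash W_{\aff}/\mathfrak{W}_{\P'}$ contains a unique element of minimal length; this element is the unique one, say $\delta_0$, satisfying $\ell(s\delta_0)>\ell(\delta_0)$ and $\ell(\delta_0 t)>\ell(\delta_0)$ for all $s\in S_\P$ and $t\in S_{\P'}$; equivalently, $\delta_0$ is the unique $(\mathfrak{W}_\P,\mathfrak{W}_{\P'})$-reduced element of that double coset.

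The next step is to match the data in $W$ with the data in $W_{\aff}$. Since $\mathfrak{W}_\P\subseteq W_{\aff}$, for $w=\delta\omega$ one computes $\mathfrak{W}_\P w\mathfrak{W}_\P = \mathfrak{W}_\P\delta(\omega\mathfrak{W}_\P\omega^{-1})\omega = (\mathfrak{W}_\P\delta\mathfrak{W}_{\P'})\omega$. Thus $\WP w\WP$ is contained in the single coset $W_{\aff}\omega$ and equals $D\omega$, where $D:=\mathfrak{W}_\P\delta\mathfrak{W}_{\P'}$ is an ordinary double coset of $W_{\aff}$, and right translation by $\omega$ is a length-preserving bijection $D\xrightarrow{\sim}\WP w\WP$ (as $\ell$ is constant on $\Omega$-cosets). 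Likewise $w\mathfrak{W}_\P = (\delta\mathfrak{W}_{\P'})\omega$ and $\mathfrak{W}_\P w = (\mathfrak{W}_\P\delta)\omega$. Since $\D_\P$ consists precisely of the minimal-length representatives of the left cosets of $\mathfrak{W}_\P$ in $W$ (by \eqref{additive0} and \cite[Prop. 4.6]{OS1}), it follows that $w=\delta\omega$ lies in $\D_\P$ iff $\delta$ is right-$\mathfrak{W}_{\P'}$-reduced; dually, $w\in\D_\P^{-1}$ iff $\delta$ is left-$\mathfrak{W}_\P$-reduced; and hence $w\in\DPP$ iff $\delta$ is the distinguished $(\mathfrak{W}_\P,\mathfrak{W}_{\P'})$-reduced element $\delta_0$ of $D$.

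Both assertions now follow. Given a double coset $\mathcal{C}$ of $W$, it is contained in a unique $W_{\aff}\omega$ and has the form $D\omega$ for a well-defined double coset $D$ of $W_{\aff}$; by the matching above, $\DPP\cap\mathcal{C}$ corresponds under $\delta\mapsto\delta\omega$ to the $(\mathfrak{W}_\P,\mathfrak{W}_{\P'})$-reduced elements of $D$, which form a singleton — this is part \eqref{prop:doublereps-1}. For part (2), if $d=\delta_0\omega\in\DPP$ then $\delta_0$ is the unique minimal-length element of $D=(\WP d\WP)\omega^{-1}$, and transporting along the length-preserving bijection $x\mapsto x\omega$ shows $d$ is the unique minimal-length element of $\WP d\WP$. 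The one point genuinely requiring care — and the one I would write out in full — is that $W$ itself is \emph{not} a Coxeter group, so the double-coset statements cannot be quoted for $W$ directly; the reduction to $W_{\aff}$ above is what repairs this, and it is legitimate precisely because $\mathfrak{W}_\P$ is a standard parabolic of $W_{\aff}$ and $\Omega$ acts on $(W_{\aff},S_{\aff})$ by length-preserving automorphisms permuting $S_{\aff}$.
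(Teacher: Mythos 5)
Your argument is correct, and it takes a genuinely different route from the paper's. The paper stays inside the extended group $W$ (which is not itself a Coxeter group) and transplants Carter's proof strategy directly: it isolates two sublemmas about the $W$-action on $\Phi_{\aff}$ --- the analogues of Carter's Lemmas 2.7.1 and 2.7.2 for the data $(\D_\P,\Phi_\P)$ --- and checks them with the root-system combinatorics set up in \eqref{representants}, \eqref{additive0} and \cite[Lem. 4.5]{OS1}. You instead push the whole problem down to the honest Coxeter system $(W_{\aff},S_{\aff})$ via $W=\Omega\ltimes W_{\aff}$, at the cost of introducing a second standard parabolic $\mathfrak{W}_{\P'}=\omega\mathfrak{W}_\P\omega^{-1}$; the price of that trade is that you need the two-parabolic version of Carter's distinguished-representative theorem rather than the one-parabolic version the paper targets. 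Your bookkeeping is right: right multiplication by $\omega\in\Omega$ is a length-preserving bijection carrying $\mathfrak{W}_\P\delta\mathfrak{W}_{\P'}$ onto $\mathfrak{W}_\P w\mathfrak{W}_\P$, and the translations ``$w\in\D_\P$ iff $\delta$ is right-$\mathfrak{W}_{\P'}$-reduced'' and ``$w\in\D_\P^{-1}$ iff $\delta$ is left-$\mathfrak{W}_\P$-reduced'' check out (the first because $ws=(\delta\,\omega s\omega^{-1})\omega$ with $\omega s\omega^{-1}\in S_{\P'}$, the second by inverting). What your approach buys is that the only input is the standard double-coset theory for a pair of standard parabolics in a Coxeter group, used as a black box; what the paper's approach buys is that it never leaves $W$ and so never needs the conjugated parabolic $\mathfrak{W}_{\P'}$. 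If you write this up, do flag where the black box comes from: Carter states the two-parabolic theorem for finite Weyl groups, so you should either cite a source treating arbitrary Coxeter systems, or note that the finiteness of $\mathfrak{W}_\P$ and $\mathfrak{W}_{\P'}$ (recorded after \eqref{f:WP}) already suffices for the usual argument to run inside $W_{\aff}$.
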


\begin{proof} In the context of finite groups, this is exactly \cite[Prop. 2.7.3]{Carter}, the proof of which is based on two Lemmas. They translate into our context with similar proofs as follows. 

\begin{sublemma}
\label{sublemma1}
Suppose $xd(A) \in \Phi_{\aff}^-$, where $x\in \WP$, $d\in \DPP$ and $A\in \Pi_\P$. Then $d(A)\in \Pi_\P$.
\end{sublemma}

\begin{proof}[Proof of Sublemma \ref{sublemma1}]
By definition of $\D_\P$, we know that $d(A)\in \Phi_{\aff}^+$. But $x\in \WP$, so the only positive roots it potentially transforms into negative roots are the ones in $\Phi_\P^+$. This is because $\WP$ is generated by $S_\P$ and for $A\in \Pi_{\aff}$, we have $s_A(\Phi_{\aff}^+\smallsetminus\{A\}) = \Phi_{\aff}^+\smallsetminus\{A\}$. Thus $d(A)\in \Phi_{\P}^+$.  This root decomposes uniquely as $d(A)=\sum_{B\in \Pi_\P} m_B B$ with $m_B$ nonnegative integers  (\cite[Lem. 4.5]{OS1}). Hence $A=\sum_{B\in \Pi_\P} m_B d^{-1}(B)$ and since $d^{-1}\in \D_\P$ we have $d^{-1}(B)\in\Phi^+_{\aff}$. But $A\in \Pi_{\P}$ so there can only be one nonzero summand and there exists  $B\in \Pi_\P$ such that $d^{-1}(B)=A$. 
\end{proof}

\begin{sublemma}
\label{sublemma2}
Let $v,w\in \DPP$. Then  ${\mathfrak W}_\P\cap v {\mathfrak W}_\P w^{-1}$ is empty unless $v=w$.
\end{sublemma}

\begin{proof}[Proof of Sublemma \ref{sublemma2}]
The proof of \cite[Lem. 2.7.2]{Carter} goes through readily.
\end{proof}
\end{proof}

\begin{proposition}\label{prop:doubleco}
The set $\DPP$ yields a system of representatives of the double cosets $\P\backslash G/\P$.
\end{proposition}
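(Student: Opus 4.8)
The plan is to prove that the assignment $d \longmapsto \P\dot d\P$ (for any lift $\dot d\in N_G(T)$ of $d$; this is independent of the lift since $T^0 = T\cap J \subseteq J \subseteq \P$) defines a bijection $\DPP \xrightarrow{\ \sim\ } \P\backslash G/\P$, in two steps: surjectivity by reducing to the affine Weyl group via Proposition~\ref{prop:doublereps}, and injectivity via a containment deduced from Lemma~\ref{coro25}\eqref{coro25-iii}.

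\emph{Surjectivity.} Since $J\subseteq\P$ we have $\P J = J\P = \P$, so the Bruhat decomposition $G = \bigsqcup_{v\in W}JvJ$ gives $\P g\P = \P\dot v\P$ for the unique $v\in W$ with $g\in JvJ$. By Proposition~\ref{prop:doublereps}\eqref{prop:doublereps-1} the double coset $\mathfrak{W}_\P v\mathfrak{W}_\P$ contains an element $d\in\DPP$; writing $v = xdy$ with $x,y\in\mathfrak{W}_\P$ and choosing lifts in $N_G(T)$, the decomposition $\P = \bigsqcup_{w\in\mathfrak{W}_\P}JwJ$ shows $\dot x\in J\dot xJ\subseteq\P$ and similarly $\dot y\in\P$, whence $\P\dot v\P = \P\dot x\dot d\dot y\P = \P\dot d\P$. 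Thus every double coset of $\P$ in $G$ is represented by an element of $\DPP$.

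\emph{Injectivity.} The key point is the containment
\[ \P\dot d\P\ \subseteq\ \bigcup_{w\in\mathfrak{W}_\P\, d\, \mathfrak{W}_\P}JwJ\qquad\text{for all }d\in\DPP. \]
To establish it I would write $\P\dot d\P = \bigcup_{x,y\in\mathfrak{W}_\P}(J\dot xJ)(J\dot dJ)(J\dot yJ)$ and apply Lemma~\ref{coro25}\eqref{coro25-iii} twice: the first part bounds $(J\dot xJ)(J\dot dJ) = JxJdJ$ by $\bigcup_{z\leq x}JzdJ$, and the second part then bounds $(JzdJ)(J\dot yJ) = J(zd)JyJ$ by $\bigcup_{z'\leq y}Jzdz'J$. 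The one nontrivial ingredient is that a Bruhat predecessor $z\leq x$ of an element $x\in\mathfrak{W}_\P$ again lies in $\mathfrak{W}_\P$: the decomposition $W = \Omega\ltimes W_{\aff}$ forces $z\in W_{\aff}$ (comparable elements of $W$ have the same $\Omega$-component), and then the subword property in the Coxeter system $(W_{\aff},S_{\aff})$, of which $(\mathfrak{W}_\P,S_\P)$ is a standard parabolic subsystem, keeps $z$ inside $\mathfrak{W}_\P$ (a reduced word for $x$, hence one for $z$, uses only letters of $S_\P$). Granting the containment, suppose $d,d'\in\DPP$ with $\P\dot d\P = \P\dot{d'}\P$; then $\dot{d'}$ lies in $J\dot{d'}J$ and in $\bigcup_{w\in\mathfrak{W}_\P d\mathfrak{W}_\P}JwJ$, so by the disjointness of Bruhat cells $d'\in\mathfrak{W}_\P d\mathfrak{W}_\P$, and then $d = d'$ by the uniqueness in Proposition~\ref{prop:doublereps}\eqref{prop:doublereps-1}.

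\emph{Expected main obstacle.} The only genuinely technical step is the displayed containment in the injectivity argument: tracking the two applications of Lemma~\ref{coro25}\eqref{coro25-iii} and verifying that Bruhat predecessors of elements of $\mathfrak{W}_\P$ remain in $\mathfrak{W}_\P$. Everything else is formal, using only the Bruhat decompositions of $G$ and of $\P$ together with Proposition~\ref{prop:doublereps}.
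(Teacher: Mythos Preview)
Your proof is correct and follows essentially the same approach as the paper. The paper's argument is more compressed: it asserts that Lemma~\ref{coro25}\eqref{coro25-iii} gives the equality $\P w\P = J\,\mathfrak{W}_\P w\,\mathfrak{W}_\P\,J$ for every $w\in W$, from which the bijection $\mathfrak{W}_\P\backslash W/\mathfrak{W}_\P \xrightarrow{\sim}\P\backslash G/\P$ is immediate, and then invokes Proposition~\ref{prop:doublereps}\eqref{prop:doublereps-1}. Unpacking that equality requires exactly the two applications of Lemma~\ref{coro25}\eqref{coro25-iii} and the Bruhat-predecessor-in-$\mathfrak{W}_\P$ observation that you spell out; you simply organize the argument as separate surjectivity and injectivity steps rather than as a single set equality.
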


\begin{proof} 
It follows from Lemma \ref{coro25}\eqref{coro25-iii} that  for any $w\in W$, we have  $\P w \P=J \WP w \WP J$.  This implies directly that any system of representatives of $\WP\backslash W/\WP$  yields a system of representatives of $\P\backslash G/\P$, and we conclude using Proposition \ref{prop:doublereps}\eqref{prop:doublereps-1}.  
\end{proof}

\begin{proposition} \hfill
\label{prop:index}
\begin{enumerate}
\item For $d\in \DPP$, we have $[\P_d: I_d]\equiv [\P:I]\bmod p$.
\item For  $w\in W\smallsetminus \DPP$, we have $[\P_w: I_w]\equiv 0\bmod p$.
\end{enumerate}
\end{proposition}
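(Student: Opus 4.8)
The plan is to reduce the assertion to an index computation for the pair $(\P,J)$, which can be carried out with the parahoric combinatorics of Subsection~\ref{subsec:bt}, and then to deduce the statement for $I$ from the fact that $[J:I]=[T^0:T^1]$ is prime to $p$. First I would show that $J_w/I_w\cong T^0/T^1$ for every $w\in W$, which gives
\[ [\P_w:I_w]=[\P_w:J_w]\cdot[J:I] \]
and therefore reduces the proposition to showing that $[\P_d:J_d]\equiv[\P:J]\bmod p$ for $d\in\DPP$, and that $p\mid[\P_w:J_w]$ for $w\in W\smallsetminus\DPP$. The isomorphism $J_w/I_w\cong T^0/T^1$ follows from two inclusions. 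Since $\dot w\in N_G(T)$ normalizes the characteristic subgroups $T^0$ and $T^1$ of $T$, we have $T^0\subseteq J_w$, $T^1\subseteq I_w$ and $T^0\cap I_w=T^0\cap I=T^1$, so $T^0/T^1\hookrightarrow J_w/I_w$. Conversely, $I$ is a normal pro-$p$ subgroup of $J$ of prime-to-$p$ index, hence the unique maximal pro-$p$ subgroup of $J$; applied to $J_w\le J$ this shows $J_w\cap I$ is the (unique, normal) pro-$p$-Sylow subgroup of $J_w$, and the same reasoning applied to $\dot wJ\dot w^{-1}\supseteq\dot wI\dot w^{-1}$ shows $J_w\cap\dot wI\dot w^{-1}$ is too. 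By uniqueness $J_w\cap I=J_w\cap\dot wI\dot w^{-1}$, which lies in $I\cap\dot wI\dot w^{-1}=I_w$; as $I_w\subseteq J_w\cap I$ trivially, $I_w=J_w\cap I$ and hence $J_w/I_w=J_w/(J_w\cap I)\hookrightarrow J/I=T^0/T^1$. Comparing orders, $[J_w:I_w]=[T^0:T^1]$.

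For the index $[\P_w:J_w]$, I would use the chains $J_w\le\P_w\le\P$ and $J_w\le J\le\P$ to write $[\P_w:J_w]=[\P:J]\cdot[J:J_w]/[\P:\P_w]$. By Lemma~\ref{coro25}, $[J:J_w]=|J/J_w|=q^{\ell(w)}$; from the parahoric Bruhat decomposition $\P=\bigsqcup_{x\in\WP}JxJ$ together with $|JxJ/J|=|J/J_x|=q^{\ell(x)}$, we get $[\P:J]=\sum_{x\in\WP}q^{\ell(x)}$; and orbit counting, combined with the identity $\P w\P=J\WP w\WP J=\bigsqcup_{z\in\WP w\WP}JzJ$ recalled in the proof of Proposition~\ref{prop:doubleco} (a consequence of Lemma~\ref{coro25}), yields $[\P:\P_w]=|\P w\P/\P|=\big(\sum_{z\in\WP w\WP}q^{\ell(z)}\big)\big/\big(\sum_{x\in\WP}q^{\ell(x)}\big)$. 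For $w\in W$ let $d\in\DPP$ be the unique representative of the double coset $\WP w\WP$ (Proposition~\ref{prop:doublereps}); then $w\in\DPP$ if and only if $w=d$, and $d$ is the unique element of minimal length in $\WP d\WP$, so $\ell(w)\ge\ell(d)$ with equality if and only if $w=d$. Combining the above, we obtain the integer identity
\[ [\P_w:J_w]\cdot\sum_{z\in\WP d\WP}q^{\ell(z)}=\Big(\sum_{x\in\WP}q^{\ell(x)}\Big)^{2}q^{\ell(w)}. \]

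Finally I would reduce modulo $p$. As $\WP$ is a finite Coxeter group with length function $\ell|_{\WP}$, only $x=1$ contributes length $0$, so $\sum_{x\in\WP}q^{\ell(x)}\equiv 1\bmod p$ (indeed $\equiv 1 \bmod q$); and as $d$ is the unique minimal-length element of $\WP d\WP$, we have $\sum_{z\in\WP d\WP}q^{\ell(z)}=q^{\ell(d)}(1+qb)$ for some $b\in\bbZ_{\geq 0}$. Dividing the displayed identity by $q^{\ell(d)}$ (valid over $\bbZ$ since $\ell(w)\ge\ell(d)$) and reducing modulo $p$ gives $[\P_w:J_w]\equiv q^{\ell(w)-\ell(d)}\bmod p$. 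Hence if $w\notin\DPP$ then $\ell(w)>\ell(d)$, so $p\mid[\P_w:J_w]$ and therefore $p\mid[\P_w:I_w]=[\P_w:J_w]\cdot[J:I]$, which proves (2); and if $w=d\in\DPP$ then $[\P_d:J_d]\equiv1\equiv[\P:J]\bmod p$, so $[\P_d:I_d]=[\P_d:J_d]\cdot[J:I]\equiv[\P:J]\cdot[J:I]=[\P:I]\bmod p$, which proves (1).

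The step I expect to be the main obstacle is the identification $I_w=J_w\cap I$ in the reduction to $J$: it rests on $I$ being the unique maximal pro-$p$ subgroup of $J$ — so that the pro-$p$ group $J_w\cap\dot wI\dot w^{-1}$ necessarily lies inside $I$ — together with uniqueness of the normal pro-$p$-Sylow subgroup of $J_w$. Without this point one only obtains that $[J_w:I_w]$ is prime to $p$, which is enough for (2) but not enough to determine $[\P_d:I_d]$ modulo $p$ in (1). All remaining steps are routine manipulations with group indices and with the parahoric combinatorics already established.
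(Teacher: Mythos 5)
Your proof is correct, and it takes a genuinely different route from the paper's. The paper proves part (1) by a normalizer argument inside $\P_d$: it writes $I = I_d I_F$ using the pro-unipotent radical $I_F$ of $\P$ (a structural input from \cite{OS1}), observes that only those $I_d$-double cosets $I_d x I_d$ with $x$ normalizing $I_d$ contribute $1 \bmod p$ to $[\P_d : I_d]$, and then uses the fact that the normalizer of $I$ in $\P$ is exactly $J$ to force such $x$ into $J \cap \P_d = J_d$ (via Remark~\ref{rema:disId}), reducing (1) to $[J_d : I_d] = [J:I]$. For part (2) it simply notes that when $w \neq d$ (with $d \in \D_\P$), $I_w$ is a \emph{strict} pro-$p$ subgroup of $I \cap \P_w$, and repeats for $w^{-1}$. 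You instead reduce the entire statement from $I$ to $J$ once and for all by proving $I_w = J_w \cap I$ (hence $[J_w:I_w]=[J:I]$) for every $w$, and then compute $[\P_w:J_w]$ via the Poincar\'e-series identities $[\P:J]=\sum_{x\in\WP}q^{\ell(x)}$ and $[\P:\P_w]=\bigl(\sum_{z\in\WP w\WP}q^{\ell(z)}\bigr)\big/\bigl(\sum_{x\in\WP}q^{\ell(x)}\bigr)$, extracting the residue $[\P_w:J_w]\equiv q^{\ell(w)-\ell(d)}\bmod p$ from the minimality of $d\in\DPP$. What your approach buys is that it bypasses the pro-unipotent radical $I_F$ and the normalizer-of-$I$-in-$\P$ computation entirely, replacing that structure theory by elementary index bookkeeping together with the minimal-length statement of Proposition~\ref{prop:doublereps}; it also produces, as a byproduct, the exact congruence $[\P_w:J_w]\equiv q^{\ell(w)-\ell(d)}\bmod p$, which is a bit sharper than what the paper records. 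The cost is that you need the explicit decompositions $\P=\bigsqcup_{x\in\WP}JxJ$ and $\P w\P=\bigsqcup_{z\in\WP w\WP}JzJ$ and the resulting integer identity. The step you flag as the main risk --- the identification $I_w = J_w\cap I$ --- is indeed the crux of the reduction, but it does go through as you state: $J_w \cap I$ is pro-$p$ and sits inside $\dot{w}J\dot{w}^{-1}$, which has $\dot{w}I\dot{w}^{-1}$ as its unique pro-$p$ Sylow subgroup, so $J_w\cap I \subseteq \dot{w}I\dot{w}^{-1}$ and hence $J_w\cap I\subseteq I_w$; the reverse inclusion is trivial. (The analogous intersection statement for $\P_d$ rather than $J_w$ is what the paper records in Remark~\ref{rema:disId}, but there it is established via \cite{OS1} rather than the Sylow argument you use.)
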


\begin{proof} 
\begin{enumerate}
 \item Let $d\in \DPP$, and let $F$ the facet of $C$ such that $\P={\mathbf G}_{F}^\circ(\mathfrak O)$.  We consider the pro-unipotent radical $I_F$ of $\P$ as introduced in \cite[\S 3.1]{OS1}.  It is a normal subgroup of $I$ and of $\P$. By Lemma 4.7 of \textit{op. cit.}, there is a subgroup $\EuScript U_F^0$ of $I$ which satisfies $\EuScript U_F^0\subseteq I_d$ (because  $d^{-1}\in \D_\P$), and is such that $I = \EuScript U_F^0 I_F$ (\textit{op. cit.}, Lemma 4.8).  Thus, we have $I= I_d I_F$.

Notice that the normalizer of $I$ in $\P$ is equal to $J$. This is because the  only element with length $0$ in $\mathfrak W_\P$ is the trivial element.  Let $x\in \P_d$.  Since $I_d$ is a pro-$p$-group, we have
\[ |I_d x I_d/ I_d| \equiv [I_d: (I_d)_x] \equiv \begin{cases} 1 \bmod p & \textnormal{if}~(I_d)_x = I_d \ ,\\ 0 \bmod p & \textnormal{if}~(I_d)_x \neq I_d \ .\end{cases} \]
Note that $(I_d)_x = I_d$ if and only if $x$ normalizes $I_d$.  Therefore, to calculate $[\P_d:I_d]$, it suffices to decompose $\P_d$ into $I_d$ double cosets, and focus only on those double cosets $I_dxI_d$ where $x$ normalizes $I_d$.  Thus, let us suppose that $x$ normalizes $I_d$.  Then $x$ normalizes $I= I_d I_F$ so  $x\in J$. Therefore $x\in J\cap \P_d= J_d$ by Remark \ref{rema:disId} (this uses $d\in \D_\P$), and we have (see \eqref{f:square}):
\[ [\P_d: I_d] \equiv [J_d:I_d] \equiv [J:I] \equiv [T^0:T^1] \equiv [\P: I] \bmod p\ . \]
 \item  Let $w\in W$, and let $d\in \D_P$ be such that $w\in d \WP$. We have 
 \begin{equation}\label{preli}
 I_w\subseteq I_d= I\cap \P_d= I\cap \P_w\subseteq \P_w
 \end{equation} 
 where the first equality is justified Remark \ref{rema:disId} and the first inclusion by \cite[Lem. 2.2]{Ext}, noticing that the inclusion is strict if $w\neq d$ by Lemma \ref{coro25}\eqref{coro25-i} and equation \eqref{additive0}. In particular, if $w\neq d$ we see that $I_w$ is a strict pro-$p$ subgroup of $I\cap \P_w$ and of $\P_w$. In particular, $[\P_w: I_w] \equiv 0\bmod p$.  Thus, we have shown that if $w\notin \D_\P$, then $[\P_w: I_w] \equiv 0\bmod p$.  On the other hand, we have $\P_{w^{-1}}= w^{-1} \P_w w$ and $I_{w^{-1}}= w^{-1} I_w w$, which shows that if $w^{-1}\notin \D_\P$, then $[\P_w: I_w] \equiv 0\bmod p$.
 \end{enumerate}
\end{proof}

\subsubsection{Pro-$p$ Iwahori and parahoric Hecke $\Ext$-algebras: examples and notation.\label{pro-p-hecke-alg}\label{connect0}}

We now turn our attention to specific examples of the algebras $H_\U$ and $E_\U^*$.

\begin{example}
\phantomsection
 \begin{enumerate}

\item When $\U=I$, the algebra $H_I$ will simply be denoted by $H$ as in \cite{Ext}. It is known as the \textbf{pro-$p$ Iwahori--Hecke algebra of $G$}.  We will similarly denote other objects introduced above when $\U = I$ e.g., we will write $\X$ instead of $\X_I$, $\tau_g$ instead of $\tau_g^I$ and  $E^*$ instead of $E_I^*$ for the \textbf{pro-$p$ Iwahori--Hecke $\Ext$-algebra of $G$}.

Recall that the group $\widetilde W = N_G(T)/T^1$ provides a system of representatives of the double cosets $I \backslash G/ I$ given by $w\longmapsto I \dot{w} I$, where $\dot{w}$ denotes any choice of lift of $w$ to $G$.  In particular, the double coset $I \dot{w} I$ is independent of the choice of lift, and we write it as $IwI$.  A basis for $H$ is given by the set of all 
\[ \tau_w := \chara_{I w I}\text{ for $w\in \widetilde{W}$} \]
The defining relations of $H$ (i.e., the braid and quadratic relations) are recalled in \cite[\S 2.2]{Ext}.

\item For $\P$ a parahoric subgroup containing $I$ such that $[\P:I]$ is invertible in $k$, we will denote simply by $e_{\P}$ the idempotent
$e_{I, \P}$  introduced in \eqref{f:eUV}. With the notation of Subsection \ref{subsec:bt}, it can also be written as
\[ e_\P=\dfrac{1}{[\P: I]}\sum_{w\in \widetilde {\mathfrak W}_\P} \tau_w\ . \]
By Proposition \ref{prop:doubleco}, a system of representatives of $\P\backslash G/\P$  is given by the subset $\DPP$ of $W$ (see \eqref{f:DPP}).  Thus, a basis for  $H_\P$ is given by the set of all 
\[ \tau_d^\P=\chara_{\P d\P}\text{ for $d\in \DPP$}\ .\]
We simply denote by $\iota_\P$ and $\pi_\P$ respectively the natural injective $G$-equivariant map $\iota_{\P}: \X_\P  \longhookrightarrow  \X$ and the surjective $G$-equivariant map $\pi_{\P}: \X  \longtwoheadrightarrow  \X_\P$ defined in \eqref{iUV} and \eqref{piUV}.  As in \eqref{f:RVU*} and \eqref{f:CUV*} (see also \eqref{f:RVU0} and \eqref{f:CUV0}) they induce maps
 \begin{equation*}
 R^*_\P : E^*_\P  \longrightarrow  E^*  \qquad \textrm{ and } \qquad
C_\P^*: E^*  \longrightarrow  E_\P^* \ .
\end{equation*} 
Recall  (Remark \ref{rema:RPCP*}) that $E^*_\P\xrightarrow{\frac{1}{[\P:I]}R^*_\P }  e_\P E^* e_\P$ and $e_\P E^* e_\P \xrightarrow{\frac{1}{[\P:I]}C_\P^* }   E^* _\P $ are homomorphisms of unital $k$-algebras which are inverse to each other.

 \item \label{subsubsec:JK-1}
 Suppose $\P = J$, so that $\P$ is associated to the chamber $C$.  Then $H_J$ is known as the \textbf{Iwahori--Hecke algebra}, and has the following description (for more details, see \cite{vigneras:hecke1}).  As a vector space, we have
 $H_J = \bigoplus_{w \in W} k\tau^J_w,$
 subject to the defining relations
\begin{alignat*}{2}
 \tau_w^J \tau^J_{w'} & =  \tau^J_{ww'} && \textnormal{if}~  \ell(w) + \ell(w') = \ell(ww'),  \\
 (\tau^J_{s_A})^2 & =  (q - 1)\tau^J_{s_A} + q\tau^J_{1} \qquad && \textnormal{if}~A \in \Pi_{\aff}.
\end{alignat*}

 \item \label{subsubsec:JK-2}
 Suppose $\P = K$, so that $\P$ is associated to the vertex $x_0$.  Then $H_K$ is known as the \textbf{spherical Hecke algebra}, and has the following description.  As a vector space, we have
 \[ H_K = \bigoplus_{w \in {}_K\D_K} k\tau^K_w = \bigoplus_{\lambda \in X_*(T)_+} k\tau^K_{\lambda(\pi)}, \]
  where $X_*(T)_+$ denotes the cone of dominant cocharacters.  The product structure is most naturally described via the Satake transform; for more details, see \cite{henniartvigneras:satake}.

 \end{enumerate}
\label{subsubsec:JK}
 \end{example}

\begin{remark} When $k$ has characteristic $p$, we have, by diagram \eqref{f:CUV0diag}, Proposition \ref{prop:index} and 
Remark \ref{rema:section0UV}-\ref{rema:section0UV-v}:
\begin{equation}
\label{f:formulaCP}
C^*_\P(e_\P \tau_{w} e_\P)=C^0_\P(\tau_{w})= {[\P_w :I_w]}\tau_w^\P =\begin{cases} 0&\text{ if $w\notin \widetilde \DPP$ \ ,}
\cr[\P:I] \tau_w^\P&\text{ if $w\in \widetilde \DPP$ \ .}\end{cases}
\end{equation} 
Consequently, using Remark \ref{rema:section0UV}-\ref{rema:section0UV-iv}, we obtain:
\begin{equation}
\label{f:formulaRP}
R_{\P}^*(\tau_w^\P) = R^0_\P(\tau_w^\P)= {[\P :I]} e_\P\tau_{\tilde w} e_\P \quad \textnormal{where $w\in \DPP$ and $\tilde w\in \widetilde W$ is a lift of $w$.}
\end{equation}
\end{remark}

\begin{remark}
\label{rema:res-DPP} 
Suppose that $k$ is a field of characteristic $p$. By Remark \ref{rema:coressurj}, \eqref{f:defiCP*} and Proposition \ref{prop:index},
the restriction of  $C_{J, \P}^*: E_J^*\longrightarrow  E_\P^*$ (resp.  $C_{ \P}^*: E^*\longrightarrow  E_\P^*$) to 
$\bigoplus_{d \in  \DPP} H^*(J, \X_J(d )) \quad (\text{resp.}, ~\bigoplus_{d \in  \DPP} H^*(I, \X(\tilde{d})))$ is still surjective. 
\end{remark}

\subsubsection{Some Yoneda products in the Iwahori--Hecke $\Ext$-algebra}

In this paragraph we consider the case  of the parahoric subgroup $\P=J$.

\begin{remark}
\label{rema:E*}
Let  $w\in \widetilde W= N_G(T)/T^1$ (resp. $w\in  W= N_G(T)/T^0$) with lift $\dot w$ in $N_G(T)$. As with the double coset $IwI$ (resp., $J w J$) , the group $I_{\dot w}$  (resp., $J_{\dot{w}}$) does not depend on the choice of the lift, and we simply denote it by $I_w$ (resp., $J_w$).  Since $T^1 \subseteq I_w$ (resp., $T^0 \subseteq J_w$), the conjugation by an element of $T^1$ induces the trivial action on $H^i(I_w,k)$ (see \cite[Prop. 1.6.3]{NSW}). Therefore, by Lemma \ref{lemma:shapindep}, the Shapiro isomorphism
$
  \Sh^I_{\dot w} : H^*(I,\X(w)) \longrightarrow H^*(I_w,k)
$ (resp. $
  \Sh^J_{\dot w} : H^*(J,\X(w)) \longrightarrow H^*(J_w,k)
$)
does not depend on the lift chosen in $N_G(T)$.  Hence, we often simply denote it by $\Sh_w$ (resp. $\Sh_w^J$).
\end{remark}

Recall that \cite[Prop. 5.3]{Ext} makes explicit the product structure in $E^*$,  which for a general open compact subgroup $\U$ is given in Proposition \ref{yoneda-product-U} of the present article.  In particular, Corollary 5.5 and Proposition 5.6 of \textit{op. cit.} make certain Yoneda products more explicit in the case when $\U$ is the pro-$p$ Iwahori subgroup $I$. It is straightforward to check that \cite[Cor. 5.5]{Ext} holds \textit{mutatis mutandis} when $I$ is replaced by $J$.
We therefore get Proposition  \ref{coro:prod-goodlength} (we refer to Remark \ref{rema:E*}).

\begin{proposition}
\label{coro:prod-goodlength}
Let $v,w \in  W$ such that  $\ell(vw)=\ell(v)+\ell(w)$. For any  cohomology classes $\upalpha \in H^i(J,\mathbf{X}_J(v))$ and $\upbeta \in H^j(J,\mathbf{X}_J(w))$ we have $\upalpha\cdot \upbeta \in H^{i+j}(J, \X_J(vw))$, and
\begin{equation}\label{f:prod-goodlength}
 \upalpha\cdot \upbeta = (\upalpha\cdot \tau^J_w) \cup  (\tau^J_v\cdot \upbeta) \ ,
\end{equation}
where we use the cup product in the sense of Subsection \ref{subsec:cup}; moreover
\begin{equation}\label{f:prod-goodlength2}
  \Sh^J_{vw}(\upalpha\cdot \tau^J_w) =
 \res^{J _v}_{J_{vw}} \big(\Sh^J_v(\upalpha)
   \big)  \quad\textrm{ and }\quad  \Sh^J_{vw}( \tau^J_v\cdot \upbeta)   =
   \res^{ v J_w v^{-1}}_{J_{vw}} \big(v_*\Sh^J_w(\upbeta)\big)\ .
\end{equation}
\end{proposition}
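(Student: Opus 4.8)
The plan is to deduce this from the general description of the Yoneda product in Proposition \ref{yoneda-product-U}, applied with $\U = J$, $x = v$, $y = w$, and to follow the argument of \cite[Cor. 5.5]{Ext} (which treats the pro-$p$ Iwahori subgroup $I$) essentially verbatim, as indicated in the paragraph preceding the statement.

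First I would exploit the length hypothesis. By Lemma \ref{coro25}\eqref{coro25-ii}, the assumption $\ell(vw) = \ell(v) + \ell(w)$ gives $JvJ\cdot JwJ = JvwJ$, so in the sum of Proposition \ref{yoneda-product-U} the only double coset $z$ with $JzJ\subseteq JvJ\cdot JwJ$ is $z = vw$; hence $\upalpha\cdot\upbeta = \gamma_{vw}\in H^{i+j}(J,\X_J(vw))$, which is the first assertion. I would then analyze formula \eqref{f:yon} for $\gamma_{vw}$: the claim — proved for $I$ in \cite[Cor. 5.5]{Ext} and formally identical for $J$ — is that length-additivity forces the index set $J_{v^{-1}}\backslash(v^{-1}Jvw\cap JwJ)/J_{(vw)^{-1}}$ to consist of the single double coset of $h = w$ (i.e.\ one may take $a = c = d = 1$ in the parametrization $h = cwd = v^{-1}a^{-1}(vw)$), and simultaneously forces the group $J_{vw}\cap vh^{-1}Jhv^{-1}$ serving as the source of the outer corestriction to equal all of $J_{vw}$, and the two subgroups over which the inner restrictions are taken to be $J_v$ and $vJ_wv^{-1}$. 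Granting this, \eqref{f:yon} collapses to
\[ \Sh_{vw}^J(\gamma_{vw}) = \res^{J_v}_{J_{vw}}\big(\Sh_v^J(\upalpha)\big)\cup \res^{vJ_wv^{-1}}_{J_{vw}}\big(v_*\Sh_w^J(\upbeta)\big). \]

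Next I would treat the two degree-zero special cases. Taking $\upbeta = \tau_w^J\in H^0(J,\X_J(w))$, for which $\Sh_w^J(\tau_w^J) = \ev_w(\tau_w^J) = 1$, in the computation just described yields $\Sh_{vw}^J(\upalpha\cdot\tau_w^J) = \res^{J_v}_{J_{vw}}(\Sh_v^J(\upalpha))$; symmetrically, taking $\upalpha = \tau_v^J$, for which $\Sh_v^J(\tau_v^J) = 1$, yields $\Sh_{vw}^J(\tau_v^J\cdot\upbeta) = \res^{vJ_wv^{-1}}_{J_{vw}}(v_*\Sh_w^J(\upbeta))$. These are exactly the identities \eqref{f:prod-goodlength2}. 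Feeding them back into the collapsed form of \eqref{f:yon} and using that the Shapiro isomorphism intertwines the cup product on $H^*(J,\X_J(vw))$ with that on $H^*(J_{vw},k)$ (diagram \eqref{f:cup+Sh}), together with the orthogonality property \eqref{f:orth}, identifies $\gamma_{vw}$ with $(\upalpha\cdot\tau_w^J)\cup(\tau_v^J\cdot\upbeta)$ after applying the isomorphism $\Sh_{vw}^J$; this proves \eqref{f:prod-goodlength}.

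The main obstacle is the second step: checking that the length-additive hypothesis genuinely produces the asserted collapse of the $h$-index set and of the corestriction/restriction groups in \eqref{f:yon}. In the pro-$p$ Iwahori case this is the substance of \cite[Cor. 5.5]{Ext}, and it rests on the combinatorics packaged in Lemma \ref{coro25} — in particular on $I\dot vI\cdot I\dot wI = I\dot v\dot wI$ and the resulting bijectivity of the relevant product of coset spaces, which yields the needed inclusions such as $J_{vw}\subseteq J_v$ and $J_{vw}\subseteq vJv^{-1}$. Passing from $I$ to $J$ is then bookkeeping: the groups $J_v, J_w, J_{vw}$ are lift-independent (Remark \ref{rema:E*}), satisfy $J_w = T^0 I_w$ with $T^0\subseteq J$ normalizing each $I_w$, and the double-coset decompositions for $J$ are obtained from those for $I$ by multiplying by $T^0$; hence every step of the $I$-argument has a verbatim counterpart for $J$.
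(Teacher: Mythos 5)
Your proposal is correct and follows the same route the paper takes: the paper simply invokes \cite[Cor.~5.5]{Ext} and observes it goes through \emph{mutatis mutandis} for $J$ in place of $I$, and you have unpacked exactly that argument — collapsing the double-coset sum of Proposition~\ref{yoneda-product-U} via Lemma~\ref{coro25}\eqref{coro25-ii}, showing the single surviving term has trivial outer corestriction using $J_{vw}\subseteq J_v$ and $J_{vw}\subseteq vJv^{-1}$, then extracting \eqref{f:prod-goodlength2} from the degree-zero specializations and feeding them back via \eqref{f:cup+Sh}. (Minor slip: the source of the outer corestriction should be written $J_{vw}\cap (vw)h^{-1}Jh(vw)^{-1}$, not $J_{vw}\cap vh^{-1}Jhv^{-1}$, but for $h=w$ this still gives $J_{vw}\cap vJv^{-1}=J_{vw}$, so the conclusion is unaffected.)
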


\begin{remark}
\label{f:rightomega} 
We easily deduce the following. Let $\upalpha\in H^i(J, \X_J(w))$ with $w\in W$ and $i \geq 0$. For $\omega \in \Omega$, we have $	 \upalpha\cdot\tau^J_\omega \in H^i(I, \X_J( w\omega))$ and
\begin{equation}\label{f:rightomega}
  \Sh^J_{w\omega }(\upalpha	\cdot\tau^J_\omega )=\Sh^J_w(\upalpha) \ .
\end{equation}
\end{remark}

To state the next proposition (compare with \cite[Prop. 5.6]{Ext}), we refer to the notations introduced in \cite[\S 2.1.6, (14) and (15)]{Ext}.  Let $\mathbf{G}_{x_0}$ denote the Bruhat-Tits group scheme over $\mathfrak{O}$ corresponding to the hyperspecial vertex $x_0$. As part of a Chevalley basis we have (cf.\ \cite[\S 3.2]{BT2}), for any root $\alpha \in \Phi$, a homomorphism $\varphi_\alpha : {\rm SL}_2 \longrightarrow \mathbf{G}_{x_0}$ of $\mathfrak{O}$-group schemes which restricts to isomorphisms
\begin{equation*}
    \{ \begin{pmatrix}
    1 & \ast \\ 0 & 1
    \end{pmatrix}
    \}
    \stackrel{\sim}{\longrightarrow} {\EuScript U}_\alpha
    \qquad\text{and}\qquad
    \{ \begin{pmatrix}
    1 & 0 \\ \ast & 1
    \end{pmatrix}
    \}
    \stackrel{\sim}{\longrightarrow} {\EuScript U}_{-\alpha} \ ,
\end{equation*} where ${\EuScript U}_\alpha$ and ${\EuScript U}_{-\alpha}$ denote the root subgroups as introduced in \cite[\S 2.1.2]{Ext}.  We then define the additive isomorphism $x_\alpha : \mathfrak{F} \stackrel{\sim}{\longrightarrow} \EuScript{U}_\alpha$ by
$
  x_\alpha (u) := \varphi_\alpha( \left(
  \begin{smallmatrix}
    1 & u \\ 0 & 1
  \end{smallmatrix}
  \right) ) \ .
$ Moreover, note that we have $\check\alpha (x) = \varphi_\alpha( \bigl( \begin{smallmatrix}
x & 0 \\ 0 & x^{-1}
\end{smallmatrix}
\bigr) )$.

We let the subtorus $\mathbf{T}_{s_\alpha} \subseteq \mathbf{T}$ denote the image (in the sense of algebraic groups) of the cocharacter $\check\alpha$. We always view these as being defined over $\mathfrak{O}$ as subtori of $\mathbf{G}_{x_0}$. The group of $\mathbb{F}_q$-rational points $\mathbf{T}_{s_\alpha} (\mathbb{F}_q)$ can be viewed as a subgroup of $T ^0 / T ^1 \stackrel{\sim}{\longrightarrow} \mathbf{T}(\mathbb{F}_q)$ (and is abstractly isomorphic to $\mathbb{F}_q^\times$). Given $z\in \mathbb F^\times_q$, we consider $[z]\in\mathfrak O^\times$ the Teichm\"uller representative  and consider the  morphism of groups
\[ \check\alpha([_-])\::\:\mathbb F_q^\times\xrightarrow{[_-]} \mathfrak O^\times \stackrel{\check\alpha}{\longrightarrow} \mathbf{T}(\mathfrak{O}) = T^0 \ . \] 
For $(\alpha, \mathfrak h)\in \Pi_{\aff}$ and $s= s_{(\alpha,\mathfrak h)}$, we put
$    n_s := \varphi_\alpha (
    \begin{psmallmat}
    0 & \pi^\mathfrak{h} \\
    - \pi^{-\mathfrak{h}} & 0
    \end{psmallmat}
    ) \in N_G(T ) \ .$
We then have $n_s^2 = \check\alpha(-1) \in T ^0$ and $n_s T ^0 = s \in W $.

\begin{proposition}
\label{prop:explicitleftaction}  
Let $\upbeta\in H^j(J, \X_J(w))$ with $w\in W$ and $j\geq 0$. For $\omega \in \Omega$, we have $\tau^J_\omega	\cdot \upbeta\in H^j(J, \X_J(\omega w))$ and
\begin{equation}\label{f:leftomega}
  \Sh^J_{\omega w}(\tau^J_\omega	\cdot \upbeta)=\omega_*\Sh^J_w(\upbeta) \ .
\end{equation}
Assume  that  $q-1$ is invertible in $k$. For $ s\in S_{\aff}$ corresponding to the affine root $(\alpha, \mathfrak h)$,  we have

\begin{itemize}
\item  either $\ell( s w)=\ell(w)+1$
and $\tau^J_{ s} \cdot \upbeta\in H^j(J, \X_J( s w))$  with
\begin{equation}\label{f:leftsgood}
  \Sh^J_{  s w}(\tau^J_{  s}	\cdot \upbeta)=\res^{  s  J_w   s^{-1}}_{ J_{sw}} \big(  s_* \Sh^J_w(\upbeta)\big)\ ,
\end{equation}
\item  or  $\ell(  s w)=\ell(w)-1$ and   $\tau^J_{ s} \cdot \upbeta = \upgamma_w+\upgamma_{sw}\in H^j(J, \X_J(w)) \oplus H^j(J, \X_J( s w))$,
where 
\begin{equation}\label{gsw}
\Sh_{ s w}^J(\upgamma_{s w}) = \cores_{J _{ sw}}^{ s J _w s^{-1}}\big( s_* \Sh_w^J(\upbeta)\big)
\end{equation}
and  
\begin{equation}\label{gw}
\Sh_w^J(\upgamma_w)=\sum_{z\in \mathbb F_q^\times}  (n_s x_{\alpha}(\pi^{\mathfrak h}[z]) n_s^{-1})_* \Sh^J_w(\upbeta). 
\end{equation}
\end{itemize}
\end{proposition}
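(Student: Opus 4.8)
The plan is to reduce everything except one case to Proposition~\ref{coro:prod-goodlength}, and to handle that remaining case by unwinding the general product formula of Proposition~\ref{yoneda-product-U}. Throughout I would use that $\Sh^J_s(\tau^J_s)=1\in H^0(J_s,k)=k$, since $\tau^J_s=\chara_{JsJ}$ and $\ev_s(\chara_{JsJ})=1$; this is precisely what makes the formulas below agree with the outputs of the general product formula applied with $\upalpha=\tau^J_s$. For $\omega\in\Omega$ one has $\ell(\omega w)=\ell(w)$, and since $\omega$ normalizes $J$ also $J_{\omega w}=\omega J_w\omega^{-1}$; hence \eqref{f:prod-goodlength2} applied with $v=\omega$ gives at once $\tau^J_\omega\cdot\upbeta\in H^j(J,\X_J(\omega w))$ and the identity \eqref{f:leftomega}. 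Likewise, when $s\in S_{\aff}$ with $\ell(sw)=\ell(w)+1$, equation \eqref{f:prod-goodlength2} with $v=s$ yields $\tau^J_s\cdot\upbeta\in H^j(J,\X_J(sw))$ together with \eqref{f:leftsgood}.

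The one genuinely new case is $s=s_{(\alpha,\mathfrak h)}\in S_{\aff}$ with $\ell(sw)=\ell(w)-1$. By the computation of $JsJvJ$ recalled in the proof of Lemma~\ref{coro25}, we have $JsJ\cdot JwJ=JwJ\sqcup JswJ$. Applying Proposition~\ref{yoneda-product-U} with $\U=J$, $\alpha=\tau^J_s\in H^0(J,\X_J(s))$ and $\beta=\upbeta\in H^j(J,\X_J(w))$, only the double cosets $z=w$ and $z=sw$ contribute, so $\tau^J_s\cdot\upbeta=\upgamma_w+\upgamma_{sw}$ with $\upgamma_w\in H^j(J,\X_J(w))$ and $\upgamma_{sw}\in H^j(J,\X_J(sw))$ determined by \eqref{f:yon}. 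Since $\Sh^J_s(\tau^J_s)=1$ has degree $0$, each $\widetilde{\Gamma}_{z,h}$ appearing in \eqref{f:yon} collapses to a single restriction of $(asc)_*\Sh^J_w(\upbeta)$, so the remaining task is to describe the index sets $J_{s^{-1}}\backslash(s^{-1}Jz\cap JwJ)/J_{z^{-1}}$ for $z\in\{w,sw\}$ and to add up the resulting corestrictions.

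Here I would follow the corresponding part of the proof of \cite[Prop. 5.6]{Ext} \emph{mutatis mutandis}. For $z=sw$ the index set reduces to a single class, one has $sJ_ws^{-1}\subseteq J_{sw}$, the surviving restriction in \eqref{f:yon} is trivial, and the formula specializes to $\Sh^J_{sw}(\upgamma_{sw})=\cores^{sJ_ws^{-1}}_{J_{sw}}\big(s_*\Sh^J_w(\upbeta)\big)$, which is \eqref{gsw}. For $z=w$ one identifies a system of representatives of $J_{s^{-1}}\backslash(s^{-1}Jw\cap JwJ)/J_{w^{-1}}$ indexed by $z\in\mathbb F_q^\times$ via the elements $n_sx_\alpha(\pi^{\mathfrak h}[z])n_s^{-1}$; each of these normalizes $J_w$, so the associated restrictions and corestrictions are trivial, and summing the contributions gives \eqref{gw}. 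The only feature absent from \cite{Ext} is that $J$ is not pro-$p$, but this is harmless: $J=T^0I$ with $[J:I]=[T^0:T^1]$ prime to $p$ (hence invertible in $k$, by the hypothesis that $q-1$ is invertible), $J_w=T^0I_w$, and $JsJ\cdot JwJ=J\,(IsI\cdot IwI)\,J$ has the same double-coset shape as $IsI\cdot IwI$ by Lemma~\ref{coro25}\eqref{coro25-i}, so the combinatorics of these index sets is literally the one treated in \emph{op. cit.}

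I expect the main obstacle to be this last step: carrying the Bruhat--Tits data $n_s$, $x_\alpha(\pi^{\mathfrak h}[z])$ and $\check\alpha([z])$ through the Shapiro isomorphisms and the conjugations $(asc)_*$ so that the sum in \eqref{gw} ranges exactly over $\mathbb F_q^\times\cong\mathbf{T}_{s_\alpha}(\mathbb F_q)$ (and not, say, over $\mathbb F_q$) and carries precisely the stated twists. This is the explicit calculation performed in the proof of \cite[Prop. 5.6]{Ext}; it transfers without change once the double-coset bookkeeping above, which is insensitive to replacing $I$ by $J$, has been set up.
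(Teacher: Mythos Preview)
Your reduction of the cases $\omega\in\Omega$ and $\ell(sw)=\ell(w)+1$ to Proposition~\ref{coro:prod-goodlength} matches the paper exactly, and your treatment of \eqref{gsw} (direct adaptation of \cite[(67)]{Ext} with $I$ replaced by $J$) is also what the paper does.

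For \eqref{gw}, however, the paper takes a genuinely different route. Rather than redoing the double-coset computation for $J$, it uses the algebra isomorphisms $r^*=-R^*_J:E_J^*\to e_JE^*e_J$ and $c^*=-C^*_J:e_JE^*e_J\to E_J^*$ (Remark~\ref{rema:RPCP*}) to transport the problem to $E^*$, applies \cite[Prop.~5.6]{Ext} there verbatim, and then transports back. Concretely: one computes $r^*(\tau^J_s)=e_J\tau_{n_s}$ and $r^*(\upbeta)=e_J\cdot\beta_{\tilde w}\cdot e_J$ for a suitable lift $\beta_{\tilde w}\in H^j(I,\X(\tilde w))$, invokes the known pro-$p$ formula $\tau_{n_s}\cdot\beta_{\tilde w}=\gamma_{n_s\tilde w}+\sum_{t\in\check\alpha([\mathbb F_q^\times])}\gamma_{t\tilde w}$, and then a chain of manipulations with $e_J$, corestriction, and a change of summation variable collapses the double sum over $(a,t)\in(T^0/T^1)\times\check\alpha([\mathbb F_q^\times])$ to the single sum over $z\in\mathbb F_q^\times$ appearing in \eqref{gw}.

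Your direct approach should also work, but your assertion that the combinatorics is ``literally the one treated in \emph{op.\ cit.}'' glosses over a real difference: in the $I$-version the $w$-component of $\tau_{n_s}\cdot\beta_{\tilde w}$ is spread across several distinct double cosets $It\tilde wI$ indexed by $t\in\check\alpha([\mathbb F_q^\times])$, each carrying an inner sum over the fiber of $\check\alpha([{-}])$ above $t$; for $J$ these cosets coalesce into the single $JwJ$, and the index set $J_{s^{-1}}\backslash(s^{-1}Jw\cap JwJ)/J_{w^{-1}}$ must be computed afresh (it is not the same set as in \cite{Ext}, though its cardinality is $q-1$ and the conjugating elements $a n_s c$ do come out as $n_sx_\alpha(\pi^{\mathfrak h}[z])n_s^{-1}$). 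The paper's transfer argument sidesteps this recomputation at the cost of the $e_J$-bookkeeping; your approach avoids the idempotent algebra but must supply the $J$-coset analysis directly.
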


\begin{proof}
The results for $\omega \in \Omega$ and $s \in S_{\aff}$ which satisfies $\ell(sw) = \ell(w) + 1$ follow from Proposition \ref{coro:prod-goodlength} (in particular formula \eqref{f:prod-goodlength2}).  Thus, let us assume $\ell( s w) = \ell(w) - 1$, which implies $JsJwJ   \subseteq  JswJ  \sqcup JwJ$ (see Lemma \ref{coro25}\eqref{coro25-iii}).  From Proposition \ref{yoneda-product-U}, we obtain
\[ \tau^J_{ s} \cdot \upbeta = \upgamma_w + \upgamma_{sw} \in H^j(J, \X_J(w)) \oplus H^j(J, \X_J(sw)) \ \] 
and we want to compute
$\upgamma_w$ and  $\upgamma_{sw}$.
\noindent The proof  of   \eqref{gsw} is the same as the proof of \cite[(67)]{Ext} where one simply needs to replace $\tilde s $ by $s$ and $I$ by $J$.

We will now deduce \eqref{gw} from the proof of \cite[(68)]{Ext} using Remark \ref{rema:RPCP*}-\ref{rema:RPCP*-ii}, which says that the restrictions
\[ r^*:={\dfrac{R^*_{J}}{[J:I]} }\quad :E_J^*\longrightarrow  e_J  \cdot E^* \cdot e_{J} \quad \textnormal{ and } \quad c^*:={\dfrac{C^*_{J}}{[J:I]} }\quad:e_J\cdot E^* \cdot e_J \longrightarrow   E^*_J \]
are isomorphisms of $k$-algebras which are inverse to each other. We compute:
\begin{itemize}
\item By \eqref{f:formulaCP}, we have
\[ {{C^0_{J}}}(\tau_{n_s})={[J:I]} \tau^J_{s}, \quad \textnormal{so}\quad 
c^0(e_J\tau_{n_s} e_J)=\tau^J_{s} \textnormal{ and } r^0(\tau^J_{s})=e_J\tau_{n_s} \ . \]
\item Further, from \eqref{f:defiRP*} we deduce
\[ r^*(\upbeta)={\frac{1}{[J:I]} }\sum_{a\in {T^0/T^1}} \beta_ {a \tilde w} \ , \]
 where $\tilde w\in \widetilde W$ is a lift for $w$ and where $\beta_ {a \tilde w}\in H^j(I, \X(a\tilde w))$ satisfies \begin{equation}
 \label{shb}
 \Sh_{a\tilde w}^I(\beta_ {a \tilde w})=\res^{J_w}_{I_w} \big( a_* \Sh^J_w(\upbeta) \big) = \res^{J_w}_{I_w}   \big(\Sh^J_w(\upbeta) \big)
 \end{equation}
(because $a\in J_w$).  It is then easy to check  (using \cite[Cor. 5.5, (64)]{Ext}) that $r^*(\upbeta)= e_J\cdot \beta_{\tilde w} \cdot e_J$.
\end{itemize}
Next, we compute $\upgamma_w$. Using the above two bullet points, we have 
\[ \tau_s^J\cdot \upbeta= c^*( r^*(\tau_s^J)\cdot  r^*(\upbeta))=c^*(e_J\cdot( \tau_{n_s} \cdot \beta_{\tilde w}) \cdot e_J)\ . \]
By \cite[Prop. 5.6]{Ext}, we know that
\[ \tau_{n_s} \cdot \beta_{\tilde w} = \gamma_{n_s\tilde w} + \sum_{t\in \check\alpha([\mathbb F_q^\times])}\gamma_{t\tilde w} \]
where $\gamma_{t\tilde w} \in H^j(I, \X(t\tilde w))$, $\gamma_{n_s\tilde w} \in H^j(I, \X(n_s\tilde w))$ and 
\[ \Sh^I_{{t}  \tilde{w}}(\gamma_{{t} \tilde{w}})=  \sum_{z\in \mathbb F_q^\times,\: \check\alpha([z])=t}(n_s t^{-1} x_{\alpha}(\pi^{\mathfrak h}[z]) n_s^{-1})_*\Sh^I_{\tilde{w}}(\beta_{\tilde w})\ .\] 
(It is implicit in this formula (and verified \textit{op. cit.}) that $(n_s t^{-1} x_{\alpha}(\pi^{\mathfrak h}[z]) n_s^{-1})$ normalizes $I_{\tilde w}$.) It follows that $(n_s  x_{\alpha}(\pi^{\mathfrak h}[z]) n_s^{-1})$ normalizes $I_{\tilde w}$ and $J_w$. We have 
\[ \upgamma_{w}=c^*\Bigg(e_J\cdot\big( \sum_{t\in \check\alpha([\mathbb F_q^\times])}\gamma_{t\tilde w}\big)\cdot e_J\Bigg) \ . \]
Let us write $e_J\cdot (\sum_{t\in \check\alpha([\mathbb F_q^\times])}\gamma_{t\tilde w})\cdot e_J \ $  as $\sum_{u\in T^0/T^1} \delta_{u \tilde w}$ for some $\delta_{u\tilde w}\in H^j(I,\X(u\tilde w))$. Then 
\[ \Sh_{u\tilde w}^I(\delta_{u\tilde w}) =  \frac{1}{[J:I]^2}\sum_{a\in T^0/T^1,  t\in \check\alpha([\mathbb F_q^\times]) }a_*\Sh_{t\tilde w}^I (\gamma_{t\tilde w}) \]
(using \cite[Cor. 5.5, (64)]{Ext}), and therefore
\begin{eqnarray*}
\Sh_{u\tilde w}^I(\delta_{u\tilde w}) & = & \frac{1}{[J:I]^2} \sum_{a\in T^0/T^1,  t\in \check\alpha([\mathbb F_q^\times]) }\:
\sum_{z\in \mathbb F_q^\times,\: \check\alpha([z])=t}(a n_s t^{-1} x_{\alpha}(\pi^{\mathfrak h}[z]) n_s^{-1})_*\Sh^I_w(\beta_{\tilde w}) \\
& \stackrel{\eqref{shb}}{=} &  \frac{1}{[J:I]^2} \sum_{a\in T^0/T^1,  t\in \check\alpha([\mathbb F_q^\times]) }\:
\sum_{z\in \mathbb F_q^\times,\: \check\alpha([z])=t}(a n_s t^{-1} x_{\alpha}(\pi^{\mathfrak h}[z]) t n_s^{-1}a^{-1})_*\res^{J_w}_{I_w}  ( \Sh^J_w(\upbeta))
\end{eqnarray*}
where we have also used the fact that $a n_st^{-1} n_s^{-1}\in J_w= T^0I_w$.  Continuing:
\begin{eqnarray*}
\phantom{\Sh_{u\tilde w}^I(\delta_{u\tilde w})}  & = & \frac{1}{[J:I]^2} \sum_{t\in \check\alpha([\mathbb F_q^\times]), b\in T^0/T^1 }\:
\sum_{z\in \mathbb F_q^\times,\: \check\alpha([z])=t}(n_s b x_{\alpha}(\pi^{\mathfrak h}[z]) b^{-1} n_s^{-1})_*  \res^{J_w}_{I_w}   (\Sh^J_w(\upbeta)) \\
 & = & \frac{1}{[J:I]^2} \sum_{ b\in T^0/T^1 }\:
\sum_{z\in \mathbb F_q^\times}(n_s b x_{\alpha}(\pi^{\mathfrak h}[z]) b^{-1} n_s^{-1})_*  \res^{J_w}_{I_w}   (\Sh^J_w(\upbeta)) \\
 & = &  \frac{1}{[J:I]^2} \sum_{ b\in T^0/T^1 }\:
\sum_{z\in \mathbb F_q^\times}(n_s x_{\alpha}(\pi^{\mathfrak h}[z]\alpha(b)) n_s^{-1})_*  \res^{J_w}_{I_w}   (\Sh^J_w(\upbeta))\\ 
 & = & \frac{1}{[J:I]}\sum_{z\in \mathbb F_q^\times}(n_s x_{\alpha}(\pi^{\mathfrak h}[z]) n_s^{-1})_*  \res^{J_w}_{I_w}  (\Sh^J_w(\upbeta)) \\
 & = & \frac{1}{[J:I]}\sum_{z\in \mathbb F_q^\times}  \res^{J_w}_{I_w}  \left((n_s x_{\alpha}(\pi^{\mathfrak h}[z]) n_s^{-1})_* \Sh^J_w(\upbeta)\right) \ .
\end{eqnarray*}
Thus, using the above calculation we finally obtain
\begin{eqnarray*}
\Sh^J_w(\upgamma_w) & = & \Sh^J_w\Bigg(c^*\bigg(\sum_{u\in T^0/T^1} \delta_{u \tilde w}\bigg)\Bigg) \\
& = & \frac{1}{[J:I]}\sum_{u\in T^0/T^1}\Sh^J_w(C_J^*( \delta_{u \tilde w})) \\
& \stackrel{\eqref{f:defiCP*}}{=} & \frac{1}{[J:I]}\sum_{u\in T^0/T^1}\cores_{J_w}^{I_{\tilde w}}( \Sh^I_{u\tilde w}(\delta_{u \tilde w})) \\
 & = & \frac{1}{[J:I]^2}\sum_{u\in T^0/T^1,z\in \mathbb F_q^\times}\cores_{J_w}^{I_{\tilde w}}\Big(   \res^{J_w}_{I_w}  \big((n_s x_{\alpha}(\pi^{\mathfrak h}[z]) n_s^{-1})_* \Sh^J_w(\upbeta)\big)\Big) \\
 & \stackrel{\textnormal{Rmk.}~\ref{rema:coressurj}}{=} & \frac{1}{[J:I]}\sum_{u\in T^0/T^1,z\in \mathbb F_q^\times}  (n_s x_{\alpha}(\pi^{\mathfrak h}[z]) n_s^{-1})_* \Sh^J_w(\upbeta) \\
 & = & \sum_{z\in \mathbb F_q^\times}  (n_s x_{\alpha}(\pi^{\mathfrak h}[z]) n_s^{-1})_* \Sh^J_w(\upbeta) \ .
\end{eqnarray*}

\end{proof}

\begin{corollary}
\label{prop:explicitleftaction-cor}  
Suppose $q \neq 2,3$ and that $q - 1$ is invertible in $k$.  Let $\upbeta\in H^1(J, \X_J(w))$, and suppose $ s\in S_{\aff}$ satisfies $\ell(sw) = \ell(w) - 1$.  
\begin{enumerate}
\item We then have $\tau^J_{ s} \cdot \upbeta = \upgamma_w+\upgamma_{sw}\in H^1(J, \X_J(w)) \oplus H^1(J, \X_J( s w))$, where 
\begin{equation}\label{cor-gsw}
\Sh_{ s w}^J(\upgamma_{s w}) = \cores_{J _{ sw}}^{ s J _w s^{-1}}\big( s_* \Sh^J_w(\upbeta)\big)
\end{equation}
and  
\begin{equation}\label{cor-gw}
\Sh_w^J(\upgamma_w) = (q - 1)\Sh^J_w(\upbeta). 
\end{equation}
Thus, $\upgamma_w = (q - 1)\upbeta$.  
\item Suppose furthermore that $k$ is of characteristic $p$.  We then have $\tau^J_s \cdot \upbeta = -\upbeta$.  \label{prop:explicitleftaction-cor-2}
\end{enumerate}
\end{corollary}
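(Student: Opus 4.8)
The plan is to obtain both parts from Proposition~\ref{prop:explicitleftaction}, specialised to degree $j=1$, by pinning down the conjugation operators that occur in \eqref{gsw} and \eqref{gw}.

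For part~(1), Proposition~\ref{prop:explicitleftaction} gives directly $\tau^J_s\cdot\upbeta=\upgamma_w+\upgamma_{sw}$ together with \eqref{cor-gsw} (which is \eqref{gsw}), so only $\upgamma_w$ must be identified. Set $g_z:=n_sx_\alpha(\pi^{\mathfrak h}[z])n_s^{-1}$; a computation with the Chevalley datum $\varphi_\alpha$ shows $g_z\in{\EuScript U}_{-\alpha}$ with parameter a unit multiple of $\pi^{-\mathfrak h}[z]$, and, as already recorded in the proof of Proposition~\ref{prop:explicitleftaction}, $g_z$ normalises $I_w$; since $[J_w:I_w]=[T^0:T^1]$ is invertible in $k$, we have $H^1(J_w,k)=H^1(I_w,k)^{T^0/T^1}$ and \eqref{gw} reads $\Sh^J_w(\upgamma_w)=\sum_{z\in\mathbb{F}_q^\times}(g_z)_*\Sh^J_w(\upbeta)$ there. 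The heart of the argument is then to show $\sum_{z\in\mathbb{F}_q^\times}(g_z)_*=(q-1)\cdot\id$ on $H^1(I_w,k)=\Hom\big(I_w/\overline{[I_w,I_w]I_w^{\,p}},k\big)$: using the root-subgroup decomposition of $I_w$ from \cite{OS1} together with the Chevalley commutator relations, conjugation by $g_z$ fixes the classes of all standard generators of $I_w$ modulo $\overline{[I_w,I_w]I_w^{\,p}}$ except that it sends the generator in the ${\EuScript U}_{\alpha}$-direction to itself plus a term depending on $[z]$ through a fixed polynomial of degree $\le 2$ (linear in the ${\EuScript U}_{-\alpha}$- and toral directions, at worst quadratic overall). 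Thus $(g_z)_*=\id+[z]\,N_1+[z]^2N_2$ for fixed endomorphisms $N_1,N_2$, and since $q\neq 2,3$ we have $\sum_{z\in\mathbb{F}_q^\times}z=\sum_{z\in\mathbb{F}_q^\times}z^2=0$ in $\mathbb{F}_q$, so the correction terms sum to $0$ and $\sum_{z\in\mathbb{F}_q^\times}(g_z)_*=(q-1)\id$. This yields $\Sh^J_w(\upgamma_w)=(q-1)\Sh^J_w(\upbeta)$, hence \eqref{cor-gw} and $\upgamma_w=(q-1)\upbeta$.

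For part~(2) we are in characteristic $p$, so $q\equiv 0$, whence $q-1\equiv -1$ and part~(1) gives $\upgamma_w=(q-1)\upbeta=-\upbeta$; it remains to show $\upgamma_{sw}=0$. By \eqref{cor-gsw} it suffices to prove that the corestriction $\cores_{J_{sw}}^{sJ_ws^{-1}}\colon H^1(sJ_ws^{-1},k)\longrightarrow H^1(J_{sw},k)$ vanishes, noting $sJ_ws^{-1}\subseteq J_{sw}$ with index $q$, which is $0$ in $k$. I would do this through the transfer description of corestriction in degree $1$: evaluating $\cores(\phi)$ on the standard generators of $J_{sw}$ modulo its Frattini subgroup, the transfers of the toral and ${\EuScript U}_{-A}$-type generators land in $q$ times the corresponding Frattini quotient of $sJ_ws^{-1}$, hence in $0$, while the transfer of the ${\EuScript U}_{A}$-type generator (for $A$ the affine root with $s=s_A$) is congruent to the ${\EuScript U}_{A}$-generator of $sJ_ws^{-1}$, on which every $\phi\in H^1(sJ_ws^{-1},k)$ vanishes: indeed $\phi$ is pulled back from the pro-$p$ radical of $sJ_ws^{-1}$ and is $T^0/T^1$-invariant, and since $q\neq 2,3$ the restriction of the relevant root character to $T^0/T^1$ is nontrivial, so the ${\EuScript U}_{A}$-direction is not $T^0/T^1$-fixed. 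Hence $\cores_{J_{sw}}^{sJ_ws^{-1}}=0$, so $\upgamma_{sw}=0$ and $\tau^J_s\cdot\upbeta=\upgamma_w=-\upbeta$.

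The main obstacle will be the two explicit computations inside the Bruhat--Tits building: the description of $(g_z)_*$ on $H^1(I_w,k)$ as a degree-$\le 2$ polynomial in $[z]$ (part~(1)), and the vanishing of the degree-$1$ corestriction (part~(2)). Both rest on the root-group factorisations of $I_w$, $J_{sw}$ and $sJ_ws^{-1}$ and on the Chevalley commutator formula, and both invoke $q\neq 2,3$ only through the elementary identities $\sum_{z\in\mathbb{F}_q^\times}z^j=0$ for $j\in\{1,2\}$ (equivalently, the nontriviality of squaring on $\mathbb{F}_q^\times$).
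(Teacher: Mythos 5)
Your plan takes a genuinely different route from the paper's in both parts, and while the overall outline is reasonable, the technical claims you leave as ``obstacles'' do not line up with how the argument actually closes. The missing structural input throughout is Lemma~\ref{lem:comm-PY} (and its Corollary~\ref{cor:frattini-PY}), which localize the relevant abelianizations to the torus and render the root-subgroup bookkeeping you are attempting unnecessary.

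For part~(1), the paper's route is far simpler than the degree-$\le 2$ polynomial device you propose. By Lemma~\ref{lem:comm-PY} one has $[\P_Y,\P_Y]=\U_Y$ for parahoric-type groups, so $J_w^{\textnormal{ab}}$ is supported on $T^0$, i.e.\ the homomorphism $\Sh^J_w(\upbeta)\in\Hom(J_w,k)$ factors through $T^0$. For $t\in T^0$ one has $g_z^{-1}tg_z=[g_z^{-1},t]\,t$ with $[g_z^{-1},t]\in[J_w,J_w]$, so $(g_z)_*$ acts \emph{trivially} on $\Sh^J_w(\upbeta)$ for \emph{each} $z$, and the sum over $z$ is $(q-1)\Sh^J_w(\upbeta)$ with no analysis of $z$-dependence at all. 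Your detour through $H^1(I_w,k)$ loses this: $(I_w)_\Phi$ genuinely has root-subgroup pieces, $(g_z)_*$ is not trivial there, and your assertion that the dependence on $[z]$ is a fixed polynomial of degree $\le 2$ is neither proved nor automatic in general rank --- for $G_2$ the Chevalley commutator $[x_{-\alpha}(\cdot),x_\beta(\cdot)]$ already exhibits cubic terms in the first variable, and if the degree were higher your power-sum vanishing would impose additional constraints on $q$ beyond $q\neq 2,3$. Working in $H^1(J_w,k)$ via Lemma~\ref{lem:comm-PY} makes the whole issue disappear.

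For part~(2), you correctly reduce to the vanishing of the transfer $(J_{sw})_\Phi\longrightarrow(sJ_ws^{-1})_\Phi$, but the case analysis you run is based on a wrong picture of $(J_{sw})_\Phi$. By Corollary~\ref{cor:frattini-PY} the Frattini quotient of the parahoric-type group $J_{sw}$ is supported on $T^1$; there are no $\EuScript{U}_{A}$- or $\EuScript{U}_{-A}$-type generators (you may be thinking of $I_{sw}$, whose Frattini quotient does carry root-subgroup contributions, but $J_{sw}=T^0I_{sw}$ has a strictly smaller abelianization). Your subsidiary $T^0/T^1$-invariance argument about the $\EuScript{U}_A$ direction is therefore vacuous. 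More seriously, the toral part of your argument is underspecified: the transfer of $t\in T^1$ is not merely $t^q$ (which is $q\cdot[t]=0$ in the $p$-elementary Frattini quotient), but $t^q$ multiplied by a correction term landing in $\dot{w}^{-1}n_s^{-1}x_\alpha(\pi^{\mathfrak h+1}\mathfrak O)n_s\dot{w}$, and one must check that this correction actually lies in $\Phi(sJ_ws^{-1})$. The paper does this explicitly by writing out the transfer cocycle and appealing once more to (the proof of) Lemma~\ref{lem:comm-PY}; your sketch leaves this point unaddressed.
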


\begin{proof}
\begin{enumerate}
\item We have 
\[ \Sh_w^J(\upbeta) \in H^1(J_w,k) = \Hom(J_w,k) = \Hom(J_w/[J_w,J_w], k) \ , \]
and therefore we identify $\Sh_w^J(\upbeta)$ with a homomorphism $J_w \longrightarrow k$.  By Lemma \ref{lem:comm-PY}, any such homomorphism is supported on the integral torus $T^0$.  If $s$ corresponds to the simple affine root $(\alpha, \mathfrak{h})$, then we have (for $z\in \bbF_q^\times$)
\begin{eqnarray*} 
\left((n_s x_\alpha(\pi^{\mathfrak{h}}[z])n_s^{-1})_*\Sh^J_w(\upbeta)\right)(t) & = & \Sh^J_w(\upbeta)\left(n_s x_\alpha(-\pi^{\mathfrak{h}}[z])n_s^{-1}tn_s x_\alpha(\pi^{\mathfrak{h}}[z])n_s^{-1}\right) \\
& = & \Sh^J_w(\upbeta)\left((n_s x_\alpha(-\pi^{\mathfrak{h}}[z])n_s^{-1}tn_s x_\alpha(\pi^{\mathfrak{h}}[z])n_s^{-1}t^{-1}) t\right) \\
& = & \Sh^J_w(\upbeta)(t) ,
\end{eqnarray*}
where the last equality follows from the fact that $(n_s x_\alpha(-\pi^{\mathfrak{h}}[z])n_s^{-1}tn_s x_\alpha(\pi^{\mathfrak{h}}[z])n_s^{-1}t^{-1})$ lies in $[J_w, J_w]$ (see the proof of Lemma \ref{lem:comm-PY}).  We conclude using the previous proposition.
\item By the above point, it suffices to prove the map $\cores_{J _{ sw}}^{ s J _w s^{-1}}: H^1(sJ_ws^{-1},k) \longrightarrow H^1(J_{sw},k)$ is identically zero.  By duality, it suffices to show that the transfer map on Frattini quotients $\textnormal{tr}:(J_{sw})_\Phi \longrightarrow (sJ_w s^{-1})_\Phi$ is zero (see \cite[\S 1.5.4, Prop. 1.5.9]{NSW}).  Furthermore, by Lemma \ref{cor:frattini-PY}, we may compute the transfer on elements $t \in T^1$.

By \cite[Lem. 2.2]{Ext} and Lemma \ref{coro25}\eqref{coro25-i}, the subgroup $J_{w^{-1}}$ has index $q$ in $J_{w^{-1}s^{-1}}$, so that $sJ_w s^{-1}$ has index $q$ in $J_{sw}$.  Moreover, by the discussion preceding \cite[Rmk. 2.10]{Ext}, we have that $\dot{w}^{-1}n_s^{-1}x_\alpha(b\pi^{\mathfrak{h}})n_s\dot{w}$ is a system of representatives for $sJ_ws^{-1}\setminus J_{sw}$, where $b$ ranges over a set $S \subseteq \mathfrak{O}$ of representatives of $\mathfrak{O}/\mathfrak{M}$ and $\dot{w} \in N_G(T)$ is a lift of $w$.  (As in the previous point, we let $(\alpha, \mathfrak{h})$ denote the simple affine root corresponding to $s$.)  For $t \in T^1$, we then obtain
\begin{eqnarray*}
 \textnormal{tr}(t) & \equiv &  \prod_{b\in S} \left(\dot{w}^{-1}n_s^{-1}x_\alpha(b\pi^{\mathfrak{h}})n_s\dot{w}\right)t\left(\dot{w}^{-1}n_s^{-1}x_\alpha(-b\pi^{\mathfrak{h}})n_s\dot{w}\right) \\
  & \equiv & \prod_{b\in S} t\cdot \left(\dot{w}^{-1}n_s^{-1}x_\alpha\left(b\pi^{\mathfrak{h}}(\check{\alpha}(t')-1)\right)n_s\dot{w}\right)~ \bmod \Phi(sJ_ws^{-1}) \ , 
 \end{eqnarray*}
where $t' := n_s\dot{w}t^{-1}\dot{w}^{-1}n_s^{-1} \in T^1$.  
This is a product of $t^q$ (which is a $p^{\textnormal{th}}$ power, hence lies in $\Phi(sJ_{w}s^{-1})$) and a matrix in $\dot{w}^{-1}n_s^{-1}x_\alpha(\pi^{\mathfrak{h} + 1}\mathfrak{O})n_s\dot{w}$ (since $\check{\alpha}(t') \in 1 + \mathfrak{M}$), which is contained in $sJ_w s^{-1}$.  By the proof of Lemma \ref{lem:comm-PY}, the latter also lies in $\Phi(sJ_ws^{-1})$, and therefore $\textnormal{tr}(t) = 0$.  
\end{enumerate}
\end{proof}

\section{\label{sec:SL2}Parahoric Hecke $\Ext$-algebras for ${\rm SL}_2$ in characteristic $p$}

In this section, we specialize further to the case $G={\rm SL}_2(\mathfrak F)$ in order to obtain even more precise information about the parahoric Hecke $\Ext$-algebras.  We maintain the notation introduced in previous sections.

We let $T \subseteq G$ be the torus of diagonal matrices. We choose the positive root with respect to $T$ to be $\alpha( \left(
\begin{smallmatrix}
t & 0 \\ 0 & t^{-1}
\end{smallmatrix}
\right) ) := t^2$, which corresponds to the Borel subgroup of upper triangular matrices. We then have
\[ I = \begin{pmatrix} 1+\mathfrak{M} & \mathfrak{O} \\ \mathfrak{M} & 1+\mathfrak{M} \end{pmatrix}
\qquad  \textnormal{and}  \qquad J= I T^0= \begin{pmatrix} \mathfrak{O}^\times & \mathfrak{O} \\ \mathfrak{M} & \mathfrak{O}^\times \end{pmatrix} \] 
(by abuse of notation, here and below, all matrices are understood to have determinant 1) and $K={\rm SL}_2(\mathfrak O)$.

Recall the short exact sequence 
\begin{equation}
\label{f:esW}
  0 \longrightarrow T^0/T^1 \longrightarrow \widetilde{W} = N(T)/T^1 \longrightarrow W = N(T)/T^0 \longrightarrow 0 \ .
\end{equation}
Additionally, we set
\[ s_0 := s_\alpha = \begin{pmatrix} 0 & 1 \\ -1 & 0 \end{pmatrix}, \qquad s_1 := s_{(-\alpha,1)} = \begin{pmatrix} 0 & -\pi^{-1} \\ \pi & 0 \end{pmatrix}, \qquad \theta := \begin{pmatrix} \pi & 0 \\ 0 & \pi^{-1} \end{pmatrix}, \qquad \varpi := \begin{pmatrix} 0 & 1 \\ \pi & 0 \end{pmatrix} \ .\]
We have $s_0, s_1, \theta \in \textnormal{SL}_2(\mathfrak{F})$ and $\varpi \in \textnormal{GL}_2(\mathfrak{F})$.  Moreover, note that $s_0 s_1 = \theta$, $s_1=\varpi  s_0  \varpi^{-1}$, and that the matrix $\varpi$ normalizes $I$ and $J$.  The first three are elements in $N_G(T)$ which we will often identify with their respective images in $W$ or in $\widetilde W$.  The images of $s_0$ and $s_1$ in $W$ are the two reflections corresponding to the two vertices of the standard edge fixed by $I$ in the Bruhat--Tits tree of $G$. They generate $W$, i.e., we have $W = \langle s_0,s_1 \rangle = \theta^{\mathbb{Z}} \sqcup s_0 \theta^{\mathbb{Z}}$. We have $
  \ell(\theta^i) = |2i|$ and $\ell(s_0\theta^i) = |1 - 2i|.$

\medskip

We introduce  the following subsets of $W$:
\[ {  W}^0:=\{w\in {  W}, \: \ell(s_0 w)=\ell(w)+1\} \qquad \text{and} \qquad {  W}^1:=\{w\in {  W}, \: \ell(s_1w)=\ell(w)+1\}. \]
Note that the intersection of these  two subsets is $\{1\}$.  Analogously to \cite[\S 3.2.1]{OS4}, we define for $\ell\geq 0$ the subgroups
\begin{equation}
\label{defiJn}
  J_\ell^+ := \begin{pmatrix} \mathfrak{O}^\times & \mathfrak{O} \\ \mathfrak M^{\ell +1} & \mathfrak{O}^\times \end{pmatrix} \qquad\text{and}\qquad J_\ell^- =\varpi J_\ell ^+ \varpi^{-1}= \varpi^{-1} J_\ell^+\varpi = \begin{pmatrix} \mathfrak{O}^\times & \mathfrak M^\ell \\ \mathfrak M & \mathfrak{O}^\times \end{pmatrix}
\end{equation}
of $J$  
 and recall that
\begin{equation}
\label{f:cap}
J_w=  J \cap wJw^{-1} =
  \begin{cases}
  J_{\ell(w)}^+ & \text{if $w\in {  W}^0$}, \\
  J_{\ell(w)}^- & \text{if $w\in {  W}^1$}.
  \end{cases}
\end{equation}

Notice that $\mathfrak W_K=\mathfrak W$ (see \eqref{f:WP}).
The system of representatives $\D_K$  of $W/\mathfrak W_K$ satisfying \eqref{representants} is given by 
\[ \lbrace 1, \: \theta^n, \: s_0 \theta^n, \text{ for } n\geq 1\rbrace\ . \]
Therefore, the minimal system of double coset representatives $_K\D_K$ as defined in Proposition \ref{prop:doublereps} is the set 
\begin{equation}
\label{KDK}
_K\D_K = \lbrace  1, \: s_0 \theta^n \text{ for } n\geq 1\rbrace\ .
\end{equation}  Compare with the Cartan decomposition  which gives
$W = \bigsqcup_{n \geq 0} \mathfrak{W}_K \theta^n \, \mathfrak{W}_K.$

\subsection{Parahoric Hecke algebras of ${\rm SL}_2$\label{subsec:parahSL2}}
From this point forward, we take $k$ to be a field of  characteristic $p$.  Recall that we simply denote by $H$ (instead of $H_I$) the pro-$p$ Iwahori--Hecke algebra over $k$.
The idempotent $e_J = - \sum_{t \in T^0/T^1 } \tau_t$ was introduced in Subsection \ref{pro-p-hecke-alg}; it is central in $H$. We may then describe the relations in $H$ as
\begin{alignat*}{2}
\tau_w\tau _v & = \tau_{wv} & & \textrm{for $w,v\in\widetilde W$ such that}\ \ell(w)+\ell(v)=\ell(wv), \\
 \tau_{s_i}^2 & = -e_J\tau_{s_i} \qquad & &\textrm{for $i=0,1$.}
\end{alignat*} 
We have the following descriptions of the parahoric Hecke algebras $H_J$ and $H_K$:
\begin{itemize}

\item The Iwahori--Hecke algebra $H_J$ was described in Example \ref{subsubsec:JK}\eqref{subsubsec:JK-1}. Recall that it  has basis  $\{\tau_w^J\}_{w\in W}$
with the relations
\label{f:relJ}
\begin{alignat*}{2}
\tau^J_w\tau^J _v & = \tau^J_{wv} & & \textrm{for $w,v\in W$ such that}\ \ell(w)+\ell(v)=\ell(wv), \\
 ({\tau^J_{s_i}})^2 & = -\tau^J_{s_i} \qquad & & \textrm{for $i=0,1$.}
\end{alignat*} 
 It identifies  with the algebra $e_J H e_J$ with unit $e_J$.  Let 
\begin{equation}
\zeta_J := (\tau^J_{s_0}+1)(\tau^J_{s_1}+1)+\tau^J_{s_1}\tau^J_{s_0}=(\tau^J_{s_1}+1)(\tau^J_{s_0}+1)+\tau^J_{s_0}\tau^J_{s_1}\label{f:zeta} \ .
\end{equation} 
The subalgebra $k[\zeta_J]$ of $H_J$ is a polynomial algebra; it is the center of $H_J$.  Finally, we define a decreasing filtration $(F^nH_J)_{n\geq 0}$ on $H_J$ by the $H$-bimodules 
\[ F^n H_J:= \bigoplus_{w\in  W, \, \ell(w)\geq n} k\tau^J_w. \]

\item The spherical Hecke algebra $H_K$ is described in Example \ref{subsubsec:JK}\eqref{subsubsec:JK-2}. It identifies  with  the algebra $e_{J,K}  H_J e_{J,K}$ with unit $e_{J,K}$, where
$e_{J,K}= \tau^J_{s_0}+1\in H_J\ .$
The composition
\begin{equation} 
\label{f:compacenter}
k[\zeta_J]\xrightarrow{z\mapsto e_{J,K}z } e_{J,K}k[\zeta_J] e_{J,K}=e_{J,K}H_J e_{J,K}\underset{\sim}{\xrightarrow{C_{J,K}}} H_K
\end{equation} 
is an isomorphism (\cite[Thm. 4.3]{Ollcompa}).  Denote by $T$ the image of $\zeta_J$ by this map. We verify below the following identity:
\begin{equation}
\label{tauT}
\tau^K_{s_0\theta^n}= T^n-T^{n-1}
\end{equation}
\begin{proof}
For $n\geq 1$,
recall
$ K_{s_0\theta^n}= s_0 K_{\theta^n} s_0^{-1}= J_{s_0\theta^n}\ .$ Therefore, using \eqref{f:defiCP*}, we have $C_{J,K}(\tau_{s_0\theta^n}^J)=\tau_{s_0\theta^n}^K$.
Now $C_{J,K}(e_{J,K}\zeta_J e_{J,K})=C_{J,K}((\tau^J_{s_0}+1)(\tau^J_{s_1}+1)(\tau^J_{s_0}+1))=
C_{J,K}(\tau^J_{s_1}+1)=
\tau_{s_1}^K+ 1=\tau^K_{s_0\theta}+1$ which proves the statement at $n=1$. Then proceed by induction
using $ \zeta_J \tau_{s_0\theta^n}^J = \tau_{s_0\theta^{n+1}}^J$.

\end{proof}

\end{itemize}

\subsection{Frattini quotients}

We now consider Frattini quotients of various compact open subgroups of $G$.  For the basic definitions, see Appendix \ref{appendix:frattini}. We assume in this subsection that $q\neq 2,3$.

\begin{lemma} 
\label{lem:frattini-J-SL2}
Assume $q\neq2,3$.
\begin{enumerate}
\item The Frattini quotient of $J$ is trivial.
\item For $w\in W$ with length $\ell(w)\geq 1$, we have an isomorphism of abelian groups
\begin{align}
\label{iso:frat} 
(1+\mathfrak{M}) \big/ (1+\mathfrak{M}^{\ell(w)+1})(1+\mathfrak{M})^p & \stackrel{\sim}\longrightarrow  (J_w)_\Phi \\
1+\pi x\bmod (1+\mathfrak{M}^{\ell(w)+1})(1+\mathfrak{M})^p & \longmapsto \begin{pmatrix} 1+\pi x & 0 \cr 0 & (1+\pi x)^{-1} \end{pmatrix} \bmod \Phi(J_w) .
\end{align}
\end{enumerate}\label{lemma:fratJ}
\end{lemma}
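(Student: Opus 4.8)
The key structural input is that $J$ is a compact $p$-adic analytic group which, for $q \neq 2,3$, has no small torsion: concretely, $J$ is a pro-$p$ group up to the prime-to-$p$ factor $T^0/T^1 \cong \overline{\mathbf T}(\mathbb F_q)$, and the subgroups $J_w$ for $\ell(w) \geq 1$ are genuinely pro-$p$. The plan is first to reduce everything to commutator and $p$-th power computations inside the explicit matrix groups $J_\ell^{\pm}$ described in \eqref{defiJn}, using \eqref{f:cap} to identify $J_w$ with $J_{\ell(w)}^+$ or $J_{\ell(w)}^-$ according to whether $w \in W^0$ or $w \in W^1$. Since $\varpi$ conjugates $J_\ell^+$ to $J_\ell^-$, it suffices to treat $J_w$ for $w \in W^0$, i.e.\ $J_{\ell(w)}^+ = \sm{\mathfrak O^\times}{\mathfrak O}{\mathfrak M^{\ell(w)+1}}{\mathfrak O^\times}$, and then transport by $\varpi_*$.

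For part (1): I would show $\Phi(J) = J$, equivalently $J^{\mathrm{ab}} \otimes \mathbb F_p = 0$. The quotient $J/I \cong T^0/T^1$ has order prime to $p$ (as recalled in Subsection \ref{subsec:bt}), so it contributes nothing to the Frattini quotient; hence it suffices to show $\Phi(I) \supseteq $ a set generating $I$ modulo $\Phi(J)$, and in fact that the pro-$p$ group $I$ already has trivial Frattini quotient once we are allowed commutators with $T^0$. The root subgroups $\EuScript U_{\pm\alpha} \cap J$ (the strictly upper/lower entries) are killed in the Frattini quotient because $T^0$ acts on them through $\alpha$ (or $-\alpha$), and for $q \neq 2,3$ the map $t \mapsto \alpha(t)$ hits a unit $u \neq 1$ in $\mathbb F_q^\times$; then the commutator $[t, x_\alpha(a)] = x_\alpha((u-1)a)$ shows $x_\alpha(\mathfrak O) \subseteq [J,J]$, and similarly for $-\alpha$. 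This accounts for all of $J$ modulo $T^0$, and $T^0/T^1$ is prime-to-$p$ while $T^1 = 1 + \mathfrak M$ (diagonally) is a pro-$p$ group with $(1+\mathfrak M)/(1+\mathfrak M)^p(1+\mathfrak M^2)$ of rank one — but that rank-one piece is itself a commutator: conjugating $\sm{1+\pi x}{0}{0}{(1+\pi x)^{-1}}$ by a lower unipotent element produces, modulo deeper terms, exactly a lower-unipotent contribution already shown to be trivial, and one extracts the diagonal part via the Steinberg-type relation $[x_\alpha(a), x_{-\alpha}(b)] \in \check\alpha(1 - ab + \cdots)\cdot(\text{unipotents})$. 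Assembling these gives $\Phi(J) = J$.

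For part (2): the map in \eqref{iso:frat} is visibly a well-defined homomorphism from $(1+\mathfrak M)$ into $(J_w)_\Phi$ once one checks that $(1+\mathfrak M^{\ell(w)+1})$ and $(1+\mathfrak M)^p$ land in $\Phi(J_w) = J_w^p[J_w,J_w]$; the $p$-th powers are clear, and $1 + \mathfrak M^{\ell(w)+1}$ sits in the lower-left position's "partner" — more precisely, $\check\alpha(1+\mathfrak M^{\ell(w)+1})$ arises as a commutator $[x_{-\alpha}(\mathfrak M^{\ell(w)+1}), x_\alpha(\mathfrak O)]$ inside $J_{\ell(w)}^+$, which lies in $[J_w, J_w]$. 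Surjectivity: the Frattini quotient of the pro-$p$ group $J_{\ell(w)}^+$ is generated by the images of its three "coordinate" one-parameter subgroups $x_\alpha(\mathfrak O)$, $x_{-\alpha}(\mathfrak M^{\ell(w)+1})$, and $\check\alpha(1+\mathfrak M)$; the first two are commutators by the $q\neq2,3$ argument above (the torus $T^1$ still acts on $x_\alpha(\mathfrak O)$ through $\alpha$ with a nontrivial $1$-unit, giving $x_\alpha(\mathfrak M) \subseteq [J_w,J_w]$, and $x_\alpha(\mathfrak O)/x_\alpha(\mathfrak M)$ is a $p$-th power issue handled directly, using that $\alpha(T^1) = 1 + \mathfrak M$ acts, combined with the commutator with $x_{-\alpha}$), so only the diagonal $\check\alpha(1+\mathfrak M)$ survives, giving surjectivity onto $(J_w)_\Phi$. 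Injectivity then follows by a dimension count: both sides are $\mathbb F_p$-vector spaces and the domain $(1+\mathfrak M)/(1+\mathfrak M^{\ell(w)+1})(1+\mathfrak M)^p$ has $\mathbb F_p$-dimension $\min(\ell(w), e)$ where $e = \val_{\mathfrak F}(p)\cdot[\text{ram.\ index bookkeeping}]$... rather, one argues the reverse inequality $\dim(J_w)_\Phi \geq \dim$ of the domain directly from a lower central / minimal-generator count for $J_{\ell(w)}^+$, forcing the surjection to be an isomorphism. The main obstacle is the bookkeeping in this last step: one must pin down $\Phi(J_{\ell(w)}^+)$ precisely enough — identifying which powers of $\mathfrak M$ appear in each matrix entry of $[J_w,J_w]$ and $J_w^p$ — so that the surjection is seen to have no kernel; this is where the hypothesis $q \neq 2,3$ (ensuring nontrivial $1$-units act on the root groups, killing the unipotent directions completely) is genuinely used, and it is essentially the computation underlying Lemma \ref{lem:comm-PY} and Lemma \ref{cor:frattini-PY} invoked elsewhere.
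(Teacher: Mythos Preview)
Your overall strategy --- kill the unipotent root groups via torus commutators, leaving only the diagonal $T^1$-contribution --- is exactly the paper's approach (it is the content of Lemma~\ref{lem:comm-PY}, Corollary~\ref{cor:frattini-PY}, and Lemma~\ref{lem:UYintT} in Appendix~\ref{appendix:frattini}, specialized via Corollary~\ref{cor:frattini-Jw}). But your execution in part~(2) has a concrete error that creates a real gap.

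You assert in the first paragraph that ``the subgroups $J_w$ for $\ell(w) \geq 1$ are genuinely pro-$p$.'' This is false: $J_{\ell}^+ = \sm{\mathfrak{O}^\times}{\mathfrak{O}}{\mathfrak{M}^{\ell+1}}{\mathfrak{O}^\times}$ contains the full integral torus $T^0$, whose quotient $T^0/T^1 \cong \mathbb{F}_q^\times$ has order prime to~$p$. (You are confusing $J_w$ with $I_w$.) This error propagates: in your surjectivity argument for part~(2) you restrict yourself to commutators with $T^1$, observing that $[t, x_\alpha(a)] = x_\alpha((t^2-1)a)$ with $t^2 - 1 \in \mathfrak{M}$ gives only $x_\alpha(\mathfrak{M}) \subseteq [J_w, J_w]$, and then you hand-wave the remaining $x_\alpha(\mathfrak{O})/x_\alpha(\mathfrak{M}) \cong \mathbb{F}_q$ as ``a $p$-th power issue handled directly.'' It is not: $p$-th powers give only $x_\alpha(p\mathfrak{O}) \subseteq x_\alpha(\mathfrak{M})$, and commutators with $x_{-\alpha}(\mathfrak{M}^{\ell+1})$ do not help either. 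The fix is simply to use $T^0$, exactly as you did in part~(1): pick a Teichm\"uller lift $x \in \mathfrak{O}^\times$ with $x^2 - 1 \in \mathfrak{O}^\times$ (possible since $q > 3$), and then $[\check{\alpha}(x), x_\alpha(a)] = x_\alpha((x^2-1)a)$ gives $x_\alpha(\mathfrak{O}) \subseteq [J_w, J_w]$ in one stroke. This is precisely the argument of Lemma~\ref{lem:comm-PY}.

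Once this is corrected, your dimension-count for injectivity is unnecessary: the paper shows both containments directly, computing $[\P_Y,\P_Y] = \U_Y$ exactly (the reverse inclusion holds because $\P_Y/\U_Y \cong T^0/(T^0 \cap \U_Y)$ is abelian), and then identifying $T^1 \cap \U_Y$ via the explicit equation~\eqref{coroot}, which for $Y = C \cup w\cdot C$ yields $\check{\alpha}(1 + \mathfrak{M}^{\ell(w)+1})$.
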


\begin{proof}
This follows from Corollary \ref{cor:frattini-Jw}.
\end{proof}

\begin{lemma}
\label{lemma:JKd}
Assume $q \neq 2,3$.
\begin{enumerate}
\item The Frattini quotient of $K$ is trivial.
\item For $x=\theta ^n$ or $x= s_0\theta ^n$ with $n\geq 1$, we have an isomorphism of abelian groups
\begin{align*} (1+\mathfrak{M}) \big/ (1+\mathfrak{M}^{2n})(1+\mathfrak{M})^p
&\overset{\cong}\longrightarrow  (K_x)_\Phi \cr
1+\pi x\bmod (1+\mathfrak{M}^{2n})(1+\mathfrak{M})^p&\longmapsto \begin{pmatrix} 1+\pi x & 0 \cr 0 & (1+\pi x)^{-1} \end{pmatrix} \bmod \Phi(K_x)\end{align*}
\end{enumerate}
\end{lemma}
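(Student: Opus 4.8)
The plan is to reduce the statement for $K_x$ to the already-established statement for $J_w$ in Lemma \ref{lemma:fratJ}, using the identification $K_x = J_x$ whenever $x \in {}_K\D_K$. Recall from \eqref{KDK} that ${}_K\D_K = \{1, s_0\theta^n : n \geq 1\}$, and from the discussion preceding \eqref{tauT} (and Remark \ref{rema:disId}) that for $d \in \DPP$ one has $J \cap K_d = J_d$. More precisely, for $x = s_0\theta^n$ with $n \geq 1$ we have $K_{s_0\theta^n} = s_0 K_{\theta^n} s_0^{-1} = J_{s_0\theta^n}$, so part (2) in the case $x = s_0\theta^n$ follows immediately from Lemma \ref{lemma:fratJ}(2) applied to $w = s_0\theta^n$, noting $\ell(s_0\theta^n) = |1 - 2n| = 2n - 1$... wait, this gives filtration level $2n$, matching the statement since $(1+\mathfrak{M}^{\ell(w)+1}) = (1+\mathfrak{M}^{2n})$. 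Good — so this case is essentially a citation.

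\textbf{The case $x = \theta^n$.} For $x = \theta^n$ with $n \geq 1$, the element $\theta^n$ is \emph{not} in ${}_K\D_K$, so $K_{\theta^n}$ need not equal $J_{\theta^n}$; indeed $\theta^n \in W^0 \cap W^1$ fails, but $\theta^n \in W^1$ (since $\ell(s_1\theta^n) = \ell(\theta^{n-1}s_0\cdots)$, one checks $\ell(s_1 \theta^n) = 2n+1 > 2n$ using $s_1\theta = \theta s_1$... actually $s_1\theta^n$ has length $2n-1$, so $\theta^n \in W^0$). In any case one computes $K_{\theta^n} = K \cap \theta^n K \theta^{-n}$ directly: conjugating $K = \mathrm{SL}_2(\mathfrak{O})$ by $\theta^n = \mathrm{diag}(\pi^n, \pi^{-n})$ gives $\theta^n K \theta^{-n} = \begin{psmallmat} \mathfrak{O} & \mathfrak{M}^{2n}\mathfrak{O}... \end{psmallmat}$, more precisely $\begin{psmallmat} \mathfrak{O} & \mathfrak{M}^{2n} \\ \mathfrak{M}^{-2n} & \mathfrak{O}\end{psmallmat} \cap \mathrm{SL}_2$, so $K_{\theta^n} = \begin{psmallmat} \mathfrak{O}^\times & \mathfrak{M}^{2n} \\ \mathfrak{M}^{2n}... \end{psmallmat}$ — wait, need to be careful: $K \cap \theta^n K\theta^{-n} = \{g \in \mathrm{SL}_2(\mathfrak{O}) : \theta^{-n}g\theta^n \in \mathrm{SL}_2(\mathfrak{O})\} = \begin{psmallmat} \mathfrak{O}^\times & \mathfrak{O} \\ \mathfrak{M}^{2n} & \mathfrak{O}^\times \end{psmallmat} \cap \mathrm{SL}_2 = J^+_{2n-1}$ in the notation of \eqref{defiJn}. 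This is exactly $J_{s_0\theta^n} $ — no wait, $J^+_{\ell} = J_w$ for $w \in W^0$ of length $\ell$, and $\ell(s_0\theta^n) = 2n-1$, while here I need length $2n-1$ so $\ell+1 = 2n$: this matches $K_{\theta^n} = J^+_{2n-1}$. So in fact $K_{\theta^n} = J^+_{2n-1}$, which is $J_w$ for any $w \in W^0$ with $\ell(w) = 2n-1$, e.g. $w = s_0\theta^n$. Hence Lemma \ref{lemma:fratJ}(2) applies verbatim and gives the claim for $x = \theta^n$ as well, with filtration level $\ell(w)+1 = 2n$, matching $(1+\mathfrak{M}^{2n})$.

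\textbf{Part (1) and the main obstacle.} For part (1), that the Frattini quotient of $K = \mathrm{SL}_2(\mathfrak{O})$ is trivial: $K$ is not pro-$p$, but $K/K^1 \cong \mathrm{SL}_2(\mathbb{F}_q)$ is perfect for $q \neq 2, 3$, and $K^1$ is pro-$p$ with $K^1/\Phi(K^1)$ a module killed by the perfect group action — one shows $\Phi(K) = K$ by combining perfectness of $\mathrm{SL}_2(\mathbb{F}_q)$ with commutator computations inside $K^1$; alternatively, since $K = \langle \mathrm{SL}_2(\mathbb{F}_q)\text{-lifts}\rangle$ and these generate $K$ by elementary matrices $x_\alpha(u)$, and each $x_\alpha(u)$ is a commutator times a $p$-th power via the torus action, one gets $\Phi(K) = K$. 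This is parallel to Lemma \ref{lemma:fratJ}(1), whose proof via Corollary \ref{cor:frattini-Jw} I would invoke, or re-run the $\mathrm{SL}_2(\mathbb{F}_q)$-perfectness argument. The only genuinely delicate point is verifying the explicit identification $K_{\theta^n} = J^+_{2n-1}$ and that the resulting isomorphism \eqref{iso:frat} transports to the stated map unchanged — a direct matrix conjugation check — together with making sure the filtration index $2n$ is consistent with the convention $\ell(w)+1$; I expect no conceptual obstacle, just bookkeeping. I would write: \emph{This follows from Lemma \ref{lemma:fratJ} together with the identifications $K_{\theta^n} = J^+_{2n-1} = J_w$ for any $w \in W^0$ with $\ell(w) = 2n-1$, and $K_{s_0\theta^n} = s_0 K_{\theta^n}s_0^{-1} = J_{s_0\theta^n}$, which are immediate from the matrix descriptions \eqref{defiJn}--\eqref{f:cap}.}
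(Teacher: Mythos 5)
The strategy — reduce to Lemma~\ref{lem:frattini-J-SL2} via an explicit identification of the groups $K_x$ — is exactly the alternative route the paper itself mentions (the paper's primary proof cites the general Corollary~\ref{cor:frattini-Klambda}). Your treatment of $x = s_0\theta^n$ is correct: $K_{s_0\theta^n} = s_0K_{\theta^n}s_0^{-1} = J_{s_0\theta^n} = J^+_{2n-1}$, and Lemma~\ref{lem:frattini-J-SL2}(2) applies directly.

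However, the case $x = \theta^n$ contains a genuine computational error that breaks the argument as written. For $g = \begin{psmallmat} a & b \\ c & d \end{psmallmat}\in K$, one has $\theta^{-n}g\theta^n = \begin{psmallmat} a & \pi^{-2n}b \\ \pi^{2n}c & d \end{psmallmat}$, so membership in $K$ constrains the \emph{upper-right} entry:
\[ K_{\theta^n} = \begin{pmatrix} \mathfrak{O}^\times & \mathfrak{M}^{2n} \\ \mathfrak{O} & \mathfrak{O}^\times\end{pmatrix}, \]
exactly as stated later in Subsection~\ref{subsec:nonzerosq}. You instead constrained the lower-left entry, obtaining $\begin{psmallmat} \mathfrak{O}^\times & \mathfrak{O} \\ \mathfrak{M}^{2n} & \mathfrak{O}^\times\end{psmallmat} = J^+_{2n-1}$ and concluding $K_{\theta^n} = J^+_{2n-1}$. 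That identity is false: note $K_{\theta^n}$ is not even contained in $J$ (its lower-left entry ranges over all of $\mathfrak{O}$), whereas every $J_w = J\cap wJw^{-1}$ is a subgroup of $J$; so $K_{\theta^n}$ cannot equal any $J_w$. (The parallel slip in the length computation — you settled on $\theta^n\in W^0$, but in fact $s_1\theta^n = s_0\theta^{n+1}$ has length $2n+1 > 2n$ while $\ell(s_0\theta^n) = 2n-1 < 2n$, so $\theta^n \in W^1$ — is symptomatic of the same transposition error, though it doesn't enter the argument you actually run.) The fix is one step: $K_{\theta^n} = s_0^{-1}K_{s_0\theta^n}s_0 = s_0^{-1}J^+_{2n-1}s_0$, i.e.\ $K_{\theta^n}$ is $s_0$-\emph{conjugate} to $J^+_{2n-1}$ rather than equal to it, and the isomorphism of Frattini quotients transfers under this conjugation (which acts on the diagonal torus by inversion, an automorphism of $(1+\mathfrak{M})/(1+\mathfrak{M}^{2n})(1+\mathfrak{M})^p$). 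As written, the reduction for $x = \theta^n$ does not go through without that extra conjugation.
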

\begin{proof}
This follows from Corollary \ref{cor:frattini-Klambda} (see also Corollary \ref{cor:frattini-K} for the first point).   Note that we can also obtain the second point from Lemma \ref{lem:frattini-J-SL2} using the identity
\begin{equation}
\label{JKd} 
K_{s_0\theta^n}= s_0 K_{\theta^n} s_0^{-1}= J_{s_0\theta^n}\ .
\end{equation}
\end{proof}

\begin{remark} \hfill
\label{kapparem}
\begin{enumerate}
\item There  exists  a minimal $L>1$ such that $1+\mathfrak M^L\subseteq (1+\mathfrak M)^p$.
\item If $\mathfrak{F}$ is unramified over $\bbQ_p$ and $p\neq 2$, we have $(1+\mathfrak M)^p= 1+\mathfrak M^2$ so $L=2$. 
\end{enumerate}
\end{remark}

\noindent Recall from \cite[\S 3.2.1]{OS4} the following description of the Frattini quotient of $I_w$ for $w\in \widetilde W$. When $w\in \widetilde {W^0}$:
\begin{align}
\label{f:I+ab}
 (I_{\ell(w)}^+)_\Phi&\xrightarrow{\; \sim \;}  \mathfrak{O}/\mathfrak{M} \times (1+\mathfrak{M}) \big/ (1+\mathfrak{M}^{\ell(w)+1})(1+\mathfrak{M})^p \times \mathfrak{O}/\mathfrak{M} \\
 \begin{pmatrix} 1+\pi x & y \cr \pi^{\ell(w)+1} z & 1+\pi t \end{pmatrix} \bmod \Phi(I_w)&\longmapsto (z\bmod \mathfrak M, \,1+\pi x\bmod (1+\mathfrak{M}^{\ell(w)+1})(1+\mathfrak{M})^p,\, y\bmod \mathfrak M)
\end{align} 
and when $w\in \widetilde {W^1}$:
\begin{align}
\label{f:I-ab}
 (I_{\ell(w)}^-)_\Phi&\xrightarrow{\; \sim \;}  \mathfrak{O}/\mathfrak{M} \times (1+\mathfrak{M}) \big/ (1+\mathfrak{M}^{\ell(w)+1})(1+\mathfrak{M})^p \times \mathfrak{O}/\mathfrak{M} \\
 \begin{pmatrix} 1+\pi x & \pi ^{\ell(w)} y \cr \pi z & 1+\pi t \end{pmatrix} \bmod \Phi(I_w)&\longmapsto (z\bmod \mathfrak M, \,1+\pi x\bmod (1+\mathfrak{M}^{\ell(w)+1})(1+\mathfrak{M})^p,\, y\bmod \mathfrak M).
 \end{align}

\begin{lemma}\label{lemma:coresIJw}
Let $w\in \widetilde W$. Via the isomorphisms \eqref{iso:frat} and \eqref{f:I+ab}, \eqref{f:I-ab} the transfer map  $(J_w)_\Phi\longrightarrow (I_w)_\Phi$ is given by:
\[ 1+\pi x \bmod (1+\mathfrak{M}^{\ell(w)+1})(1+\mathfrak{M})^p \longmapsto  (0, (1+\pi x)^{-1} \bmod (1+\mathfrak{M}^{\ell(w)+1})(1+\mathfrak{M})^p, 0) \ . \]
\end{lemma}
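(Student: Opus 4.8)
The plan is to compute the transfer (Verlagerung) map $(J_w)_\Phi \longrightarrow (I_w)_\Phi$ directly, using the fact that $I_w$ is a subgroup of $J_w$ of index prime to $p$ (in fact of index $[T^0:T^1] = q-1$, since $J_w = T^0 I_w$ and $T^0 \cap I_w = T^1$), and that both Frattini quotients have been explicitly identified in Lemma \ref{lemma:fratJ} and in the recalled formulas \eqref{f:I+ab}, \eqref{f:I-ab}. First I would recall that for a subgroup $H \leq G$ of finite index with coset representatives $g_1, \dots, g_n$, the transfer of $g \in G$ to $H^{\textnormal{ab}}$ is $\prod_i g_{\sigma(i)}^{-1} g g_i$ (suitably interpreted), and that this descends to a homomorphism on Frattini quotients. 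Since by Lemma \ref{lemma:fratJ} the generator of $(J_w)_\Phi$ is the class of the diagonal matrix $\sm{1+\pi x}{0}{0}{(1+\pi x)^{-1}} \in T^1 \subseteq I_w$, I only need to transfer such diagonal elements $t \in T^1$.

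The key simplification is that $t$ already lies in the subgroup $I_w$, so the transfer formula becomes especially clean: choosing coset representatives $r_1, \dots, r_{q-1}$ for $I_w \backslash J_w$, and noting that $T^0$ normalizes $I_w$ (indeed $T^0$ normalizes $I$, and it normalizes $w J w^{-1}$-type groups since $t$ is diagonal and commutes with the relevant torus conjugates — more precisely $T^0$ normalizes $J_w$ because $J_w$ is one of $J_\ell^\pm$ which are stable under diagonal conjugation), we may take the $r_i$ to be a set of representatives of $T^0/T^1$, say Teichmüller lifts $\check\alpha([z])$ or simply diagonal torus elements. Then $r_i^{-1} t r_i$ is again in $T^1 \subseteq I_w$ for each $i$ (diagonal matrices commute), so the permutation $\sigma$ attached to $t$ is trivial, and the transfer is simply $\prod_{i} r_i^{-1} t r_i = \prod_i t = t^{q-1} \bmod \Phi(I_w)$. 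Since $\gcd(q-1, p) = 1$ and $(J_w)_\Phi$, $(I_w)_\Phi$ are $p$-groups, raising to the $(q-1)$-st power is an automorphism; but here $t^{q-1}$ must be matched against the target description. The point is that under \eqref{f:I+ab}/\eqref{f:I-ab}, the class of $t = \sm{1+\pi x}{0}{0}{(1+\pi x)^{-1}}$ maps to $(0, 1+\pi x \bmod \cdots, 0)$ — wait, one must be careful with the inversion: the target coordinate extracts the upper-left entry modulo $(1+\mathfrak M^{\ell(w)+1})(1+\mathfrak M)^p$, and a short computation, together with the fact that $q-1 \equiv -1 \bmod p$ is \emph{not} what's used (rather $t^{q-1} = t^{q}\cdot t^{-1}$ and $t^q \in \Phi$ since $q$ is a power of $p$), shows $t^{q-1} \equiv t^{-1} \bmod \Phi(I_w)$, which gives the claimed $(0, (1+\pi x)^{-1}, 0)$.

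Concretely the steps are: (1) reduce to transferring $t \in T^1$, via Lemma \ref{lemma:fratJ}(2); (2) pick $T^0/T^1$ as coset representatives for $I_w\backslash J_w$, using $J_w = T^0 I_w$ and $T^0 \cap I_w = T^1$ and that $T^0$ normalizes $I_w$; (3) observe the transfer permutation is trivial since $T^0$ is abelian, so $\textnormal{tr}(t) = \prod_{r \in T^0/T^1} r^{-1} t r = t^{q-1}$ in $(I_w)_\Phi$; (4) write $t^{q-1} = t^q \cdot t^{-1}$ and note $t^q \in (I_w)^p \subseteq \Phi(I_w)$ since $q$ is a power of $p$ and $I_w$ is pro-$p$, hence $\textnormal{tr}(t) \equiv t^{-1} \bmod \Phi(I_w)$; (5) unwind the isomorphisms \eqref{iso:frat} and \eqref{f:I+ab}/\eqref{f:I-ab} to see that $t^{-1} = \sm{(1+\pi x)^{-1}}{0}{0}{1+\pi x}$ maps to $(0, (1+\pi x)^{-1} \bmod (1+\mathfrak M^{\ell(w)+1})(1+\mathfrak M)^p, 0)$, as desired. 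The main obstacle I anticipate is step (2)–(3): verifying carefully that $T^0$ normalizes $I_w$ for all $w \in \widetilde W$ (not just $w \in W^0 \cup W^1$, though the reduction to these via right multiplication by $\Omega$ or $\varpi$-conjugation is available) and that the transfer permutation really is trivial, i.e. that no nontrivial coset shuffling occurs; once that is pinned down the rest is a routine matrix computation that I would not spell out in full.
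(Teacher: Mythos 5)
Your proof is correct and follows essentially the same route as the paper, which simply observes that a system of coset representatives for $J_w/I_w$ is given by the Teichm\"uller-lifted diagonal elements $\{\sm{[a]}{0}{0}{[a^{-1}]}\}_{a\in(\mathfrak{O}/\mathfrak{M})^\times}$ and declares the transfer computation immediate. Your step-by-step unwinding (reduce to $t\in T^1$, observe the transfer permutation is trivial because $T^0$ is abelian so $\textnormal{tr}(t)=t^{q-1}$, then $t^{q}\in\Phi(I_w)$ since $q$ is a $p$-power, hence $\textnormal{tr}(t)\equiv t^{-1}$) is precisely the content the paper leaves tacit; your residual worry about whether $T^0$ normalizes $I_w$ for all $w\in\widetilde W$ is dispelled by noting $T^0$ is normalized by $N_G(T)$, so it normalizes both $I$ and $\dot w I\dot w^{-1}$ hence their intersection (or, for $\textnormal{SL}_2$ specifically, by noting that the groups $I_\ell^{\pm}$ of \eqref{defiJn} are visibly stable under conjugation by diagonal elements of $\mathfrak{O}^\times$).
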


\begin{proof}
It follows immediately from the fact that a system of representatives of $J_w/I_w$ is given by $\{\begin{psmallmat} [a]&0\cr 0&[a^{-1}]\end{psmallmat}\}_{a\in (\mathfrak O/\mathfrak M)^\times}$.
\end{proof}

\subsection{Iwahori--Hecke $\Ext$-algebra}
\subsubsection{The first cohomology space}Here the field $\mathfrak F$ is arbitrary with the condition $q\neq 2,3$. 
Let $w\in W$.
Recall from Subsection \ref{subsec:Ext} that the Shapiro isomorphism gives
\[ H^1(J, \X_J(w))\cong H^1(J_w,k) = \Hom((J_w)_\Phi,k) \ , \] 
where $(J_w)_\Phi$ denotes the Frattini quotient of $J_w$.  By Lemma \ref{lemma:fratJ}, we have $H^1(J, \X_J(1))=0$ and for $w\neq 1$, 
$(1+\mathfrak{M}) \big/ (1+\mathfrak{M}^{\ell(w)+1})(1+\mathfrak{M})^p
\overset{\sim}\longrightarrow  (J_w)_\Phi  \ .$
Therefore, we will identify  an element of $H^1(J, \X_J(w))$ with a group homomorphism 
$1+\mathfrak M \longrightarrow k$ which is trivial on 
$(1+\mathfrak{M}^{\ell(w)+1})(1+\mathfrak{M})^p$. Given $w\in W$ and such a homomorphism $c^0$, we will denote by $(c^0)_w$ the corresponding element of $H^1(J, \X_J(w))$.

\begin{proposition}
\label{prop:theformulas}
Let $\epsilon \in \{0,1\}$, and choose $w\in  W$ with length $\geq 1$ and $(c^0)_w\in H^1(J, \X_J(w))$.  We have:
\begin{align*}
  \tau^J_{s_\epsilon} \cdot (c^0)_w =  
  \begin{cases}
 -(c^0)_{s_\epsilon w}  & \text{if $w \in {W}^\epsilon$,}\cr
  - (c^0)_{ w} & \text{if  $w \in {W}^{1-\epsilon}$}.\cr
  \end{cases}\quad \quad
 (c^0)_w   \cdot  \tau^J_{s_\epsilon}=  
  \begin{cases}
 (c^0)_{ w s_\epsilon}  & \text{if $w^{-1} \in {W}^\epsilon$,}\cr
  - (c^0)_{ w} & \text{if  $w^{-1} \in {W}^{1-\epsilon}$}.\cr
  \end{cases}
  \end{align*}
\end{proposition}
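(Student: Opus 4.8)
The plan is to reduce everything to the explicit left-action formulas of Proposition \ref{prop:explicitleftaction} (together with its Corollary \ref{prop:explicitleftaction-cor}) and the anti-involution $\anti_J$ of Subsection \ref{subsec-antiinvolution}. First I would treat the left-multiplication statement. Fix $\epsilon\in\{0,1\}$, write $s=s_\epsilon$ and let $w\in W$ with $\ell(w)\geq 1$. There are two cases according to whether $\ell(sw)=\ell(w)+1$ or $\ell(sw)=\ell(w)-1$, i.e.\ whether $w\in W^\epsilon$ or $w\in W^{1-\epsilon}$.

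If $w\in W^\epsilon$, then $\ell(sw)=\ell(w)+1$ and Proposition \ref{coro:prod-goodlength} (the case of additive lengths, formula \eqref{f:leftsgood}) gives $\tau^J_s\cdot(c^0)_w\in H^1(J,\X_J(sw))$ with $\Sh^J_{sw}(\tau^J_s\cdot(c^0)_w)=\res^{sJ_ws^{-1}}_{J_{sw}}(s_*\Sh^J_w((c^0)_w))$. Here I would invoke Corollary \ref{prop:explicitleftaction-cor}\eqref{prop:explicitleftaction-cor-2}, but note that corollary is stated for $\ell(sw)=\ell(w)-1$; for the additive case I instead argue directly: $\Sh^J_w((c^0)_w)$ is the homomorphism $c^0$ supported on $T^0$ (Lemma \ref{lem:comm-PY}), and conjugation by $n_s$ followed by the relevant restriction sends it, as a homomorphism on the diagonal torus, to $-c^0$ (because $s_\alpha$ acts by $t\mapsto t^{-1}$ on $\check\alpha(\mathfrak{O}^\times)$, giving the inversion $1+\pi x\mapsto(1+\pi x)^{-1}$, which is $-c^0$ additively once $c^0$ is seen as valued in $k$); hence $\tau^J_s\cdot(c^0)_w=-(c^0)_{sw}$. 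If instead $w\in W^{1-\epsilon}$, so $\ell(sw)=\ell(w)-1$, I apply Corollary \ref{prop:explicitleftaction-cor}\eqref{prop:explicitleftaction-cor-2} directly: it states $\tau^J_s\cdot\upbeta=-\upbeta$, which is exactly $\tau^J_{s_\epsilon}\cdot(c^0)_w=-(c^0)_w$. (The hypotheses $q\neq 2,3$ and $q-1$ invertible in $k$ hold since $\chara(k)=p$ and $q\neq 2,3$.) This settles the left column.

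For the right-multiplication column I would transport the left column through $\anti_J$. By Lemma \ref{lemma-JU-antiinvolution} we have $\anti_J(A\cdot B)=(-1)^{ij}\anti_J(B)\cdot\anti_J(A)$; with $A=\tau^J_{s_\epsilon}\in E^0_J$ and $B=(c^0)_w\in E^1_J$ the sign is $+1$, so $\anti_J((c^0)_w\cdot\tau^J_{s_\epsilon})=\anti_J(\tau^J_{s_\epsilon})\cdot\anti_J((c^0)_w)=\tau^J_{s_\epsilon}\cdot\anti_J((c^0)_w)$, using Lemma \ref{anti-inv-on-H} (which gives $\anti_J(\tau^J_{s_\epsilon})=\tau^J_{s_\epsilon^{-1}}=\tau^J_{s_\epsilon}$). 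Now $\anti_{J,w}$ sends $H^1(J,\X_J(w))$ to $H^1(J,\X_J(w^{-1}))$ and, by its definition via $(w^{-1})_*$ on $H^1(J_w,k)\cong\Hom((J_w)_\Phi,k)$ composed with the Shapiro isomorphisms, it carries $(c^0)_w$ to $((c^0\circ(\text{conj. by }w))^{?})_{w^{-1}}$; but because $J_w$ (resp.\ $J_{w^{-1}}$) contains $T^0$ and homomorphisms are supported on $T^0$, and conjugation by $\dot w\in N_G(T)$ permutes $T^0$, one checks that $\anti_{J,w}((c^0)_w)=(c^0)_{w^{-1}}$ up to possibly composing $c^0$ with an automorphism of $1+\mathfrak M$ induced by $w$; since $\dot w$ normalizes $T$ and acts on $\check\alpha(\mathfrak O^\times)$ either trivially or by inversion, and inversion is again $-c^0$, I would absorb this into a careful bookkeeping of signs — this is the step I expect to be the main obstacle. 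Granting $\anti_{J,w}((c^0)_w)=(c^0)_{w^{-1}}$, applying the already-proved left-action formula to $(c^0)_{w^{-1}}$ (noting $w^{-1}\in W^\epsilon \Leftrightarrow \ell(s_\epsilon w^{-1})=\ell(w^{-1})+1$) and then applying $\anti_J^{-1}=\anti_J$ back gives the stated right-action formulas, since $\anti_J((c^0)_{s_\epsilon w^{-1}})=(c^0)_{ws_\epsilon}$ (as $(s_\epsilon w^{-1})^{-1}=ws_\epsilon$) and $\anti_J((c^0)_{w^{-1}})=(c^0)_w$. The main obstacle, as indicated, is to pin down precisely that $\anti_{J,w}$ acts as the identity on the "coefficient" $c^0\in\Hom(1+\mathfrak M,k)$ (i.e.\ that conjugation by the lift $\dot w$ does not introduce an extra inversion beyond what is already accounted for), and I would do this by the same computation as in the proof of Corollary \ref{prop:explicitleftaction-cor}, reducing to the behaviour of conjugation by $N_G(T)$-lifts on the diagonal torus $T^0$ and using that any homomorphism $J_w\to k$ factors through $(1+\mathfrak M)/(1+\mathfrak M^{\ell(w)+1})(1+\mathfrak M)^p$ via \eqref{iso:frat}.
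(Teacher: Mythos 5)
Your structure matches the paper's: prove the left-hand formula using Proposition \ref{prop:explicitleftaction} (formula \eqref{f:leftsgood} in the length-additive case, Corollary \ref{prop:explicitleftaction-cor}\eqref{prop:explicitleftaction-cor-2} in the length-subtractive case), then transport to the right-hand formula via the anti-involution $\anti_J$ and Lemmas \ref{anti-inv-on-H}, \ref{lemma-JU-antiinvolution}.  The left column is fine (modulo a harmless citation slip: \eqref{f:leftsgood} lives in Proposition \ref{prop:explicitleftaction}, not Proposition \ref{coro:prod-goodlength}).

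The gap is exactly the step you flag, and it is a real one: you grant $\anti_{J,w}((c^0)_w) = (c^0)_{w^{-1}}$, but the correct identity carries a length-parity sign,
\[
\anti_{J,w}\big((c^0)_w\big) = (-1)^{\ell(w)}\,(c^0)_{w^{-1}} \ .
\]
The sign comes from the very observation you already use in the additive case of the left column: conjugation by a lift in $N_G(T)$ of a simple reflection sends $\begin{psmallmat}1+\pi x & 0 \cr 0 & (1+\pi x)^{-1}\end{psmallmat}$ to its inverse, hence acts on $c^0$ (valued additively in $k$) by $-1$.  Iterating along a reduced word of $w$, conjugation by a lift $\dot w$ acts on $T^0$ as $t\mapsto t^{(-1)^{\ell(w)}}$, and since homomorphisms on $J_w$ and on $J_{w^{-1}}$ are both supported on the diagonal torus via \eqref{iso:frat}, this is exactly what $(w^{-1})_*$ does to $\Sh^J_w((c^0)_w)$.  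The ``careful bookkeeping of signs'' you postpone therefore resolves to this single factor and nothing else.

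That factor cannot be absorbed; without it your transport-by-$\anti_J$ gives the wrong answer.  Take $w=s_1$ and $\epsilon=0$: since $w^{-1}=s_1\in W^0$, the Proposition asserts $(c^0)_{s_1}\cdot\tau^J_{s_0} = (c^0)_{s_1 s_0}$.  With the signless version of $\anti_J$ you would compute
\[
\anti_J\big((c^0)_{s_1}\cdot\tau^J_{s_0}\big) = \tau^J_{s_0}\cdot\anti_J\big((c^0)_{s_1}\big)=\tau^J_{s_0}\cdot (c^0)_{s_1} = -(c^0)_{s_0 s_1},
\]
and since $\ell(s_0 s_1)=2$ is even, applying $\anti_J$ once more gives $(c^0)_{s_1}\cdot\tau^J_{s_0} = -(c^0)_{s_1 s_0}$, off by a sign.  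With the correct $(-1)^{\ell(w)}$, the parities of $\ell(w)$ and $\ell(s_\epsilon w^{-1})$ interact so that the extra sign cancels in every case.  Once you state and use $\anti_{J,w}((c^0)_w) = (-1)^{\ell(w)}(c^0)_{w^{-1}}$, the rest of your argument goes through and coincides with the paper's.
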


\begin{proof} 
Using the involution $\anti_J$ (see Subsection \ref{subsec-antiinvolution}) which satisfies $\anti_J((c^0)_w)=(c^0)_{w^{-1}}$ (resp., $(-c^0)_{w^{-1}}$) if $\ell(w)$ is even (resp., odd)  and $\anti_J(\tau_{s_\epsilon}^J) = \tau_{s_\epsilon}^J$ it is enough to check the first formula.
We write the proof for $\epsilon=0$ using Proposition \ref{prop:explicitleftaction}.  Recall that $\alpha$ denotes the corresponding simple root.
\begin{itemize}
\item Suppose first $\ell(s_0w)=\ell(w)+1$.  We use  \eqref{f:leftsgood} and the fact that the map 
\[ H^1(J_w,k) \xrightarrow{{s_0}_*}  H^1(s_0J_ws_0^{-1},k) \xrightarrow{\res}  H^1(J_{s_0w},k) \]
sends $(c^0)_w$ to $-(c^0)_{s_0w}$ since conjugation by $s_0$ maps $\begin{psmallmat} 1+\pi x & 0 \cr 0 & (1+\pi x)^{-1} \end{psmallmat} $ onto its inverse (and use \eqref{iso:frat}). 
\item  The case $\ell(s_0w)=\ell(w)-1$ follows directly from Corollary \ref{prop:explicitleftaction-cor}\eqref{prop:explicitleftaction-cor-2}.
\end{itemize}
\end{proof}

\begin{lemma}
\label{lemma:E1bimo}
Suppose that $\mathfrak F$ is unramified over $\mathbb Q_p$, and that $p\neq 2$ and $q\neq 3$.
As $H_J$-bimodules, we have 
\[ E_J^1\cong F^1 H_J\otimes _k \Hom(\mathfrak O/\mathfrak M, k) \ , \]
where the bimodule structure on the right-hand side comes from left tensor factor.
\end{lemma}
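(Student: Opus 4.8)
The plan is to exploit the explicit description of the first cohomology spaces $H^1(J,\X_J(w))$ together with the formulas for left and right multiplication by the $\tau^J_{s_\epsilon}$ from Proposition \ref{prop:theformulas}, and to package these into a bimodule isomorphism. Since $\mathfrak{F}$ is unramified over $\mathbb{Q}_p$ with $p\neq 2$, Remark \ref{kapparem} gives $(1+\mathfrak{M})^p = 1+\mathfrak{M}^2$, so that for any $w\in W$ with $\ell(w)\geq 1$ the quotient $(1+\mathfrak{M})/(1+\mathfrak{M}^{\ell(w)+1})(1+\mathfrak{M})^p = (1+\mathfrak{M})/(1+\mathfrak{M}^2)$ is \emph{independent of $w$}, and is canonically isomorphic to $\mathfrak{O}/\mathfrak{M}$ via $1+\pi x \mapsto x \bmod \mathfrak{M}$. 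Hence by Lemma \ref{lemma:fratJ} we get, for each $w$ with $\ell(w)\geq 1$, a canonical identification $H^1(J,\X_J(w)) \cong \Hom(\mathfrak{O}/\mathfrak{M},k)$, while $H^1(J,\X_J(1))=0$. Using the decomposition \eqref{f:H*dec} this already yields an isomorphism of $k$-vector spaces $E_J^1 \cong \bigoplus_{\ell(w)\geq 1} k\tau^J_w \otimes_k \Hom(\mathfrak{O}/\mathfrak{M},k) = F^1H_J \otimes_k \Hom(\mathfrak{O}/\mathfrak{M},k)$, by sending $(c^0)_w$ to $\tau^J_w \otimes c^0$ (where I regard $c^0$ as an element of $\Hom(\mathfrak{O}/\mathfrak{M},k)$ via the identification above).

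The substance of the proof is then to check that this $k$-linear bijection is a map of $H_J$-bimodules, where $F^1 H_J$ carries the restriction of the bimodule structure of $H_J$ and $\Hom(\mathfrak{O}/\mathfrak{M},k)$ is a trivial bimodule. Since $H_J$ is generated as a $k$-algebra by $\tau^J_{s_0}$ and $\tau^J_{s_1}$ (together with $\tau^J_1 = e_J$), it suffices to verify compatibility with left and right multiplication by $\tau^J_{s_\epsilon}$ for $\epsilon\in\{0,1\}$. On the $F^1 H_J$ side the multiplication is governed by the braid and quadratic relations: $\tau^J_{s_\epsilon}\tau^J_w = \tau^J_{s_\epsilon w}$ if $\ell(s_\epsilon w) = \ell(w)+1$, and $\tau^J_{s_\epsilon}\tau^J_w = -\tau^J_w$ if $\ell(s_\epsilon w) = \ell(w)-1$ (the latter from $(\tau^J_{s_\epsilon})^2 = -\tau^J_{s_\epsilon}$ and the braid relation). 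Comparing with Proposition \ref{prop:theformulas}: if $w\in W^\epsilon$ then $\ell(s_\epsilon w)=\ell(w)+1$ and $\tau^J_{s_\epsilon}\cdot(c^0)_w = -(c^0)_{s_\epsilon w}$, whereas if $w\in W^{1-\epsilon}$ (so $\ell(w)\geq 1$ forces $\ell(s_\epsilon w)=\ell(w)-1$) then $\tau^J_{s_\epsilon}\cdot(c^0)_w = -(c^0)_w$. Wait — there is a sign discrepancy: the Hecke side gives $+\tau^J_{s_\epsilon w}$ when the length goes up, but the cohomology side gives $-(c^0)_{s_\epsilon w}$. The resolution is that one must insert a suitable sign depending on $\ell(w)$ into the isomorphism: the correct map is $(c^0)_w \mapsto (-1)^{\ell(w)}\tau^J_w \otimes c^0$ (or a fixed choice of sign for each coset; one checks the signs propagate consistently because each $s_\epsilon$-multiplication changes $\ell(w)$ by exactly $\pm 1$ and in the length-decreasing case both sides pick up a factor $-1$). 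I would carry this out carefully, fixing the sign normalization once and for all and then verifying all four cases (left/right, length up/down) match.

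The key steps, in order: (1) invoke Remark \ref{kapparem}(2) to collapse the Frattini quotient to $\mathfrak{O}/\mathfrak{M}$ uniformly; (2) use Lemma \ref{lemma:fratJ} and the decomposition \eqref{f:H*dec} to write $E_J^1 = \bigoplus_{\ell(w)\geq 1} H^1(J,\X_J(w))$ with each summand canonically $\cong \Hom(\mathfrak{O}/\mathfrak{M},k)$; (3) define the candidate isomorphism with the appropriate length-sign twist; (4) verify bimodule equivariance via Proposition \ref{prop:theformulas} against the Iwahori--Hecke relations, noting the crucial point that conjugation by $s_\epsilon$ acts on the relevant diagonal part of $J_w$ by inversion (hence by $-1$ on $\Hom(\mathfrak{O}/\mathfrak{M},k)$) and that in the length-decreasing case Corollary \ref{prop:explicitleftaction-cor}\eqref{prop:explicitleftaction-cor-2} gives the clean formula $\tau^J_{s_\epsilon}\cdot(c^0)_w = -(c^0)_w$. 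The main obstacle I anticipate is bookkeeping the signs consistently: one must confirm that a single sign convention (depending only on $\ell(w)$, or equivalently on the $W^0$ versus $W^1$ side) makes \emph{both} the left and right actions come out right simultaneously, and that the right-action formulas in Proposition \ref{prop:theformulas} (which involve $w^{-1}$) are compatible with this choice — this uses that $\ell(w) = \ell(w^{-1})$ and the behaviour of the anti-involution $\anti_J$ recorded in the proof of Proposition \ref{prop:theformulas}. A secondary point to handle cleanly is that $\Hom(\mathfrak{O}/\mathfrak{M},k)$ genuinely receives the trivial $H_J$-bimodule structure, i.e.\ that the identification $H^1(J,\X_J(w))\cong\Hom(\mathfrak{O}/\mathfrak{M},k)$ is independent of $w$ in a way that intertwines the $\tau^J_{s_\epsilon}$-actions up to the length sign — this is exactly what the conjugation-by-$s_\epsilon$-is-inversion observation provides.
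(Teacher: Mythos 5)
Your overall strategy matches the paper's: use Remark \ref{kapparem} to collapse the Frattini quotients to a uniform $\mathfrak O/\mathfrak M$, identify each $H^1(J,\X_J(w))$ with $\Hom(\mathfrak O/\mathfrak M,k)$ via Lemma \ref{lemma:fratJ} and \eqref{f:H*dec}, and then check $H_J$-bimodule equivariance against Proposition \ref{prop:theformulas}. But there is a concrete error in the sign bookkeeping, and it is not just a detail you deferred -- it is the whole content of the ``main obstacle'' you flag.

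Your proposed sign $(-1)^{\ell(w)}$ does \emph{not} work, and more importantly \emph{no} sign depending only on $\ell(w)$ can work. Look at what Proposition \ref{prop:theformulas} gives when the length increases: on the left, $\tau^J_{s_\epsilon}\cdot(c^0)_w = -(c^0)_{s_\epsilon w}$ (a minus sign appears), while on the right, $(c^0)_w\cdot\tau^J_{s_\epsilon} = (c^0)_{ws_\epsilon}$ (no minus sign). If the normalizing sign $\psi(w)$ depended only on $\ell(w)$, compatibility with the left action (length up) would force $\psi(\ell+1)=-\psi(\ell)$, while compatibility with the right action (length up) would force $\psi(\ell+1)=\psi(\ell)$ -- a contradiction since $\mathrm{char}\,k = p\neq 2$. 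Concretely, with your choice: take $w=s_1$, $\epsilon=0$; then $(c^0)_{s_1}\cdot\tau^J_{s_0}=(c^0)_{s_1s_0}$ maps to $(-1)^2\tau^J_{s_1s_0}\otimes c^0$, whereas $\big((-1)^1\tau^J_{s_1}\otimes c^0\big)\cdot\tau^J_{s_0}=-\tau^J_{s_1s_0}\otimes c^0$, which do not agree. Your parenthetical claim that ``depending only on $\ell(w)$'' is ``equivalently'' depending on ``the $W^0$ versus $W^1$ side'' is also false: $s_0\in W^1$ and $s_1\in W^0$ both have length $1$. The correct sign \emph{must} see the $W^0/W^1$ distinction (i.e., the leftmost letter of a reduced word), not the length parity. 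The paper's normalization is $+1$ on $W^0$ and $-1$ on $W^1$, encoded by setting $x^J_{s_0}:=-(c^0)_{s_0}$ and $x^J_{s_1}:=(c^0)_{s_1}$; this invariant is preserved under right multiplication (appending $s_\epsilon$ on the right does not change the leftmost letter) and flipped under left multiplication, which is exactly what reconciles the asymmetry between the two formulas in Proposition \ref{prop:theformulas}. The paper also organizes the verification more efficiently by defining $f_{(x^J_{s_0},x^J_{s_1})}:F^1H_J\to E^1_J$ on the generators $\tau^J_{s_0},\tau^J_{s_1}$ and checking the two relations $\tau^J_{s_i}\cdot x^J_{s_i}=-x^J_{s_i}=x^J_{s_i}\cdot\tau^J_{s_i}$ and $\tau^J_{s_0}\cdot x^J_{s_1}=x^J_{s_0}\cdot\tau^J_{s_1}$ (and its mirror), which is cleaner than case-checking the formula for arbitrary $w$.
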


\begin{proof}  By Remark \ref{kapparem}, we have $(1+\mathfrak M)^p= 1+\mathfrak M^2$. 
Let $c^0\in  \Hom(1+\mathfrak M/1+\mathfrak M^2,k)$.
Define $x^J_{s_0}:=- (c^0)_{s_0}$ and $x^J_{s_1}:= (c^0)_{s_1}$. 
They satisfy the relations: 
\begin{itemize}
  \item $\tau^J_{s_i} \cdot  x^J_{s_i} = - x^J_{s_i} = x^J_{s_i}\cdot   \tau^J_{s_i}$ for $i \in \{0,1\}$;
  \item $\tau^J_{s_0} \cdot  x^J_{s_1} = x^J_{s_0} \cdot  \tau^J_{s_1}$ and $\tau^J_{s_1}\cdot   x^J_{s_0} = x^J_{s_1} \cdot  \tau^J_{s_0}$.
\end{itemize}
Therefore (compare with \cite[\S 3.7.1]{OS4}), we have a well-defined homomorphism of $H_J$-bimodules $F^1H_J \longrightarrow  E_J^1$ given by
\begin{align}
f_{(x^J_{s_0}, x^J_{s_1})}:\:\:F^1H_J\longrightarrow E^1_J, \quad  \tau_{s_0}^J\longmapsto x^J_{s_0}, \: \tau_{s_1}^J\longmapsto x^J_{s_1} \ .
\end{align}
For $w\in W$ with $\ell(w)\geq 1$,  it satisfies 
\[ f_{(x^J_{s_0}, x^J_{s_1})}(\tau^J_w)=\begin{cases} (c^0)_w&\text{ if $w\in W^0$}\cr  -(c^0)_w&\text{ if $w\in W^1$}\cr\end{cases} \ . \] 
(Compare with \cite[Prop. 3.18]{OS4}.) In particular, $f_{(x^J_{s_0}, x^J_{s_1})}$ is an injective homomorphism of $H_J$-bimodules.  Since $E^1_J$ is the direct sum of the images of such homomorphisms when $c^0$ ranges over a basis of the $k$-vector space $\Hom(1+\mathfrak M/1+\mathfrak M^2,k)\cong \Hom(\mathfrak O/\mathfrak M,k)$, we obtain the lemma.
\end{proof}

\begin{remark} Let $w\in W$ with lift $\tilde w\in \widetilde W$. Suppose $\ell(w)\geq 1$. As in \cite[\S 3.2.1]{OS4}, we may consider the element $(0, c^0, 0)_{\tilde{w}} \in H^1(I , \X(\tilde{w}))$. 
Via the Shapiro isomorphism $H^1(I , \X(\tilde w))\cong H^1(I_{\tilde w},k)$, this element maps the left hand side matrix in \eqref{f:I+ab} or \eqref{f:I-ab} onto $c^0(1+\pi x~ \bmod (1+\mathfrak{M}^{\ell(w)+1})(1+\mathfrak{M})^p)$.
By \eqref{f:defiCP*} and Lemma \ref{lemma:coresIJw}, we have
$ C^1_{J}((0,c^0,0)_{\tilde w})=-(c^0)_{w}$. Then by Remark \ref{rema:RPCP*}-\ref{rema:RPCP*-ii} and \ref{rema:RPCP*-iii}, we have 
\begin{equation}\label{f:Rc0}
R^1_J((c^0)_{w})=- e_J\cdot (0,c^0,0)_{\tilde w} \cdot e_J\ .
\end{equation}

\end{remark}

\subsubsection{The top cohomology space\label{sec:Ed}} 
We suppose here that $I$ is torsion-free, so that it is a Poincar\'e group of dimension $d = \dim_{\bbQ_p}(\textnormal{SL}_2(\mathfrak{F})) = 3[\mathfrak{F}:\mathbb{Q}_p]$. (In particular, this holds if $p > 2e + 1$, where $e$ denotes the ramification degree of $\mathfrak{F}$ over $\bbQ_p$; see \cite[\S III.3.2.7, Eqn. (3.2.7.5)]{lazard}.)
By Lemma \ref{lemma:duality} and by applying $\anti_J$, we obtain
\[ \Delta_J^d:E_J^d\stackrel{\sim}{\longrightarrow} ({}^{\anti_J}{(H_J)}^{\anti_J})^{\vee, f}\cong (H_J)^{\vee, f} \ . \] 
Given $w\in W$, we denote by $\tau^{J,*}_w$ the element in ${(H_J)}^{\vee, f}$ such that $\tau_w^{J,*}(\tau_v^J)=\delta_{v,w}$ for any $v\in W$.  A basis for $E^d_J$ is given by $\{\upphi^J_w\}_{w\in W}$ where $\upphi^J_w$ is the preimage of $\tau^{J,*}_w\in  ({}^{\anti_J}{(H_J)}^{\anti_J})^{\vee, f}$ under the above isomorphism $\Delta_J^d$.  In particular, for $w\in W$, the element $\upphi^J_w\in H^d(J, \X_J(w))$ is determined by the condition $\textnormal{tr}_J\circ \trace_J^d(\upphi_w^J)=1$ (compare with \cite[\S 8]{Ext} and equation \eqref{eqn:topcohbasis}).

Using the isomorphism $\Delta_J^d$, we see that the action of $\tau^J_{s}$ on these basis elements (for $s\in\{s_0,s_1\}$) is given by:
\begin{equation}
\label{f:leftrightHd}
 \upphi^J_w\cdot \tau^J_{s}= \begin{cases}  \upphi^J_{w s}-\upphi^J_w & \text{ if $\ell(w s)=\ell(w)-1$,}\cr 0& \text{ if $\ell(w s)=\ell(w)+1$,}\end{cases}
\quad \quad \tau^J_{s}\cdot \upphi^J_w= \begin{cases}\upphi^J_{s w}- \upphi^J_w& \text{ if $\ell( sw)=\ell(w)-1$,}\cr 0& \text{ if $\ell(sw)=\ell(w)+1$.}\end{cases}
\end{equation}
For  $w\in W$ satisfying $\ell(w)\geq 1$, we define 
\[ \uppsi^J_w:=\tau^J_{s_{1-\epsilon}}\cdot \upphi_w^J \ , \] 
where $w\in W^\epsilon$.  Recall the isomorphism of $H_J$-bimodules $E^d_J{\cong} \chi_{\triv}^{J} \oplus  \ker(\trace_J^d)\ $ of Proposition \ref{prop:decEd}. 
By arguments analogous to \cite[Rmk. 2.16]{OS4}, a basis 
 of the vector space $  \ker(\trace_J^d)$  is given by  $\{{\uppsi^J_w}\}_{w\in W, \ell(w)\geq 1}$ while $\upphi_1^J$ supports the character $\chi_{\triv}^J$.  Further, as $H_J$-bimodules, we have $\ker(\trace_J^d)\cong \bigcup_{n\geq 1} (H_J/\zeta_J^n H_J) ^\vee$ (the argument is the same as \cite[Prop. 2.4]{OS4}). Thus, we obtain
 \[ E_J^d=k\upphi^J_1\oplus \bigoplus_{w\in W, \ell(w)\geq 1} k\uppsi^J_w\cong \chi_{\triv}^J\oplus
 \bigcup_{n\geq 1} (H_J/\zeta_J^n H_J) ^\vee\ . \]

\subsubsection{Center of the  Iwahori--Hecke $\Ext$-algebra of $\textnormal{SL}_2(\bbQ_p)$, $p\geq 5$ \label{subsubsec:EJ}}
In this subsection, we take $\mathfrak F=\mathbb Q_p$ with $p\geq 5$.  This assumption guarantees that $J$ is $p$-torsion-free.  We begin by describing the $H_J$-bimodule structure on $E_J^*$.

\begin{proposition}\label{prop:fullbimo}
The Iwahori--Hecke $\Ext$-algebra $E_J^*$ is supported in degrees $0$ to $3$.  We have $E_J^0= H_J$ and, as $H_J$-bimodules:
\begin{itemize}
\item $E_J^1\cong F^1 H_J$,
\item $E_J^2\cong (F^1 H_J)^{\vee, f}\cong \bigcup_{n\geq 1}(F^1H_J/\zeta_J^n F^1H_J)^\vee$,
\item $E_J^3\cong ({H_J})^{\vee, f}\cong  \chi^J_{\triv}\oplus \bigcup_{n\geq 1} (H_J/\zeta_J^n H_J) ^\vee $,
\end{itemize}
where the finite duals are defined as in \eqref{f:finitedual} with respect to  $H_J = \bigoplus_{w\in W} k \tau^J_w$ and  $F^1H_J = \bigoplus_{w\in W, \ell(w)\geq 1} k \tau^J_w$.
\end{proposition}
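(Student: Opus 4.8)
The plan is to establish the four isomorphisms of Proposition~\ref{prop:fullbimo} one degree at a time, reading off the lower degrees from earlier results and then invoking duality for the upper ones. First, the statement that $E_J^*$ is supported in degrees $0$ to $3$ follows from Lemma~\ref{lemma:EUdegrees}(3): since $\mathfrak{F}=\bbQ_p$ and $G=\textnormal{SL}_2(\bbQ_p)$, we have $d=\dim_{\bbQ_p}(\textnormal{SL}_2(\bbQ_p))=3$, and the hypothesis $p\geq 5$ guarantees $J$ is $p$-torsion-free (as noted in Subsection~\ref{subsec:parahSL2}, cf.\ \cite[\S III.3.2.7]{lazard}). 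The identity $E_J^0=H_J$ is immediate from the definition of the $\Ext$-algebra, and the case of $E_J^1$ is exactly Lemma~\ref{lemma:E1bimo}: applying it with $\mathfrak{F}=\bbQ_p$, $p\geq 5$, we get $E_J^1\cong F^1H_J\otimes_k\Hom(\mathfrak{O}/\mathfrak{M},k)$; since $\mathfrak{O}/\mathfrak{M}=\bbF_p$ and $k$ has characteristic $p$, $\Hom(\mathfrak{O}/\mathfrak{M},k)\cong\Hom(\bbF_p,k)$ is one-dimensional over $k$, so $E_J^1\cong F^1H_J$ as $H_J$-bimodules.

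Next I would handle $E_J^3$, which is the top cohomology. This has essentially already been carried out in Subsection~\ref{sec:Ed}: Lemma~\ref{lemma:duality} (applied with $\U=J$, $i=d=3$, so $d-i=0$) together with the anti-involution $\anti_J$ gives $\Delta_J^3\colon E_J^3\stackrel{\sim}{\longrightarrow}({}^{\anti_J}(H_J)^{\anti_J})^{\vee,f}$; since $\anti_J$ fixes the distinguished basis $\{\tau_w^J\}$ up to relabelling $w\mapsto w^{-1}$ and signs that square out, one identifies this twisted finite dual with $(H_J)^{\vee,f}$ as a plain $H_J$-bimodule. The further identification $(H_J)^{\vee,f}\cong\chi^J_{\triv}\oplus\bigcup_{n\geq 1}(H_J/\zeta_J^nH_J)^\vee$ is precisely the content of the displayed decomposition at the end of Subsection~\ref{sec:Ed}, which rests on Proposition~\ref{prop:decEd} (with $\EuScript{E}=\{1\}$, so that $\sum_{d\in\EuScript{E}}[\U:\U_d]=1$ is invertible) and on the arguments analogous to \cite[Rmk.~2.16, Prop.~2.4]{OS4} identifying $\ker(\trace_J^d)$ as a bimodule with $\bigcup_{n\geq 1}(H_J/\zeta_J^nH_J)^\vee$.

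The remaining and genuinely new case is $E_J^2$, the middle cohomology. The approach is to apply Lemma~\ref{lemma:duality} with $i=2$, $d-i=1$, which gives an injective morphism of $H_J$-bimodules $\Delta_J^2\colon E_J^2\hookrightarrow({}^{\anti_J}(E_J^1)^{\anti_J})^{\vee}$ with image $({}^{\anti_J}(E_J^1)^{\anti_J})^{\vee,f}$. Substituting the already-established $E_J^1\cong F^1H_J$ and checking, via the first diagram of \eqref{f:J+RC*} and the formulas of Proposition~\ref{prop:theformulas} (or equivalently via Lemma~\ref{lemma-JU-antiinvolution} and the explicit action of $\anti_J$ on the classes $(c^0)_w$ recorded in the proof of Proposition~\ref{prop:theformulas}), that $\anti_J$ acts on $F^1H_J\subseteq E_J^1$ by $\tau_w^J\mapsto\pm\tau_{w^{-1}}^J$ with signs that cancel in the two-sided twist, one concludes that the twisted finite dual ${}^{\anti_J}(F^1H_J)^{\anti_J}$ is isomorphic as an $H_J$-bimodule to the untwisted finite dual $(F^1H_J)^{\vee,f}$. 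Finally, $(F^1H_J)^{\vee,f}\cong\bigcup_{n\geq 1}(F^1H_J/\zeta_J^nF^1H_J)^\vee$ follows by the same mechanism as in the $E_J^3$ case: $F^1H_J$ is a $k[\zeta_J]$-module that is free of countable rank as a $k$-vector space and exhausted by the submodules $\zeta_J^nF^1H_J$ (this uses $\zeta_J\tau_w^J=\tau_{(\text{longer})}^J$-type relations, e.g.\ $\zeta_J\tau_{s_0\theta^n}^J=\tau_{s_0\theta^{n+1}}^J$, so that $\bigcap_n\zeta_J^nF^1H_J=0$), whence the finite dual of $F^1H_J$ coincides with the union of the duals of the finite-dimensional quotients $F^1H_J/\zeta_J^nF^1H_J$. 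The main obstacle I anticipate is the careful bookkeeping of the $\anti_J$-twist in the middle degree: one must verify that the two one-sided twists genuinely cancel on $F^1H_J$ as a bimodule (not merely as a vector space), which requires tracking the signs in $\anti_J((c^0)_w)=(-1)^{\ell(w)}(c^0)_{w^{-1}}$ against the degree-$1$ Koszul sign $(-1)^{1\cdot 1}=-1$ appearing in Lemma~\ref{lemma-JU-antiinvolution}, together with the compatibility of $\anti_J$ with the $\zeta_J$-action (which is central and $\anti_J$-invariant).
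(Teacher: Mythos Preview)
Your proof is correct and matches the paper's approach: degrees $0,1,3$ are read off from Lemma~\ref{lemma:E1bimo} and Subsection~\ref{sec:Ed}, and $E_J^2$ comes from Lemma~\ref{lemma:duality} with $i=2$ together with the observation $\zeta_J^nF^1H_J\subseteq F^{2n+1}H_J$. Your anticipated obstacle dissolves once you note that the bimodule twist ${}^{\anti_J}(-)^{\anti_J}$ only involves $\anti_J|_{E_J^0}=\anti_J|_{H_J}$, and since $\anti_J(\tau_w^J)=\tau_{w^{-1}}^J$ (Lemma~\ref{anti-inv-on-H}) preserves $F^1H_J$, the map $\anti_J\colon F^1H_J\to{}^{\anti_J}(F^1H_J)^{\anti_J}$ is already an $H_J$-bimodule isomorphism---no degree-$1$ Koszul signs enter.
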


\begin{proof} See Lemma \ref{lemma:E1bimo} and Subsection \ref{sec:Ed}. 
We fix a basis $\c^0$ of the $k$-vector space  $\Hom(1+p\mathbb Z_p/1+p^2\mathbb Z_p,k)$ and consider the  associated elements 
$\x^J_{s_0}:=- (\c^0)_{s_0}$ and $\x^J_{s_1}:= (\c^0)_{s_1}$. It gives an associated isomorphism of $H_J$-bimodules 
\begin{align}
\label{f:f}
f_{(\x^J_{s_0}, \x^J_{s_1})}:\:\:F^1H_J\longrightarrow E^1_J, \quad  \tau_{s_0}^J\longmapsto \x^J_{s_0}, \: \tau_{s_1}^J\longmapsto \x^J_{s_1} \ .
\end{align} 
as in the proof of Lemma \ref{lemma:E1bimo}.  For $E^2_J$, notice that $f_{(\x^J_{s_0}, \x^J_{s_1})}$ preserves the decompositions of $F^1H_J$ and $E^1_J$ with respect to the natural bases indexed by $\{w \in W: \ell(w) \geq 1\}$. Therefore, the isomorphism of $H_J$-bimodules $E_J^2\cong ({}^{\anti_J}(F^1 H_J)^{\anti_J})^{\vee, f}\cong  ({F^1 H_J})^{\vee, f}$ is a direct consequence of  Lemma  \ref{lemma:duality} (and where the second isomorphism follows from the isomorphism of $H_J$-bimodules ${}^{\anti_J}(F^1H_J)^{\anti_J} \stackrel{\sim}{\longrightarrow} F^1H_J$ obtained by applying $\anti_J$).
We then notice that $\zeta_J ^n$ maps  $F^1H_J$ to $F^{2n+1} H_J$, hence the second isomorphism  for $E^2_J$ in the lemma.
\end{proof}

In order to calculate the center of $E_J^*$, we use introduce the following notation.

\begin{notn}[\textbf{Basis for $E^*_J$}]\label{not:basis}\hfill
\begin{itemize}
\item A basis for $E^0$ is given by  $\{\tau^J_w\}_{w\in W}$.
\item A basis for $E^3$ is given by  $\{\upphi^J_w\}_{w\in W}$ or $\{\upphi^J_1\}\cup\{\uppsi^J_w\}_{w\in W, \ell(w)\geq 1}$ (see Subsection \ref{sec:Ed}).
\item A basis for $E^1_J$ is given by  $\{\x^J_w\}_{w\in W, \ell(w)\geq 1}$ where $\x^J_w:= f_{(\x^J_{s_0}, \x^J_{s_1})}(\tau^J_w)$ (see the proof of Proposition \ref{prop:fullbimo}).
\item A basis for  $E^2_J$ is given by  $\{\upalpha^J_w\}_{w\in W, \ell(w)\geq 1}$ where $\upalpha^J_w$ is the  preimage of $\tau^{J,*}_w\vert _{F^1H}\in  (F^1 H_J)^{\vee, f}$ in the isomorphism $E_J^2\cong ({}^{\anti_J}(F^1 H_J)^{\anti_J})^{\vee, f}$.
It is the unique element in $H^2(I, \X_J(w))$ such that  $\upalpha^J_w\cup \x^J_w=\upphi^J_w$.
For $s\in \{s_0, s_1\}$, the action of $\tau_s^J$ on these basis elements is given by
\begin{equation}\label{f:leftrightd-1}
 \upalpha^J_w\cdot \tau^J_{s}= \begin{cases}  \upalpha^J_{w s}-\upalpha^J_w & \text{ if $\ell(w s)=\ell(w)-1$, $\ell(w)\geq 2$,}\cr
-\upalpha^J_w & \text{ if $\ell(w s)=\ell(w)-1$, $\ell(w)= 1$,}\cr
  0& \text{ if $\ell(w s)=\ell(w)+1$,}\end{cases}
\quad \quad \tau^J_{s}\cdot \upalpha^J_w= \begin{cases}\upalpha^J_{s w}- \upalpha^J_w& \text{ if $\ell( sw)=\ell(w)-1$, $\ell(w)\geq 2$,}\cr - \upalpha^J_w& \text{ if $\ell( sw)=\ell(w)-1$, $\ell(w)=1$,}\cr 0& \text{ if $\ell( sw)=\ell(w)+1$.}\end{cases}
\end{equation}
\item For $w\in W$ with $\ell(w)\geq 1$, we let \begin{equation}\upbeta^J_w:= \tau^J_{s_{1-\epsilon}} \cdot \upalpha^J_w\text{ where $w\in W^\epsilon$}.\label{defiB}\end{equation}
The set $\{\upbeta^J_w\}_{w\in W, \ell(w)\geq 1}$ is also a basis for $E^2_J$.
\end{itemize}
\end{notn}

We now describe the product structure relative to the above basis.

\begin{proposition}\hfill
\begin{enumerate}
\item The product $E_J^1\otimes_k  E_J^1\longrightarrow E_J^2$:  for $v,w\in W$ with $\ell(v), \ell(w)\geq 1$, the product of two elements $\x_v^J$ and $\x_w^J$ is zero unless $v=w=s_0$ or $v=w=s_1$, in which case we have
\begin{equation}\label{f:prod11}
\x^J_{s_0}\cdot  \x^J_{s_0}= -\upalpha^J_{s_0}\quad\quad \x^J_{s_1}\cdot  \x^J_{s_1}=-\upalpha^J_{s_1}
\end{equation}
\item The products $E_J^1\otimes_k  E_J^2\longrightarrow E_J^3$ and $E_J^2\otimes_k  E_J^1\longrightarrow E_J^3$: since $E_J^1$ is generated on the left and on the right by $\x_{s_0}^J$ and $\x_{s_1}^J$, the products are entirely described by the following, where $\epsilon\in\{0,1\}$:
\begin{align}\label{f:prod12}
{\bf x}^J_{s_\epsilon}\cdot \upalpha^J_ w & =-\tau^J_{s_\epsilon}\cdot \upphi^J_w\ =
\begin{cases}
-\upphi^J_{s_\epsilon w}+ \upphi^J_w&\text{if $w\in W^{1-\epsilon}$,}\cr
0&\text{if $w\in W^{\epsilon}$,}
\end{cases} \\
\upalpha^J_ w\cdot{\bf x}^J_{s_\epsilon} & = -\upphi^J_w\cdot \tau^J_{s_\epsilon} =
\begin{cases}
-\upphi^J_{ ws_\epsilon}+ \upphi^J_w&\text{if $w^{-1}\in W^{1-\epsilon}$,}\cr
0&\text{if $w^{-1}\in W^{\epsilon}$ \ .} \notag
\end{cases}
\end{align}
\end{enumerate}
\end{proposition}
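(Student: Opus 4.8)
The plan is to reduce every product to a handful of ``atomic'' ones involving the $H_J$-bimodule generators $\x^J_{s_0},\x^J_{s_1}$ of $E^1_J$, to dispose of the good-length atomic products directly, and to obtain the two length-dropping cases by transferring along $R^*_J$, $C^*_J$ to the pro-$p$ Iwahori $\Ext$-algebra $E^*$ of $\textnormal{SL}_2(\bbQ_p)$, whose product structure is known (\cite{OS4}). For the reductions: since $f_{(\x^J_{s_0},\x^J_{s_1})}\colon F^1H_J\xrightarrow{\sim}E^1_J$ of Proposition \ref{prop:fullbimo} is an isomorphism of $H_J$-bimodules and $W=\langle s_0,s_1\rangle$ is infinite dihedral, every $\x^J_v$ with $\ell(v)\geq 1$ equals $\tau^J_{v'}\cdot\x^J_{s_\epsilon}$ where $s_\epsilon$ is the last letter of a reduced word for $v$ and $v=v's_\epsilon$ with $\ell(v')=\ell(v)-1$; symmetrically $\x^J_w=\x^J_{s_{\epsilon'}}\cdot\tau^J_{w''}$ with $s_{\epsilon'}$ the first letter of $w$. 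As the Yoneda product is associative and restricts to the bimodule action on $E^0_J=H_J$, this gives $\x^J_v\cdot\x^J_w=\tau^J_{v'}\cdot(\x^J_{s_\epsilon}\cdot\x^J_{s_{\epsilon'}})\cdot\tau^J_{w''}$, and likewise the products $E^1_J\otimes_kE^2_J\to E^3_J$ are determined by the $\x^J_{s_\epsilon}\cdot\upalpha^J_w$ ($w\in W$), from which $E^2_J\otimes_kE^1_J\to E^3_J$ follows by applying $\anti_J$ (Lemma \ref{lemma-JU-antiinvolution}) together with $\anti_J(\x^J_{s_\epsilon})=-\x^J_{s_\epsilon}$, $\anti_J(\upphi^J_w)=\upphi^J_{w^{-1}}$ (Lemma \ref{lemma-traceantiinvolution} and the characterization of $\upphi^J_w$), and $\anti_J(\upalpha^J_w)=\pm\upalpha^J_{w^{-1}}$ (from $\upalpha^J_w\cup\x^J_w=\upphi^J_w$, multiplicativity of $\anti_J$ on cup products, and nondegeneracy in Lemma \ref{lemma-XUduality}).

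For the atomic degree-$(1,1)$ products: when $\epsilon\neq\epsilon'$ we have $\ell(s_\epsilon s_{\epsilon'})=2=\ell(s_\epsilon)+\ell(s_{\epsilon'})$, so Proposition \ref{coro:prod-goodlength} gives $\x^J_{s_\epsilon}\cdot\x^J_{s_{\epsilon'}}=(\x^J_{s_\epsilon}\cdot\tau^J_{s_{\epsilon'}})\cup(\tau^J_{s_\epsilon}\cdot\x^J_{s_{\epsilon'}})$; both factors equal $\x^J_{s_\epsilon s_{\epsilon'}}\in H^1(J,\X_J(s_\epsilon s_{\epsilon'}))$ since $f_{(\x^J_{s_0},\x^J_{s_1})}$ is a bimodule map, so by \eqref{f:cup+Sh} this is the cup-square of an $H^1$-class, which vanishes as the cup product is graded-commutative and $2\in k^\times$. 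Hence $\x^J_{s_0}\cdot\x^J_{s_1}=\x^J_{s_1}\cdot\x^J_{s_0}=0$, and combined with \eqref{f:leftrightd-1} (which gives $\tau^J_s\cdot\upalpha^J_{s_\epsilon}=0=\upalpha^J_{s_\epsilon}\cdot\tau^J_s$ for $s\neq s_\epsilon$), the reduction shows $\x^J_v\cdot\x^J_w=0$ unless $v=w=s_\epsilon$. In the diagonal case $\x^J_{s_\epsilon}\cdot\x^J_{s_\epsilon}$ lies in $H^2(J,\X_J(1))\oplus H^2(J,\X_J(s_\epsilon))$ by Lemma \ref{coro25}; the first summand is $H^2(J,k)$, which is $0$ because $J$ has trivial Frattini quotient (Lemma \ref{lem:frattini-J-SL2}), so $H^1(J,k)=0$, and $J$ is a Poincar\'e group of dimension $d=3$, so $H^2(J,k)\cong H^1(J,k)^\vee=0$ (Proposition \ref{Uduality}); the second summand $\cong H^2(J_{s_\epsilon},k)\cong H^1(J_{s_\epsilon},k)^\vee$ is one-dimensional (Lemma \ref{lem:frattini-J-SL2} and Remark \ref{kapparem}), so $\x^J_{s_\epsilon}\cdot\x^J_{s_\epsilon}=c\,\upalpha^J_{s_\epsilon}$ for a unique $c\in k$. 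To pin down $c=-1$ I would pass to $e_JE^*e_J$ along $r^*=\tfrac{1}{[J:I]}R^*_J$, use \eqref{f:Rc0} to write $r^1(\x^J_{s_\epsilon})=-e_J\cdot(0,\c^0,0)_{\widetilde{s_\epsilon}}\cdot e_J$, and read off the scalar from the square of $(0,\c^0,0)_{\widetilde{s_\epsilon}}$ in $E^*$ (\cite{OS4}); alternatively $c$ can be computed directly from Proposition \ref{yoneda-product-U}. This establishes item (1).

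For item (2): if $w\in W^\epsilon$ then $\ell(s_\epsilon w)=\ell(w)+1$, the product $\x^J_{s_\epsilon}\cdot\upalpha^J_w$ is of good length, and Proposition \ref{coro:prod-goodlength} with \eqref{f:leftrightd-1} ($\tau^J_{s_\epsilon}\cdot\upalpha^J_w=0$) gives $\x^J_{s_\epsilon}\cdot\upalpha^J_w=0$, matching $-\tau^J_{s_\epsilon}\cdot\upphi^J_w=0$ by \eqref{f:leftrightHd}. If $w\in W^{1-\epsilon}$ then $\ell(s_\epsilon w)=\ell(w)-1$, so by Lemma \ref{coro25} the product $\x^J_{s_\epsilon}\cdot\upalpha^J_w$ has components only in $H^3(J,\X_J(w))$ and $H^3(J,\X_J(s_\epsilon w))$, which I would identify from the explicit Yoneda formula of Proposition \ref{yoneda-product-U} — the computation paralleling the proof of Corollary \ref{prop:explicitleftaction-cor}, or equivalently by transferring to $E^*$ and invoking \cite{OS4} — obtaining $\x^J_{s_\epsilon}\cdot\upalpha^J_w=-\upphi^J_{s_\epsilon w}+\upphi^J_w=-\tau^J_{s_\epsilon}\cdot\upphi^J_w$ (the last equality by \eqref{f:leftrightHd}). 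The companion identity $\upalpha^J_w\cdot\x^J_{s_\epsilon}=-\upphi^J_w\cdot\tau^J_{s_\epsilon}$ then follows by applying $\anti_J$ as above, and the explicit case-by-case formulas in the statement are just \eqref{f:leftrightHd} written out.

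I expect the main obstacle to be precisely the two length-dropping computations: pinning the scalar $-1$ in $\x^J_{s_\epsilon}\cdot\x^J_{s_\epsilon}=-\upalpha^J_{s_\epsilon}$ and determining both components of $\x^J_{s_\epsilon}\cdot\upalpha^J_w$ when $\ell(s_\epsilon w)<\ell(w)$. These are the only steps where one must unwind the full Yoneda product formula (or lean on the detailed structure of $E^*$ from \cite{OS4}); everything else is bimodule bookkeeping, the good-length product formula, and the vanishing of cup-squares of $H^1$-classes in odd characteristic.
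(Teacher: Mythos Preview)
Your proposal is correct and arrives at the same destination, but by a genuinely different route than the paper. The paper does not attempt any direct computation inside $E_J^*$: it systematically identifies the images of \emph{all} basis elements $\tau_w^J,\x_w^J,\upalpha_w^J,\upphi_w^J$ under $R_J^*$ in $e_JE^*e_J$, and then pushes every product wholesale through the algebra isomorphism $E_J^*\cong e_JE^*e_J$, reading off the answers from the corresponding product tables in \cite{OS4} (Remark~8.4 for $E^1\cdot E^1$ and Proposition~8.6 for $E^1\cdot E^2$). In particular, the vanishing of $\x_v^J\cdot\x_w^J$ for $(v,w)\neq(s_\epsilon,s_\epsilon)$ and the determination of the scalar $-1$ are obtained simultaneously from \cite[Rmk.~8.4]{OS4}, and both formulas in item~(2) are obtained in parallel from \cite[Prop.~8.6]{OS4} without appealing to $\anti_J$.

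Your approach instead does as much as possible inside $E_J^*$: the bimodule reduction to atomic products, the good-length cup formula killing the off-diagonal $\x_{s_0}^J\cdot\x_{s_1}^J$ via cup-square vanishing (which is a nice conceptual explanation the paper's transfer method obscures), the annihilation $\tau_s^J\cdot\upalpha_{s_\epsilon}^J=0$ for $s\neq s_\epsilon$, and the Poincar\'e duality argument that forces $\x_{s_\epsilon}^J\cdot\x_{s_\epsilon}^J$ into the one-dimensional space $k\upalpha_{s_\epsilon}^J$. This buys you a more self-contained treatment of all the ``easy'' cases, at the cost of still needing the transfer to $E^*$ (or an equivalent direct cocycle computation) for exactly the two hard atomic scalars you flag. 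The paper's uniform transfer method is shorter to write down once the dictionary $R_J^*(\textnormal{basis})\leftrightarrow e_J\cdot(\textnormal{basis})$ is in place, while yours makes clearer which vanishings are formal and which genuinely require the explicit cohomology of $I_w$. Two minor caveats on your version: the compatibility $\anti_J(\alpha\cup\beta)=\anti_J(\alpha)\cup\anti_J(\beta)$ for same-coset classes is true (it reduces via Shapiro to the fact that $(g^{-1})_*$ is a ring map) but is not stated in the paper, so you should say a word; and deducing the second formula of item~(2) from the first via $\anti_J$ requires tracking the sign in $\anti_J(\upalpha_w^J)=\pm\upalpha_{w^{-1}}^J$ precisely, not just up to $\pm1$.
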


\begin{proof}
 We want to use some of  the calculations  in \cite[\S 8.1, 8.2]{OS4} and for this we need to compute the image under $R_J^*: E^*_J\longrightarrow E^*$ of the basis elements of $E^*_J$ given above.  Recall that the restricted maps
\[ E_J^*\xrightarrow{-R^*_{J}}  e_{J} \cdot \, E^* \cdot  e_{J} \qquad \text{ and }\qquad e_{J}  \cdot  E^*  \cdot  e_{J} \xrightarrow{-C^*_{J} }   E^* _J  \]
are homomorphisms of unital $k$-algebras which are inverse to each other.
\begin{itemize}
\item Let $w\in W$ with lift $\tilde w\in \widetilde W$. Recall from \eqref{f:formulaCP} that  $C_J(\tau_{\tilde w})=-\tau_w^J $ so $C_J(e_J\tau_{\tilde w})=-\tau_w^J $  and $R_J(\tau_w^J)= -e_J \tau_{\tilde w}$.
\item Let $w\in W$ with lift $\tilde w\in \widetilde W$.  By  the commutativity of the two right hand squares in \eqref{f:dualitycompa},  we have $C_J^3(\phi_{\tilde w})=\upphi_w^J$  where  $\phi_{\tilde w}$ is introduced in \cite[\S 8]{Ext} as the unique element in $H^3(I, \X(\tilde w))$ such that $\textnormal{tr}_I\circ \trace_I^3(\phi_{\tilde w})=1$. Hence by Remark \ref{rema:RPCP*}-\ref{rema:RPCP*-ii} and \ref{rema:RPCP*-iii}, we have $R^*_J(\upphi_w^J)= e_J  \cdot \phi_{\tilde w}  \cdot e_J= e_J  \cdot \phi_{\tilde w}$.
\item  We consider the elements  $(\x_{w})_{w\in \widetilde W,\ell(w)\geq 1}$ given by $\x_{w}:= f_{(\x_{0}, \x_{1})}(\tau_w)$ with the notation of \cite[Prop. 3.18]{OS4}.  For $w\in W$ with $\ell(w)\geq 1$, we see that $R^*_J(\x_w^J)= -e_J \cdot  \x_{\tilde w}  \cdot  e_J= -e_J \cdot  \x_{\tilde w}$ for $\tilde w\in \widetilde W$ a lift of $w$ (using \eqref{f:Rc0}).
\item Consider, for $\tilde w\in \widetilde W$ with $\ell(w)\geq 1$, the element $\upalpha_{\tilde w}^\star\in H^2(I, \X(\tilde{w}))$ as introduced in \cite[\S8]{OS4}. By Remark 8.3 of \textit{op. cit.}, it satisfies $\upalpha^\star_{\tilde w}\cup \x_{\tilde w}=\phi_{\tilde w}$.

Let $w\in W$, $\ell(w)\geq 1$ with lift $\tilde w\in\widetilde W$. We have $C_J^*( e_J  \cdot \upalpha_{\tilde w}^\star  \cdot e_J)=\upalpha_w^J$ because
\begin{flushleft}
$\displaystyle{C_J^*( e_J  \cdot  \upalpha_{\tilde w}^\star  \cdot  e_J)\cup \x_w^J \quad\stackrel{ \eqref{f:projform}}{=}\quad C_J^*( e_J  \cdot \upalpha_{\tilde w}^\star  \cdot  e_J\cup R_J^*( \x_w^J)) \quad = \quad -C_J^*( e_J  \cdot \upalpha^\star_{\tilde w}  \cdot e_J\cup e_J  \cdot \x_{\tilde w}  \cdot e_J)
}$
\end{flushleft}
\begin{flushright}
$\displaystyle{\stackrel{\text{\cite[Prop. 7.18, (85)]{Ext}}}{=}-C^*_J( \upalpha^\star_{\tilde w} \cup e_J  \cdot \x_{\tilde w}  \cdot e_J) \quad \stackrel{\text{ \eqref{f:orth}}}{=} \quad C^*_J( \upalpha^\star_{\tilde w} \cup \x_{\tilde w}) \quad = \quad C^*_J(\phi_{\tilde w}) \quad = \quad \upphi_w^J\ .}$
\end{flushright}
\end{itemize}

Now we verify the product formulas \eqref{f:prod11} and \eqref{f:prod12}. 
\begin{itemize}
\item  Let $v,w\in W$ with $\ell(w), \ell(w)\geq 1$ and lifts $\tilde v, \tilde w\in \widetilde W$.  We have 
\[ \x_v^J\cdot \x_w^J= C_J^* R_J^*(\x_v^J\cdot \x_w^J)=-C_J^*(R_J^*(\x_v^J)\cdot R_J^*( \x_w^J))=
-C_J^*(e_J\cdot \x_{\tilde v} \cdot \x_{\tilde w} \cdot e_J)\ . \]
By \cite[Rmk. 8.4]{OS4}, this product is zero unless $v=w=s_\epsilon$ for $\epsilon\in \{0,1\}$ and 
$e_J\cdot \x_{s_\epsilon} \cdot \x_{s_\epsilon}\cdot e_J= 
e_J\cdot \upalpha^\star_{s_\epsilon}\cdot e_J$.
So $\x_{s_\epsilon}^J\cdot \x_{s_\epsilon}^J=-C_J^*(e_J\cdot \upalpha^\star_{s_\epsilon}\cdot e_J)=-\upalpha_{s_\epsilon}^J.$
\item We have 
\[ \x^J_{s_\epsilon}\cdot \upalpha_w^J= C_J^* R_J^*(\x^J_{s_\epsilon}\cdot \upalpha_w^J)=- C_J^* (R_J^*(\x^J_{s_\epsilon})\cdot R_J^*( \upalpha_w^J))=C_J^* (e_J\cdot \x_{s_\epsilon}\cdot  \upalpha^\star_{\tilde w}\cdot e_J)= C_J^*(-e_J\cdot \tau_{s_\epsilon}\cdot  \phi_{\tilde w}) \]
using \cite[Prop. 8.6]{OS4}. So $\x^J_{s_\epsilon}\cdot \upalpha_w^J=C_J^*(e_J\cdot \tau_{s_\epsilon}) C_J^*(e_J\cdot  \phi_{\tilde w})=-\tau_{s_\epsilon}^J\cdot \upphi_w^J$. The computation of $ \upalpha_w^J\cdot \x^J_{s_\epsilon}$ is similar.
\end{itemize}
\end{proof}

\begin{definition}\label{defZ}
Recall  Notation \ref{not:basis}. We define the following linear subspace $\mathfrak Z_J^*$ of $E^*_J$: 
\begin{itemize}
\item $\mathfrak Z_J^0=k[\zeta_J]$
\item $\mathfrak Z_J^1= f_{(\x^J_{s_0}, \x^J_{s_1})}((\zeta_J-1)k[\zeta_J])$
\item $\mathfrak Z_J^2= \langle \upbeta^J_{s_0(s_1s_0)^n}+\upbeta^J_{s_1(s_0s_1)^n}, \: n\geq 0\rangle_k$
\item $\mathfrak Z_J^3= \langle \upphi_1^J, \uppsi^J_{s_0(s_1s_0)^n}+\uppsi^J_{s_1(s_0s_1)^n}, \: n\geq 0\rangle_k$
\end{itemize}
\end{definition}

We now consider the center of $E^*_J$ (as an abstract $k$-algebra), namely
\begin{equation}\label{dcenter}
\mathcal Z(E^*_J):=\{z\in E^*_J, \:  z\cdot x=x\cdot z\text{ for any $x\in E^*_J$} \} \ .
\end{equation}

\begin{lemma}
\label{lemma:Z}
The vector space $\mathfrak  Z_J^*$ is a  subalgebra of $\mathcal Z(E^*_J)$.  It is a commutative, but not graded-commutative, algebra. 
\end{lemma}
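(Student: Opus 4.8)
The plan is to verify the three separate assertions in Lemma~\ref{lemma:Z}: that $\mathfrak Z_J^*$ is a subalgebra, that it lies in $\mathcal Z(E_J^*)$, and that it is commutative but not graded-commutative. For the centrality claim, by bilinearity and the grading it suffices to check that each listed basis element of $\mathfrak Z_J^r$ commutes with each of the algebra generators of $E_J^*$. Since $E_J^0 = H_J$ is generated by $\tau^J_{s_0}, \tau^J_{s_1}$ (together with $\tau^J_\omega$ for $\omega\in\Omega$; in the $\mathrm{SL}_2$ case $\Omega$ is generated by the class of $\varpi$), and $E_J^1$ is generated as a bimodule over $H_J$ by $\x^J_{s_0},\x^J_{s_1}$ (Lemma~\ref{lemma:E1bimo}), while $E_J^2$ and $E_J^3$ contribute no new multiplicative generators beyond what is hit by $E_J^1\cdot E_J^1$ and $E_J^1\cdot E_J^2$, it is enough to check commutation with $\tau^J_{s_0},\tau^J_{s_1}$ and with $\x^J_{s_0},\x^J_{s_1}$. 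The degree-$0$ piece $\mathfrak Z_J^0 = k[\zeta_J]$ is central in $H_J=E_J^0$ by the results recalled in Subsection~\ref{subsec:parahSL2}, and one must additionally check $\zeta_J$ commutes with $\x^J_{s_i}$, with the degree-$2$ and degree-$3$ generators, and that multiplication by $\zeta_J$ preserves $\mathfrak Z_J^1,\mathfrak Z_J^2,\mathfrak Z_J^3$; these follow from the explicit bimodule descriptions in Proposition~\ref{prop:fullbimo} together with the fact that $\zeta_J$ acts on $F^1H_J$, on $(F^1H_J)^{\vee,f}$, and on $(H_J)^{\vee,f}$ via the same central element.

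First I would handle the degree-by-degree bookkeeping. For $\mathfrak Z_J^1$: writing $z = f_{(\x^J_{s_0},\x^J_{s_1})}((\zeta_J-1)P(\zeta_J))$ for $P\in k[\zeta_J]$, commutation with $\tau^J_{s_i}$ is immediate because $f_{(\x^J_{s_0},\x^J_{s_1})}$ is an $H_J$-bimodule map and $(\zeta_J-1)P(\zeta_J)\in F^1H_J$ is central in $H_J$ hence central as an element of the bimodule $F^1H_J$; commutation with $\x^J_{s_i}$ uses the product $E_J^1\otimes E_J^1\to E_J^2$ from \eqref{f:prod11}, which sends $\x^J_{s_\epsilon}\cdot\x^J_w$ to a multiple of $\upalpha^J_{s_\epsilon}$ only when $w=s_\epsilon$, so one reduces to checking $\x^J_{s_0}\cdot z$ and $z\cdot\x^J_{s_0}$ coincide, and similarly for $s_1$; this is a finite computation with the coefficient of $\x^J_{s_\epsilon}$ in $z$. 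For $\mathfrak Z_J^2$ and $\mathfrak Z_J^3$: commutation with $\tau^J_{s_i}$ follows from the action formulas \eqref{f:leftrightd-1}, \eqref{defiB}, \eqref{f:leftrightHd}, and the symmetry of the elements $\upbeta^J_{s_0(s_1s_0)^n}+\upbeta^J_{s_1(s_0s_1)^n}$ and $\uppsi^J_{s_0(s_1s_0)^n}+\uppsi^J_{s_1(s_0s_1)^n}$ under swapping the roles of $s_0$ and $s_1$ (this is exactly the symmetrization that makes them central, analogous to how $\zeta_J$ is the symmetric combination); commutation with $\x^J_{s_i}$ in degree $2$ uses \eqref{f:prod12} (a degree-$2$ element times $\x^J_{s_i}$ lands in $E_J^3$), and in degree $3$ the products vanish since $E_J^*$ is supported in degrees $0$ to $3$. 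Throughout, I would tabulate the relevant products against the explicit bases of Notation~\ref{not:basis}.

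Next, that $\mathfrak Z_J^*$ is closed under multiplication: $\mathfrak Z_J^0\cdot\mathfrak Z_J^0\subseteq\mathfrak Z_J^0$ since $k[\zeta_J]$ is a subalgebra; $\mathfrak Z_J^0\cdot\mathfrak Z_J^r\subseteq\mathfrak Z_J^r$ since multiplication by $\zeta_J$ preserves each $\mathfrak Z_J^r$ (using that $\zeta_J\tau^J_{s_0\theta^n}$-type identities from Subsection~\ref{subsec:parahSL2} propagate to the dual and bimodule pictures, e.g.\ $\zeta_J$ carries $\upbeta^J_{s_0(s_1s_0)^n}+\upbeta^J_{s_1(s_0s_1)^n}$ to $\upbeta^J_{s_0(s_1s_0)^{n+1}}+\upbeta^J_{s_1(s_0s_1)^{n+1}}$ up to lower terms); and the products among positive degrees are controlled by \eqref{f:prod11}, \eqref{f:prod12}: $\mathfrak Z_J^1\cdot\mathfrak Z_J^1$ lands in degree $2$ and, by \eqref{f:prod11}, is a $k[\zeta_J]$-combination of $\upalpha^J_{s_0}$ and $\upalpha^J_{s_1}$ appearing only in the symmetric combination since the $\x^J_{s_0}$ and $\x^J_{s_1}$ components of an element of $\mathfrak Z_J^1$ have equal coefficients; $\mathfrak Z_J^1\cdot\mathfrak Z_J^2$ lands in $\mathfrak Z_J^3$ by \eqref{f:prod12} and the same symmetry; and all products into degree $\geq 4$ vanish. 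Finally, commutativity of $\mathfrak Z_J^*$ is automatic once centrality is established (a central subalgebra is commutative), and the failure of graded-commutativity is witnessed by $\x^J_{s_0}+\x^J_{s_1}\in\mathfrak Z_J^1$ (the image under $f$ of $(\zeta_J-1)\cdot 1 = \tau^J_{s_0}+\tau^J_{s_1}+\tau^J_{s_1}\tau^J_{s_0}+\tau^J_{s_0}\tau^J_{s_1}$, which has $\x^J_{s_0}$- and $\x^J_{s_1}$-coefficients both equal to $1$): its square is $\x^J_{s_0}\cdot\x^J_{s_0}+\x^J_{s_1}\cdot\x^J_{s_1} = -\upalpha^J_{s_0}-\upalpha^J_{s_1}\neq 0$ by \eqref{f:prod11}, whereas graded-commutativity would force the square of an odd-degree element to be $2$-torsion, hence zero in characteristic $p$ (as $p$ is odd). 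The main obstacle I anticipate is the degree-$2$ and degree-$3$ centrality check against $\tau^J_{s_0},\tau^J_{s_1}$: one must carefully track how the action formulas \eqref{f:leftrightHd} and \eqref{f:leftrightd-1} interact with the infinite symmetric sums defining $\mathfrak Z_J^2,\mathfrak Z_J^3$, and confirm that the "boundary terms'' $\upphi^J_w-\upphi^J_{s_\epsilon w}$ (respectively for $\upalpha,\upbeta,\uppsi$) telescope correctly so that left and right multiplication agree; this is where an analogy with the known computation that $\zeta_J$ is central in $H_J$ (and with \cite[\S 2]{OS4}) should be invoked to organize the calculation rather than doing it blindly.
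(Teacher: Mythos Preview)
Your overall strategy---reduce to explicit computations using the formulas \eqref{f:leftrightHd}, \eqref{f:leftrightd-1}, \eqref{f:prod11}, \eqref{f:prod12}---is the same as the paper's, and your witness for the failure of graded-commutativity (the square of $f_{(\x^J_{s_0},\x^J_{s_1})}(\zeta_J-1)$ being $-\upalpha^J_{s_0}-\upalpha^J_{s_1}=\upbeta^J_{s_0}+\upbeta^J_{s_1}\neq 0$) is exactly the one the paper uses.

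There is, however, a genuine gap in your reduction step. You assert that $E_J^2$ and $E_J^3$ ``contribute no new multiplicative generators beyond what is hit by $E_J^1\cdot E_J^1$ and $E_J^1\cdot E_J^2$,'' and hence that it suffices to check centrality against $\tau^J_{s_0},\tau^J_{s_1},\x^J_{s_0},\x^J_{s_1}$. This is false: by \eqref{f:prod11} the image of the multiplication $E_J^1\otimes E_J^1\to E_J^2$ is exactly $k\upalpha^J_{s_0}\oplus k\upalpha^J_{s_1}$, and by \eqref{f:leftrightd-1} one has $\tau^J_{s_{1-\epsilon}}\cdot\upalpha^J_{s_\epsilon}=0$ and $\tau^J_{s_\epsilon}\cdot\upalpha^J_{s_\epsilon}=-\upalpha^J_{s_\epsilon}$, so this two-dimensional space is already an $H_J$-subbimodule. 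Thus $\upalpha^J_w$ for $\ell(w)\geq 2$ does \emph{not} lie in the subalgebra generated by $H_J$ and $E_J^1$, and your argument leaves unchecked the commutation of $\mathfrak Z_J^1$ with $E_J^2$. (For $\mathfrak Z_J^2$ and $\mathfrak Z_J^3$ this is harmless, since their products with $E_J^2$ land in degree $\geq 4$ and vanish.) The paper does not attempt such a reduction; it simply verifies all the needed identities directly. The missing case is in fact easy to supply using \eqref{f:prod12}, which gives both $\x^J_{s_\epsilon}\cdot\upalpha^J_w$ and $\upalpha^J_w\cdot\x^J_{s_\epsilon}$ in terms of the $H_J$-action on $\upphi^J_w$, so you can complete the argument---but you must actually do that computation rather than claim it follows from the reduction.

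Two smaller corrections. First, for $G=\mathrm{SL}_2(\mathfrak F)$ the group $\Omega$ is trivial (the element $\varpi$ lies in $\mathrm{GL}_2$, not in $G$), so there is no $\tau^J_\omega$ to worry about. Second, the action of $\zeta_J$ on the $\upbeta^J$'s goes the other way: the paper records $\zeta_J\cdot\upbeta^J_{s_0(s_1s_0)^n}=\upbeta^J_{s_0(s_1s_0)^{n-1}}$ for $n\geq 1$ and $=0$ for $n=0$ (this is consistent with $E_J^2$ being a union of duals $(F^1H_J/\zeta_J^n F^1H_J)^\vee$), so your description of $\zeta_J$ raising the index should be reversed; the stability of $\mathfrak Z_J^2$ under $\zeta_J$ still follows, of course.
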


\begin{proof} By Proposition \ref{prop:fullbimo}, we know that $\zeta_J\in E^0_J$ centralizes $E^*_J$.
Using Subsection \ref{f:relJ}, \eqref{f:leftrightHd},  \eqref{f:leftrightd-1} and the description of the product in $E^*_J$ (see in particular \eqref{f:prod11} and \eqref{f:prod12}), one checks that $\mathfrak  Z_J^*$ is central, that it is stable by the Yoneda product and that it is not graded commutative. We give some formulas for this verification:
\begin{itemize}
\item  $\mathfrak Z_J^*$ is stable by the action of $\zeta_J$ on the left.  For example, one checks that
$\zeta_J\cdot \upbeta^J_{s_0(s_1s_0)^n}=\upbeta^J_{s_0(s_1s_0)^{n-1}}$ if $n\geq 1$ and it is zero if $n=0$, while $\zeta_J\cdot \upphi^J_{s_0(s_1s_0)^n}=\upphi^J_{s_0(s_1s_0)^{n-1}}$ if $n\geq 1$ and it is equal to $\upphi^J_1$ if $n=0$. 
\item  We have
\[ f_{(\x_{s_0}, \x_{s_1})}((\zeta_J-1))\cdot f_{(\x_{s_0}, \x_{s_1})}((\zeta_J-1))=\x_{s_0}^J\cdot \x_{s_0}^J+\x_{s_1}^J\cdot \x_{s_1}^J =-\upalpha^J_{s_0}-\upalpha^J_{s_1} = \upbeta^J_{s_0}+\upbeta^J_{s_1} \ . \]
This is a nonzero square in $\mathfrak Z_J^2$, and therefore $\mathfrak Z_J^*$ is not graded-commutative.
\item  If $n\geq 1$,  then $\x^J_{s_0}\cdot (\tau^J_{s_1}+1)\cdot \upbeta_{s_0(s_1s_0)^n}^J=-\uppsi^J_{s_0(s_1s_0)^{n-1}}+\uppsi^J_{s_0(s_1s_0)^n}$ and   $\x^J_{s_1}\cdot (\tau^J_{s_0}+1)\cdot  \upbeta_{s_0(s_1s_0)^n}^J=0$. We have a symmetric result for $ \upbeta_{s_1(s_0s_1)^n}^J$. Thus, we get
\[f_{(\x_{s_0}, \x_{s_1})}(\zeta_J-1)\cdot (\upbeta_{s_0(s_1s_0)^n}^J+\upbeta_{s_1(s_0s_1)^n}^J)= -(\uppsi^J_{s_0(s_1s_0)^{n-1}}+\uppsi^J_{s_1(s_0s_1)^{n-1}})+(\uppsi^J_{s_0(s_1s_0)^n}+\uppsi^J_{s_1(s_0s_1)^{n}}) \ . \]
When $n = 0$, we have $f_{(\x_{s_0}, \x_{s_1})}(\zeta_J-1)\cdot (\upbeta_{s_0}^J+\upbeta_{s_1}^J)= \uppsi^J_{s_0}+\uppsi^J_{s_1}$ since in that case $\x^J_{s_0}\cdot (\tau^J_{s_1}+1)\cdot \upbeta_{s_0}^J=\uppsi^J_{s_0}$ and $\x^J_{s_1}\cdot (\tau^J_{s_0}+1)\cdot \upbeta_{s_1}^J=\uppsi^J_{s_1}$.
\end{itemize}
\end{proof}

\begin{proposition}  \label{proo:fullZ}
As a vector space we have
 $\mathcal Z(E^*_J)= \mathfrak Z_J^*\oplus k \uppsi_{s_0}^J\oplus k \upbeta_{s_0}^J$.
\end{proposition}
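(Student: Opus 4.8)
The starting point is that the center of any $\bbZ$-graded $k$-algebra is a graded subspace: if $z=\sum_i z^i$ with $z^i\in E_J^i$ commutes with every homogeneous $x\in E_J^j$, comparing the degree-$(i+j)$ parts of $zx=xz$ gives $z^ix=xz^i$ for all $i$. Since $E_J^*$ is concentrated in degrees $0$ through $3$ (Proposition \ref{prop:fullbimo}), it suffices to identify, for each $i$, the subspace $\mathcal Z_i:=\mathcal Z(E_J^*)\cap E_J^i$, and to check that it equals $\mathfrak Z_J^0$, $\mathfrak Z_J^1$, $\mathfrak Z_J^2\oplus k\upbeta^J_{s_0}$, $\mathfrak Z_J^3\oplus k\uppsi^J_{s_0}$ respectively.

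The key structural reduction uses that $E_J^*$ vanishes above degree $3$ and that $E_J^1$ is generated as an $H_J$-bimodule by $\x^J_{s_0},\x^J_{s_1}$ (proof of Proposition \ref{prop:fullbimo}): a homogeneous $z\in E_J^i$ commutes with all of $E_J^0\oplus E_J^1$ as soon as it commutes with the four elements $\tau^J_{s_0},\tau^J_{s_1},\x^J_{s_0},\x^J_{s_1}$. Consequently, for degree reasons, $\mathcal Z_3$ is contained in $\{z\in E_J^3:\ z\tau^J_{s_\epsilon}=\tau^J_{s_\epsilon}z,\ \epsilon=0,1\}$ (its products with $E_J^{\ge1}$ vanish), $\mathcal Z_2$ is contained in $\{z\in E_J^2:\ z\x^J_{s_\epsilon}=\x^J_{s_\epsilon}z,\ \epsilon=0,1\}$ (its products with $E_J^2\oplus E_J^3$ vanish), $\mathcal Z_1$ is contained in $\{z\in E_J^1:\ z\tau^J_{s_\epsilon}=\tau^J_{s_\epsilon}z\}$, and $\mathcal Z_0$ is contained in $\mathcal Z(H_J)$. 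For the reverse inclusions I use Lemma \ref{lemma:Z}, which already gives $\mathfrak Z_J^*\subseteq\mathcal Z(E_J^*)$, together with the direct verification that $\upbeta^J_{s_0}$ (which by \eqref{f:leftrightd-1} and \eqref{defiB} equals $-\upalpha^J_{s_0}$) and $\uppsi^J_{s_0}=\upphi^J_1-\upphi^J_{s_0}$ each commute with $\tau^J_{s_0},\tau^J_{s_1},\x^J_{s_0},\x^J_{s_1}$; these are one-line computations from \eqref{f:leftrightHd}, \eqref{f:leftrightd-1}, \eqref{f:prod11} and \eqref{f:prod12}.

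It then remains to compute the four commutant spaces above. In degree $0$ this is $\mathcal Z(H_J)=k[\zeta_J]=\mathfrak Z_J^0$ (see \eqref{f:zeta} and Subsection \ref{subsec:parahSL2}). In degree $1$, the isomorphism of $H_J$-bimodules $E_J^1\cong F^1H_J$ (Lemma \ref{lemma:E1bimo}) carries the elements commuting with $H_J$ onto $\mathcal Z(H_J)\cap F^1H_J=(\zeta_J-1)k[\zeta_J]$, i.e.\ onto $\mathfrak Z_J^1$; combined with the reverse inclusion this already yields $\mathcal Z_1=\mathfrak Z_J^1$ (no condition beyond commutation with $H_J$ is needed). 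In degrees $2$ and $3$ one writes a general element in the basis $\{\upalpha^J_w\}$, resp.\ $\{\upphi^J_w\}$, and imposes the commutation relations using \eqref{f:prod12} together with \eqref{f:leftrightHd} in degree $2$, resp.\ \eqref{f:leftrightHd} in degree $3$. Because $W=\langle s_0,s_1\rangle$ is infinite dihedral, its elements of length $\ge1$ form two alternating strings $s_0s_1s_0\cdots$ and $s_1s_0s_1\cdots$, and the commutation conditions become a pair of coupled linear recursions on the coefficients along these two strings. Solving them shows the solution space consists of the symmetrized combinations already present in $\mathfrak Z_J^2$ (resp.\ $\mathfrak Z_J^3$), together with exactly one further dimension, spanned by $\upalpha^J_{s_0}=-\upbeta^J_{s_0}$ (resp.\ by $\upphi^J_{s_0}$, equivalently $\uppsi^J_{s_0}$); matching with the stated bases then uses $\uppsi^J_w=\tau^J_{s_{1-\epsilon}}\cdot\upphi^J_w$ for $w\in W^\epsilon$ (whence $\uppsi^J_{w_n^0}+\uppsi^J_{w_n^1}=\upphi^J_{w_{n-1}^0}+\upphi^J_{w_{n-1}^1}-\upphi^J_{w_n^0}-\upphi^J_{w_n^1}$ for the alternating words $w_n^\epsilon$) and the analogous identity for $\upbeta^J_w$.

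The only non-formal part of the argument is this last recursion bookkeeping in degrees $2$ and $3$: one must track how the asymmetry between the ``$\ell(w)=1$'' and ``$\ell(w)\ge2$'' cases in \eqref{f:leftrightHd} and \eqref{f:leftrightd-1} produces precisely one surviving class beyond $\mathfrak Z_J^2$ and beyond $\mathfrak Z_J^3$, and verify that no additional relations are forced. Everything else—the reduction to four generators, the degree $0$ and $1$ cases, and the translations among the bases $\{\tau^J_w\},\{\x^J_w\},\{\upalpha^J_w\},\{\upbeta^J_w\},\{\upphi^J_w\},\{\uppsi^J_w\}$ of $E_J^*$—is routine.
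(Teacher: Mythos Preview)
Your proposal is correct and follows essentially the same strategy as the paper: reduce to graded pieces, identify degree~$0$ as $\mathcal Z(H_J)=k[\zeta_J]$, degree~$1$ as $f_{(\x^J_{s_0},\x^J_{s_1})}(\mathcal Z(H_J)\cap F^1H_J)$ via the bimodule isomorphism $E_J^1\cong F^1H_J$, and in degrees~$2$ and~$3$ solve the linear commutation recursions along the two strings of the infinite dihedral group. The only cosmetic difference is that in degree~$2$ you test commutation with $\x^J_{s_\epsilon}$ (which by \eqref{f:prod12} is the same as testing $\tau^J_{s_\epsilon}$-commutation of the element $\sum_w\lambda_w\upphi^J_w$, so you effectively recycle the degree-$3$ computation), whereas the paper works directly in the $\upbeta$-basis and tests against $\tau^J_{s_\epsilon}$; both routes yield the same answer.
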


\begin{proof} Using Lemma  \ref{lemma:Z},  formulas \eqref{f:leftrightHd},  \eqref{f:leftrightd-1}, and the description of the product in $E^*_J$ (\eqref{f:prod11} and \eqref{f:prod12}), we see that the right hand side is contained in the center.  For the other inclusion, the fact that the degree $0$ graded piece of $\mathcal Z(E^*_J)$ is contained in the center $k[\zeta_J]$ of $H_J$ is clear. The degree $1$ graded piece  of $\mathcal Z(E^*_J)$ is contained in $f_{(\x_{s_0}, \x_{s_1})}((\zeta_J-1)k[\zeta_J])$ because $F^1H_J\cap k[\zeta_J]=(\zeta_J-1)k[\zeta_J]$.
Finally, using \eqref{f:leftrightHd} and \eqref{f:leftrightd-1}, it is easy to check that an element $\sum_{w\in W, \ell(w)\geq 2} \lambda_w \upbeta^J_w$ (resp., $\sum_{w\in W, \ell(w)\geq 2} \lambda_w \uppsi^J_w$) commutes with the action of $H_J$ if and only if it lies in  $\mathfrak Z_J^2$ (resp.  $\mathfrak Z_J^3$). 
\end{proof}

We can now use the above results to deduce finite generation properties.

\begin{proposition}
	 The algebra $E_J^*$ is finitely generated as a module over $\mathcal{Z}(E_J^*)$. 
\end{proposition}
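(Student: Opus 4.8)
The plan is to exhibit a finite set of homogeneous generators. By Proposition~\ref{prop:fullbimo} the graded algebra $E^*_J$ is concentrated in degrees $0,1,2,3$, so it suffices to find finitely many homogeneous elements of $E^*_J$ with $E^i_J\subseteq\sum\mathcal Z(E^*_J)\sigma$ for $i=0,1,2,3$; throughout we use that the polynomial ring $k[\zeta_J]=\mathcal Z(H_J)=\mathfrak Z^0_J$ lies in degree $0$ of $\mathcal Z(E^*_J)$. For degree $0$: it is standard (a special case of Bernstein's theorem, valid over any coefficient ring; see \cite{vigneras:hecke1}) that $H_J=E^0_J$ is a finitely generated module over its centre $k[\zeta_J]$, so we may fix $k[\zeta_J]$-module generators $h_1,\dots,h_N\in H_J$. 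For degree $1$: by Proposition~\ref{prop:fullbimo}, $E^1_J\cong F^1H_J$ is a submodule of the finitely generated $k[\zeta_J]$-module $H_J$, and $k[\zeta_J]$ is Noetherian, so $E^1_J$ is finitely generated over $k[\zeta_J]$; fix generators $g_1,\dots,g_M\in E^1_J$. Since $k[\zeta_J]\subseteq\mathcal Z(E^*_J)$, both $E^0_J$ and $E^1_J$ lie in $\sum_j\mathcal Z(E^*_J)h_j+\sum_i\mathcal Z(E^*_J)g_i$.

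The essential point, which handles degrees $2$ and $3$, is the claim that
\[
E^2_J=H_J\cdot\mathfrak Z^2_J\qquad\text{and}\qquad E^3_J=H_J\cdot\mathfrak Z^3_J,
\]
where $H_J=E^0_J$ acts through the Yoneda product; that is, $E^2_J$ (resp.\ $E^3_J$) is the $H_J$-sub-bimodule of $E^*_J$ generated by the degree-$2$ piece $\mathfrak Z^2_J\subseteq\mathcal Z(E^*_J)$ (resp.\ the degree-$3$ piece $\mathfrak Z^3_J\subseteq\mathcal Z(E^*_J)$) of the centre, as described in Proposition~\ref{proo:fullZ}. Granting this, and using that $\mathfrak Z^2_J$ is central while $H_J=\sum_j k[\zeta_J]h_j$, we get
\[
E^2_J=H_J\cdot\mathfrak Z^2_J=\sum_{j=1}^N(k[\zeta_J]h_j)\cdot\mathfrak Z^2_J=\sum_{j=1}^N\bigl(\mathfrak Z^2_J\cdot k[\zeta_J]\bigr)\cdot h_j\subseteq\sum_{j=1}^N\mathcal Z(E^*_J)h_j,
\]
where the third equality uses the centrality of $\mathfrak Z^2_J$ and the inclusion uses that $\mathcal Z(E^*_J)$ is a subring containing both $k[\zeta_J]$ and $\mathfrak Z^2_J$; the identical computation with $\mathfrak Z^3_J$ gives $E^3_J\subseteq\sum_j\mathcal Z(E^*_J)h_j$. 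Combining the four degrees, $E^*_J=\sum_{j}\mathcal Z(E^*_J)h_j+\sum_i\mathcal Z(E^*_J)g_i$ is finitely generated over $\mathcal Z(E^*_J)$.

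It remains to prove the claim, which is the main obstacle. We show $\upbeta^J_w\in H_J\cdot\mathfrak Z^2_J$ for all $w\in W$ with $\ell(w)\ge1$, by induction on $\ell(w)$; the argument for $E^3_J$ is word-for-word the same with $\upalpha^J_w,\upbeta^J_w$ and \eqref{f:leftrightd-1} replaced by $\upphi^J_w,\uppsi^J_w$ and \eqref{f:leftrightHd} (and with $\upphi^J_1\in\mathfrak Z^3_J$ accounting for the extra basis vector $\upphi^J_1$). Set $b_n:=\upbeta^J_{s_0(s_1s_0)^n}+\upbeta^J_{s_1(s_0s_1)^n}\in\mathfrak Z^2_J$; the two elements of $W$ of odd length $2n+1$ are $s_0(s_1s_0)^n$ and $s_1(s_0s_1)^n$, and those of even length $2n$ are $\theta^{\pm n}$. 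Using the explicit left action \eqref{f:leftrightd-1} of $\tau^J_{s_0},\tau^J_{s_1}$ on the basis $\{\upbeta^J_w\}$ together with \eqref{defiB}, one computes for $n\ge1$
\[
\tau^J_{s_0}\cdot b_n=-\upbeta^J_{s_0(s_1s_0)^n}+\upbeta^J_{\theta^n},\qquad
\tau^J_{s_0}\tau^J_{s_1}\cdot b_n=\upbeta^J_{s_0(s_1s_0)^{n-1}}-\upbeta^J_{\theta^n},
\]
together with the identities obtained by interchanging $s_0$ and $s_1$ (so $\theta$ becomes $\theta^{-1}$), and $\tau^J_{s_0}\cdot b_0=-\upbeta^J_{s_0}$. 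Since $b_n\in\mathfrak Z^2_J$ and $H_J\cdot\mathfrak Z^2_J$ is an $H_J$-submodule, all these left translates lie in $H_J\cdot\mathfrak Z^2_J$; in each displayed identity every term but the one of maximal length has strictly smaller length, so by the inductive hypothesis one may solve for the top-length terms. Concretely, the first identity yields $\upbeta^J_{s_0(s_1s_0)^n}$ (hence $\upbeta^J_{s_1(s_0s_1)^n}=b_n-\upbeta^J_{s_0(s_1s_0)^n}$), settling odd length $2n+1$; the second identity and its $s_0\leftrightarrow s_1$ partner yield $\upbeta^J_{\theta^{\pm n}}$, settling even length $2n$; and the last identity settles length $1$. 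This completes the induction, hence the proof.
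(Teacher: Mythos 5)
Your proof is correct and its broad outline mirrors the paper's: treat each graded piece $E^i_J$ ($i=0,1,2,3$) in turn, exhibit finitely many homogeneous generators in degrees $\le 1$, and reduce degrees $2$ and $3$ to left-translates of central elements. The one genuine departure is the treatment of degree $3$. The paper shows $E^3_J\subseteq\mathcal Z(E^*_J)\cdot(S^0\cup S^1)$ by computing $\x^J_{s_\epsilon}\cdot\bigl(\upbeta^J_{s_0(s_1s_0)^n}+\upbeta^J_{s_1(s_0s_1)^n}\bigr)$ and $\x^J_{s_\epsilon s_{1-\epsilon}}\cdot\bigl(\upbeta^J_{s_0(s_1s_0)^n}+\upbeta^J_{s_1(s_0s_1)^n}\bigr)$, i.e.\ degree-$1$ generators applied to the central degree-$2$ elements of $\mathfrak Z^2_J$. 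You instead prove $E^3_J=H_J\cdot\mathfrak Z^3_J$ by multiplying the central degree-$3$ elements $\uppsi^J_{s_0(s_1s_0)^n}+\uppsi^J_{s_1(s_0s_1)^n}$ on the left by $\tau^J_{s_\epsilon}$ and $\tau^J_{s_\epsilon s_{1-\epsilon}}$. Your claim that this parallels the degree-$2$ computation checks out: one finds, for instance, $\tau^J_{s_0}\cdot\bigl(\uppsi^J_{s_0(s_1s_0)^n}+\uppsi^J_{s_1(s_0s_1)^n}\bigr)=\uppsi^J_{\theta^n}-\uppsi^J_{s_0(s_1s_0)^n}$ for $n\ge1$, exactly analogous to $\tau^J_{s_0}\cdot b_n=\upbeta^J_{\theta^n}-\upbeta^J_{s_0(s_1s_0)^n}$ in degree $2$, and the extra basis vector $\upphi^J_1$ of $E^3_J$ is harmless because it already lies in $\mathfrak Z^3_J$. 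Your route gives the cleaner intermediate statement $E^i_J=H_J\cdot\mathfrak Z^i_J$ for $i=2,3$, and in particular shows $E^3_J$ needs only degree-$0$ generators over $\mathcal Z(E^*_J)$, whereas the paper's generating set for degree $3$ draws on $S^1$ as well; the remaining differences (invoking Noetherianity of $k[\zeta_J]$ rather than the explicit basis $S^0$ of $H_J$ and generators $S^1$ of $E^1_J$) are cosmetic.
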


\begin{proof} 
We prove that a set of generators of $E_J^*$ as a module over $\mathcal{Z}(E_J^*)$ is given by 
	\[  S =  S^0\cup S^1 \text{ where } S^0=\{1, \tau_{s_0}^J, \tau_{s_1}^J, \tau_{s_0s_1}^J\} \text{ and } S^1=\{\x_{s_0}^J, \x_{s_1}^J, \x_{s_0s_1}^J, \x_{s_1s_0}^J \} \]

It is known that $H_J$ is free  as a module over its center $\mathfrak Z_J^0$ with basis $S^0$ (compare with \cite[Cor. 3.4]{OS2}). This uses the following:
for $n \geq 0$ and $\epsilon \in \{0, 1\}$,  we have $\tau_{s_\epsilon}^J \cdot \zeta_J^n = \tau_{s_\epsilon(s_{1-\epsilon} s_\epsilon)^n}^J$
Using this identity along with the $H_J$-bimodule isomorphism \eqref{f:f}, we  also get that $E^1_J$ is generated, over $\mathfrak{Z}_J^0$ hence over $\mathcal{Z}(E_J^*)$, by $S^1$.

Next, for $n \geq 1$, we compute
\begin{align*}
	\tau_{s_\epsilon}^J \cdot \left( \upbeta_{s_0(s_1s_0)^n}^J + \upbeta_{s_1(s_0s_1)^n}^J \right)  &= -\upbeta_{s_\epsilon(s_{1-\epsilon}s_\epsilon)^n}^J + \upbeta_{(s_\epsilon s_{1-\epsilon})^n}^J, \\
	\tau_{s_{\epsilon} s_{1-\epsilon}}^J \cdot \left( \upbeta_{s_0(s_1s_0)^n}^J + \upbeta_{s_1(s_0s_1)^n}^J \right)  &= -\upbeta_{(s_\epsilon s_{1-\epsilon})^n}^J + \upbeta_{s_\epsilon (s_{1-\epsilon} s_\epsilon)^{n-1}}^J.
\end{align*}
Since $\upbeta_{s_0}^J$ and $\upbeta_{s_1}^J$ already lie in the center, it follows by induction on $\ell(w)$ that $\upbeta_w^J $ lies in the $\mathcal{Z}(E_J^*)$-module generated by $S^0$ for any $w\in W$, $\ell(w)\geq 1$. So  $E^2_J$ is contained  in the $\mathcal{Z}(E_J^*)$-module generated by $S^0$.

Finally, we show that $E_J^3 $ is contained  in the $\mathcal{Z}(E_J^*)$-module generated by $S^0\cup S^1$
 by noting that
\begin{align*}
	\x_{s_\epsilon}^J \cdot \left( \upbeta_{s_0(s_1s_0)^n}^J + \upbeta_{s_1(s_0s_1)^n}^J \right)  &= \uppsi_{s_\epsilon(s_{1-\epsilon}s_\epsilon)^n}^J - \uppsi_{(s_\epsilon s_{1-\epsilon})^n}^J \ , \\
	\x_{s_{\epsilon} s_{1-\epsilon}}^J \cdot \left( \upbeta_{s_0(s_1s_0)^n}^J + \upbeta_{s_1(s_0s_1)^n}^J \right)  &= \uppsi_{(s_\epsilon s_{1-\epsilon})^n}^J - \uppsi_{s_\epsilon (s_{1-\epsilon} s_\epsilon)^{n-1}}^J \ ,
\end{align*} 
along with an induction argument as above (recalling that $\upphi_1^J$, $\uppsi_{s_0}^J$, and  $\uppsi_{s_1}^J$ already lie in $\mathcal{Z}(E_J^*)$).
\end{proof}

\subsection{The spherical Hecke $\Ext$-algebra of  ${\rm SL}_2(\mathbb Q_p)$, $p\geq 5$}

We continue to assume $p \geq 5$, so that $K$ is $p$-torsion-free.  Our next task will be to obtain the structure of $E_K^*$.  Recall that $e_{J,K}= 1+\tau_{s_0}^J\in H_J$ and we define $T$ to be the image of $\zeta_J$ by the map \eqref{f:compacenter}.

\begin{proposition}\label{prop:fullbimoK}
The  spherical Hecke $\Ext$-algebra $E_K^*$ is supported in degrees $0$ to $3$.  We have $E_K^0= H_K=k[T]$ and, as $H_K$-bimodules; 
\begin{itemize}
\item $E_K^1\cong k[T]$,
\item $E_K^2\cong \bigcup _{n\geq 1} (k[T]/ (T^n))^\vee\cong  k[T^{\pm 1}]/k[T]$,
\item $E_K^3\cong \chi^K_{\triv}\oplus \bigcup _{n\geq 1} (k[T]/ (T^n))^\vee\cong  \chi^K_{\triv}\oplus k[T^{\pm 1}]/k[T]$.
\end{itemize}

\end{proposition}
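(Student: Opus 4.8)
The plan is to transport the structural results about $E_J^*$ obtained in Proposition~\ref{prop:fullbimo} across the algebra isomorphisms $e_{J,K}\,E_J^*\,e_{J,K}\cong E_K^*$ provided by Remark~\ref{rema:RPCP*}-\ref{rema:RPCP*-ii} (applied to the pair $J\subseteq K$, legitimate since $[K:J]=[\P:J]\equiv 1\bmod p$ by Remark~\ref{rema:square}). Concretely, $\frac{1}{[K:J]}C_{J,K}^*\colon e_{J,K}\cdot E_J^*\cdot e_{J,K}\to E_K^*$ is an isomorphism of graded $k$-algebras, so it suffices to compute the graded pieces of $e_{J,K}\cdot E_J^*\cdot e_{J,K}$ as $H_K$-bimodules, where $H_K$ acts via its identification $H_K\cong e_{J,K}H_Je_{J,K}\cong k[T]$ from Subsection~\ref{subsec:parahSL2}. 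The support statement (degrees $0$ to $3$) is then immediate from Proposition~\ref{prop:fullbimo}, and $E_K^0=H_K=k[T]$ was recalled just before the statement.

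First I would treat degree $1$. Using the $H_J$-bimodule isomorphism $f_{(\x^J_{s_0},\x^J_{s_1})}\colon F^1H_J\xrightarrow{\sim}E_J^1$ of \eqref{f:f}, the problem reduces to identifying $e_{J,K}\cdot F^1H_J\cdot e_{J,K}$ as a bimodule over $e_{J,K}H_Je_{J,K}\cong k[T]$. Since $e_{J,K}=1+\tau_{s_0}^J$ and $F^1H_J=\bigoplus_{\ell(w)\geq 1}k\tau_w^J$, one checks (using the relations in $H_J$ from Subsection~\ref{f:relJ}) that $e_{J,K}\tau_w^Je_{J,K}$ is, up to sign, supported on a single coset representative and that the resulting module is free of rank one over $k[T]$; explicitly $\zeta_J^n$ acting on a suitable generator realizes multiplication by $T^n$, using $\tau_{s_\epsilon}^J\cdot\zeta_J^n=\tau_{s_\epsilon(s_{1-\epsilon}s_\epsilon)^n}^J$ as in the proof of the finite-generation proposition. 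This yields $E_K^1\cong k[T]$.

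For degrees $2$ and $3$, I would instead dualize. By Proposition~\ref{prop:fullbimo} we have $E_J^2\cong (F^1H_J)^{\vee,f}$ and $E_J^3\cong (H_J)^{\vee,f}$ as $H_J$-bimodules (with the twisted action, but $\anti_J$ is trivial on the relevant central elements so this causes no trouble after the identifications). Cutting by the idempotent $e_{J,K}$ on both sides commutes with finite duals, so $e_{J,K}\cdot(F^1H_J)^{\vee,f}\cdot e_{J,K}\cong (e_{J,K}\cdot F^1H_J\cdot e_{J,K})^{\vee,f}$ and similarly for $H_J$; combining with the degree-$0$ and degree-$1$ computations this gives $E_K^2\cong (k[T])^{\vee,f}$ and $E_K^3\cong (H_K)^{\vee,f}\cong\chi_{\triv}^K\oplus(k[T]_{\geq1})^{\vee,f}$, where the $\chi_{\triv}^K$ summand comes from Proposition~\ref{prop:decEd} (whose hypotheses hold here: $\EuScript E=\{1\}$ and $[K:K_1]=1$ is invertible) together with the fact that the isomorphism $\frac{1}{[K:J]}C_{J,K}^*$ identifies $\chi_{\triv}^J$ with $\chi_{\triv}^K$ by Lemma~\ref{lemma:sametrivUV} and its $\Ext$-analogue. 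Finally, to rewrite $(k[T])^{\vee,f}$ in the more concrete forms stated, one identifies the finite dual of $k[T]=\bigoplus_n kT^n$ with $\varinjlim_n(k[T]/(T^n))^\vee$ and, via the $k[T]$-module structure, with the localization quotient $k[T]_T/k[T]$; this is the same bookkeeping already carried out for $E_J^*$ in Subsection~\ref{sec:Ed}.

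The main obstacle I anticipate is not conceptual but combinatorial: verifying that conjugating by $e_{J,K}$ collapses the $W$-indexed basis of $F^1H_J$ (and of $H_J$) onto the ${}_K\D_K$-indexed basis $\{1,s_0\theta^n\}$ in a way compatible with the $k[T]$-action, i.e. matching the identity \eqref{tauT} $\tau_{s_0\theta^n}^K=T^n-T^{n-1}$ and checking the module is free of the asserted rank rather than merely finitely generated. One must be careful that $\anti_J$ acts with a sign on odd-length elements, so I would either restrict attention to the even-length central elements $\zeta_J^n$ where the sign is trivial, or absorb the sign into the choice of generators; since only the $k[T]=k[\zeta_J]$-action matters for the bimodule structure and $\zeta_J$ has even length, this is harmless. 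The cup-product compatibility \eqref{f:projfor} can be invoked if one prefers to see the multiplicativity of the dualities transported correctly, but for the bimodule statement alone the idempotent-truncation argument suffices.
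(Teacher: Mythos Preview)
Your approach is essentially the same as the paper's: both transport the structure of $E_J^*$ across the algebra isomorphism $e_{J,K}\,E_J^*\,e_{J,K}\cong E_K^*$ and then read off each graded piece. The paper's execution differs only in two details that make the bookkeeping lighter. For degree~$1$, rather than computing $e_{J,K}\tau_w^Je_{J,K}$ directly, the paper observes that $e_{J,K}F^1H_Je_{J,K}\supseteq e_{J,K}(\zeta_J-1)k[\zeta_J]$ and that both sides have codimension~$1$ in $e_{J,K}H_Je_{J,K}=e_{J,K}k[\zeta_J]$ (by \eqref{f:compacenter}), forcing equality; this immediately yields $E_K^1\cong(T-1)k[T]\cong k[T]$ without any coset combinatorics. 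For degrees~$2$ and~$3$, the paper uses the \emph{second} descriptions $E_J^2\cong\bigcup_{n\geq1}(F^1H_J/\zeta_J^nF^1H_J)^\vee$ and $E_J^3\cong\chi_{\triv}^J\oplus\bigcup_{n\geq1}(H_J/\zeta_J^nH_J)^\vee$ from Proposition~\ref{prop:fullbimo}: since each dual in the union is finite-dimensional, cutting by $e_{J,K}$ (which is $\anti_J$-fixed) commutes with dualizing on the nose, and one lands directly on $\bigcup_{n\geq1}((T-1)k[T]/T^n(T-1)k[T])^\vee\cong\bigcup_{n\geq1}(k[T]/(T^n))^\vee$. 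Your route through the $(\cdots)^{\vee,f}$ description is also correct, but the assertion that idempotent-cutting commutes with the finite dual requires verifying that the $W$-indexed decomposition of $F^1H_J$ descends to a compatible decomposition of the corner---precisely the combinatorial obstacle you flag at the end.
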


\begin{proof}
Recall that the map $e_{J,K} \cdot E^*_J \cdot e_{J,K} \xrightarrow{C^*_{J,K} } E^*_K$ is an isomorphism of unital graded $k$-algebras. It therefore suffices to compute the graded components of $e_{J,K} \cdot E^*_J \cdot e_{J,K}$.  The graded components of $E_J^*$ are described in Proposition \ref{prop:fullbimo}, from which we obtain the following:
\begin{itemize}
\item We have $E^1_J\cong F^1H_J$ as  $H_J$-bimodules.  
Note that $e_{J,K} F^1H_J e_{J,K}$ contains $e_{J,K}(\zeta_J-1)k[\zeta_J]$, and the first (resp., second) space has codimension $1$ in $e_{J,K} H_J e_{J,K}$ (resp., $e_{J,K} k[\zeta_J]$). But by  \eqref{f:compacenter}, we have $e_{J,K} k[\zeta_J]=e_{J,K} H_J e_{J,K}$. Therefore, the containment above is actually an equality $e_{J,K} F^1H_J e_{J,K}=e_{J,K}(\zeta_J-1)k[\zeta_J]$, and $E^1_K$ is isomorphic to $(T-1)k[T]\cong k[T]$ as an $H_K$-bimodule.

\item Next, we have $E^2_J\cong \bigcup_{n\geq 1}(F^1H_J/\zeta_J^n F^1H_J)^\vee$ as  $H_J$-bimodules.   So
\[ E^2_K\cong \bigcup_{n\geq 1}((T-1)k[T]/T^n (T-1)k[T])^\vee\cong \bigcup_{n\geq 1}(k[T]/T^n k[T])^\vee \] 
which identifies with the quotient of the localization $k[T^{\pm 1}]$ by $k[T]$.

\item Lastly, we have $E_J^3\cong ({H_J})^{\vee, f}\cong  \chi^J_{\triv}\oplus \bigcup_{n\geq 1} (H_J/\zeta_J^n H_J) ^\vee $ as $H_J$-bimodules.  However, $\chi^J_{\triv}(e_{J,K})=1$ and $\chi^J_{\triv}(\zeta_J^n-\zeta_J^{n-1})=1-1=0$ for any $n\geq 1$. This proves that  
\[ E_K^3\cong  \chi^K_{\triv}\oplus \bigcup_{n\geq 1} (k[T]/T^n k[T]) ^\vee \cong  \chi^K_{\triv}\oplus k[T^{\pm 1}]/k[T] \ . \]
\end{itemize}
\end{proof}

Recall the commutative subalgebra $\mathfrak Z_J^*$  of $E_J^*$ given in Definition \ref{defZ}.
\begin{proposition} \label{prop:squeeze}
The homomorphism of algebras 
\begin{align}\label{f:squeeze}
\mathcal Z(E_J^*)&\longrightarrow  E_K^*\\
 z&\longmapsto C_{J,K}^*(e_{J,K}\cdot z\cdot e_{J,K})
\end{align} is surjective and has kernel equal to the ideal $\,k \uppsi_{s_0}^J\oplus k \upbeta_{s_0}^J$ of $\mathcal Z(E_J^*)$. It restricts to an isomorphism of $k$-algebras
\begin{equation} \label{f:squeezeZ} 
\mathfrak Z_J^* \underset{\sim}{\xrightarrow{C^*_{J,K}(e_{J,K}\cdot _-\cdot e_{J,K})}}  E_K^* \ .\end{equation}  
In particular, $E_K^*$ is commutative, but not graded-commutative. 
\end{proposition}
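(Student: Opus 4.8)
The plan is to exploit the algebra isomorphism $C^*_{J,K}(e_{J,K}\cdot{}_-\cdot e_{J,K}): e_{J,K}\cdot E^*_J\cdot e_{J,K}\xrightarrow{\sim} E^*_K$ from Remark~\ref{rema:RPCP*}-\ref{rema:RPCP*-ii} (applied with $\U=J$, $\V=K$, using that $[K:J]$ is invertible in characteristic $p$ by Remark~\ref{rema:square}), and to trace where the various pieces of $\mathcal Z(E^*_J)$ are sent. First I would record that the map \eqref{f:squeeze} is a well-defined algebra homomorphism: for $z\in\mathcal Z(E^*_J)$ we have $z=e_{J,K}\cdot z\cdot e_{J,K}$ by Remark~\ref{rema:RPCP*}-\ref{rema:RPCP*-iii} is \emph{not} automatic, so instead I would note that $z\mapsto e_{J,K}\cdot z\cdot e_{J,K}$ lands in $e_{J,K}\cdot E^*_J\cdot e_{J,K}$ and restricted to the center of $E^*_J$ is multiplicative (because $z$ commutes with everything, in particular $e_{J,K}\cdot z_1\cdot e_{J,K}\cdot e_{J,K}\cdot z_2\cdot e_{J,K}=e_{J,K}\cdot z_1z_2\cdot e_{J,K}$ using $e_{J,K}^2=e_{J,K}$ and centrality), hence its composition with the algebra isomorphism $C^*_{J,K}$ is an algebra homomorphism.

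Next I would compute the images of the generators of $\mathcal Z(E^*_J)=\mathfrak Z_J^*\oplus k\uppsi^J_{s_0}\oplus k\upbeta^J_{s_0}$ (Proposition~\ref{proo:fullZ}). In degree $0$, $\mathfrak Z_J^0=k[\zeta_J]$ maps onto $E^0_K=k[T]$ by construction of $T$ via \eqref{f:compacenter}, which is surjective with trivial kernel. In higher degrees I would use the identifications of the graded pieces of $E^*_K$ with $e_{J,K}\cdot E^i_J\cdot e_{J,K}$ obtained in the proof of Proposition~\ref{prop:fullbimoK}: there it is shown that $e_{J,K}F^1H_Je_{J,K}=e_{J,K}(\zeta_J-1)k[\zeta_J]$, which is exactly the image of $\mathfrak Z_J^1=f_{(\x^J_{s_0},\x^J_{s_1})}((\zeta_J-1)k[\zeta_J])$, so $\mathfrak Z^1_J$ maps \emph{onto} $E^1_K$; and similarly the degree $2$ and $3$ computations in that proof show $e_{J,K}E^2_Je_{J,K}$ and $e_{J,K}E^3_Je_{J,K}$ are the images of $\mathfrak Z^2_J$ and $\mathfrak Z^3_J$ respectively (the elements $\upbeta^J_{s_0(s_1s_0)^n}+\upbeta^J_{s_1(s_0s_1)^n}$, resp. $\upphi^J_1$ and $\uppsi^J_{s_0(s_1s_0)^n}+\uppsi^J_{s_1(s_0s_1)^n}$, pair with $e_{J,K}$ to give bases of the corresponding localization quotients). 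This establishes surjectivity of \eqref{f:squeeze} and, since $\dim_k E^i_K=\dim_k\mathfrak Z^i_J$ in every degree (comparing Proposition~\ref{prop:fullbimo}/Definition~\ref{defZ} with Proposition~\ref{prop:fullbimoK}), that the restriction \eqref{f:squeezeZ} to $\mathfrak Z^*_J$ is an algebra isomorphism.

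For the kernel I would argue that $k\uppsi^J_{s_0}\oplus k\upbeta^J_{s_0}$ is killed by $e_{J,K}\cdot{}_-\cdot e_{J,K}$: using the formulas \eqref{f:leftrightHd}, \eqref{f:leftrightd-1} and \eqref{f:prod12} one checks $e_{J,K}\cdot\upbeta^J_{s_0}=(1+\tau^J_{s_0})\cdot\upbeta^J_{s_0}$, and since $s_0$ with $\ell(s_0)=1$ falls into the ``$\ell(s_0 s_0)=\ell(s_0)-1$, $\ell(w)=1$'' case, $\tau^J_{s_0}\cdot\upbeta^J_{s_0}=-\upbeta^J_{s_0}$ (this follows from \eqref{f:leftrightd-1} applied via \eqref{defiB}), whence $e_{J,K}\cdot\upbeta^J_{s_0}=0$; the same computation with $\uppsi^J_{s_0}=\tau^J_{s_1}\cdot\upphi^J_{s_0}$ and \eqref{f:leftrightHd} gives $e_{J,K}\cdot\uppsi^J_{s_0}=0$. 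Combined with the fact that \eqref{f:squeezeZ} is an isomorphism (so $\mathfrak Z^*_J\cap\ker=0$) and the decomposition $\mathcal Z(E^*_J)=\mathfrak Z^*_J\oplus k\uppsi^J_{s_0}\oplus k\upbeta^J_{s_0}$, this identifies the kernel exactly as $k\uppsi^J_{s_0}\oplus k\upbeta^J_{s_0}$; it is an ideal because it is the kernel of an algebra homomorphism. Finally, $E^*_K$ is commutative since it is the image of the commutative algebra $\mathcal Z(E^*_J)$ (Lemma~\ref{lemma:Z}), and it is not graded-commutative because under \eqref{f:squeezeZ} the nonzero element $\upbeta^J_{s_0}+\upbeta^J_{s_1}\in\mathfrak Z^2_J$ is a square (of $f_{(\x^J_{s_0},\x^J_{s_1})}(\zeta_J-1)\in\mathfrak Z^1_J$, by the computation in the proof of Lemma~\ref{lemma:Z}), hence its image is a nonzero square in $E^2_K$ of an odd-degree element. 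The main obstacle I anticipate is the bookkeeping of the kernel: verifying cleanly that $e_{J,K}$ annihilates $\uppsi^J_{s_0}$ and $\upbeta^J_{s_0}$ while the rest of $\mathcal Z(E^*_J)$ survives, i.e. being careful that the length-one edge cases in \eqref{f:leftrightHd}--\eqref{f:leftrightd-1} behave as claimed and that no other linear combination is lost.
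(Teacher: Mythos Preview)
Your overall strategy matches the paper's: use the algebra isomorphism $e_{J,K}\cdot E^*_J\cdot e_{J,K}\cong E^*_K$, check that left-multiplication by $e_{J,K}$ kills $\upbeta^J_{s_0}$ and $\uppsi^J_{s_0}$, and verify that $\mathfrak Z^*_J$ maps isomorphically onto $E^*_K$ degree by degree. Degrees $0$ and $1$ are fine, and the final sentence on (non-)graded-commutativity is fine.

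The gap is in degrees $2$ and $3$. You assert that the proof of Proposition~\ref{prop:fullbimoK} shows the basis elements of $\mathfrak Z^2_J,\mathfrak Z^3_J$ ``pair with $e_{J,K}$ to give bases of the corresponding localization quotients'', but that proof only computes the $H_K$-bimodule structure of $e_{J,K}\cdot E^i_J\cdot e_{J,K}$; it says nothing about the image of $\mathfrak Z^i_J$ under $z\mapsto e_{J,K}\cdot z\cdot e_{J,K}$. Your dimension argument then fails: both $\mathfrak Z^2_J$ and $E^2_K$ are countably infinite-dimensional over $k$, so ``$\dim_k E^i_K=\dim_k\mathfrak Z^i_J$'' plus surjectivity does not force injectivity. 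The paper fills this by computing $e_{J,K}\cdot(\upbeta^J_{s_0(s_1s_0)^n}+\upbeta^J_{s_1(s_0s_1)^n})\cdot e_{J,K}$ and $e_{J,K}\cdot(\uppsi^J_{s_0(s_1s_0)^n}+\uppsi^J_{s_1(s_0s_1)^n})\cdot e_{J,K}$ explicitly via \eqref{f:leftrightHd}, \eqref{f:leftrightd-1}, showing the images form a basis of $\bigoplus_{d\in{}_K\D_K}H^i(J,\X_J(d))$; it then uses Remark~\ref{rema:res-DPP} together with a \emph{per-double-coset} dimension comparison (each $H^i(J,\X_J(d))$ and $H^i(K,\X_K(d))$ have the same finite dimension by Lemma~\ref{lemma:JKd} and Proposition~\ref{Uduality}) to conclude bijectivity of the composite with $C^*_{J,K}$. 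That explicit computation is the actual work here --- the kernel verification you flagged as the main obstacle is in fact the easy part (the paper dispatches it in one line).

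One small slip: you wrote $\uppsi^J_{s_0}=\tau^J_{s_1}\cdot\upphi^J_{s_0}$, but $s_0\in W^1$ (since $\ell(s_1s_0)=\ell(s_0)+1$), so by the definition in \S\ref{sec:Ed} one has $\uppsi^J_{s_0}=\tau^J_{s_0}\cdot\upphi^J_{s_0}$; with your formula the element would be zero by \eqref{f:leftrightHd}. The conclusion $e_{J,K}\cdot\uppsi^J_{s_0}=0$ is correct with the right definition, via $(\tau^J_{s_0}+1)\tau^J_{s_0}=0$.
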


\begin{proof}  Note that \eqref{f:squeeze} is an  homomorphism of algebras by Remark \ref{rema:RPCP*}-\ref{rema:RPCP*-ii} and since $[K:J]=1\bmod p$. The fact that $\,k \uppsi_{s_0}^J\oplus k \upbeta_{s_0}^J$ lies in its kernel is clear from the definition of these elements and since $e_{J,K}=\tau_{s_0}^J+1$.

We consider  the map 
\begin{align}\label{f:squeeze1}
\mathcal Z(E_J^*)&\xrightarrow{e_{J,K}\cdot _-\cdot e_{J,K}} e_{J,K}\cdot E^*_J \cdot e_{J,K}\ .
\end{align} 
Its restriction to $\mathfrak Z^0_J = k[\zeta_J]$ is bijective, which comes from the bijection $k[\zeta_J]\underset{\sim}{\xrightarrow{e_{J,K}\cdot _-}} e_{J,K}H_J e_{J,K}$ (see \eqref{f:compacenter}). 
As discussed in the proof of Proposition \ref{prop:fullbimoK}, the latter restricts to a bijection
$(\zeta_J-1)k[\zeta_J]\cong e_{J,K}F^1H_J e_{J,K}$ which implies that 
\eqref{f:squeeze1} induces a bijection between $\mathfrak Z_J^1$ and $e_{J,K}\cdot E^1_J \cdot e_{J,K}$. 
Thus \eqref{f:squeeze} induces a bijection between $\mathfrak Z_J^0$ and $E_K^0$ as well as between 
$\mathfrak Z_J^1$ and $E_K^1$.

We now study the image of our chosen basis elements of $\mathfrak Z^2_J$ and  $\mathfrak Z^3_J$.  Let $n\geq 0$, and recall that $e_{K,J}\cdot \upbeta^J_{s_0(s_1s_0)^n}=0$ and $e_{K,J}\cdot \uppsi^J_{s_0(s_1s_0)^n}=0$.  Using \eqref{f:leftrightd-1}, we have:
\[ e_{K,J}\cdot (\upbeta^J_{s_0(s_1s_0)^n}+\upbeta^J_{s_1(s_0s_1)^n}) \cdot e_{J,K}=
e_{K,J}\cdot \upbeta^J_{s_1(s_0s_1)^n} \cdot e_{J,K} =\begin{cases}\upalpha^J_{s_1(s_0s_1)^{n-1}}-\upalpha^J_{s_1(s_0s_1)^n}&\text{ if $n\geq 1$,}\cr
-\upalpha^J_{s_1}&\text{ if $n=0$}. 
\end{cases} \]
Likewise, using \eqref{f:leftrightHd}, we have:
\[e_{K,J}\cdot (\uppsi^J_{s_0(s_1s_0)^n}+\uppsi^J_{s_1(s_0s_1)^n}) \cdot e_{J,K}=
e_{K,J}\cdot \uppsi^J_{s_1(s_0s_1)^n} \cdot e_{J,K} =\begin{cases}\upphi^J_{s_1(s_0s_1)^{n-1}}-\upphi^J_{s_1(s_0s_1)^n}&\text{ if $n\geq 1$,}\cr
\upphi^J_1-\upphi^J_{s_1}&\text{ if $n=0$,}
\end{cases} \]
and
$e_{K,J}\cdot \upphi^J_1 \cdot e_{J,K}=\upphi^J_1$.  This shows that \eqref{f:squeeze1} maps a basis  of $\mathfrak Z^2_J\oplus \mathfrak Z^3_J$ onto a basis of $\bigoplus_{d\in {}_K\D_K} H^2(J, \X_J(d))\oplus H^3(J, \X_J(d))$ (recalling that $H^2(J, \X_J(1))=0$).
By Lemma \ref{lemma:JKd} and Proposition \ref{Uduality},  we know that $H^i(J, \X_J(d))$ and $H^i(K, \X_K(d))$ have the same dimension for $i=2,3$ and any $d\in {}_K\D_K\smallsetminus \{1\}$  (and $H^2(K, \X_K(1))=0$) . 
We deduce, using Remark \ref{rema:res-DPP}, that  $C_{J,K}^i$ yields a bijection  between $H^i(J, \X_J(d))$ and $H^i(K, \X_K(d))$ for $d\in {}_K\D_K$.
This shows that the restriction to $\mathfrak Z^2_J\oplus \mathfrak Z^3_J$ of 
 \eqref{f:squeeze} is bijective.

To conclude, we use Lemma \ref{lemma:Z}.
\end{proof}

\begin{remark}\label{rema:square}
The vector space $H^1(K,\X_K(s_0\theta))$ is $1$-dimensional with basis $C^1_{J,K}(\x_{s_1}^J)$. 
Since $e_{J,K}(\zeta_J-1)=e_{J,K}\tau^J_{s_1}$,
it is easy to check that $C^1_{J,K}(\x_{s_1}^J)$ is the image under \eqref{f:squeeze} of $f_{(\x_{s_0}^J,\x_{s_1}^J)}(\zeta_J-1)$. By the proof of Lemma \ref{lemma:Z}, the square of the latter element is  the nonzero element
$\upbeta^J_{s_0}+\upbeta^J_{s_1}\in \mathfrak Z_J^2$ whose image under \eqref{f:squeeze} is $-C^1_{J,K}(\upalpha_{s_1}^J)$.  Thus, $H^1(K,\X_K(s_0\theta))$ and $H^2(K,\X_K(s_0\theta))$ are both $1$-dimensional vector spaces, and the Yoneda product in $E_K^*$ gives a map
\[ H^1(K,\X_K(s_0\theta))\otimes _k H^1(K,\X_K(s_0\theta))\longrightarrow H^2(K,\X_K(s_0\theta)) \]
which is not zero.
The existence of this nonzero square is the reason why we know that $E_K^*$ is not graded-commutative. In Subsection \ref{subsec:nonzerosq} we generalize this observation to the case where $\mathfrak{F}$ is unramified over $\bbQ_p$.
\end{remark}

\begin{remark}
The map \eqref{f:squeeze} is equivariant for the action of $\zeta_J(\zeta_J-1)$ on the source $\mathcal Z(E_J^*)$ and of $T(T - 1)$ on the target $E_K^*$.  It therefore induces a map on localizations.  This localization annihilates the degree $2$ and $3$ graded pieces of both algebras, and consequently we obtain an isomorphism of algebras $\mathcal Z(E_J^*)_{\zeta_J(\zeta_J-1)}\cong (E^*_K)_{T(T-1)}$.
\end{remark}

\subsection{A nonzero square in the spherical Hecke $\Ext$-algebra of $\textnormal{SL}_2(\mathfrak{F})$\label{subsec:nonzerosq}}

In this subsection, we continue to work with the spherical $\Ext$-algebra $E_K^*$ for the group $\textnormal{SL}_2(\mathfrak{F})$.  We will show below that when $\mathfrak{F}$ is unramified over $\bbQ_p$ with $p$ odd and $q \neq 3$, the algebra $E_K^*$ contains a nonzero square of a degree 1 element, which shows it is not graded-commutative.  (Compare with Remark \ref{rema:square}.)

We assume $p$ is odd and $q \neq 3$.  Recall that $\alpha$ denotes the unique positive root (relative to the upper-triangular Borel subgroup), and define $\theta := \check{\alpha}(\pi) = \sm{\pi}{0}{0}{\pi^{-1}}$.  The Cartan decomposition then gives
$ G = \bigsqcup_{n \geq 0} K\theta^n K$ (compare with \eqref{KDK}).
Thus, according to equations \eqref{f:frob} and \eqref{f:H*dec}, we have
\[ E_K^* \cong \bigoplus_{n \geq 0} H^*(K_{\theta^n},k). \]
We note that for $n \geq 1$, the group $K_{\theta^n}$ takes the form
$K_{\theta^n} = \begin{pmatrix} \mathfrak{O}^\times & \mathfrak{M}^{2n} \\ \mathfrak{O} & \mathfrak{O}^\times\end{pmatrix}.$

\subsubsection{Some cohomology groups}  

The purpose of this section is to calculate some cohomology spaces of a particular group appearing below.  To this end, let us define
\[ L := \begin{pmatrix}1 + \mathfrak{M} & \mathfrak{M}^2 \\ \mathfrak{O} & 1 + \mathfrak{M}\end{pmatrix} \cdot \langle -\textnormal{id}\rangle \subseteq K. \]

\begin{lemma}
\label{lem:Lcoh}
    We have 
    \[ [L,L] = \begin{pmatrix} 1 + \mathfrak{M}^2 & \mathfrak{M}^3 \\ \mathfrak{M} & 1 + \mathfrak{M}^2\end{pmatrix},\qquad \Phi(L) = [L,L]L^p = \begin{pmatrix} 1 + \mathfrak{M}^2 & \mathfrak{M}^3 \\ \mathfrak{M} & 1 + \mathfrak{M}^2\end{pmatrix}\cdot \langle -\textnormal{id}\rangle. \]
    Thus, we get 
    \[ L_\Phi \cong \mathfrak{sl}_2(\mathbb{F}_q) \]
    via the map 
    \[ \begin{pmatrix} 1 + \pi a & \pi^2 b \\ c & 1 + \pi d\end{pmatrix}\cdot (-\textnormal{id})^j \longmapsto \begin{pmatrix} a & b \\ c & d\end{pmatrix}~(\textnormal{mod}~\mathfrak{M}). \]
\end{lemma}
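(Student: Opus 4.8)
The plan is to compute the commutator subgroup, the Frattini subgroup, and the resulting Frattini quotient of $L$ by direct matrix calculation, using the unramifiedness of $\mathfrak{F}$ over $\bbQ_p$ only through the fact (Remark~\ref{kapparem}) that $(1+\mathfrak{M})^p = 1 + \mathfrak{M}^2$. First I would observe that $L$ is a pro-$p$ group, being an extension of the pro-$p$ group $\begin{psmallmat} 1+\mathfrak{M} & \mathfrak{M}^2 \\ \mathfrak{O} & 1+\mathfrak{M}\end{psmallmat}$ by $\langle -\textnormal{id}\rangle$ (which is a $p$-group since $p$ is odd). So the Frattini quotient $L_\Phi$ is an $\bbF_q$-vector space and it suffices to compute $\Phi(L) = [L,L]L^p$.

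For the commutator computation I would evaluate commutators of the standard one-parameter subgroups: $x_\alpha(u) = \begin{psmallmat}1 & u \\ 0 & 1\end{psmallmat}$ with $u \in \mathfrak{M}^2$, $x_{-\alpha}(v) = \begin{psmallmat}1 & 0 \\ v & 1\end{psmallmat}$ with $v \in \mathfrak{O}$, and $\check\alpha(1+\pi a)$ with $a \in \mathfrak{O}$. The key identities are $[\check\alpha(t), x_\alpha(u)] = x_\alpha((t^2-1)u)$ and $[\check\alpha(t), x_{-\alpha}(v)] = x_{-\alpha}((t^{-2}-1)v)$, which with $t = 1+\pi a$ produce $x_\alpha(\mathfrak{M}^3)$ and $x_{-\alpha}(\mathfrak{M})$ (using $p \neq 2$ so $t^2 - 1 \in \pi a(2 + \pi a) \subseteq \mathfrak{M}$ exactly), and $[x_{-\alpha}(v), x_\alpha(u)] \in \begin{psmallmat}1+\mathfrak{M}^2 & \mathfrak{M}^3 \\ \mathfrak{M} & 1+\mathfrak{M}^2\end{psmallmat}$ lands in the diagonal part modulo higher terms. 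Conversely one checks the displayed group $M := \begin{psmallmat}1+\mathfrak{M}^2 & \mathfrak{M}^3 \\ \mathfrak{M} & 1+\mathfrak{M}^2\end{psmallmat}$ is normal in $L$ and $L/M$ is abelian (indeed isomorphic to $(\mathfrak{M}/\mathfrak{M}^2) \times (\mathfrak{M}^2/\mathfrak{M}^3) \times (\mathfrak{O}/\mathfrak{M})$ via the off-diagonal entries and the diagonal entry $1 + \pi a \bmod \mathfrak{M}^2$, the $-\textnormal{id}$ factor being absorbed since $-\textnormal{id} \in M \cdot (\textnormal{something})$... actually $-\textnormal{id} \notin M$, so $L/M \cong (\bbF_q)^3 \times \bbZ/2$, wait $p$ odd makes $-\textnormal{id}$ of order $2$ coprime to $q$ — I must be careful here), so $[L,L] \subseteq M$, giving equality. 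For $L^p$: any element of $\begin{psmallmat}1+\mathfrak{M} & \mathfrak{M}^2 \\ \mathfrak{O} & 1+\mathfrak{M}\end{psmallmat}$ raised to the $p$-th power lies in $M$ — the diagonal entries become $p$-th powers in $1+\mathfrak{M}$, hence in $1 + \mathfrak{M}^2$ by Remark~\ref{kapparem}, and the off-diagonal entries pick up a factor divisible by $p \in \mathfrak{M}$ — while $(-\textnormal{id})^p = -\textnormal{id}$. Hence $L^p \subseteq M \cdot \langle -\textnormal{id}\rangle$, and combined with $[L,L] = M$ this yields $\Phi(L) = M \cdot \langle -\textnormal{id}\rangle$ as claimed; one checks $-\textnormal{id}$ is genuinely a nontrivial Frattini generator's negative, i.e.\ it already lies in $M\langle-\textnormal{id}\rangle$ by definition so $L_\Phi \cong L/\Phi(L) \cong (\mathfrak{M}/\mathfrak{M}^2) \oplus (\mathfrak{M}^2/\mathfrak{M}^3) \oplus (\mathfrak{O}/\mathfrak{M}) \cong \mathfrak{sl}_2(\bbF_q)$ via the stated map, with the three summands matching the strictly-lower, strictly-upper, and Cartan entries.

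The main obstacle I anticipate is bookkeeping around the element $-\textnormal{id}$: one must confirm that adjoining $\langle -\textnormal{id}\rangle$ does not enlarge the Frattini quotient, i.e.\ that $-\textnormal{id} \in \Phi(L)$, which is exactly why the problem writes $\Phi(L) = [L,L]L^p$ with the $\langle -\textnormal{id}\rangle$ explicitly appended to $[L,L]$ — so $-\textnormal{id}$ is being declared a member of $\Phi(L)$ and the content is that $[L,L]L^p$ already contains $M$ and that no further off-$M$ contributions arise. Since $p$ is odd, $-\textnormal{id}$ is a $2$-element and $L$ is pro-$p$ only if $-\textnormal{id}$ is absorbed — in fact $L = M \cdot \langle -\textnormal{id}\rangle \cdot (\text{transversal})$ and one must verify $-\textnormal{id}$ commutes with enough to be a product of a commutator and a $p$-th power; the cleanest route is to note $-\textnormal{id} = \check\alpha(-1)$ and, since $q \neq 3$ guarantees $\bbF_q^\times$ has an element of order $> 2$ wait that's not quite it either — rather, $-1 = (t)^2 \cdot (\text{unit})^{-1}$ type manipulation inside $\check\alpha$, or more simply observe $-\textnormal{id} \equiv \textnormal{id}$ modulo the subgroup generated by $M$ together with squares, and squares lie in $L^p$ only if... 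Actually the safe statement, which I would use, is that $L_\Phi$ is the maximal elementary abelian $p$-quotient, the $2$-torsion $\langle -\textnormal{id}\rangle$ maps to zero in any such quotient, so $L_\Phi = (L/\langle -\textnormal{id}\rangle)_\Phi = \left(\begin{psmallmat}1+\mathfrak{M} & \mathfrak{M}^2 \\ \mathfrak{O} & 1+\mathfrak{M}\end{psmallmat}\right)_\Phi$, and then the computation reduces to the pro-$p$ matrix group where the three-summand description is routine. This circumvents the $-\textnormal{id}$ issue entirely and is the step I'd present most carefully.
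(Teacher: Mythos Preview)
Your approach matches the paper's: show the candidate subgroup $L_+ := \begin{psmallmat}1+\mathfrak{M}^2 & \mathfrak{M}^3\\ \mathfrak{M} & 1+\mathfrak{M}^2\end{psmallmat}$ lies in $[L,L]$ via the commutator identities \eqref{comm-1}--\eqref{coroot}, check that $L/L_+ \cong \mathfrak{sl}_2(\mathbb{F}_q)\oplus\mathbb{Z}/2\mathbb{Z}$ is abelian for the reverse inclusion, then handle $\Phi(L)$. But your opening assertion that $L$ is pro-$p$ is false --- $\langle-\textnormal{id}\rangle$ has order $2$, not $p$ --- and this misstep sends you on the long detour in the final paragraph. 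The clean fix is the observation you already made in passing: $(-\textnormal{id})^p = -\textnormal{id}$ since $p$ is odd, so $-\textnormal{id}\in L^p\subseteq\Phi(L)$ directly from the definition $\Phi(L)=[L,L]L^p$. Your alternative ``safe statement'' (the maximal elementary abelian $p$-quotient kills $2$-torsion) is also valid and amounts to the same thing. Finally, you do not need Remark~\ref{kapparem} or unramifiedness here: $(1+\mathfrak{M})^p\subseteq 1+\mathfrak{M}^2$ holds for any odd $p$, and the paper bypasses even this by noting that the first factor of $L/L_+$ is already $p$-torsion.
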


\begin{proof}
	Let us denote the group on the right-hand side of the claimed formula for $[L,L]$ by $L_{+}$.  Proceeding as in Lemmas \ref{lem:comm-PY} and \ref{lem:UYintT} (and using the Iwahori decomposition for elements of $L_{+}$) shows that $L_{+} \subseteq [L,L]$.  (To imitate the argument of Lemma \ref{lem:comm-PY}, we wish to find an element $x\in 1 + \mathfrak{M}$ such that $x^2 - 1 \in \mathfrak{M} \smallsetminus \mathfrak{M}^2$; this is guaranteed by our assumption $p > 2$.)  On the other hand, one easily checks that we have an isomorphism of groups
	\[L/L_{+} \cong \mathfrak{sl}_2(\mathbb{F}_q) \oplus \bbZ/2\bbZ\]
	(the $\bbZ/2\bbZ$ factor corresponding to $\langle -\textnormal{id}\rangle$).   In particular, $L/L_{+}$ is abelian, from which we deduce $[L,L] \subseteq L_{+}$.  The case of $\Phi(L)$ follows analogously, using that the subgroup $\langle -\textnormal{id}\rangle$ has prime-to-$p$ order.
\end{proof}

\begin{corollary}
\label{cor:H1L}
Suppose $\varphi:\mathbb{F}_q \longrightarrow k$ is an $\mathbb{F}_p$-linear homomorphism.  Define the homomorphisms $\varphi_{\textnormal{U}}, \varphi_{\textnormal{L}}, \varphi_{\textnormal{D}}: L \longrightarrow k$ via
\begin{eqnarray*}
\varphi_{\textnormal{U}} \left(\begin{pmatrix} 1 + \pi a & \pi^2 b \\ c & 1 + \pi d\end{pmatrix}\cdot (-\textnormal{id})^j\right) & = & \varphi(\overline{b}) \\
\varphi_{\textnormal{L}} \left(\begin{pmatrix} 1 + \pi a & \pi^2 b \\ c & 1 + \pi d\end{pmatrix}\cdot (-\textnormal{id})^j\right) & = & \varphi(\overline{c}) \\
\varphi_{\textnormal{D}} \left(\begin{pmatrix} 1 + \pi a & \pi^2 b \\ c & 1 + \pi d\end{pmatrix}\cdot (-\textnormal{id})^j\right) & = & \varphi(\overline{a}) ,
\end{eqnarray*}
where $a,b,c,d \in \mathfrak{O}, j \in \{0,1\}$.  We then have
\[ H^1(L,k) =  \textnormal{span}\{\varphi_{\textnormal{U}}, \varphi_{\textnormal{L}}, \varphi_{\textnormal{D}}\}_{\varphi \in \Hom_{\mathbb{F}_p}(\mathbb{F}_q,k)}.\]
In particular, $\dim_k(H^1(L,k)) = 3f$.  
\end{corollary}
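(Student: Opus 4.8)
The plan is to compute $H^1(L,k) = \Hom(L_\Phi,k) = \Hom(L/\Phi(L),k)$ directly from the identification of the Frattini quotient provided by Lemma \ref{lem:Lcoh}. By that lemma we have a group isomorphism $L_\Phi \cong \mathfrak{sl}_2(\bbF_q)$, where $\mathfrak{sl}_2(\bbF_q)$ is regarded as an (elementary) abelian group under addition, i.e., as an $\bbF_p$-vector space of dimension $3f$ (here $q = p^f$). Therefore $H^1(L,k) = \Hom_{\bbZ}(\mathfrak{sl}_2(\bbF_q),k) = \Hom_{\bbF_p}(\mathfrak{sl}_2(\bbF_q),k)$, the last equality because any group homomorphism from an $\bbF_p$-vector space to the $\bbF_p$-vector space underlying $k$ is automatically $\bbF_p$-linear.

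Next I would decompose $\mathfrak{sl}_2(\bbF_q)$ into the three coordinate lines spanned by the strictly-upper-triangular, strictly-lower-triangular, and diagonal traceless matrices: explicitly $\begin{psmallmat} a & b \\ c & d\end{psmallmat} \mapsto (a,b,c,d)$ with $a + d = 0$, so the group is $\bbF_q^{\oplus 3}$ via the coordinates $(a, b, c)$ (with $d = -a$). Under the isomorphism of Lemma \ref{lem:Lcoh}, the class of $\begin{psmallmat} 1 + \pi a & \pi^2 b \\ c & 1 + \pi d\end{psmallmat}\cdot(-\textnormal{id})^j$ corresponds to $(\overline{a},\overline{b},\overline{c})$ (and the constraint forces $\overline{d} = -\overline{a}$ in the Frattini quotient — this uses that $(1+\pi a)(1+\pi d) \equiv 1 \bmod \mathfrak{M}^2$ implies $\overline{a} + \overline{d} = 0$, consistent with the stated map). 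Hence $\Hom_{\bbF_p}(\mathfrak{sl}_2(\bbF_q),k) \cong \Hom_{\bbF_p}(\bbF_q,k)^{\oplus 3}$, with the three factors being exactly the functionals that read off $\overline{b}$, $\overline{c}$, and $\overline{a}$ respectively and compose with an $\bbF_p$-linear map $\varphi:\bbF_q \to k$. These are precisely $\varphi_{\textnormal{U}}$, $\varphi_{\textnormal{L}}$, $\varphi_{\textnormal{D}}$, so $H^1(L,k) = \textnormal{span}\{\varphi_{\textnormal{U}},\varphi_{\textnormal{L}},\varphi_{\textnormal{D}}\}_{\varphi}$, and the dimension count $\dim_k H^1(L,k) = 3\cdot\dim_k\Hom_{\bbF_p}(\bbF_q,k) = 3f$ follows since $\dim_k \Hom_{\bbF_p}(\bbF_q, k) = \dim_{\bbF_p}\bbF_q = f$ (as $k \supseteq \bbF_p$).

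There is essentially no serious obstacle here: the whole statement is a formal consequence of Lemma \ref{lem:Lcoh} together with the standard fact $H^1(G,k) = \Hom(G_\Phi,k)$ for a pro-$p$ group $G$ and $k$ of characteristic $p$. The only minor point requiring care is checking that $\varphi_{\textnormal{U}}$, $\varphi_{\textnormal{L}}$, $\varphi_{\textnormal{D}}$ are \emph{well-defined} homomorphisms on all of $L$ (not just on the cosets of $\Phi(L)$): one must verify they kill $\Phi(L) = \begin{psmallmat} 1 + \mathfrak{M}^2 & \mathfrak{M}^3 \\ \mathfrak{M} & 1 + \mathfrak{M}^2\end{psmallmat}\cdot\langle -\textnormal{id}\rangle$ and are additive modulo $\Phi(L)$, which is immediate from the coordinatewise description of $L_\Phi$ in Lemma \ref{lem:Lcoh} and the multiplicativity of matrix entries modulo the relevant powers of $\mathfrak{M}$. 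Since these maps manifestly span a $3f$-dimensional space (the $\varphi_{\textnormal{U}}$, $\varphi_{\textnormal{L}}$, $\varphi_{\textnormal{D}}$ for $\varphi$ ranging over an $\bbF_p$-basis of $\Hom_{\bbF_p}(\bbF_q,k)$ are linearly independent, being supported on distinct coordinates), and $H^1(L,k)$ has dimension exactly $3f$, they form a basis and the claim follows.
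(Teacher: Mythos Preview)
Your proposal is correct and matches the paper's intended approach: the corollary is stated in the paper without proof, precisely because it follows immediately from Lemma~\ref{lem:Lcoh} via $H^1(L,k) = \Hom(L_\Phi,k)$ and the identification $L_\Phi \cong \mathfrak{sl}_2(\bbF_q) \cong \bbF_q^{\oplus 3}$, exactly as you argue. One small correction: $L$ is not pro-$p$ (it contains $-\textnormal{id}$, of order~$2$), but this does not affect the argument since $H^1(L,k) = \Hom^{\textnormal{cts}}(L,k)$ still factors through $L/[L,L]L^p = L_\Phi$ whenever $k$ has characteristic~$p$, which is the definition of Frattini quotient used in the paper.
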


Next, we use the above corollary to calculate some corestriction maps.

\begin{lemma}
    \label{lem:corestrictions}
    \begin{enumerate}
    \item Let $\varphi:\bbF_q \longrightarrow k$ be an $\bbF_p$-linear homomorphism.  We have
    \[ \textnormal{cores}^L_{K_\theta}(\varphi_{\textnormal{D}} \cup \varphi_{\textnormal{U}}) = \textnormal{cores}^L_{K_\theta}(\varphi_{\textnormal{D}} \cup \varphi_{\textnormal{L}}) = 0.\]
    \item Suppose furthermore that $\mathfrak{F}$ is unramified over $\bbQ_p$.  Then the trace map $\varphi = \textnormal{Tr}_{\bbF_q/\bbF_p}$ satisfies
    \[ \textnormal{cores}^L_{K_\theta}(\varphi_{\textnormal{L}} \cup \varphi_{\textnormal{U}}) \neq 0. \]
    \end{enumerate}
\end{lemma}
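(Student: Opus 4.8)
The plan is to reduce both parts of Lemma~\ref{lem:corestrictions} to explicit cocycle computations in the finite group $L_\Phi \cong \mathfrak{sl}_2(\bbF_q)$, using the fact that corestriction from $L$ to $K_\theta$ can be computed via a Shapiro-type / transfer formula once we choose coset representatives. Concretely, note $L \subseteq K_\theta$ for $\theta = \check\alpha(\pi)$: indeed $K_\theta = \sm{\mathfrak O^\times}{\mathfrak M^2}{\mathfrak O}{\mathfrak O^\times}$, while $L = \sm{1+\mathfrak M}{\mathfrak M^2}{\mathfrak O}{1+\mathfrak M}\cdot\langle-\id\rangle$, so the index is $[K_\theta : L] = |T^0/T^1|/2 = (q-1)/2$ (after accounting for $-\id$), which is prime to $p$ since $q\neq 3$ and $p$ odd. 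Hence by Remark~\ref{rema:coressurj} the corestriction $H^2(L,k)\to H^2(K_\theta,k)$ is surjective; but more importantly, since the index is prime to $p$, one has $\cores^L_{K_\theta}\circ\res^{K_\theta}_L = [K_\theta:L]\cdot\id$ is an isomorphism on mod-$p$ cohomology, so $\cores$ is computable and nonzero on the image of $\res$.

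First I would set up the representatives: a set of coset representatives for $L\backslash K_\theta$ can be taken inside the torus, namely $\{\check\alpha([a]) : a \in (\bbF_q^\times)/\{\pm1\}\}$ (these lie in $T^0 \subseteq K_\theta$ and normalize $L$). Because these representatives normalize $L$, the corestriction on cohomology is given by the formula $\cores^L_{K_\theta}(x) = \sum_{a} (\check\alpha([a]))_* x$, the sum of conjugates. Now I would compute the conjugation action of $\check\alpha([a])$ on the generators $\varphi_{\textnormal U},\varphi_{\textnormal L},\varphi_{\textnormal D}$ of $H^1(L,k)$ from Corollary~\ref{cor:H1L}: conjugation by $\check\alpha([a])$ scales the upper-right entry $b$ by $[a]^2$, the lower-left entry $c$ by $[a]^{-2}$, and fixes the diagonal entry $a$. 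Thus on cohomology, $(\check\alpha([a]))_*\varphi_{\textnormal U} = (\varphi\circ(\overline a^{\,2}\cdot))_{\textnormal U}$, similarly $(\check\alpha([a]))_*\varphi_{\textnormal L} = (\varphi\circ(\overline a^{\,-2}\cdot))_{\textnormal L}$, and $(\check\alpha([a]))_*\varphi_{\textnormal D}=\varphi_{\textnormal D}$. For the cup products, $(\check\alpha([a]))_*(\varphi_{\textnormal D}\cup\varphi_{\textnormal U}) = \varphi_{\textnormal D}\cup(\varphi\circ(\overline a^{\,2}\cdot))_{\textnormal U}$ and $(\check\alpha([a]))_*(\varphi_{\textnormal L}\cup\varphi_{\textnormal U}) = (\varphi\circ(\overline a^{\,-2}\cdot))_{\textnormal L}\cup(\varphi\circ(\overline a^{\,2}\cdot))_{\textnormal U}$.

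For part~(1), summing $\varphi_{\textnormal D}\cup(\varphi\circ(\overline a^{\,2}\cdot))_{\textnormal U}$ over $a\in(\bbF_q^\times)/\{\pm1\}$ gives $\varphi_{\textnormal D}\cup\big(\sum_{a}\varphi(\overline a^{\,2}\cdot)\big)_{\textnormal U}$; the inner sum $\sum_{a\in(\bbF_q^\times)/\{\pm1\}}\varphi(\overline a^{\,2}x)$ runs, as $a^2$ ranges over the squares in $\bbF_q^\times$ each once, so it equals $\varphi\big(x\cdot\sum_{s\in(\bbF_q^\times)^2}s\big)$; since the sum of all squares in $\bbF_q^\times$ is $0$ (standard: the squares form a subgroup of index $2$, so their sum is $0$ unless $|(\bbF_q^\times)^2|=1$, i.e.\ $q=3$, excluded) this vanishes, giving $\cores(\varphi_{\textnormal D}\cup\varphi_{\textnormal U})=0$; the argument for $\varphi_{\textnormal D}\cup\varphi_{\textnormal L}$ is identical with $\overline a^{\,2}$ replaced by $\overline a^{\,-2}$, whose sum over squares is also $0$. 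For part~(2), taking $\varphi = \textnormal{Tr}_{\bbF_q/\bbF_p}$ and $\mathfrak F$ unramified, I would compute $\cores^L_{K_\theta}(\varphi_{\textnormal L}\cup\varphi_{\textnormal U}) = \sum_{a}(\varphi(\overline a^{\,-2}\cdot))_{\textnormal L}\cup(\varphi(\overline a^{\,2}\cdot))_{\textnormal U}$ and pair it against a suitable $2$-cycle (or compute it as an element of $H^2(K_\theta,k)$ and test nontriviality against the known structure, e.g.\ via $H^2$ of $\mathfrak{sl}_2$ or via pairing with a product of $1$-cocycles pulled back from $K_\theta$). The point will be that the relevant character sum $\sum_{a\in(\bbF_q^\times)/\{\pm1\}}\textnormal{Tr}(\overline a^{\,-2}x)\cdot\textnormal{Tr}(\overline a^{\,2}y)$ does \emph{not} vanish identically in $(x,y)$: expanding bilinearly one gets a sum of the shape $\sum_{i,j}\sum_a \textnormal{Tr}(a^{-2}p^i x)\textnormal{Tr}(a^{2}p^j y)$ reduces to Gauss-type sums $\sum_{a}a^{2(p^j-p^i)}$ which are nonzero precisely when the exponent is $0\bmod(q-1)$, and the diagonal $i=j$ terms survive, producing a nonzero symmetric pairing. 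The main obstacle will be making the second computation genuinely rigorous — i.e.\ confirming that the surviving terms assemble into a cohomology class that is nonzero in $H^2(K_\theta,k)$ rather than a coboundary. The cleanest route is probably to use that $\res^{K_\theta}_L$ is injective on $H^*$ (index prime to $p$), so it suffices to show $\cores^L_{K_\theta}(\varphi_{\textnormal L}\cup\varphi_{\textnormal U})$ is nonzero after restricting back to $L$, where it equals $[K_\theta:L]\cdot\varphi_{\textnormal L}\cup\varphi_{\textnormal U}$ up to the averaging — no wait, that's circular; instead one evaluates the explicit $2$-cocycle on the pair of commuting-enough elements $\big(\sm{1}{0}{1}{1},\sm{1}{\pi^2}{0}{1}\big)$-type group elements in $K_\theta/\Phi$, reducing nonvanishing to the single nonzero trace-pairing value $\textnormal{Tr}_{\bbF_q/\bbF_p}(1)\neq 0$ in $\bbF_p$, which holds since the trace form is nondegenerate. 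I would carry out this last evaluation by hand using the cocycle description of cup product on $H^1\times H^1$.
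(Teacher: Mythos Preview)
Your approach to part~(1) is correct and arguably more direct than the paper's. The paper base-changes to $k\bbF_q$, decomposes $\varphi$ into Frobenius twists $\sigma_i(a)=a^{p^i}$, and observes that each $\sigma_{i,\textnormal D}\cup\sigma_{i,\textnormal U}$ is a $T^0$-eigenvector with nontrivial eigencharacter $x\mapsto\overline x^{-2p^i}$, whence its corestriction vanishes. Your direct character-sum argument avoids this decomposition. One notational slip: the formula you write for $\cores^L_{K_\theta}$ is really $\res^{K_\theta}_L\circ\cores^L_{K_\theta}$ (the right-hand side lives in $H^2(L,k)$, not $H^2(K_\theta,k)$); since $\res$ is injective this is harmless, but you should say so.

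Part~(2) has a genuine gap. You correctly identify that after averaging over $T^0/\{\pm1\}$ only the diagonal Frobenius contributions survive, and you correctly recognize that the remaining issue is showing the resulting class is nonzero in $H^2$ rather than a coboundary. But your proposed fix does not work. Evaluating a $2$-cocycle at a single pair of group elements cannot detect whether it is a coboundary, and the reduction to $\textnormal{Tr}_{\bbF_q/\bbF_p}(1)\neq 0$ is wrong: $\textnormal{Tr}_{\bbF_q/\bbF_p}(1)=f$, which vanishes mod~$p$ whenever $p\mid f$. (Nondegeneracy of the trace \emph{form} $(x,y)\mapsto\textnormal{Tr}(xy)$ is a different statement.)

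The paper's missing ingredient is the subgroup
\[
K_\theta' \;=\; \begin{pmatrix} 1+\mathfrak M & \mathfrak M^2 \\ \mathfrak O & 1+\mathfrak M\end{pmatrix}\subseteq L\subseteq K_\theta.
\]
When $\mathfrak F/\bbQ_p$ is unramified this group is \emph{uniform}, so $H^*(K_\theta',k)\cong\bigwedge^* H^1(K_\theta',k)$ and linearly independent $1$-classes have nonzero cup product. One then computes, for each Frobenius $\sigma_i$,
\[
\res^{K_\theta}_{K_\theta'}\circ\cores^L_{K_\theta}(\sigma_{i,\textnormal L}\cup\sigma_{i,\textnormal U})
=\res^L_{K_\theta'}\Big(\sum_{g\in K_\theta/L} g_*(\sigma_{i,\textnormal L}\cup\sigma_{i,\textnormal U})\Big)
=\tfrac{q-1}{2}\,\res^L_{K_\theta'}(\sigma_{i,\textnormal L}\cup\sigma_{i,\textnormal U})\neq 0,
\]
using that $T^0$ acts on $\sigma_{i,\textnormal L}$ and $\sigma_{i,\textnormal U}$ by inverse characters. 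Finally, uniformity again shows the classes $\res^L_{K_\theta'}(\sigma_{i,\textnormal L}\cup\sigma_{i,\textnormal U})$ for $0\le i<f$ are linearly independent in $H^2(K_\theta',k)$, so their sum is nonzero. This is precisely where the unramified hypothesis enters.
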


\begin{proof}

We first note the following general fact.  Suppose $\chi:\bbF_q^\times \longrightarrow k^\times$ is a non-trivial character and $\beta \in H^i(L,k)$ is a cohomology class which is an eigenvector for the conjugation action of $T^0$ with eigencharacter $\chi$ (that is, we have
\[\begin{pmatrix} x & 0 \\ 0 & x^{-1}\end{pmatrix}_* \beta = \chi(\overline{x})\beta\]
for all $x \in \mathfrak{O}^\times$.)  Then we have $\textnormal{cores}_{K_\theta}^L(\beta) = 0$.  Indeed, since $T^0$ is contained in $K_\theta$, we have
    \begin{eqnarray*}
        \textnormal{cores}^L_{K_\theta}(\beta) & = & \begin{pmatrix}x & 0 \\ 0 & x^{-1}\end{pmatrix}_*\textnormal{cores}^L_{K_\theta}(\beta)  =  \textnormal{cores}^L_{K_\theta}\left(\begin{pmatrix} x & 0 \\ 0 & x^{-1}\end{pmatrix}_*\beta\right) 
         =  \chi(\overline{x}) \textnormal{cores}^L_{K_\theta}(\beta).
    \end{eqnarray*}
    Since $\chi(\overline{x}) \neq 1$ for some $x$, we conclude that $\textnormal{cores}^L_{K_\theta}(\beta) = 0$.
    
    We also note that by base-changing from $k$ to $k\bbF_q$, we may assume that the coefficient field contains $\bbF_q$.  

\begin{enumerate}
	\item We prove the claim for $\textnormal{cores}^L_{K_\theta}(\varphi_{\textnormal{D}} \cup \varphi_{\textnormal{U}})$, the other case being completely analogous.

Suppose first that $\varphi:\mathbb{F}_q \longrightarrow k$ is of the form $a \longmapsto a^{p^i}$.   In this case, the action of $T^0$ on $\varphi_{\textnormal{D}}\cup \varphi_{\textnormal{U}}$ is given by the character $\sm{x}{0}{0}{x^{-1}} \longmapsto \overline{x}^{-2p^i}$.  Since $q > 3$, this character is non-trivial, and by the first paragraph we get $\textnormal{cores}^L_{K_\theta}(\varphi_{\textnormal{D}} \cup \varphi_{\textnormal{U}}) = 0$.

    Next, suppose that $\varphi: \mathbb{F}_q \longrightarrow k$ is an arbitrary $\mathbb{F}_p$-linear homomorphism.  By writing $\varphi$ as a linear combination of Frobenius maps and analogously decomposing $\varphi_{\textnormal{D}} \cup \varphi_{\textnormal{U}}$, and by using the previous paragraph, we obtain $\textnormal{cores}^L_{K_\theta}(\varphi_{\textnormal{D}} \cup \varphi_{\textnormal{U}}) = 0$.

    \item Suppose $\mathfrak{F}$ is unramified over $\mathbb{Q}_p$, and let $\varphi:\mathbb{F}_q \longrightarrow k$ denote a nonzero $\bbF_p$-linear homomorphism.  In this case, it is well known that the group 
    \[K_\theta' := \begin{pmatrix} 1 + \mathfrak{M} & \mathfrak{M}^2 \\ \mathfrak{O} & 1 + \mathfrak{M}\end{pmatrix}\]
    is a uniform pro-$p$-group (since it is conjugate to the principal congruence subgroup of $\textnormal{SL}_2(\mathfrak{O})$, this follows directly from the definition of uniformity and \cite[Prop. 3.62]{OS2}).  In particular, since $\textnormal{res}^{L}_{K_\theta'}(\varphi_{\textnormal{L}})$ and $\textnormal{res}^{L}_{K_\theta'}(\varphi_{\textnormal{U}})$ are linearly independent in $H^1(K_\theta',k)$, we obtain
    \[\textnormal{res}^{L}_{K_\theta'}(\varphi_{\textnormal{L}} \cup \varphi_{\textnormal{U}}) = \textnormal{res}^{L}_{K_\theta'}(\varphi_{\textnormal{L}}) \cup \textnormal{res}^{L}_{K_\theta'}(\varphi_{\textnormal{U}}) \neq 0\]
    in $H^2(K_\theta',k)$.

    Suppose now that $\varphi(a) = a^{p^i}$.  We claim that the element $\textnormal{cores}_{K_\theta}^L(\varphi_{\textnormal{L}} \cup \varphi_{\textnormal{U}})$ is nonzero.  In order to do this, it suffices to prove this cohomology class is nonzero after applying $\textnormal{res}^{K_\theta}_{K_\theta'}$.  Using that $L$ is normal in $K_{\theta}$, \cite[Cor. 1.5.7]{NSW} gives
    \begin{eqnarray*}
        \textnormal{res}^{K_\theta}_{K_\theta'} \circ \textnormal{cores}^L_{K_\theta}(\varphi_{\textnormal{L}} \cup \varphi_{\textnormal{U}}) & = &   \textnormal{res}^{L}_{K_\theta'}\circ \textnormal{res}^{K_\theta}_{L} \circ \textnormal{cores}^L_{K_\theta}(\varphi_{\textnormal{L}} \cup \varphi_{\textnormal{U}})  =   \textnormal{res}^{L}_{K_\theta'}\left(\sum_{g\in K_\theta/L} g_*(\varphi_{\textnormal{L}} \cup \varphi_{\textnormal{U}})\right) \\
        & = &  \textnormal{res}^{L}_{K_\theta'}\left(\sum_{a \in \mathbb{F}_q^\times/\pm 1} \begin{pmatrix}[a] & 0 \\ 0 & [a]^{-1}\end{pmatrix}_*(\varphi_{\textnormal{L}} \cup \varphi_{\textnormal{U}})\right) 
          =   \textnormal{res}^{L}_{K_\theta'}\left(\sum_{a \in \mathbb{F}_q^\times/\pm 1} \varphi_{\textnormal{L}} \cup \varphi_{\textnormal{U}}\right) \\
         & = & \frac{q - 1}{2}\cdot  \textnormal{res}^{L}_{K_\theta'}\left(\varphi_{\textnormal{L}}\cup \varphi_{\textnormal{U}}\right) \quad
          \neq  0,
    \end{eqnarray*}
    where we have used that $T^0$ acts on $\varphi_{\textnormal{L}}$ and $\varphi_{\textnormal{U}}$ by inverse characters.  

    Finally, suppose $\varphi = \textnormal{Tr}_{\bbF_q/\bbF_p}$, and write $\varphi = \sum_{i = 0}^{f - 1}\sigma_i$, where $\sigma_i(a) = a^{p^i}$.  We then have
    \begin{eqnarray*}
    \textnormal{cores}_{K_\theta}^L(\textnormal{Tr}_{\bbF_q/\bbF_p, \textnormal{L}} \cup \textnormal{Tr}_{\bbF_q/\bbF_p, \textnormal{U}}) & = & \sum_{i,j = 0}^{f - 1}\textnormal{cores}_{K_\theta}^L(\sigma_{i,\textnormal{L}} \cup \sigma_{j, \textnormal{U}})       =  \sum_{i= 0}^{f - 1}\textnormal{cores}_{K_\theta}^L(\sigma_{i,\textnormal{L}} \cup \sigma_{i, \textnormal{U}}),
    \end{eqnarray*}
    where we have used the argument of the first paragraph to conclude that $\textnormal{cores}_{K_\theta}^L(\sigma_{i,\textnormal{L}} \cup \sigma_{j, \textnormal{U}}) = 0$ if $i \neq j$.  The summands above are nonzero (by the previous paragraph) and linearly independent (this can be seen by applying $\textnormal{res}^{K_\theta}_{K_\theta'}$ and again using that $K_\theta'$ is uniform).  Hence $\textnormal{cores}_{K_\theta}^L(\textnormal{Tr}_{\bbF_q/\bbF_p, \textnormal{L}} \cup \textnormal{Tr}_{\bbF_q/\bbF_p, \textnormal{U}}) \neq 0$.
        \end{enumerate}
\end{proof}

\subsubsection{Yoneda products}

We now compute some Yoneda products.  Fix an element $\alpha\in E_K^1$ which corresponds to a nontrivial homomorphism $\varphi \in H^1(K_\theta,k)$ (by Remark \ref{rem:H1parahoric} and Corollary \ref{cor:frattini-longest}, such a homomorphism exists and is supported on $T^0$). We furthermore assume that $\mathfrak{F}$ is unramified over $\bbQ_p$, and choose $\varphi = \textnormal{Tr}_{\bbF_q/\bbF_p}$ as in Lemma \ref{lem:corestrictions}.

We will compute $\alpha^2 = \alpha\cdot \alpha \in E_K^2$ using Proposition \ref{yoneda-product-U}.  We have
\begin{equation}
\label{nongradcomm-square}
\alpha^2 = \sum_{\substack{n \geq 0 \\ K\theta^nK \subseteq K \theta K \theta K}} \gamma_{\theta^n},
\end{equation}
where $\gamma_{\theta^n} \in H^2(K,\textnormal{c-ind}_K^{K\theta^n K}(k))$, and 
\begin{equation}
\label{nongradcomm-Sh}
\textnormal{Sh}_{\theta^n}(\gamma_{\theta^n}) = \sum_{h \in K_{\theta^{-1}}\backslash (\theta^{-1}K\theta^n \cap K\theta K)/K_{\theta^{-n}}}\textnormal{cores}^{K_{\theta^n} \cap \theta^nh^{-1}Kh\theta^{-n}}_{K_{\theta^n}}\left(\widetilde{\Gamma}_{\theta^n,h}\right)
\end{equation}
where
\begin{equation}
    \label{cup-product-Gamma}
    \widetilde{\Gamma}_{\theta^n,h} := \textnormal{res}^{K \cap \theta^nh^{-1}Kh\theta^{-n}}_{K_{\theta^n} \cap \theta^nh^{-1}Kh\theta^{-n}}(a_*\varphi) \cup \textnormal{res}^{\theta^nK\theta^{-n} \cap \theta^nh^{-1}Kh\theta^{-n}}_{K_{\theta^n} \cap \theta^nh^{-1}Kh\theta^{-n}}((a\theta c)_*\varphi),  
\end{equation}
where $h = c\theta d = \theta^{-1}a^{-1}\theta^n$ for $a,c,d \in K$.

We unpack the above formulas piece by piece.  

\begin{lemma}
    We have $K\theta K\theta K = K \sqcup K \theta K \sqcup K \theta^2 K$.  We therefore only have three summands in \eqref{nongradcomm-square}.
\end{lemma}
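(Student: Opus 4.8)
The statement to prove is that $K\theta K\theta K = K \sqcup K\theta K \sqcup K\theta^2 K$, so that only $n = 0, 1, 2$ contribute to the sum in \eqref{nongradcomm-square}. The natural approach is to translate the question into the combinatorics of the affine Weyl group $W$ of $\textnormal{SL}_2$ via the Bruhat--Tits dictionary already set up in the excerpt, and then invoke Lemma \ref{coro25}. First I would recall from Subsection \ref{subsec:parahSL2} that, under the identification of $W$ with $\langle s_0, s_1\rangle = \theta^{\bbZ} \sqcup s_0\theta^{\bbZ}$, the element $\theta = s_0 s_1$ has length $2$, and that the Cartan decomposition reads $W = \bigsqcup_{n\geq 0} \mathfrak{W}_K \theta^n \mathfrak{W}_K$ where $\mathfrak{W}_K = \mathfrak{W} = \{1, s_0\}$. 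Since $K = \mathbf{G}_{x_0}^\circ(\mathfrak O)$ and $K\theta^n K = J\mathfrak{W}_K\theta^n\mathfrak{W}_K J$ by Lemma \ref{coro25}\eqref{coro25-iii} (exactly as in the proof of Proposition \ref{prop:doubleco}), the double coset $K\theta K\theta K$ corresponds to the union of the double cosets $\mathfrak{W}_K w \mathfrak{W}_K$ for $w$ running over $\mathfrak{W}_K \theta \mathfrak{W}_K \cdot \mathfrak{W}_K \theta \mathfrak{W}_K$ inside $W$.

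The key computation is then to expand $\mathfrak{W}_K \theta \mathfrak{W}_K \cdot \mathfrak{W}_K\theta\mathfrak{W}_K$ as a subset of $W$. Writing out the four-fold product $\{1,s_0\}\theta\{1,s_0\}\cdot\{1,s_0\}\theta\{1,s_0\}$ and using $s_0\theta s_0 = s_0(s_0 s_1)s_0 = s_1 s_0 = \theta^{-1}$ together with $\theta s_0 s_0 \theta = \theta^2$, one finds that every element appearing lies in $\mathfrak{W}_K\theta^2\mathfrak{W}_K \cup \mathfrak{W}_K\theta\mathfrak{W}_K \cup \mathfrak{W}_K\mathfrak{W}_K$; concretely the products reduce (modulo left/right multiplication by $\mathfrak{W}_K$) to $\theta^2$, to $\theta s_0\theta = \theta(\theta^{-1}s_0^{-1}\cdot s_0\theta^{-1}\cdot\ldots)$-type terms of length $\leq 2$, namely $\theta$ and $1$. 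Rather than enumerate by hand, a cleaner route is to use Lemma \ref{coro25}\eqref{coro25-iii} directly at the level of $J$-double cosets: $J\theta J\theta J \subseteq \bigcup_{z\leq \theta} Jz\theta J$, and since the elements $z\leq\theta$ in Bruhat order are $1, s_0, s_1, \theta$ (as $\ell(\theta)=2$), the products $z\theta$ have length at most $\ell(z)+2 \leq 4$; combining with the analogous right-hand bound and intersecting with the $\mathfrak{W}_K$-double coset structure forces the only surviving Cartan pieces to be $\theta^0, \theta^1, \theta^2$. One then checks each of these three actually occurs: $K\theta^2 K \subseteq K\theta K\theta K$ because $\theta^2 = \theta\cdot\theta$ with additive lengths, $K \subseteq K\theta K\theta K$ because $1 = \theta\cdot\theta^{-1}$ and $\theta^{-1} \in K\theta K$ (as $\theta^{-1} = s_0\theta s_0 \in \mathfrak{W}_K\theta\mathfrak{W}_K$), and $K\theta K \subseteq K\theta K\theta K$ trivially since $1 \in K$, $K\theta K\theta K \supseteq K\theta K \cdot K = K\theta K$ — wait, more carefully, $K\theta K \subseteq K\theta K\theta K$ follows from $\theta = \theta\cdot(\theta^{-1}\theta)$ and $\theta^{-1}\theta \in K\theta K$ via $1 \in K \subseteq K\theta K$ is false; instead use that $K\theta K \cdot 1 \subseteq K\theta K\theta K$ requires $1 \in K\theta K$, so one argues directly $\theta \cdot \theta \cdot \theta^{-1}$-style or simply notes disjointness from the Cartan decomposition makes all three nonempty pieces appear.

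\textbf{Main obstacle.} The only real subtlety is making the transition from double cosets in $W$ (or $\mathfrak{W}_K\backslash W/\mathfrak{W}_K$) to double cosets in $G$ rigorous: one must justify that $K w K \cdot K w' K = \bigsqcup_{u} K u K$ where $u$ ranges exactly over the $\mathfrak{W}_K$-double cosets contained in the set-product $\mathfrak{W}_K w\mathfrak{W}_K \cdot \mathfrak{W}_K w'\mathfrak{W}_K$ — this is essentially the content of the argument in the proof of Proposition \ref{prop:doubleco} combined with Lemma \ref{coro25}, and the potential pitfall is that containment of a $J$-double coset in the product need not a priori give containment of the full $\mathfrak{W}_K$-sandwiched double coset, so one should phrase the argument as an upper bound (via Lemma \ref{coro25}\eqref{coro25-iii}, giving $K\theta K\theta K \subseteq K \cup K\theta K \cup K\theta^2 K$) plus a lower bound (exhibiting each of $1, \theta, \theta^2$ explicitly as a product of two elements of $K\theta K$), with disjointness of the three pieces being immediate from the Cartan decomposition $G = \bigsqcup_{n\geq 0} K\theta^n K$. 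I expect the write-up to be short once this two-sided strategy is fixed.
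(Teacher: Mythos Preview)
The paper's proof is a single sentence: use the action of $G$ on the Bruhat--Tits tree, where $K$ is the stabilizer of a vertex $x_0$ and $K\theta^nK$ consists of the elements moving $x_0$ to distance $2n$; composing two distance-$2$ moves in a tree of valence $q+1\geq 3$ lands at distance $0$, $2$, or $4$, each value realized.

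Your alternative via Weyl-group combinatorics can be made to work, but the proposal as written contains two genuine errors, not just expository confusion.

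\emph{First}, the claim that ``$K\theta K\theta K$ corresponds to the union of the $\mathfrak{W}_K w\mathfrak{W}_K$ for $w$ running over $\mathfrak{W}_K\theta\mathfrak{W}_K\cdot\mathfrak{W}_K\theta\mathfrak{W}_K$ inside $W$'' is false: products of $J$-double cosets are not captured by multiplication in $W$. If you actually compute the set $\{1,s_0\}\theta\{1,s_0\}\theta\{1,s_0\}\subseteq W$ you get only elements of $\mathfrak{W}_K\cup\mathfrak{W}_K\theta^2\mathfrak{W}_K$ (for instance $\theta s_0\theta = s_0$), so this recipe \emph{misses} $K\theta K$ entirely---the very piece you later struggle to exhibit.

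\emph{Second}, your ``cleaner route'' bounds $J\theta J\theta J$, but $K\theta K\theta K$ is not $K(J\theta J\theta J)K$. Since $\ell(\theta^2)=\ell(\theta)+\ell(\theta)$, Lemma~\ref{coro25}\eqref{coro25-ii} gives $J\theta J\theta J = J\theta^2 J$ exactly, hence $K(J\theta J\theta J)K = K\theta^2 K$, again missing $K$ and $K\theta K$. The middle $K$ in $K\theta K\theta K$ must be expanded, not absorbed into the outer $K$'s.

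A correct combinatorial argument in your spirit: write $K\theta K = Ks_1K$ (as $s_0\in K$ and $s_0\theta=s_1$), expand the middle $K = J\sqcup Js_0J$, and observe
\[
Ks_1Ks_1K \;=\; K(Js_1Js_1J)K \,\cup\, K(Js_1Js_0Js_1J)K.
\]
Now $Js_1Js_1J = J\sqcup Js_1J$ by the quadratic relation, while $Js_1Js_0Js_1J = Js_1s_0s_1J$ since lengths add, and $s_1s_0s_1 = s_0\theta^2\in\mathfrak{W}_K\theta^2\mathfrak{W}_K$. This yields $K\cup K\theta K\cup K\theta^2K$ in one stroke, handling both inclusions simultaneously; your tangled paragraph about exhibiting $K\theta K$ then becomes unnecessary.
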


\begin{proof}
    This follows from examining the action of $G$ on the Bruhat--Tits tree (and noting that $K$ fixes the standard vertex).
\end{proof}

Next, we examine the double coset space $K_{\theta^{-1}}\backslash (\theta^{-1}K\theta^n \cap K\theta K)/K_{\theta^{-n}}$.

\begin{lemma}
    The coset space $K_{\theta^{-1}}\backslash (\theta^{-1}K\theta^n \cap K\theta K)$ admits the following representatives:
    \begin{itemize}
        \item $n = 0$: 
        \[ \left\{\begin{pmatrix}-y\pi^{-1} & -\pi^{-1} \\ \pi & 0 \end{pmatrix}\right\}_{y \in \mathfrak{O}/\mathfrak{M}^2}\sqcup \left\{\begin{pmatrix}\pi^{-1} & -y \\ 0 & \pi\end{pmatrix}\right\}_{y \in \mathfrak{O}/\mathfrak{M}}. \]
        \item $n = 1$:
        \[ \left\{\begin{pmatrix}1 & -\pi^{-1}y \\ 0 & 1\end{pmatrix}\right\}_{y \in \mathfrak{O}/\mathfrak{M}, y \neq 0}. \]
        \item $n = 2$:
        \[ \left\{\begin{pmatrix}\pi & 0 \\ 0 & \pi^{-1}\end{pmatrix}\right\}. \]
    \end{itemize}
\end{lemma}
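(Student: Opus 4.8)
The plan is to make the action on the Bruhat--Tits tree explicit by reducing everything to a coset computation modulo $\mathfrak M^2$. First I would observe that for $g\in\theta^{-1}K\theta^n$ the element $\kappa:=\theta g\theta^{-n}$ lies in $K$ and is uniquely determined by $g$, so $g\mapsto\kappa$ is a bijection $\theta^{-1}K\theta^n\xrightarrow{\ \sim\ }K$; moreover for $k\in K_{\theta^{-1}}=K\cap\theta^{-1}K\theta$ one has $\theta(kg)\theta^{-n}=(\theta k\theta^{-1})\kappa$, and as $k$ ranges over $K_{\theta^{-1}}$ the element $\theta k\theta^{-1}$ ranges over $\theta K_{\theta^{-1}}\theta^{-1}=K\cap\theta K\theta^{-1}$, a group we denote $K_\theta$. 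Hence the bijection descends to $K_{\theta^{-1}}\backslash\theta^{-1}K\theta^n\xrightarrow{\ \sim\ }K_\theta\backslash K$. A direct matrix computation gives $K_\theta=\{\sm{a}{b}{c}{d}\in\textnormal{SL}_2(\mathfrak O):b\in\mathfrak M^2\}$, i.e.\ $K_\theta$ is the full preimage of the Borel subgroup $B^-(\mathfrak O/\mathfrak M^2)$ of lower-triangular matrices under the reduction $K\twoheadrightarrow\textnormal{SL}_2(\mathfrak O/\mathfrak M^2)$. Consequently $K_\theta\backslash K\cong B^-(\mathfrak O/\mathfrak M^2)\backslash\textnormal{SL}_2(\mathfrak O/\mathfrak M^2)\cong\mathbb P^1(\mathfrak O/\mathfrak M^2)$, a set of cardinality $q(q+1)$, the class of $\kappa=\sm{a}{b}{c}{d}$ corresponding to the point $[a:b]\in\mathbb P^1(\mathfrak O/\mathfrak M^2)$ given by the reduction mod $\mathfrak M^2$ of its first row $(a,b)$ (which is unimodular since $\kappa\in K$).

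Next I would carve out the extra condition $g\in K\theta K$. By the Cartan decomposition $G=\bigsqcup_{m\geq0}K\theta^m K$ recalled above, $g\in K\theta^m K$ is equivalent to $-\min_{i,j}\val(g_{ij})=m$. Writing $g=\theta^{-1}\kappa\theta^n=\sm{\pi^{n-1}a}{\pi^{-n-1}b}{\pi^{n+1}c}{\pi^{1-n}d}$, using that $(a,b)$ is unimodular, and using $ad-bc=1$ to see that $d$ is automatically a unit once $b\in\mathfrak M^2$, a short valuation count gives: $g\in K\theta K$ iff ($n=0$) always; ($n=1$) $\val(b)=1$; ($n=2$) $b\in\mathfrak M^2$. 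Under the identification of the previous paragraph these conditions pick out, respectively, all of $\mathbb P^1(\mathfrak O/\mathfrak M^2)$; the $q-1$ points $[1:m]$ with $m\in(\mathfrak M/\mathfrak M^2)\smallsetminus\{0\}$; and the single point $[1:0]$.

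Finally I would match the representatives in the statement with these sets. For each listed matrix $r$ one computes $\kappa_r:=\theta r\theta^{-n}$: for $n=0$ one gets $\kappa_r=\sm{-y}{-1}{1}{0}$ (class $[-y:-1]=[y:1]$, $y\in\mathfrak O/\mathfrak M^2$) together with $\kappa_r=\sm{1}{-\pi y}{0}{1}$ (class $[1:-\pi y]$, $y\in\mathfrak O/\mathfrak M$), whose classes jointly exhaust $\mathbb P^1(\mathfrak O/\mathfrak M^2)$; for $n=1$ one gets $\kappa_r=\sm{1}{-\pi y}{0}{1}$ with $y$ a nonzero class in $\mathfrak O/\mathfrak M$, of class $[1:-\pi y]$ with $-\pi y\in(\mathfrak M/\mathfrak M^2)\smallsetminus\{0\}$; for $n=2$ one gets $\kappa_r=\id$, of class $[1:0]$. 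In each case $\kappa_r\in K$ (so $r\in\theta^{-1}K\theta^n$) and the classes obtained run bijectively over the subset found in the second paragraph, so the listed matrices form a complete set of representatives. (Equivalently, and in the style of the paper, one checks directly that each $r$ lies in $\theta^{-1}K\theta^n\cap K\theta K$ by computing $\theta r\theta^{-n}$ and the minimal valuation of the entries of $r$, and that distinct listed matrices lie in distinct $K_{\theta^{-1}}$-cosets --- here one need only verify $r'r^{-1}\notin K$, since $r'r^{-1}\in\theta^{-1}K\theta$ automatically; completeness then follows by comparing cardinalities.) There is no deep obstacle; the only delicate point is the bookkeeping in the first two paragraphs: getting the bijection $K_{\theta^{-1}}\backslash\theta^{-1}K\theta^n\cong K_\theta\backslash K$ and its parametrization by $\mathbb P^1(\mathfrak O/\mathfrak M^2)$ right --- it is the first row, not the first column, of $\kappa$ --- and correctly turning the Cartan criterion $g\in K\theta K$ into the valuation condition on $b$; everything else is routine arithmetic with $\theta=\operatorname{diag}(\pi,\pi^{-1})$.
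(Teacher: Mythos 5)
Your argument is correct and follows the same basic route as the paper: both parametrize $K_{\theta^{-1}}\backslash\theta^{-1}K\theta^n$ via the bijection with $K_\theta\backslash K$ (you use $g\mapsto\theta g\theta^{-n}$; the paper applies $a\mapsto\theta^{-1}a^{-1}\theta^n$ to a fixed transversal of $K/K_\theta$) and then sort cosets by the Cartan double coset that contains them. The genuine, if modest, difference is in the bookkeeping. The paper fixes an explicit list of representatives for $K/K_\theta$ and, for each one, grinds out which Cartan cell $\theta^{-1}a^{-1}\theta^n$ lands in by a fresh matrix computation, separately for $n=0,1,2$. You instead identify $K_\theta\backslash K$ with $\mathbb P^1(\mathfrak O/\mathfrak M^2)$ via the first row of $\kappa=\theta g\theta^{-n}$ and reduce the Cartan condition to a single valuation criterion on the upper-right entry $b$ of $\kappa$ (no condition for $n=0$; $\val_{\mathfrak F}(b)=1$ for $n=1$; $b\in\mathfrak M^2$ for $n=2$, where the $n=2$ case uses $ad-bc=1$ to see $d$ is then automatically a unit). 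This repackaging is cleaner: the counts $q(q+1)$, $q-1$, $1$ drop out immediately and the match with the listed representatives becomes a transparent comparison of points of $\mathbb P^1(\mathfrak O/\mathfrak M^2)$, whereas in the paper these facts emerge only after three independent matrix calculations. The substance, however, is the same; there is no new lemma or decomposition at work, only a tidier way of indexing $K_\theta\backslash K$.
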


\begin{proof}
    Note first that we may choose coset representatives for $K/K_\theta$ as follows:
    \[ \left\{\begin{pmatrix} 0 & 1 \\ -1 & 0\end{pmatrix}\begin{pmatrix}1 & y \\ 0 & 1\end{pmatrix}\right\}_{y \in \mathfrak{O}/\mathfrak{M}^2}\sqcup \left\{\begin{pmatrix}1 & \pi y \\ 0 & 1\end{pmatrix}\right\}_{y \in \mathfrak{O}/\mathfrak{M}}. \]
    According to \cite[Rmk. 5.1]{Ext}, in order to obtain the result we must verify which of the above representatives $a$ satisfy $\theta^{-1}a^{-1}\theta^n \in K\theta K$.  (Note that the inclusion ``$v^{-1}au \in IwI$'' in \cite[Rmk. 5.1]{Ext} should read ``$v^{-1}a^{-1}u \in IwI$.'')
    \begin{itemize}
        \item $n = 0$: We note that for any $a$ as above, we have $a \in K$, and therefore $\theta^{-1}a^{-1} \in K\theta^{-1}K = K \theta K$.
        \item $n = 1:$ for $a = \sm{0}{1}{-1}{0}\sm{1}{y}{0}{1}$, we have
        \begin{eqnarray*}
            \theta^{-1}a^{-1}\theta & = & \begin{pmatrix} \pi^{-1} & 0 \\ 0 & \pi\end{pmatrix}\begin{pmatrix}1 & -y \\ 0 & 1\end{pmatrix}\begin{pmatrix}0 & -1 \\ 1 & 0\end{pmatrix}\begin{pmatrix} \pi & 0 \\ 0 & \pi^{-1}\end{pmatrix}  
             =  \begin{pmatrix}-y & -\pi^{-2} \\ \pi^2 & 0\end{pmatrix} \\
            & = & \begin{pmatrix} \pi^{-2} & 0 \\ 0 & \pi^2\end{pmatrix}\begin{pmatrix}-\pi^2y & -1 \\ 1 & 0\end{pmatrix} 
             \in  K\theta^{-2}K = K \theta^2 K.
        \end{eqnarray*}
        For $a = \sm{1}{\pi y}{0}{1}$ with $y \neq 0$, we have
        \begin{eqnarray*}
            \theta^{-1}a^{-1}\theta & = & \begin{pmatrix} \pi^{-1} & 0 \\ 0 & \pi \end{pmatrix}\begin{pmatrix}1 & -\pi y \\ 0 & 1\end{pmatrix}\begin{pmatrix}\pi & 0 \\ 0 & \pi^{-1}\end{pmatrix} 
             =  \begin{pmatrix} 1 & -\pi^{-1}y \\ 0 & 1\end{pmatrix} \\
            & = & \begin{pmatrix}0 & 1 \\ -1 & -\pi y^{-1} \end{pmatrix}\begin{pmatrix} \pi & 0 \\ 0 & \pi^{-1}\end{pmatrix}\begin{pmatrix}-y^{-1} & 0 \\ \pi & -y\end{pmatrix}
             \in  K \theta K.
        \end{eqnarray*}
        Finally, if $a = 1$, then $\theta^{-1}a^{-1}\theta = 1 \in K$.
        \item $n = 2$: for $a = \sm{0}{1}{-1}{0}\sm{1}{y}{0}{1}$, we have
        \begin{eqnarray*}
            \theta^{-1}a^{-1}\theta^2 & = & \begin{pmatrix} \pi^{-1} & 0 \\ 0 & \pi\end{pmatrix}\begin{pmatrix}1 & -y \\ 0 & 1\end{pmatrix}\begin{pmatrix}0 & -1 \\ 1 & 0\end{pmatrix}\begin{pmatrix}\pi^2 & 0 \\ 0 & \pi^{-2}\end{pmatrix}  
             =  \begin{pmatrix}-\pi y & -\pi^{-3} \\ \pi^3 & 0\end{pmatrix} \\
            & = & \begin{pmatrix}\pi^{-3} & 0 \\ 0 & \pi^3\end{pmatrix}\begin{pmatrix}-\pi^4y & -1 \\ 1 & 0\end{pmatrix} 
             \in  K\theta^{-3}K = K \theta^3 K.
        \end{eqnarray*}
        For $a = \sm{1}{\pi y}{0}{1}$ with $y \neq 0$, we have
        \begin{eqnarray*}
            \theta^{-1}a^{-1}\theta^2 & = & \begin{pmatrix} \pi^{-1} & 0 \\ 0 & \pi\end{pmatrix}\begin{pmatrix}1 & -\pi y \\ 0 & 1\end{pmatrix}\begin{pmatrix}\pi^2 & 0 \\ 0 & \pi^{-2}\end{pmatrix} \\
            & = & \begin{pmatrix}0 & 1 \\ -1 & -\pi y^{-1} \end{pmatrix}\begin{pmatrix} \pi^2 & 0 \\ 0 & \pi^{-2}\end{pmatrix}\begin{pmatrix}-y^{-1} & 0 \\ \pi^3 & -y\end{pmatrix} 
             \in  K \theta^2 K
        \end{eqnarray*}
        Finally, if $a = 1$, then $\theta^{-1}a^{-1}\theta^2 = \theta \in K\theta K$.
    \end{itemize}
\end{proof}

\begin{corollary}
    \label{double-coset-reps}
    The double coset space $K_{\theta^{-1}}\backslash (\theta^{-1}K\theta^n \cap K\theta K)/K_{\theta^{-n}}$ admits the following representatives:
    \begin{itemize}
        \item $n = 0$: 
        \[ \left\{\begin{pmatrix}\pi^{-1} & 0 \\ 0 & \pi \end{pmatrix}\right\}. \]
        \item $n = 1$:
        \[ \left\{\begin{pmatrix}1 & \pi^{-1} \\ 0 & 1\end{pmatrix}\right\}. \]
        \item $n = 2$:
        \[ \left\{\begin{pmatrix}\pi & 0 \\ 0 & \pi^{-1}\end{pmatrix}\right\} \]
    \end{itemize}
    Thus, the sum in \eqref{nongradcomm-Sh} (for $n = 0,1,2$) has one summand.
\end{corollary}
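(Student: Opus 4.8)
The plan is to pass from the one-sided coset representatives produced in the preceding lemma to representatives for the two-sided double cosets, by analysing the residual right-multiplication action of $K_{\theta^{-n}}$. First I would record the relevant subgroups explicitly: $K_{\theta^{0}} = K$, while for $m\geq 1$, conjugating the identity $K_{\theta^{m}} = \sm{\mathfrak{O}^\times}{\mathfrak{M}^{2m}}{\mathfrak{O}}{\mathfrak{O}^\times}$ by $\theta^{-m}$ yields $K_{\theta^{-m}} = \sm{\mathfrak{O}^\times}{\mathfrak{O}}{\mathfrak{M}^{2m}}{\mathfrak{O}^\times}$. Thus, for each $n\in\{0,1,2\}$, the set $\theta^{-1}K\theta^{n}\cap K\theta K$ carries a right action of an explicit compact group, and the double-coset space in question is precisely the orbit space of that action on the set of left cosets modulo $K_{\theta^{-1}}$.

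For $n = 2$ there is nothing to prove, since the preceding lemma already exhibits a single left coset representative, namely $\theta = \sm{\pi}{0}{0}{\pi^{-1}}$ (note $\theta = \theta^{-1}\cdot\theta^{2}\in\theta^{-1}K\theta^{2}$ and $\theta\in K\theta K$). For $n = 0$ I would use that $\theta^{-1}\in K\theta^{-1}K = K\theta K$ (e.g. $\theta^{-1}=\sm{0}{1}{-1}{0}\,\theta\,\sm{0}{-1}{1}{0}$), so that $\theta^{-1}K\theta^{0}\cap K\theta K$ is just the single right coset $\theta^{-1}K$; since $K_{\theta^{0}} = K$ this collapses to one double coset, represented by $\theta^{-1}=\sm{\pi^{-1}}{0}{0}{\pi}$.

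The case $n = 1$ is where the genuine work lies, and I expect it to be the main obstacle. Here the preceding lemma supplies the $q-1$ left coset representatives $u_{y}:=\sm{1}{-\pi^{-1}y}{0}{1}$ indexed by $y\in\mathfrak{O}^{\times}$ modulo $\mathfrak{M}$, and one must understand the right action of $K_{\theta^{-1}}=\sm{\mathfrak{O}^\times}{\mathfrak{O}}{\mathfrak{M}^2}{\mathfrak{O}^\times}$ on the cosets $K_{\theta^{-1}}u_{y}$. The plan is a direct matrix manipulation: for $k=\sm{a}{b}{c}{d}\in K_{\theta^{-1}}$ (so $a,d\in\mathfrak{O}^{\times}$, $b\in\mathfrak{O}$, $c\in\mathfrak{M}^{2}$, $ad-bc=1$) one multiplies out $u_{y}k$ and rewrites it in the form $k'u_{y'}$ with $k'\in K_{\theta^{-1}}$, using that $\pi^{-1}c\in\mathfrak{M}$ and $\pi^{-2}c\in\mathfrak{O}$; the value of $y'$ modulo $\mathfrak{M}$ is then read off from the entries together with the determinant relation. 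Carrying out this bookkeeping determines the orbit structure and yields the representative $u_{-1}=\sm{1}{\pi^{-1}}{0}{1}$ stated in the corollary; one also checks that $u_{-1}$ lies in $\theta^{-1}K\theta\cap K\theta K$ by computing $\theta u_{-1}\theta^{-1}=\sm{1}{\pi}{0}{1}\in K$ and invoking the preceding lemma for membership in $K\theta K$. The concluding remark that \eqref{nongradcomm-Sh} has a single summand for each of $n=0,1,2$ is then immediate. A clean alternative for the $n=1$ step is to transport the question to the action of $K$ on the Bruhat--Tits tree, where these double cosets become orbits of the pointwise stabiliser of a geodesic segment of length $2$ on a sphere of radius $2$; this is conceptually transparent but requires the same careful tracking of residues.
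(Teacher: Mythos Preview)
Your treatment of $n=0$ and $n=2$ is fine and essentially matches the paper (your $n=0$ argument is actually a bit cleaner). The genuine problem is the $n=1$ step, where you assert---without carrying it out---that the matrix bookkeeping collapses the $q-1$ left cosets to a single double coset. If you actually perform the computation you outline, it does not give this. Write $u_{z}^{-1}k_{1}u_{y}$ for an arbitrary $k_{1}=\sm{a}{b}{c}{d}\in K_{\theta^{-1}}$ (so $c\in\mathfrak{M}^{2}$, $ad\equiv 1\pmod{\mathfrak{M}}$); the only nontrivial constraint for this product to lie in $K_{\theta^{-1}}$ is that the upper-right entry be integral, and this reduces to $z\equiv a^{2}y\pmod{\mathfrak{M}}$. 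Thus $u_{y}$ and $u_{z}$ lie in the same $(K_{\theta^{-1}},K_{\theta^{-1}})$-double coset if and only if $z/y\in(\bbF_{q}^{\times})^{2}$, and since $q$ is odd this gives \emph{two} double cosets, not one. Your tree picture confirms it: $K_{\theta^{-1}}$ acts on the link of the midpoint $m$ of $[x_{0},\theta^{-1}x_{0}]$ through the diagonal torus of $\mathrm{SL}_{2}(\bbF_{q})$, whose action on $\mathbb{P}^{1}(\bbF_{q})\smallsetminus\{[0\!:\!1],[1\!:\!0]\}$ has exactly two orbits (the square classes).

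The paper's own proof contains the same slip. In the displayed identity expressing $\sm{1}{-\pi^{-1}z}{0}{1}$ as a product with $\sm{1}{-\pi^{-1}y}{0}{1}$ in the middle, the outer factors have lower-left entry $\pi(y^{-1}-z^{-1})$ (respectively $\pi(z^{-1}-y^{-1})$), which for $y\not\equiv z\pmod{\mathfrak{M}}$ lies in $\mathfrak{M}\smallsetminus\mathfrak{M}^{2}$ and hence \emph{not} in $K_{\theta^{-1}}=\sm{\mathfrak{O}^{\times}}{\mathfrak{O}}{\mathfrak{M}^{2}}{\mathfrak{O}^{\times}}$. So the corollary as stated is false for $n=1$; the sum in \eqref{nongradcomm-Sh} has two terms there, indexed by the two square classes, and the subsequent computation of $\gamma_{\theta}$ needs to be revisited accordingly.
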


\begin{proof}
    This follows from the previous proposition: the $n = 0$ case follows from the fact that the representatives are of the form $\theta^{-1}a^{-1}$ for various $a \in K = K_{\theta^0}$; the $n = 1$ case follows from the identity
    \begin{eqnarray*}
    \begin{pmatrix} 1 & -\pi^{-1}z \\ 0 & 1\end{pmatrix} & = & \begin{pmatrix} 1 & 0 \\ \pi(y^{-1} - z^{-1}) & 1\end{pmatrix}\begin{pmatrix} 1 & -\pi^{-1}y \\ 0 & 1\end{pmatrix}\begin{pmatrix} yz^{-1} & 0 \\ 0 & y^{-1}z\end{pmatrix}\begin{pmatrix} 1 & 0 \\ \pi(yz^{-2} - z^{-1}) & 1\end{pmatrix} \\
    & \in & K_{\theta^{-1}}\begin{pmatrix} 1 & -\pi^{-1}y \\ 0 & 1\end{pmatrix} K_{\theta^{-1}} 
    \end{eqnarray*}
    for nonzero $y, z \in \mathfrak{O}/\mathfrak{M}$ satisfying $y \neq z$; the $n = 2$ case is due to the fact that $K_{\theta^{-1}}\backslash (\theta^{-1}K\theta^n \cap K\theta K)$ is a singleton.
\end{proof}

Next, we slightly rewrite the formula for $\widetilde{\Gamma}_{\theta^n,h}$. The first term in the cup product is equal to
\[ \textnormal{res}^{K \cap \theta^n h^{-1} K h\theta^{-n}}_{K_{\theta^n} \cap \theta^n h^{-1} K h\theta^{-n}}(a_*\varphi). \]
Note that $a \in K$ and $h = \theta^{-1}a^{-1}\theta^n$.  Therefore, conjugating the two subgroups appearing above by $a^{-1}$ gives
\begin{eqnarray*}
a^{-1}(K \cap \theta^nh^{-1}Kh\theta^{-n})a & = & a^{-1}Ka \cap a^{-1}\theta^nh^{-1}Kh\theta^{-n}a =  K \cap \theta K\theta^{-1}  =  K_\theta\\
a^{-1}(K_{\theta^n} \cap \theta^n h^{-1} K h\theta^{-n})a & = & a^{-1}K_{\theta^n}a \cap a^{-1}\theta^n h^{-1} K h\theta^{-n}a  =  K \cap a^{-1}\theta^nK\theta^{-n}a \cap \theta K \theta^{-1}  =  K_\theta \cap \theta h K h^{-1}\theta^{-1}.
\end{eqnarray*}
Consequently, we may rewrite the first term as
\begin{equation}
    \label{first-cup-term}
\textnormal{res}^{K \cap \theta^n h^{-1} K h\theta^{-n}}_{K_{\theta^n} \cap \theta^n h^{-1} K h\theta^{-n}}(a_*\varphi) = a_*\textnormal{res}^{K_\theta}_{K_{\theta} \cap \theta h K h^{-1}\theta^{-1}}(\varphi).  
\end{equation}
Similarly, the second term in the cup product defining $\widetilde{\Gamma}_{\theta^n,h}$ is equal to
\[ \textnormal{res}^{\theta^n K\theta^{-n} \cap \theta^n h^{-1} K h\theta^{-n}}_{K_{\theta^n} \cap \theta^n h^{-1} K h\theta^{-n}}((a\theta c)_*\varphi). \]
Recall that $h = \theta^{-1}a^{-1}\theta^n = c\theta d$, which implies $\theta^{-n}a\theta c = d^{-1}\theta^{-1}$ and $h\theta^{-n}a\theta c = c$.  Therefore, conjugating the two subgroups appearing above by $(a\theta c)^{-1}$ gives
\begin{eqnarray*}
(a\theta c)^{-1}(\theta^n K\theta^{-n} \cap \theta^n h^{-1} K h\theta^{-n})(a\theta c) & = & c^{-1}\theta^{-1}a^{-1}\theta^{n}K\theta^{-n}a\theta c \cap c^{-1}\theta^{-1}a^{-1}\theta^n h^{-1} K h\theta^{-n}a\theta c \\
& = & \theta dKd^{-1}\theta^{-1} \cap c^{-1} K c  =  \theta K \theta^{-1} \cap K  =  K_\theta \\
(a\theta c)^{-1}(K_{\theta^n} \cap \theta^n h^{-1} K h\theta^{-n})(a\theta c) & = & c^{-1}\theta^{-1}a^{-1}K_{\theta^n}a\theta c \cap c^{-1}\theta^{-1}a^{-1}\theta^n h^{-1} K h\theta^{-n}a\theta c \\
& = & c^{-1}\theta^{-1}a^{-1}Ka\theta c \cap c^{-1}\theta^{-1}a^{-1}\theta^n K\theta^{-n}a\theta c \\
& &  \cap c^{-1}\theta^{-1}a^{-1}\theta^n h^{-1} K h\theta^{-n}a\theta c \\
& = & c^{-1}\theta^{-1}K\theta c \cap \theta K \theta^{-1} \cap K  =  c^{-1}\theta^{-1}K\theta c \cap K_\theta.
\end{eqnarray*}
We may therefore rewrite the second term as 
\begin{equation}
\label{second-cup-term}
 \textnormal{res}^{\theta^n K\theta^{-n} \cap \theta^n h^{-1} K h\theta^{-n}}_{K_{\theta^n} \cap \theta^n h^{-1} K h\theta^{-n}}((a\theta c)_*\varphi) = (a\theta c)_*\textnormal{res}^{K_\theta}_{c^{-1}\theta^{-1}K\theta c \cap K_\theta}(\varphi).  
\end{equation}

To continue, we calculate the subgroups appearing in the restriction.

\begin{lemma}
Let $h$ run over the representatives of $K_{\theta^{-1}}\backslash (\theta^{-1}K\theta^n \cap K \theta K)/K_{\theta^{-n}}$ as in Corollary \ref{double-coset-reps}.  The groups $K_\theta \cap \theta h K h^{-1}\theta^{-1}$ and $K_\theta \cap c^{-1}\theta^{-1} K \theta c$ from equations \eqref{first-cup-term} and \eqref{second-cup-term} are given as follows:
\begin{itemize}
    \item $n = 0$: for $h = \sm{\pi^{-1}}{0}{0}{\pi}$, we have $a = 1, c = \sm{0}{1}{-1}{0}, d = \sm{0}{-1}{1}{0}$, and 
    \[ K_\theta \cap \theta h K h^{-1}\theta^{-1} = K_\theta,\qquad K_\theta \cap c^{-1}\theta^{-1} K \theta c = K_\theta. \]
    \item $n = 1$: for $h = \sm{1}{\pi^{-1}}{0}{1}$, we have $a = \sm{1}{-\pi}{0}{1}, c = \sm{0}{1}{-1}{\pi}, d = \sm{1}{0}{\pi}{1}$, and
    \[ K_\theta \cap \theta h K h^{-1}\theta^{-1} = L,\qquad K_\theta \cap c^{-1}\theta^{-1} K \theta c = L. \]
    \item $n = 2$: for $h = \sm{\pi}{0}{0}{\pi^{-1}}$, we have $a = 1, c = 1, d = 1$, and 
    \[ K_\theta \cap \theta h K h^{-1}\theta^{-1} = K_{\theta^2},\qquad K_\theta \cap c^{-1}\theta^{-1} K \theta c = \theta^{-1}K_{\theta^2}\theta. \]
\end{itemize}
\end{lemma}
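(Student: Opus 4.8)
The proof is a case-by-case computation with $2\times 2$ matrices, organized by the value of $n$, using throughout that $K_{\theta^m} = \sm{\mathfrak{O}^\times}{\mathfrak{M}^{2m}}{\mathfrak{O}}{\mathfrak{O}^\times}$ for $m \geq 1$ while $K_{\theta^0} = K$. The first step is to check, by direct multiplication, that the triples $(a,c,d)$ in the statement do satisfy $h = \theta^{-1}a^{-1}\theta^n = c\theta d$; the element $a$ is then forced by the representative $h$ selected in Corollary \ref{double-coset-reps}.

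For $n = 0$ and $n = 2$ the conjugators that actually enter are either trivial or the Weyl element $w_0 := \sm{0}{1}{-1}{0} \in K$, so each of the two subgroups collapses to an intersection of the form $K_\theta \cap \theta^j K\theta^{-j}$. When $n = 0$ one has $h = \theta^{-1}$ and $a = 1$, hence $\theta h K h^{-1}\theta^{-1} = K$ and the first group is $K_\theta \cap K = K_\theta$; the second group is $w_0^{-1}\theta^{-1}K\theta\, w_0$, which contains $K_\theta$ (as one checks entrywise), so intersecting with $K_\theta$ gives $K_\theta$ again. When $n = 2$ one has $h = \theta$ and $a = c = d = 1$, so the first group is $K_\theta \cap \theta^2 K\theta^{-2} = K_{\theta^2}$ (using $K_{\theta^2}\subseteq K_\theta$), and the second group is $K_\theta \cap \theta^{-1}K\theta$, which an entrywise comparison identifies with $\sm{\mathfrak{O}^\times}{\mathfrak{M}^2}{\mathfrak{M}^2}{\mathfrak{O}^\times}\cap\textnormal{SL}_2 = \theta^{-1}K_{\theta^2}\theta$.

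The substantive case, which I expect to be the main obstacle, is $n = 1$. Here $\theta h = \sm{\pi}{1}{0}{\pi^{-1}}$, and conjugating a general $\sm{a}{b}{c}{d}\in K$ by $\theta h$ produces $\sm{a+\pi^{-1}c}{-\pi a - c + \pi^2 b + \pi d}{\pi^{-2}c}{d-\pi^{-1}c}$. Imposing that this matrix lie in $K_\theta = \sm{\mathfrak{O}^\times}{\mathfrak{M}^2}{\mathfrak{O}}{\mathfrak{O}^\times}$ forces $c\in\mathfrak{M}^2$ from the lower-left entry; given this, the condition on the upper-right entry forces $a \equiv d\pmod{\mathfrak{M}}$, and then the determinant relation $ad - bc = 1$ forces $\overline{a}^2 = 1$ in $\bbF_q$, i.e. $a \equiv \pm 1\pmod{\mathfrak{M}}$ (here $p$ odd is used to exclude further solutions), with $d$ automatically of the same sign; conversely every matrix subject to these constraints is realized. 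Reading off the entries of the conjugated matrix then identifies $K_\theta\cap\theta h K h^{-1}\theta^{-1}$ with the group $L$ of Lemma \ref{lem:Lcoh}, the factor $\langle -\textnormal{id}\rangle$ corresponding exactly to the sign ambiguity in $a \equiv \pm 1$. For the second subgroup I would observe the identity $(\theta c)(\theta h) = w_0 \in K$, which yields $\theta h K h^{-1}\theta^{-1} = (\theta c)^{-1}K(\theta c) = c^{-1}\theta^{-1}K\theta c$; hence $K_\theta\cap c^{-1}\theta^{-1}K\theta c = K_\theta\cap\theta h K h^{-1}\theta^{-1} = L$. The only genuinely delicate points are the valuation bookkeeping of the matrix entries and keeping track of the determinant-one constraint when passing between $\sm{a}{b}{c}{d}$ and its conjugate; the structural description of $L$, and in particular the appearance of $\langle -\textnormal{id}\rangle$, is then a direct consequence.
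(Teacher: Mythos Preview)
Your proof is correct and follows essentially the same route as the paper's: a case-by-case matrix computation, with the only substantive work in the case $n=1$. Two minor differences are worth noting. First, for the first subgroup at $n=1$ the paper conjugates an element of $K_\theta$ by $(\theta h)^{-1}$ and checks when the result lies in $K$, which yields the defining conditions of $L$ directly on the element of $K_\theta$; your parametrization by $g\in K$ is equivalent but requires the extra (easy) step of reading off the image and checking surjectivity onto $L$. Second, your observation $(\theta c)(\theta h)=\sm{0}{1}{-1}{0}\in K$, giving $c^{-1}\theta^{-1}K\theta c=\theta hKh^{-1}\theta^{-1}$ and hence immediately identifying the second subgroup with the first, is a neat shortcut; the paper simply says the second calculation is ``completely analogous.''
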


\begin{proof}
    The $n = 0$ and $n = 2$ cases are straighforward, so we focus on $n = 1$.  Suppose $g = \sm{a}{\pi^2b}{c}{d}$ is an element of $K_\theta$ (so that $a,d \in \mathfrak{O}^\times, b,c \in \mathfrak{O}$) which also lies in $\theta hKh^{-1}\theta^{-1}$.  This implies $h^{-1}\theta^{-1}g\theta h \in K$.  Writing this out gives
    \begin{eqnarray*}
        h^{-1}\theta^{-1}g\theta h & = & \begin{pmatrix}1 & -\pi^{-1} \\ 0 & 1\end{pmatrix} \begin{pmatrix}\pi^{-1} & 0 \\ 0 & \pi\end{pmatrix} \begin{pmatrix}a & \pi^2b \\ c & d\end{pmatrix} \begin{pmatrix}\pi & 0 \\ 0 & \pi^{-1}\end{pmatrix} \begin{pmatrix}1 & \pi^{-1} \\ 0 & 1\end{pmatrix} 
         =  \begin{pmatrix}a - \pi c & (b - c) + \pi^{-1}(a - d) \\ \pi^2 c & d + \pi c\end{pmatrix}
    \end{eqnarray*}
    Since this latter matrix must lie in $K$, we obtain $a \equiv d~\textnormal{mod}~\mathfrak{M}$; combining this with the equation $ad - \pi^2bc = 1$ shows that $a \equiv d \equiv \pm 1 ~\textnormal{mod}~\mathfrak{M})$, which implies $K_\theta \cap \theta h K h^{-1}\theta^{-1} = L$.  The calculation of $K_\theta \cap c^{-1}\theta^{-1} K \theta c$ is completely analogous.
\end{proof}

We may now determine the terms appearing in the cup product \eqref{cup-product-Gamma}.

\begin{lemma}
    Let $h$ run over the representatives of $K_{\theta^{-1}}\backslash (\theta^{-1}K\theta^n \cap K \theta K)/K_{\theta^{-n}}$ as in Corollary \ref{double-coset-reps}.  Then the terms in the cup product \eqref{cup-product-Gamma} take the following form (using the determination of the subgroups of the previous lemma):
    \begin{itemize}
        \item $n = 0$: for $h = \sm{\pi^{-1}}{0}{0}{\pi}$, we have 
        \[ a_*\textnormal{res}^{K_\theta}_{K_\theta}(\varphi) = \varphi, \qquad (a\theta c)_*\textnormal{res}^{K_\theta}_{K_\theta}(\varphi) = -\varphi. \]
        Consequently, we have
        \[ \widetilde{\Gamma}_{\theta^0,h} = 0. \]
        \item $n = 1$: for $h = \sm{1}{\pi^{-1}}{0}{1}$, we have
        \[ a_*\textnormal{res}^{K_\theta}_{L}(\varphi) = \varphi_{\textnormal{D}} + \varphi_{\textnormal{L}},\qquad (a\theta c)_*\textnormal{res}^{K_\theta}_{L}(\varphi) = \varphi_{\textnormal{D}} - \varphi_{\textnormal{U}}. \]
        Consequently, we have
        \[ \widetilde{\Gamma}_{\theta,h} = -(\varphi_{\textnormal{D}} \cup \varphi_{\textnormal{U}}) + (\varphi_{\textnormal{L}}\cup \varphi_{\textnormal{D}}) - (\varphi_{\textnormal{L}}\cup \varphi_{\textnormal{U}}). \]
        (Here, we use the notation of Corollary \ref{cor:H1L} and identify $\varphi \in H^1(K_\theta, k)$ with the associated $\mathbb{F}_p$-linear homomorphism $\mathbb{F}_q \longrightarrow k$.)
        \item $n = 2$: for $h = \sm{\pi}{0}{0}{\pi^{-1}}$, we have
        \[ a_*\textnormal{res}^{K_\theta}_{K_{\theta^2}}(\varphi) = \textnormal{res}^{K_\theta}_{K_{\theta^2}}(\varphi),\qquad (a\theta c)_*\textnormal{res}^{K_\theta}_{\theta^{-1}K_{\theta^2}\theta}(\varphi) = \textnormal{res}^{K_\theta}_{K_{\theta^2}}(\varphi). \]
        Consequently, we have
        \[ \widetilde{\Gamma}_{\theta^2,h} = 0. \]
    \end{itemize}
\end{lemma}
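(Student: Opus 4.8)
The plan is to evaluate the two factors of the cup product \eqref{cup-product-Gamma} by combining the rewritten formulas \eqref{first-cup-term} and \eqref{second-cup-term} with the subgroup identifications of the preceding lemma, and then to compute the conjugation actions of $a$ and of $a\theta c$ on $\varphi$ by direct matrix calculation. Throughout, the essential point is that $\varphi$, viewed as a homomorphism, is supported on $T^0$, hence factors through the top-left diagonal entry; concretely one identifies $\varphi\in H^1(K_\theta,k)$ with an $\bbF_p$-linear map $\bbF_q\longrightarrow k$ via $(K_\theta)_\Phi\cong\bbF_q$, as in Corollary \ref{cor:frattini-longest}.

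First I would dispose of $n=0$ and $n=2$. For $n=0$ the two relevant subgroups are both $K_\theta$ and $a=1$, so the first factor of \eqref{cup-product-Gamma} is $\varphi$; since $a\theta c=\theta\sm{0}{1}{-1}{0}=\sm{0}{\pi}{-\pi^{-1}}{0}$ conjugates $\sm{u}{b}{c}{d}\in K_\theta$ to $\sm{d}{-\pi^2 c}{-\pi^{-2}b}{u}$, whose top-left entry is $d\equiv u^{-1}\pmod{\mathfrak M^2}$, one obtains $(a\theta c)_*\varphi=-\varphi$ after unwinding $(1+\pi x)^{-1}\equiv 1-\pi x\pmod{\mathfrak M^2}$ and using that $\varphi$ is an $\bbF_p$-linear homomorphism. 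For $n=2$ one has $a=c=1$, and since $\theta$ centralizes $T^0$ both factors equal $\textnormal{res}^{K_\theta}_{K_{\theta^2}}(\varphi)$. In both cases $\widetilde\Gamma_{\theta^n,h}$ is the cup product of a degree-one class with $\pm$ itself, which vanishes because $p$ is odd and the cup product on group cohomology with coefficients in the field $k$ is graded-commutative (compare Subsection \ref{subsec:cup}).

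The substantive case is $n=1$, where the common subgroup equals $L$. I would first observe that $\textnormal{res}^{K_\theta}_L(\varphi)=\varphi_{\textnormal{D}}$: the top-left entry of an element of $L$ has the form $\pm(1+\pi a)$ with $-1$ a Teichm\"uller unit, so the restriction sends such an element to $\varphi(\overline a)$, which is exactly $\varphi_{\textnormal{D}}$ in the notation of Corollary \ref{cor:H1L}. Then, plugging in $a=\sm{1}{-\pi}{0}{1}$ and $a\theta c=\sm{1}{0}{-\pi^{-1}}{1}$, I would conjugate a general element $\sm{1+\pi\alpha}{\pi^2\beta}{\gamma}{1+\pi\delta}\in L$ by $a^{-1}=\sm{1}{\pi}{0}{1}$, resp.\ by $(a\theta c)^{-1}=\sm{1}{0}{\pi^{-1}}{1}$, and read off that the new top-left entry is $1+\pi(\alpha+\gamma)$, resp.\ $1+\pi(\alpha-\beta)$. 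Via the identification $L_\Phi\cong\mathfrak{sl}_2(\bbF_q)$ of Lemma \ref{lem:Lcoh} this translates into $a_*\varphi_{\textnormal{D}}=\varphi_{\textnormal{D}}+\varphi_{\textnormal{L}}$ and $(a\theta c)_*\varphi_{\textnormal{D}}=\varphi_{\textnormal{D}}-\varphi_{\textnormal{U}}$; expanding $(\varphi_{\textnormal{D}}+\varphi_{\textnormal{L}})\cup(\varphi_{\textnormal{D}}-\varphi_{\textnormal{U}})$ and discarding the term $\varphi_{\textnormal{D}}\cup\varphi_{\textnormal{D}}=0$ gives the asserted value of $\widetilde\Gamma_{\theta,h}$.

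The main obstacle is really bookkeeping rather than anything conceptual: one must stay consistent that $g_*$ acts on cocycles by the conjugation $g^{-1}(-)g$, keep track of which matrix entry each of $\varphi_{\textnormal{D}},\varphi_{\textnormal{L}},\varphi_{\textnormal{U}}$ records, and invoke at the right moment that the input $\varphi$ is supported on the diagonal torus — this last fact being precisely what collapses $\textnormal{res}^{K_\theta}_L(\varphi)$ to $\varphi_{\textnormal{D}}$ alone rather than to a combination of the three coordinate homomorphisms on $L$.
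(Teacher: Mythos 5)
Your proposal is correct and follows essentially the same approach as the paper: plug in the subgroup identifications, conjugate matrix-by-matrix using that $g_*$ acts by $g^{-1}(\cdot)g$, and exploit that $\varphi$ reads only the diagonal entry; the $n=1$ computations match the paper's exactly. Your added observation that $\widetilde{\Gamma}_{\theta^0,h}$ and $\widetilde{\Gamma}_{\theta^2,h}$ vanish because a degree-$1$ class cups with itself to zero in odd characteristic is a clean way to spell out what the paper leaves as ``straightforward.''
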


\begin{proof}
    The $n = 0$ and $n = 2$ cases follow in a straightforward manner, using the determination of the elements $a,c,d \in K$ from the previous lemma, and the fact that $\varphi$ is supported on $T^0$ (see Corollary \ref{cor:frattini-Klambda}).  We focus on the $n = 1$ case.  Note first that we have $\textnormal{res}^{K_{\theta}}_L(\varphi) = \varphi_{\textnormal{D}}$.  Furthermore, the elements $a = \sm{1}{-\pi}{0}{1}$ and $a\theta c = \sm{1}{0}{-\pi^{-1}}{1}$ both normalize $L$, and we have
    \begin{eqnarray*}
        (a_*\varphi_{\textnormal{D}})\left(\begin{pmatrix} 1 + \pi a & \pi^2b \\ c & 1 + \pi d\end{pmatrix}\cdot (-\textnormal{id})^j\right) & = & \varphi_{\textnormal{D}}\left(\begin{pmatrix}1 & \pi \\ 0 & 1\end{pmatrix}\begin{pmatrix} 1 + \pi a & \pi^2b \\ c & 1 + \pi d\end{pmatrix}\cdot (-\textnormal{id})^j\begin{pmatrix}1 & -\pi \\ 0 & 1\end{pmatrix}\right) \\
        & = & \varphi_{\textnormal{D}}\left(\begin{pmatrix} 1 + \pi(a + c) & \pi^2(- a + b - c + d) \\ c & 1 + \pi(-c + d)\end{pmatrix}\cdot (-\textnormal{id})^j\right) \\
        & = & \varphi(\overline{a}) + \varphi(\overline{c})\\
        ((a\theta c)_*\varphi_{\textnormal{D}})\left(\begin{pmatrix} 1 + \pi a & \pi^2b \\ c & 1 + \pi d\end{pmatrix}\cdot (-\textnormal{id})^j\right) & = & \varphi_{\textnormal{D}}\left(\begin{pmatrix}1 & 0 \\ \pi^{-1} & 1\end{pmatrix}\begin{pmatrix} 1 + \pi a & \pi^2b \\ c & 1 + \pi d\end{pmatrix}\cdot (-\textnormal{id})^j\begin{pmatrix}1 & 0 \\-\pi^{-1} & 1\end{pmatrix}\right) \\
        & = & \varphi_{\textnormal{D}}\left(\begin{pmatrix} 1 + \pi(a - b) & \pi^2b \\ a - b + c - d & 1 + \pi(b + d)\end{pmatrix}\cdot (-\textnormal{id})^j\right) \\
        & = & \varphi(\overline{a}) - \varphi(\overline{b}).
    \end{eqnarray*}
    Thus, we obtain
    \[ a_*\varphi_{\textnormal{D}} = \varphi_{\textnormal{D}} + \varphi_{\textnormal{L}},\qquad (a\theta c)_*\varphi_{\textnormal{D}} = \varphi_{\textnormal{D}} - \varphi_{\textnormal{U}}, \]
    and therefore
    \[ \widetilde{\Gamma}_{\theta,h} = a_*\varphi_{\textnormal{D}} \cup (a\theta c)_*\varphi_{\textnormal{D}} = - (\varphi_{\textnormal{D}} \cup \varphi_{\textnormal{U}}) + (\varphi_{\textnormal{L}}\cup \varphi_{\textnormal{D}}) - (\varphi_{\textnormal{L}}\cup \varphi_{\textnormal{U}}). \]
\end{proof}

We now conclude.  Combining equation \eqref{nongradcomm-square} with the above lemmas shows that $\alpha^2 = \gamma_\theta$, with 
\begin{eqnarray*}
    \textnormal{Sh}_{\theta}(\gamma_\theta) & = &  \textnormal{cores}^L_{K_\theta}\left(\widetilde{\Gamma}_{\theta,\sm{1}{\pi^{-1}}{0}{1}}\right) 
     =  -\textnormal{cores}^L_{K_\theta}(\varphi_{\textnormal{D}} \cup \varphi_{\textnormal{U}}) +\textnormal{cores}^L_{K_\theta}(\varphi_{\textnormal{L}}\cup \varphi_{\textnormal{D}}) - \textnormal{cores}^L_{K_\theta}(\varphi_{\textnormal{L}}\cup \varphi_{\textnormal{U}}) \\
    & = & -\textnormal{cores}^L_{K_\theta}(\varphi_{\textnormal{L}}\cup \varphi_{\textnormal{U}}) \quad
     \neq  0,
\end{eqnarray*}
where the last inequality follows from Lemma \ref{lem:corestrictions}.  Hence $\alpha^2 \neq 0$, and therefore $E_K^*$ is not graded-commutative.

\appendix

\section{\label{app:RC}Proof of Propositions    \ref{prop:RP} and \ref{prop:CP}}

 Recall that $\U$ and $\V$ are two open compact subgroups such that $\U\subseteq \V$.  Proposition \ref{prop:CP} follows from the commutativity of \eqref{f:C1} and \eqref{f:C2} below. For Proposition \ref{prop:RP}, see the end of the section. We first record the following obvious result.
 
 \begin{lemma}
Given an open compact subgroup $\mathcal C$ of $G$, we   have the commutative diagrams
\begin{equation}
\label{H*piP}
\begin{tikzcd}
H^*(\mathcal C, \X_\U) \ar[d, equals] \ar[rrrr, "H^*(\mathcal C{,} \pi_{\U,\V})"] &&& & H^*(\mathcal C, \X_\V)\ar[d,equals] \\
\Ext_{\Mod(G)}^*(\X_{\mathcal C}, \X_\U) \ar[rrrr, "f \mapsto \pi_{\U,\V}\cdot f"] &&&& \Ext_{\Mod(G)}^*(\X_{\mathcal C}, \X_\V)  \ .
\end{tikzcd} 
\end{equation}  

\begin{equation} 
\label{H*iotaP}
\begin{tikzcd}
H^*({\mathcal C}, \X_\V) \ar[d, equals] \ar[rrrr, "H^*({\mathcal C}{,} \iota_{\V,\U})"] &&& & H^*({\mathcal C}, \X_\U)\ar[d, equals] \\
\Ext_{\Mod(G)}^*(\X_{\mathcal C}, \X_\V)
\ar[rrrr, "f \mapsto \iota_{\V,\U}\cdot f"] &&&& \Ext_{\Mod(G)}^*(\X_{\mathcal C}, \X_\U)  \ .
\end{tikzcd}
\end{equation}   
where  the  lower horizontal maps are Yoneda compositions.

\end{lemma}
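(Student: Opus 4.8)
The plan is to deduce both diagrams from the fact that the chain of isomorphisms underlying \eqref{f:frob} is natural in the representation placed in the second slot. Recall how that chain is built for a smooth $G$-representation $M$: one picks an injective resolution $M\to J^\bullet$ in $\Mod(G)$ and computes $\Ext^*_{\Mod(G)}(\X_{\mathcal C},M)=H^*\big(\Hom_G(\X_{\mathcal C},J^\bullet)\big)$; Frobenius reciprocity for the compact induction $\X_{\mathcal C}=\ind_{\mathcal C}^G(1)$ provides a natural isomorphism of complexes $\Hom_G(\X_{\mathcal C},J^\bullet)=\Hom_G(\ind_{\mathcal C}^G(1),J^\bullet)\cong\Hom_{\mathcal C}(1,J^\bullet|_{\mathcal C})=(J^\bullet|_{\mathcal C})^{\mathcal C}$; and since the restriction functor $\Mod(G)\to\Mod(\mathcal C)$ preserves injectives, $J^\bullet|_{\mathcal C}$ is an injective resolution of $M|_{\mathcal C}$, so the cohomology of the last complex is $H^*(\mathcal C,M)$. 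Each of these steps is functorial in $M$, so altogether one obtains a natural isomorphism of cohomological $\delta$-functors $\Ext^*_{\Mod(G)}(\X_{\mathcal C},{}_-)\xrightarrow{\ \sim\ }H^*(\mathcal C,{}_-)$ on $\Mod(G)$.

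Next I would apply naturality to the two morphisms appearing in the statement. For any $G$-equivariant map $\phi:M\to N$, the naturality square of the above isomorphism asserts exactly that $H^*(\mathcal C,\phi):H^*(\mathcal C,M)\to H^*(\mathcal C,N)$ corresponds, under the identifications $H^*(\mathcal C,{}_-)\cong\Ext^*_{\Mod(G)}(\X_{\mathcal C},{}_-)$, to the map $\phi_*$ induced on $\Ext^*_{\Mod(G)}(\X_{\mathcal C},{}_-)$ by $\phi$ in the second variable. It then remains to recognize $\phi_*$ as the left Yoneda product with the class $\phi\in\Hom_G(M,N)=\Ext^0_{\Mod(G)}(M,N)$: choosing a lift $J_M^\bullet\to J_N^\bullet$ of $\phi$ to the chosen injective resolutions, post-composition with this lift simultaneously computes $\phi_*$ and the Yoneda product $f\mapsto\phi\cdot f$. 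Specializing $\phi$ to $\pi_{\U,\V}:\X_\U\to\X_\V$ yields \eqref{H*piP}, and specializing $\phi$ to $\iota_{\V,\U}:\X_\V\to\X_\U$ yields \eqref{H*iotaP}.

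Since the argument is entirely formal, there is no genuine obstacle. The only points that call for the (routine) cochain-level bookkeeping indicated above are: checking that the Frobenius-reciprocity isomorphism of complexes is natural in $M$ and independent of the chosen resolutions up to the evident compatibilities, and matching the two descriptions of $\phi_*$ (as the functoriality of $\Ext$ in its second argument, and as the Yoneda product with a degree-zero class). Both are standard homological algebra, so the write-up will be brief.
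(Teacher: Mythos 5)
Your proof is correct, and since the paper offers no proof of this lemma (it is introduced with ``we first record the following obvious result''), there is no alternative argument in the paper to compare against. The naturality-of-Frobenius-reciprocity argument you lay out, together with the identification of $\phi_*$ (functoriality of $\Ext$ in the second variable) with Yoneda multiplication by the degree-zero class $\phi$, is exactly the routine justification the authors are taking for granted.
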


\begin{lemma} Let $Y$ be a representation of $G$.  We have the following commutative diagrams
\begin{equation}
\label{resPI}
\begin{tikzcd}
H^*(\V, Y) \ar[d, equals] \ar[rrrr, "\res^\V_\U"]&&& & H^*(\U, Y)\ar[d, equals]\\
\Ext_{\Mod(G)}^*(\X_\V, Y) \ar[rrrr, "f \mapsto f\cdot  \pi_{\U,\V}"] &&&& \Ext_{\Mod(G)}^*(\X_\U, Y)  
\end{tikzcd}
\end{equation}  
and 
\begin{equation}
\label{coresPI}
\begin{tikzcd}
H^*(\U, Y) \ar[d, equals] \ar[rrrr, "\cores^\U_\V"] &&& & H^*(\V, Y)\ar[d, equals]\\
\Ext_{\Mod(G)}^*(\X_\U, Y) \ar[rrrr, "f \mapsto f\cdot  \iota_{\V,\U}"] &&&& \Ext_{\Mod(G)}^*(\X_\V, Y)  \ .
\end{tikzcd} 
\end{equation} 
\end{lemma}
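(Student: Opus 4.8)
The plan is to identify both horizontal composites in each diagram with morphisms of cohomological $\delta$-functors in the variable $Y$, to check that they agree in cohomological degree $0$, and then to conclude by a universality (effaceability) argument. Recall first that the vertical identifications are the Frobenius reciprocity isomorphisms $H^*(\mathcal{C},Y)\cong \Ext^*_{\Mod(G)}(\X_{\mathcal{C}},Y)$ of \eqref{f:frob}; since $\Mod(G)$ has enough injectives and restriction to $\mathcal{C}$ preserves injectives, both sides are the right derived functors (in $Y$) of $Y\longmapsto Y^{\mathcal{C}}\cong\Hom_{\Mod(G)}(\X_{\mathcal{C}},Y)$, and the displayed isomorphism is an isomorphism of $\delta$-functors $\Mod(G)\to k\text{-modules}$. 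In particular, since $\Ext^n_{\Mod(G)}(\X_{\mathcal{C}},I)=0$ for injective $I$ and $n>0$, each of these $\delta$-functors is effaceable in positive degrees, hence universal.

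Next I would check that each of the four horizontal maps is a morphism of $\delta$-functors. On the $\Ext$ side, the lower map $f\longmapsto f\cdot\pi_{\U,\V}$ (resp. $f\longmapsto f\cdot\iota_{\V,\U}$) is Yoneda composition with the fixed class $\pi_{\U,\V}\in\Hom_G(\X_\U,\X_\V)\subseteq\Ext^0_{\Mod(G)}(\X_\U,\X_\V)$ (resp. $\iota_{\V,\U}\in\Ext^0_{\Mod(G)}(\X_\V,\X_\U)$), so it is natural in $Y$ and commutes with connecting homomorphisms, hence defines a morphism of $\delta$-functors $\Ext^*_{\Mod(G)}(\X_\V,-)\to\Ext^*_{\Mod(G)}(\X_\U,-)$ (resp. in the other direction). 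On the cohomology side, $\res^\V_\U$ and $\cores^\U_\V$ are morphisms of $\delta$-functors by construction (see \cite[\S I.5]{NSW}). Thus in each diagram both composites are morphisms of $\delta$-functors out of a universal $\delta$-functor, so by the uniqueness of such extensions it suffices to compare them on $H^0$. In degree $0$ this is a direct computation: under $H^0(\mathcal{C},Y)\cong Y^{\mathcal{C}}$, a class $v\in Y^{\mathcal C}$ corresponds to the $G$-morphism $\phi_v\colon\X_{\mathcal{C}}\to Y$, $\chara_{\mathcal{C}}\mapsto v$ (as in the proof of well-definedness of $\chi^\U_{\triv}$). For \eqref{resPI}, if $v\in Y^\V$ then $\phi_v\circ\pi_{\U,\V}$ sends $\chara_\U\mapsto\phi_v(\chara_\V)=v$, so the lower composite is the inclusion $Y^\V\hookrightarrow Y^\U$, which is exactly $\res^\V_\U$ on $H^0$. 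For \eqref{coresPI}, using $\iota_{\V,\U}(\chara_\V)=\sum_{x\in\V/\U}x\cdot\chara_\U$ from \eqref{iUV}, if $w\in Y^\U$ then $\phi_w\circ\iota_{\V,\U}$ sends $\chara_\V\mapsto\sum_{x\in\V/\U}x\cdot w$, which is precisely the norm/transfer description of $\cores^\U_\V$ in degree $0$. This settles the agreement on $H^0$, and universality finishes the proof.

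The main obstacle is not a hard step but rather pinning down the degree-$0$ identification of $\cores$ with the sum over coset representatives in a way that matches the normalization of corestriction used in \cite{NSW}; once that bookkeeping is fixed, the $\delta$-functor argument is routine. As an alternative that avoids invoking universality, one can argue directly on an injective resolution $Y\to J^\bullet$ in $\Mod(G)$: restricting it to $\mathcal{C}$ yields an injective resolution computing $H^*(\mathcal{C},Y)$ and, by Frobenius reciprocity applied termwise, $\Ext^*_{\Mod(G)}(\X_{\mathcal{C}},Y)$; the two horizontal maps are then induced by the chain maps $\Hom_G(\pi_{\U,\V},J^\bullet)$ and $\Hom_G(\iota_{\V,\U},J^\bullet)$, whose effect on cocycles reduces to the same degree-$0$ computations performed above. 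This is essentially the previous argument unwound, so I would present the $\delta$-functor version as the main proof.
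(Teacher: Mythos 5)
Your main argument is correct, and it takes a genuinely different route from the paper's. The paper argues directly on an injective resolution $Y \to \mathcal{Y}^\bullet$: it restricts the resolution to $\U$ and $\V$, applies Frobenius reciprocity termwise to identify $\Hom_{\V}(k,\mathcal{Y}^\bullet|_\V) \cong \Hom_G(\X_\V,\mathcal{Y}^\bullet)$, and then checks the commutativity at the cochain level — exactly the degree-$0$ computation you perform, but done once and for all on the resolution, so that passing to cohomology immediately gives all degrees. You instead package the same degree-$0$ computation into a universality argument: both horizontal composites are morphisms of cohomological $\delta$-functors out of a universal (effaceable) $\delta$-functor, so agreement in degree $0$ forces agreement in all degrees. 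Both are sound. The paper's version is more self-contained (no appeal to universality of $\delta$-functors, only Frobenius reciprocity and functoriality of $\Hom$), while yours cleanly separates the formal $\delta$-functor structure from the elementary bookkeeping; but the input — the identity $\phi_v \circ \pi_{\U,\V}$ is the inclusion and $\phi_w \circ \iota_{\V,\U}$ is the norm — is the same. You even correctly identify the paper's proof as your "alternative that avoids invoking universality," so you are aware of both routes; the paper simply chose the unwound version.

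One small caveat to keep in mind if you write this up: when you assert that $f \mapsto f \cdot \pi_{\U,\V}$ is a morphism of $\delta$-functors, you should say a word about why Yoneda composition with a fixed $\Ext^0$-class commutes with the connecting homomorphisms in the other variable — this is standard, but it is the one place where the $\delta$-functor formalism is doing real work, and citing or sketching it would make the proof airtight.
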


\begin{proof}
Let $Y \longrightarrow \mathcal Y^\bullet$ be an injective resolution in $\Mod(G)$ (hence in $\Mod(\U)$ and $\Mod(\V)$ by restriction).  We have the following commutative diagrams:

\begin{equation*}
\begin{tikzcd}
\Hom_\V(k, \mathcal Y^\bullet\vert_\V) \ar[d, equals] \ar[rrrr, "\res^\V_\U"] &&& & \Hom_\U(k, \mathcal Y^\bullet\vert_ \U)\ar[d, equals]\\
\Hom_G(\X_\V, \mathcal Y^\bullet) \ar[rrrr, "f\mapsto f\cdot \pi_{\U,\V}"] &&&& \Hom_G(\X_\U, \mathcal Y^\bullet)  \ .
\end{tikzcd}
\end{equation*}  
and
\begin{equation*}
\begin{tikzcd}
( \mathcal Y^\bullet)^\U\ar[d, equals] \ar[rrrr, "N_{\V/\U}: x\mapsto \sum_{v\in \V/\U} vx"] &&& & (\mathcal{Y}^\bullet)^ \V\ar[d, equals]\\
\Hom_G(\X_\U, \mathcal Y^\bullet) \ar[rrrr, "f\mapsto f\cdot \iota_{\V, \U}"] &&&& \Hom_G(\X_\V, \mathcal Y^\bullet)  \ .
\end{tikzcd} 
\end{equation*}
where the vertical equality signs are the isomorphisms given by Frobenius reciprocity.  Passing to cohomology we obtain the lemma.
\end{proof}

\begin{lemma}
\label{lemma:Shcomp}
Let $\U, \V$ and $\W$ be three open compact subgroups of $G$ satisfying $\W \supseteq \V\supseteq \U$. Let $g\in G$.  Then the following diagram is commutative:
\begin{equation}
\begin{tikzcd}
 H^*(\U\cap g\V g^{-1},k) \ar[d, "H^*(\U \cap g\V g^{-1}{,} \mathrm{i}'_g)"'] \ar[rrrr, "\cores^{\U\cap g\V g^{-1}}_{\V\cap g\W  g^{-1}}"] &&& & H^*(\V\cap g\W  g^{-1},k) \\
H^*(\U\cap g\V g^{-1},\ind_\W ^{\V g \W }(1)) \ar[rrrr, "\cores^{\U\cap g\V g^{-1}}_\V"] &&&&H^*(\V,\ind_\W ^{\V g \W }(1)) \ar[u, "\Sh_g^{\V, \W}"']
\end{tikzcd}
\end{equation} 
where the map $\mathrm{i}_g':k \longrightarrow \ind_{\W}^{\V g\W}(1)$ is the $(\V \cap g\W g^{-1})$-equivariant map defined by $a \longmapsto a\chara_{g\W}$.
\end{lemma}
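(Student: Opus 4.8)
The plan is to deduce commutativity by composing with the explicit inverse of the hybrid Shapiro map $\Sh_g^{\V,\W}$ recorded in \eqref{f:Shapiro-inversehybrid}, which turns the assertion into a statement involving only corestriction maps and maps induced on coefficient modules. First I would note that the map $\mathrm{i}'_g$ of the statement is precisely the map $\mathrm{i}_g : k \longrightarrow \ind_\W^{\V g \W}(1)$, $a \longmapsto a\chara_{g\W}$, that appears in \eqref{f:Shapiro-inversehybrid} applied to the pair $(\V, \W)$; by definition, then,
\[ (\Sh_g^{\V,\W})^{-1} = \cores^{\V \cap g\W g^{-1}}_{\V} \circ H^*\big(\V\cap g\W g^{-1}, \mathrm{i}'_g\big). \]
Hence the commutativity of the diagram in the lemma is equivalent to the identity
\[ (\Sh_g^{\V,\W})^{-1} \circ \cores^{\U\cap g\V g^{-1}}_{\V\cap g\W g^{-1}} = \cores^{\U\cap g\V g^{-1}}_{\V} \circ H^*\big(\U\cap g\V g^{-1}, \mathrm{i}'_g\big). \]

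Next I would expand the left-hand side using the displayed formula for $(\Sh_g^{\V,\W})^{-1}$, and then commute the coefficient map $\mathrm{i}'_g$ past the inner corestriction. This is legitimate: since $\U \subseteq \V$ we have $\U\cap g\V g^{-1} \subseteq \V\cap g\W g^{-1}$, so $\mathrm{i}'_g$ restricts to a morphism of $(\U\cap g\V g^{-1})$-representations, and corestriction is natural with respect to morphisms of coefficient modules (\cite[Prop. 1.5.3]{NSW}). The left-hand side thereby becomes
\[ \cores^{\V \cap g\W g^{-1}}_{\V} \circ \cores^{\U\cap g\V g^{-1}}_{\V\cap g\W g^{-1}} \circ H^*\big(\U\cap g\V g^{-1}, \mathrm{i}'_g\big), \]
and transitivity of corestriction along the chain $\U\cap g\V g^{-1} \subseteq \V\cap g\W g^{-1} \subseteq \V$ collapses the two corestriction maps into $\cores^{\U\cap g\V g^{-1}}_{\V}$, which is exactly the right-hand side.

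All three ingredients used — the inverse formula \eqref{f:Shapiro-inversehybrid} for the hybrid Shapiro map, naturality of corestriction in the coefficients, and transitivity of corestriction — are valid in every cohomological degree, so no dimension shifting is needed and the argument is essentially bookkeeping. The only point requiring a moment's attention is the chain of inclusions of compact open subgroups $\U\cap g\V g^{-1} \subseteq \V\cap g\W g^{-1} \subseteq \V$ that makes the restriction of $\mathrm{i}'_g$ and the transitivity of corestriction meaningful; this is precisely where the hypothesis $\U \subseteq \V \subseteq \W$ is used. I expect this to be the main (and essentially only) obstacle, and it is a trivial one.
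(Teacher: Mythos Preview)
Your proof is correct and takes a genuinely different route from the paper's. The paper computes explicitly at the level of homogeneous cocycles: it starts from a class $[A]\in H^r(\U\cap g\V g^{-1},k)$, pushes it via $\mathrm{i}'_g$ to the cocycle $[A(_-)\chara_{g\W}]$, applies the corestriction $\cores^{\U\cap g\V g^{-1}}_{\V}$ using a set of coset representatives $\mathcal{X}$ for $(\U\cap g\V g^{-1})\backslash\V$, and then applies $\Sh_g^{\V,\W}$ by restricting and evaluating at $g$; the key observation is that only the representatives lying in $\V\cap g\W g^{-1}$ survive evaluation at $g$, and these form a set of representatives for $(\U\cap g\V g^{-1})\backslash(\V\cap g\W g^{-1})$, which identifies the result with $\cores^{\U\cap g\V g^{-1}}_{\V\cap g\W g^{-1}}(A)$. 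Your argument instead composes with the inverse formula \eqref{f:Shapiro-inversehybrid} and reduces everything to two standard facts (naturality of corestriction in the coefficients and transitivity of corestriction along $\U\cap g\V g^{-1}\subseteq\V\cap g\W g^{-1}\subseteq\V$), never touching a cocycle. Your route is shorter and entirely conceptual; the paper's explicit computation has the virtue of making transparent exactly why the subgroup $\V\cap g\W g^{-1}$ appears, but both establish the same identity.
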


\begin{proof} 
Suppose that $* = r$. Let $[A]\in H^r(\U\cap g\V g^{-1},k)$, where $A$ is a homogeneous $r$-cocycle, and $[\widetilde{A}] := H^r(\U \cap g\V g^{-1}, \mathrm{i}'_g)([A]) = [A(_-) \chara_{g\W}]\in H^r(\U\cap g\V g^{-1},\ind_\W ^{\V g \W }(1))$.  We fix a system of representatives ${\mathcal X}$ of $(\U\cap g\V g^{-1} )\backslash\V$ and for $p\in \V$, we denote by $\bar p$ the unique element in ${\mathcal X}$ such that $(\U\cap g\V g^{-1}) p=(\U\cap g\V g^{-1})\bar p$.
Let  $(p_0,\ldots,p_r) \in \V^{r + 1}$.  By \cite[\S I.5.4]{NSW}, we have
\begin{eqnarray*}
 \left(\cores^{\U\cap g\V g^{-1} }_\V(\widetilde A)\right)(p_0,\ldots,p_r) & = &  
 \sum_{x\in \mathcal X} x ^{-1}\cdot \widetilde{A}(x p_0 \overline{xp_0}^{-1}, x p_1 \overline{xp_1}^{-1} \ldots, x p_r \overline{xp_r}^{-1}) \\
 & = & \sum_{x\in \mathcal X}  A(x p_0 \overline{xp_0}^{-1}, x p_1 \overline{xp_1}^{-1} \ldots, x p_r \overline{xp_r}^{-1}) \chara _{x^{-1}g\W } \ .
\end{eqnarray*}
Now applying the right-hand side Shapiro map means that we first restrict to  $\V\cap g \W  g^{-1}$ and then evaluate at $g$.  But $g\in x^{-1}g\W $ if and only if $x\in \V \cap g \W  g^{-1}$.  Hence, if we let $\mathcal{Y} := \mathcal{X} \cap \V\cap g\W  g^{-1}$, then $\mathcal{Y}$ is a system of representatives of $ (\U \cap g\V g^{-1} )\backslash (\V\cap g\W  g^{-1})$. Therefore, for $(p_0,\ldots,p_r)\in (\V\cap g \W  g^{-1})^{r + 1}$, we have:
\begin{eqnarray*}
  \Sh_g^{\V, \W}\left(\cores^{\U\cap g\V g^{-1} }_\V(\widetilde{A})\right)(p_0,\ldots,p_r) & = & 
 \sum_{y\in \mathcal Y}   A(y p_0 \overline{yp_0}^{-1}, y p_1 \overline{yp_1}^{-1} \ldots, y p_r \overline{yp_r}^{-1}) \ ,
\end{eqnarray*} 
which shows $ \Sh_g^{\V, \W}\big(\cores^{\U\cap g\V g^{-1} }_\V(\widetilde{A})\big) = \cores^{\U\cap g\V g^{-1} }_{\V\cap g\W  g^{-1}}(A)$.
\end{proof}

\begin{lemma}
Let $g\in G$.
We have the commutative diagram
\begin{equation}
\label{f:C1}
\begin{tikzcd}
\Ext^*_{\Mod (G)} (\X_\U, \X_\V ) \ar[d, equals] \ar[rrr, "f \mapsto f\cdot \iota_{\V,\U}"] &&& \Ext^*_{\Mod (G)} (\X_ \V, \X_ \V )\ar[d, equals]\\
H^*(\U , \X_ \V)\ar[rrr, "\cores^\U _{\V}"] & && H^*(\V, \X_ \V) \\
H^*(\U , \ind_{\V}^{ \U g\,\V}(1)) \ar[d, "\Sh^{\U,\V}_g"] \ar[u, hook] \ar[rrr, "\cores^\U _{\V}\circ H^*(\U {,} \upsilon_g)"] & && \ar[d, "\Sh_g^{\V}"] \ar[u, hook]\ H^*(\V, \X_ \V(g)) \\
H^*(\U \cap \V_g,k) \ar[rrr, "\cores^{\U \cap \V_g}_{\V_g}"] & && H^*(\V_g, k) 
\end{tikzcd}
\end{equation} 
where $\upsilon_g$ is the natural $\U $-equivariant inclusion $\ind_{\V}^{ \U g\,\V}(1) \longhookrightarrow \X_ \V(g)$.  Further, we have the commutative diagram
\begin{equation}
\label{f:C2}
\begin{tikzcd}
\Ext^*_{\Mod (G)} (\X_\U, \X_\U ) \ar[rrr, "f \mapsto \pi_{\U,\V}\cdot f "] \ar[d, "{\rotatebox{90}{$\sim$}}"] &&& \Ext_{\Mod(G)}^*(\X_\U, \X_\V) \ar[d, "{\rotatebox{90}{$\sim$}}"]\\
H^*(\U, \X_{\U})\ar[rrr, "{H^*(\U{,} \pi_{\U,\V})}"] & && H^*(\U, \X_\V)  \\
H^* (\U, \X_\U(g) ) \ar[u, hook] \ar[d, "\Sh_g^{\U}"] \ar[rrr, "H^*(\U{,} \pi_{\U,\V})"] &&&  H^*(\U, \ind_{\V }^{\U g \V }(1))\ar[u, hook] \ar[d, "\Sh_g^{\U,\V}"] \\
H^*(\U_g, k) \ar[rrr, "\cores^{\U_g}_{\U\cap \V_g}"] & && H^*(\U\cap \V_g, k)  \ .
\end{tikzcd}
\end{equation} 
\end{lemma}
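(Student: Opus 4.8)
The plan is to establish the commutativity of \eqref{f:C1} and \eqref{f:C2} separately, in each case by decomposing the big diagram into the three stacked squares out of which it is built and treating them one at a time. Once this is done, Propositions \ref{prop:RP} and \ref{prop:CP} follow formally; the case of \ref{prop:RP} is the mirror image of the argument below, with restriction in place of corestriction throughout.

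The topmost square of each diagram --- the one whose vertical arrows are the Frobenius-reciprocity identifications \eqref{f:frob} --- is exactly one of the auxiliary lemmas proved just above: for \eqref{f:C2} it is \eqref{H*piP} with $\mathcal C=\U$ (post-composition with $\pi_{\U,\V}$ on $\Ext$-groups corresponds to $H^*(\U,\pi_{\U,\V})$), and for \eqref{f:C1} it is \eqref{coresPI} with $Y=\X_\V$ (pre-composition with $\iota_{\V,\U}$ corresponds to $\cores^{\U}_{\V}$). The middle square --- the one passing from the cohomology of $\X_\U$ (resp.\ $\X_\V$) to the $g$-component --- is a matter of bookkeeping with the canonical decompositions \eqref{f:Xbruhat} and \eqref{f:H*dec}: one records that $\pi_{\U,\V}$ carries the $\U$-summand $\X_\U(g)=\ind_\U^{\U g\U}(1)$ of $\X_\U$ into the $\U$-summand $\ind_\V^{\U g\V}(1)$ of $\X_\V$ (since $\U g\U\V=\U g\V$, because $\U\subseteq\V$), that $\upsilon_g$ is the inclusion of $\ind_\V^{\U g\V}(1)$ into the $\V$-summand $\X_\V(g)$, and that $\iota_{\V,\U}$ carries $\X_\V(g)$ into $\bigoplus_{y\in\U\backslash\V g\V/\U}\X_\U(y)$; commutativity then follows from functoriality of $H^*(\mathcal C,-)$ in the coefficient module.

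The real content is the bottom square, which relates the hybrid Shapiro isomorphisms. Here I would write out $\Sh_g^{\U,\V}$ and its inverse using \eqref{f:Shapirohybrid} and \eqref{f:Shapiro-inversehybrid}, then push the maps induced by $\pi_{\U,\V}$ and $\upsilon_g$ past the corestrictions using that corestriction commutes with coefficient homomorphisms and is transitive, and collapse the composites $\pi_{\U,\V}\circ\mathrm i_g=\mathrm i_g$ and $\upsilon_g\circ\mathrm i_g=\mathrm i_g$ (each being $a\mapsto a\,\chara_{g\V}$). After these reductions the bottom row of \eqref{f:C1} becomes $\Sh_g^{\V}\circ\cores^{\,\U\cap\V_g}_{\V}\circ H^*(\U\cap\V_g,\mathrm i_g)$, which is precisely Lemma \ref{lemma:Shcomp} applied to the triple $\V\supseteq\V\supseteq\U$, and the bottom row of \eqref{f:C2} becomes $\Sh_g^{\U,\V}\circ\cores^{\,\U_g}_{\U}\circ H^*(\U_g,\mathrm i_g)$, which is Lemma \ref{lemma:Shcomp} applied to $\V\supseteq\U\supseteq\U_g$ (using $\U_g=\U\cap g\U g^{-1}$ and $\U\cap\V_g=\U\cap g\V g^{-1}$ since $\U\subseteq\V$).

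I expect this last step to be the main obstacle: not that any single ingredient is deep, but that the composite must be matched term by term to the statement of Lemma \ref{lemma:Shcomp}, keeping scrupulous track of which subgroup each (co)restriction and each occurrence of $\ev_g$ or $\mathrm i_g$ refers to. The one genuinely non-formal input --- that the cross terms produced when a restriction meets a corestriction collapse after one evaluates at $g$ --- is already packaged inside the cocycle-level computation proving Lemma \ref{lemma:Shcomp}, so once the reduction is carried out there is nothing further to check. Everything else, including the passage from these two diagrams to Propositions \ref{prop:RP} and \ref{prop:CP}, is formal.
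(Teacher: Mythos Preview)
Your proposal is correct and follows essentially the same route as the paper: the top squares are handled by \eqref{H*piP} and \eqref{coresPI}, the middle squares by functoriality in the coefficients, and the bottom squares by expanding $(\Sh_g^{\U,\V})^{-1}$ via \eqref{f:Shapiro-inversehybrid}, using transitivity and functoriality of corestriction together with the relations $\upsilon_g\circ\mathrm i_g=\mathrm i_g'$ and $\pi_{\U,\V}\circ\mathrm i_g'=\mathrm i_g$, and then invoking Lemma \ref{lemma:Shcomp}. The only cosmetic difference is that for \eqref{f:C2} the paper plugs $(\W,\V,\U)=(\V,\U,\U)$ into Lemma \ref{lemma:Shcomp} rather than your $(\V,\U,\U_g)$; both substitutions yield the same diagram since $\U_g\cap g\U g^{-1}=\U_g$.
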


\begin{proof}
We first prove the commutativity of \eqref{f:C1}.  The commutativity of the top and middle rectangles follows from \eqref{coresPI}.  To prove the commutativity of the remaining rectangle, we consider the following diagram:
\begin{center}
\begin{tikzcd}
H^*(\U \cap \V_g, k) \ar[rrrr, "\cores_{\V_g}^{\U \cap \V_g}"] \ar[dd, "H^*(\U \cap \V_g{,} \textrm{i}_g)"'] \ar[ddrr, "H^*(\U\cap \V_g{,} \textrm{i}_g')"]  & & & & H^*(\V_g, k) \\
 &   & & & \\
 H^*(\U \cap \V_g, \ind_{\V}^{\U g\V}(1)) \ar[rr, "H^*(\U \cap \V_g{,} \upsilon_g)"] \ar[d, "\cores_{\U}^{\U \cap \V_g}"'] & & H^*(\U \cap \V_g, \ind_{\V}^{\V g\V}(1)) \ar[ddll, "\cores_{\U}^{\U \cap \V_g}"] \ar[ddrr, "\cores_{\V}^{\U \cap \V_g}"'] & & \\
 H^*(\U , \ind_{\V}^{\U g\V}(1)) \ar[d, "H^*(\U{,} \upsilon_g)"'] & &  & & \\
 H^*(\U, \ind_{\V}^{\V g\V}(1)) \ar[rrrr, "\cores_{\V}^{\U}"]  & &  & &  H^*(\V, \ind_{\V}^{\V g\V}(1)) \ar[uuuu, "\textnormal{Sh}^{\V}_g"']
\end{tikzcd}
\end{center}
Here, $\textnormal{i}_g: k \longrightarrow \ind_{\V}^{\U g\V}(1)$ is the $(\U \cap g\V g^{-1})$-equivariant map defined by $a \longmapsto a\chara_{g\V}$, and $\textnormal{i}_g': k \longrightarrow \ind_{\V}^{\V g\V}(1)$ is the $(\V \cap g\V g^{-1})$-equivariant map defined by $a \longmapsto a\chara_{g\V}$.  We have:
\begin{itemize}
\item The upper left triangle is commutative by the relation $\upsilon_g\circ \textnormal{i}_g = \textnormal{i}_g'$.
\item The lower left triangle is commutative by functoriality of the corestriction map.
\item The lower triangle is commutative by transitivity of the corestriction map.
\item The upper right triangle is commutative by applying Lemma \ref{lemma:Shcomp} to $(\W,\V, \U) = (\V, \V, \U)$.
\end{itemize}
Consequently, we obtain the commutativity of the following diagram:
\begin{center}
\begin{tikzcd}
H^*(\U \cap \V_g, k) \ar[rrr, "\cores_{\V_g}^{\U \cap \V_g}"] \ar[d, "\cores^{\U \cap \V_g}_{\U} \circ H^*(\U \cap \V_g{,} \textrm{i}_g)"']  & & & H^*(\V_g, k) \\
 H^*(\U, \ind_{\V}^{\U g\V}(1)) \ar[rrr, "\cores_{\V}^{\U}\circ H^*(\U{,}\upsilon_g)"]  & & &  H^*(\V, \ind_{\V}^{\V g\V}(1)) \ar[u, "\textnormal{Sh}^{\V}_g"']
\end{tikzcd}
\end{center}
By \eqref{f:Shapiro-inversehybrid}, the left vertical map is exactly $(\textnormal{Sh}_g^{\U,\V})^{-1}$, which concludes the proof for  \eqref{f:C1}.

Now we turn to  the commutativity of \eqref{f:C2}.  The commutativity of the top and middle rectangles follows from the definitions and diagram \eqref{H*piP}.  
Note first that we have a commutative diagram
\begin{center}
\begin{tikzcd}
& H^*(\U_g, \ind_{\U}^{\U g\U}(1)) \ar[rrr, "\cores_{\U}^{\U_g}"] \ar[dd, "H^*(\U_g{,} \pi_{\U,\V})"]& & & H^*(\U, \ind_{\U}^{\U g\U}(1)) \ar[dd, "H^*(\U{,} \pi_{\U,\V})"] \\
H^*(\U_g,k) \ar[ur, "H^*(\U_g{,} \textrm{i}_g')"] \ar[dr, "H^*(\U_g{,} \textrm{i}_g)"']& & & & \\
& H^*(\U_g, \ind_{\V}^{\U g\V}(1)) \ar[rrr, "\cores_{\U}^{\U_g}"] & & & H^*(\U, \ind_{\V}^{\U g\V}(1)) 
\end{tikzcd}
\end{center}
Here, $\textnormal{i}_g: k \longrightarrow \ind_{\V}^{\U g\V}(1)$ is the $(\U \cap g\V g^{-1})$-equivariant map defined by $a \longmapsto a\chara_{g\V}$, and $\textnormal{i}_g': k \longrightarrow \ind_{\U}^{\U g\U}(1)$ is the $(\U \cap g\U g^{-1})$-equivariant map defined by $a \longmapsto a\chara_{g\U}$.  The commutativity of the triangle follows from the relation $\pi_{\U,\V} \circ \textnormal{i}_g' = \textnormal{i}_g$, and the commutativity of the square follows from functoriality of the corestriction map.  Thus, by \eqref{f:Shapiro-inversehybrid}, we obtain a commutative diagram
\begin{center}
\begin{tikzcd}
H^*(\U_g, k) \ar[rrr, "(\textnormal{Sh}_g^{\U})^{-1}"] \ar[d, "H^*(\U_g{,}\textrm{i}_g)"'] & & & H^*(\U, \ind_{\U}^{\U g\U}(1)) \ar[d, "H^*(\U{,} \pi_{\U,\V})"] \\
H^*(\U_g, \ind_{\V}^{\U g\V}(1)) \ar[rrr, "\cores^{\U_g}_{\U}"] & & & H^*(\U, \ind_{\V}^{\U g \V}(1))
\end{tikzcd}
\end{center}
On the other hand, taking $(\W,\V, \U) = (\V,\U,\U)$ in Lemma \ref{lemma:Shcomp} gives the following commutative diagram:
\begin{center}
\begin{tikzcd}
 H^*(\U_g,k) \ar[d, "H^*(\U_g{,} \mathrm{i}_g)"'] \ar[rrrr, "\cores^{\U_g}_{\U\cap g\V  g^{-1}}"] &&& & H^*(\U\cap g\V  g^{-1},k) \\
H^*(\U_g,\ind_\V ^{\U g \V }(1)) \ar[rrrr, "\cores^{\U_g}_\U"] &&&&H^*(\U,\ind_\V ^{\U g \V }(1)) \ar[u, "\Sh_g^{\U, \V}"']
\end{tikzcd}
\end{center}
Combining the two previous commutative diagrams gives the commutativity of the bottom rectangle of \eqref{f:C2}, and concludes the proof of the lemma.
\end{proof}

\begin{proof}[Proof of Proposition \ref{prop:RP}]

We recall that when $g\in G$ and $y=v g v'\in \V g \V$ we have $v \V_g v^{-1}= \V_y$ and the map
\begin{eqnarray*}
H^*(\V_{g}, k) & \longrightarrow & H^*(\U_y,k), \\
 a & \longmapsto & \res^{ \V_{y}}_{\U_y} (v_*a)
\end{eqnarray*}
is well-defined and independent of the choice of $v$.

The commutativity of the top two rectangles in \eqref{f:defiRP*} comes from \eqref{H*iotaP} and \eqref{resPI}, and functoriality of the restriction map. To prove commutativity of the remaining portion, let $A\in H^*(\V, \X_\V(g))$ and $y\in \V g\V$ as above. We have \begin{eqnarray*}
\res^{\V_y}_{\U_y} \left(v_*\Sh^\V_{g}(A)\right) & \stackrel{\textnormal{Lem. \ref{lemma:shapindep}}}{=} & \res^{\V_y}_{\U_y} \left(\Sh^\V_{y}(A)\right) \\
 & \stackrel{\eqref{f:Shapiro1}}{=} & \res^{\V_y}_{\U_y} \left( H^*(\V_y, \ev^\V_y) \circ\res^{\V}_{\V_y}(A) \right)\\
 & = & H^*(\U_y, \ev^\V_y) \circ  \res^{\V_y}_{\U_y} \circ \res^{\V}_{\V_y}(A) \\
 & = & H^*(\U_y, \ev^\V_y) \circ \res^{\V}_{\U_y}(A) \ ;
\end{eqnarray*}
in particular, the first equality shows that the lower left triangle commutes.  On the other hand, note that the restriction of $H^*(\U,\iota_{\V,\U})\circ \res_{\U}^{\V} : H^*(\V, \X_\V) \longrightarrow H^*(\U,\X_\U)$ to $H^*(\V, \X_\V(g))$ factors through $\bigoplus_{y\in  {\U\backslash \V g\V/\U}} H^*(\U ,\X_\U(y))$.  Consequently, we may apply the map $\Sh_y^{\U}$ to each component to obtain
\begin{eqnarray*}
\Sh_y^{\U}\left(  H^*(\U, \iota_{\V,\U}) \circ \res^\V_\U (A) \right) & \stackrel{\eqref{f:Shapiro1}}{=} & H^*(\U_y, \ev^\U_y) \circ \res^\U_{\U_y} \circ H^*(\U, \iota_{\V,\U}) \circ \res^\V_\U (A) \\
& = & H^*(\U_y, \ev^\U_y)  \circ H^*(\U_y, \iota_{\V,\U})  \circ \res^\U_{\U_y} \circ \res^\V_\U (A) \\
& = & H^*(\U_y, \ev^\U_y \circ \iota_{\V,\U}) \circ \res^\V_{\U_y} (A) \\
& = & H^*(\U_y, \ev^\V_y) \circ \res^\V_{\U_y} (A) \ .
\end{eqnarray*}
(For the final equality, we note that $\ev^\U_y \circ \iota_{\V,\U} = \ev^\V_y$.)  Combining the two sets of equalities verifies the desired commutativity and concludes the proof.
\end{proof}

\section{Frattini quotients for parahoric subgroups \label{appendix:frattini}}
Our goal will be to calculate some cohomology groups appearing in the body of the article.  We maintain the notation introduced above: $\mathfrak{F}$ is a finite extension of $\mathbb{Q}_p$ with ring of integers $\mathfrak{O}$, maximal ideal $\mathfrak{M}$, uniformizer $\pi$, and residue field $\mathbb{F}_q$ of size $q = p^f$.  We suppose that $\mathbf{G}$ is a split connected reductive group over $\mathfrak{F}$, and let $\mathbf{T}$ denote a split maximal torus of $\mathbf{G}$.  We then let $\mathbf{G}_{x_0}$ denote the $\mathfrak{O}$-group scheme associated to a hyperspecial point in the apartment $\mathscr{A}$ of the Bruhat--Tits building of $\mathbf{G}$ corresponding to $\mathbf{T}$.  Set $G := \mathbf{G}(\mathfrak{F}), K:= \mathbf{G}_{x_0}(\mathfrak{O})$.  Further, we let $J \subseteq K$ denote a choice of Iwahori subgroup contained in $K$, corresponding to a chamber $C$ containing $x_0$ in its closure.

We assume henceforth that $q \neq 2,3$.

Given a profinite group $\mathcal{G}$ and elements $g,h \in \mathcal{G}$, we let 
\[ [g,h] := ghg^{-1}h^{-1},\]
and let $[\mathcal{G},\mathcal{G}]$ denote the closure of the subgroup generated by all commutators $[g,h]$.  Further, we let $\Phi(\mathcal{G}) := [\mathcal{G},\mathcal{G}]\mathcal{G}^p$ denote the Frattini subgroup of $\mathcal{G}$, and let $\mathcal{G}_\Phi := \mathcal{G}/\Phi(\mathcal{G})$ denote the Frattini quotient.

We will make use of the various matrix identities in $\textnormal{SL}_2$.  We have:
\begin{eqnarray}
        \left[\begin{pmatrix} x & 0 \\ 0 & x^{-1} \end{pmatrix},\begin{pmatrix} 1 & y \\ 0 & 1\end{pmatrix}\right] & = & \begin{pmatrix} 1 & (x^2 - 1)y \\ 0 & 1 \end{pmatrix} \label{comm-1}\\
        \left[\begin{pmatrix} x & 0 \\ 0 & x^{-1} \end{pmatrix},\begin{pmatrix} 1 & 0 \\ z & 1\end{pmatrix}\right] & = & \begin{pmatrix} 1 & 0 \\(x^{-2} - 1)z & 1 \end{pmatrix} \label{comm-2} \\
     \begin{pmatrix} 1 - xy & 0 \\ 0 & (1 - xy)^{-1}\end{pmatrix} & = & \begin{pmatrix} 1 & 0 \\ x(1 - xy)^{-1} & 1\end{pmatrix}\begin{pmatrix} 1 & y \\ 0 & 1\end{pmatrix} \begin{pmatrix}1 & 0 \\ -x & 1\end{pmatrix} \begin{pmatrix}1 & - y(1 - xy)^{-1} \\ 0 & 1\end{pmatrix} \label{coroot}
\end{eqnarray}

We also recall the following construction from \cite{Sch-St} (see also \cite[\S 6]{BT1}).   Let $Y$ denote a non-empty subset of $\mathscr{A}$, and let $f_Y: \Phi \longrightarrow \mathbb{R} \cup \{\infty\}$ denote the concave function given by
\[ f_Y(\alpha) = - \inf_{y \in Y}\{\langle\alpha, y\rangle\}. \]
Note that by \cite[Prop. 6.4.5]{BT1}, we have $f_Y(\alpha) + f_Y(-\alpha) \geq 0$.  We then set 
\begin{eqnarray*}
\U_{\alpha, f_Y(\alpha)} & := & x_\alpha\left(\mathfrak{M}^{\lceil f_{Y}(\alpha)\rceil}\right), \\
\U_{Y} & := & \big\langle \U_{\alpha, f_Y(\alpha)}\big\rangle_{\alpha \in \Phi}, \\
\P_{Y} & := & \big\langle \U_{\alpha, f_Y(\alpha)},~ T^0\big\rangle_{\alpha \in \Phi},
\end{eqnarray*}
Note that $\P_Y$ is open in $G$, though in general $\U_Y$ is not.

We begin with the subgroup $\P_Y$.

\begin{lemma}
\label{lem:comm-PY}
Suppose $Y$ is a non-empty subset of $\mathscr{A}$.  We then have
\[ [\P_Y,\P_Y] = \U_Y. \]
Thus, we get 
\[ (\P_Y)^{\textnormal{ab}} = \P_Y/[\P_Y,\P_Y] \cong T^0/(T^0 \cap \U_Y). \]
\end{lemma}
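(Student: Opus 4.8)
The plan is to prove the two containments $\U_Y \subseteq [\P_Y, \P_Y]$ and $[\P_Y, \P_Y] \subseteq \U_Y$ separately; the abelianization formula is then immediate from the definition $\P_Y = \langle \U_Y, T^0\rangle$ together with the fact that $T^0$ is abelian and normalizes the $\U_{\alpha, f_Y(\alpha)}$ (so that $\P_Y/\U_Y$ is a quotient of $T^0$, hence abelian, forcing $[\P_Y,\P_Y] \subseteq \U_Y$; and $\P_Y/\U_Y \cong T^0/(T^0 \cap \U_Y)$).

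First I would establish $\U_Y \subseteq [\P_Y, \P_Y]$. Since $\U_Y$ is generated by the root subgroups $\U_{\alpha, f_Y(\alpha)} = x_\alpha(\mathfrak{M}^{\lceil f_Y(\alpha)\rceil})$, it suffices to show each such element lies in the commutator subgroup. The key input is the split $\textnormal{SL}_2$-identity \eqref{comm-1} (respectively \eqref{comm-2}): for any $\alpha \in \Phi$ and $m := \lceil f_Y(\alpha)\rceil$, one has
\[
\left[\check\alpha(x), x_\alpha(y)\right] = x_\alpha\big((x^2 - 1)y\big)
\]
for $x \in \mathfrak{O}^\times$, $y \in \mathfrak{M}^m$ (this is \eqref{comm-1} transported through $\varphi_\alpha$, using $\check\alpha(x) = \varphi_\alpha(\textnormal{diag}(x, x^{-1}))$). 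Because $q \neq 2,3$, there exists $x \in \mathfrak{O}^\times$ with $x^2 - 1 \in \mathfrak{O}^\times$ — indeed the squaring map on $\mathbb{F}_q^\times$ is not surjective onto $\{1\} \cup \{\text{elements making } x^2-1 \in \mathfrak{M}\}$ when $q > 3$, so we may pick $x$ with $\overline{x}^2 \neq \overline{1}$, i.e. $x^2 - 1 \in \mathfrak{O}^\times$. Then as $y$ ranges over $\mathfrak{M}^m$, the element $(x^2-1)y$ ranges over all of $\mathfrak{M}^m$, so every $x_\alpha(\mathfrak{M}^m)$ element is a commutator in $\P_Y$ (both $\check\alpha(x)$ and $x_\alpha(y)$ lie in $\P_Y$, the former in $T^0$, the latter in $\U_{\alpha,f_Y(\alpha)}$). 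Hence $\U_Y \subseteq [\P_Y, \P_Y]$.

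For the reverse containment $[\P_Y, \P_Y] \subseteq \U_Y$, I would argue that $\U_Y$ is normal in $\P_Y$ with abelian quotient. Normality: $T^0$ normalizes each $\U_{\alpha, f_Y(\alpha)}$ (conjugation by $t \in T^0$ sends $x_\alpha(y)$ to $x_\alpha(\alpha(t)y)$, and $\alpha(t) \in \mathfrak{O}^\times$ preserves $\mathfrak{M}^m$), and the $\U_{\alpha, f_Y(\alpha)}$ normalize one another up to elements of $\U_Y$ by the commutator relations among root groups of a split group (Chevalley commutator formula: $[x_\alpha(\mathfrak{M}^{a}), x_\beta(\mathfrak{M}^{b})]$ lies in the group generated by $x_{i\alpha + j\beta}(\mathfrak{M}^{ia + jb})$ for $i,j > 0$, and since $f_Y$ is concave, $f_Y(i\alpha + j\beta) \leq i f_Y(\alpha) + j f_Y(\beta)$, so these commutators land in $\U_Y$). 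Thus $\U_Y \trianglelefteq \P_Y$, and $\P_Y / \U_Y$ is generated by the image of $T^0$, hence abelian; therefore $[\P_Y,\P_Y] \subseteq \U_Y$. Combining, $[\P_Y, \P_Y] = \U_Y$, and $\P_Y^{\textnormal{ab}} = \P_Y/\U_Y \cong T^0/(T^0 \cap \U_Y)$ via the map induced by $T^0 \hookrightarrow \P_Y$, which is surjective (since $T^0$ generates $\P_Y$ modulo $\U_Y$) with kernel exactly $T^0 \cap \U_Y$.

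The main obstacle I anticipate is handling the interplay between the concavity of $f_Y$ and the Chevalley commutator formula cleanly enough to conclude $\U_Y$ is normal — in particular checking that sums $i\alpha + j\beta$ of roots, when they are roots, satisfy the valuation inequality needed to keep commutators inside $\U_Y$, and dealing with any subtlety when $i\alpha + j\beta$ is not a root (in which case the corresponding term is absent). For $\textnormal{SL}_2$-type rank-one situations this is vacuous, but for the general split $\mathbf{G}$ it requires invoking the structure theory carefully; I would cite \cite[\S 6]{BT1} (or \cite{Sch-St}) for the precise form of these relations rather than re-deriving them. The existence of $x$ with $x^2 - 1$ a unit — the one place the hypothesis $q \neq 2,3$ enters — is elementary and I would dispatch it in a line.
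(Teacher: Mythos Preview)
Your proof is correct and follows the paper's approach almost exactly: both containments are argued the same way, via the commutator identity \eqref{comm-1} (using $q>3$ to find $x$ with $x^2-1\in\mathfrak{O}^\times$) for $\U_Y\subseteq[\P_Y,\P_Y]$, and via abelianness of $\P_Y/\U_Y$ for the reverse.

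The one point worth noting is that the ``main obstacle'' you flag --- verifying normality of $\U_Y$ via Chevalley commutator relations and concavity --- is unnecessary, and the paper avoids it entirely. Since $T^0$ normalizes each $\U_{\alpha,f_Y(\alpha)}$, it normalizes the group $\U_Y$ they generate; hence $\U_Y T^0$ is already a subgroup of $G$ containing all the generators of $\P_Y$, so $\P_Y=\U_Y T^0$. Normality of $\U_Y$ and abelianness of the quotient are then immediate, with no need to control commutators of root subgroups against one another.
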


\begin{proof}
Fix $\alpha \in \Phi$.  Applying the morphism $\varphi_\alpha$ to equation \eqref{comm-1} gives
\[ \left[\check{\alpha}(x), x_\alpha\left(\pi^{\lceil f_Y(\alpha)\rceil}y\right)\right] = x_\alpha\left((x^2 - 1)\pi^{\lceil f_Y(\alpha)\rceil}y\right). \]
If we choose $x \in \mathfrak{O}^\times$ such that $x^2 - 1 \in \mathfrak{O}^\times$ (which is possible thanks to the assumption $q > 3$), then we see that $x_\alpha(\mathfrak{M}^{\lceil f_Y(\alpha)\rceil}) \in [\P_Y,\P_Y]$ for every $\alpha \in \Phi$.  Consequently $\U_Y \subseteq [\P_Y, \P_Y]$.  On the other hand, since $T^0$ normalizes each $\U_{\alpha,f_{Y}(\alpha)}$, the subgroup $\U_Y$ is normal in $\P_Y$, and we have $\P_Y = \U_Y T^0$.  Hence,
\[ \P_Y/\U_Y \cong \U_YT^0/\U_Y \cong T^0/(T^0 \cap \U_Y). \]
Since this quotient is abelian, we conclude $[\P_Y,  \P_Y] \subseteq \U_Y$.
\end{proof}

\begin{corollary}
\label{cor:frattini-PY}
Suppose $Y$ is a non-empty subset of $\mathscr{A}$.  We then have
\[ \Phi(\P_Y) = \U_Y\mathbf{T}(\mathbb{F}_q)(T^1)^p\ . \]
Thus, we get
\[ (\P_Y)_\Phi = \P_Y/[\P_Y,\P_Y]\P_Y^p \cong T^1/\big\langle T^1 \cap \U_Y,~ (T^1)^p \big\rangle \ . \]
\end{corollary}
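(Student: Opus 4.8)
\textbf{Proof proposal for Corollary \ref{cor:frattini-PY}.}
The plan is to deduce the description of $\Phi(\P_Y)$ from Lemma \ref{lem:comm-PY} together with a computation of the $p$-th powers in $\P_Y$. First I would recall that $\P_Y = \U_Y T^0$ with $\U_Y$ normal in $\P_Y$, so $\Phi(\P_Y) = [\P_Y,\P_Y]\P_Y^p = \U_Y\,\P_Y^p$. The main point is therefore to understand the subgroup of $\P_Y$ generated by $p$-th powers modulo $\U_Y$. Using the isomorphism $\P_Y/\U_Y \cong T^0/(T^0\cap\U_Y)$ from Lemma \ref{lem:comm-PY}, the image of $\P_Y^p$ in $\P_Y/\U_Y$ is the subgroup generated by $p$-th powers of $T^0/(T^0\cap\U_Y)$. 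Since $T^0 \cong \mathbf{T}(\mathbb{F}_q)\times T^1$ (Teichm\"uller lift times the pro-$p$ part) and $\mathbf{T}(\mathbb{F}_q)$ has prime-to-$p$ order, the subgroup generated by $p$-th powers of $T^0$ is exactly $(T^1)^p$ (the torsion-free pro-$p$ part behaving as expected). Hence $\P_Y^p\U_Y = \U_Y (T^1)^p$ inside $\P_Y$, and adjoining $\mathbf{T}(\mathbb{F}_q)$ is automatic once we include $[\P_Y,\P_Y]=\U_Y$ only if... — here one must be careful: actually $\Phi(\P_Y)$ should \emph{not} contain $\mathbf{T}(\mathbb{F}_q)$ in general, so I expect the statement as literally written ($\Phi(\P_Y) = \U_Y\mathbf{T}(\mathbb{F}_q)(T^1)^p$) is to be read with the understanding that $\mathbf T(\mathbb F_q)$ has order prime to $p$ and so contributes nothing new beyond what is forced; I would double-check the intended reading against the quotient formula, which is the cleaner assertion.

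Concretely, the key steps in order are: (1) rewrite $\Phi(\P_Y) = [\P_Y,\P_Y]\,\P_Y^p = \U_Y\,\P_Y^p$ using Lemma \ref{lem:comm-PY}; (2) observe $\P_Y = \U_Y T^0$ and $\U_Y \trianglelefteq \P_Y$, so $\P_Y^p \U_Y/\U_Y$ is the image of $(T^0)^p$ in $T^0/(T^0\cap\U_Y)$; (3) use the direct product decomposition $T^0 = \mathbf{T}(\mathbb{F}_q)\times T^1$ with $\gcd(|\mathbf{T}(\mathbb{F}_q)|,p)=1$ to identify the subgroup generated by $p$-th powers of $T^0$ as $\mathbf{T}(\mathbb{F}_q)\times (T^1)^p = \langle \mathbf T(\mathbb F_q), (T^1)^p\rangle$ — indeed $x\mapsto x^p$ is an automorphism of the prime-to-$p$ factor; (4) conclude $\Phi(\P_Y) = \U_Y\langle \mathbf{T}(\mathbb{F}_q), (T^1)^p\rangle$, which is the asserted $\U_Y\mathbf{T}(\mathbb{F}_q)(T^1)^p$; (5) finally compute the Frattini quotient: $(\P_Y)_\Phi = \P_Y/\Phi(\P_Y) = \U_Y T^0/\U_Y\langle\mathbf{T}(\mathbb{F}_q),(T^1)^p\rangle \cong T^0/\langle T^0\cap\U_Y, \mathbf{T}(\mathbb{F}_q),(T^1)^p\rangle$, and since $\mathbf{T}(\mathbb{F}_q)$ maps onto $T^0/T^1$ this equals $T^1/\langle T^1\cap\U_Y, (T^1)^p\rangle$, as claimed.

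The main obstacle I anticipate is step (3)–(5): one has to be careful about how $\U_Y$ interacts with the torus, i.e., whether $T^0\cap\U_Y$ is contained in $T^1$ (it is, since $\U_Y$ is pro-$p$) and how to pass cleanly from a quotient of $T^0$ to the stated quotient of $T^1$. This is the same kind of bookkeeping that appears in \cite{OS4} and in Lemma \ref{lem:comm-PY}, and I would carry it out by first reducing mod $T^1$ to kill the prime-to-$p$ part and then working entirely inside the pro-$p$ group $T^1$. Everything else is a formal consequence of Lemma \ref{lem:comm-PY} and the standard fact that the Frattini subgroup of a (topologically finitely generated) pro-$p$ group, when sitting inside a group with a normal pro-$p$ complement to a prime-to-$p$ abelian quotient, is computed as above.
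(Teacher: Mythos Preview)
Your steps (1)--(4) are correct and match the paper's (very brief) argument. There is, however, a genuine error in your justification of step~(5). You write that ``$T^0\cap\U_Y$ is contained in $T^1$ (it is, since $\U_Y$ is pro-$p$)'', but $\U_Y$ is \emph{not} pro-$p$ in general: for $\mathbf{G}=\textnormal{SL}_2$ and $Y=\{x_0\}$ one has $\U_Y=\langle x_\alpha(\mathfrak{O}),x_{-\alpha}(\mathfrak{O})\rangle=\textnormal{SL}_2(\mathfrak{O})=K$, so $T^0\cap\U_Y=T^0\not\subseteq T^1$. (Lemma~\ref{lem:UYintT} confirms this in general: $T^0\cap\U_Y$ can contain $\check\alpha(\mathfrak{O}^\times)$.) Since your passage from the $T^0$-quotient to the $T^1$-quotient rests on this false inclusion, the argument as written has a gap.

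The conclusion is nonetheless correct, and the fix stays entirely within your framework. From Lemma~\ref{lem:comm-PY} one has $(\P_Y)_\Phi=(\P_Y^{\mathrm{ab}})_\Phi\cong\bigl(T^0/(T^0\cap\U_Y)\bigr)_\Phi$. In the abelian group $T^0/(T^0\cap\U_Y)$, the image of $\mathbf{T}(\mathbb{F}_q)$ has prime-to-$p$ order and the image of $T^1$ is pro-$p$; these generate and intersect trivially, so the quotient is their internal direct product. The image of $T^1$ is $T^1/(T^1\cap\U_Y)$ by the second isomorphism theorem (no assumption on $T^0\cap\U_Y$ needed here). Since the Frattini quotient of a direct product is the product of Frattini quotients and the prime-to-$p$ factor has trivial Frattini quotient, one obtains $(\P_Y)_\Phi\cong\bigl(T^1/(T^1\cap\U_Y)\bigr)_\Phi=T^1/\langle T^1\cap\U_Y,(T^1)^p\rangle$, as required.
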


\begin{proof}
Using the fact that $T^0$ normalizes $\U_Y$, and the fact that $\mathbf{T}(\mathbb{F}_q)$ has order prime to $p$, we get
\[ [\P_Y, \P_Y]\P_Y^p =  \U_Y(\U_Y\mathbf{T}(\mathbb{F}_q)T^1)^p = \U_Y\mathbf{T}(\mathbb{F}_q)(T^1)^p. \]
\end{proof}

Next, we more explicitly describe the subgroup $T^1 \cap \U_Y$.  

\begin{lemma}
\label{lem:UYintT}
Suppose $Y$ is a non-empty subset of $\mathscr{A}$.  We then have 
\[ T^0 \cap \U_Y = \left\langle \check{\alpha}(1 + \mathfrak{M}^{\lceil f_Y(\alpha)\rceil + \lceil f_Y(-\alpha)\rceil})\right\rangle_{\alpha\in \Phi^+} \]
 (where we define $1 + \mathfrak{M}^0 := \mathfrak{O}^\times$).  Thus, we get
 \[ T^1 \cap \U_Y = \left\langle \check{\alpha}(1 + \mathfrak{M}^{\max\{1,~\lceil f_Y(\alpha)\rceil + \lceil f_Y(-\alpha)\rceil\}})\right\rangle_{\alpha\in \Phi^+}~. \]
\end{lemma}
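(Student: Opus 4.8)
The statement to prove is Lemma~\ref{lem:UYintT}: an explicit description of $T^0 \cap \U_Y$ (and hence $T^1 \cap \U_Y$) as generated by the subtori $\check\alpha(1 + \mathfrak{M}^{\lceil f_Y(\alpha)\rceil + \lceil f_Y(-\alpha)\rceil})$ over $\alpha \in \Phi^+$. The plan is to prove the two inclusions separately, with the nontrivial direction being $T^0 \cap \U_Y \subseteq \langle \check\alpha(\cdots)\rangle_{\alpha \in \Phi^+}$.

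For the easy inclusion ``$\supseteq$'', I would use the matrix identity \eqref{coroot}, pushed forward through $\varphi_\alpha$: it expresses $\check\alpha(1 - xy)$ as a product of elements $x_\alpha(y)$, $x_{-\alpha}(\pm x)$, $x_\alpha(-y(1-xy)^{-1})$, $x_{-\alpha}(x(1-xy)^{-1})$. Taking $x \in \mathfrak{M}^{\lceil f_Y(-\alpha)\rceil}$ and $y \in \mathfrak{M}^{\lceil f_Y(\alpha)\rceil}$, each of these four factors lies in $\U_{\alpha, f_Y(\alpha)}$ or $\U_{-\alpha, f_Y(-\alpha)}$, so the product $\check\alpha(1 - xy)$ lies in $\U_Y$. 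Letting $x,y$ range appropriately, $1 - xy$ ranges over $1 + \mathfrak{M}^{\lceil f_Y(\alpha)\rceil + \lceil f_Y(-\alpha)\rceil}$ (here one uses surjectivity of multiplication $\mathfrak{M}^a \times \mathfrak{M}^b \to \mathfrak{M}^{a+b}$, and the convention $1 + \mathfrak{M}^0 = \mathfrak{O}^\times$ handles the case where one of the exponents vanishes; note $\check\alpha$ depends only on $\alpha$ up to sign, so restricting to $\Phi^+$ loses nothing). Since each such $\check\alpha(1 + \mathfrak{M}^{\ast})$ is visibly in $T^0$, we get the inclusion into $T^0 \cap \U_Y$.

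The harder inclusion ``$\subseteq$'' requires understanding which torus elements can appear in $\U_Y$. Here I would invoke the structure theory of the groups $\U_Y$ coming from Bruhat--Tits theory (following \cite[\S 6]{BT1} and \cite{Sch-St}): $\U_Y$ admits a decomposition as a product $\U_Y^+ \cdot (T^0 \cap \U_Y) \cdot \U_Y^-$ (or more precisely, using the root group filtration and the ``birational'' group law datum), and the intersection of $\U_Y$ with $T$ is generated precisely by the torus parts arising from the commutator/coroot relations among root groups at the given levels. Concretely, the key input is that $[\U_{\alpha,f_Y(\alpha)}, \U_{-\alpha, f_Y(-\alpha)}] \cap T$ is exactly $\check\alpha(1 + \mathfrak{M}^{\lceil f_Y(\alpha)\rceil + \lceil f_Y(-\alpha)\rceil})$ — this is essentially a rank-one ($\textnormal{SL}_2$) computation via \eqref{coroot} — together with the fact that the $\U_{\beta, f_Y(\beta)}$ for $\beta$ not proportional to $\alpha$ contribute no torus elements beyond those already accounted for (Chevalley commutator relations show products of positive/negative root group elements decompose with trivial torus component except through such opposite pairs). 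The main obstacle is this last point: controlling the torus component of an arbitrary product of elements from all the $\U_{\beta, f_Y(\beta)}$ and showing nothing new arises. I expect this is handled cleanly by the Iwahori-type factorization for $\U_Y$ (uniqueness of the decomposition into $\prod_{\alpha>0}\U_{\alpha}$, $T$-part, $\prod_{\alpha<0}\U_{-\alpha}$), which is standard in this setting; once the factorization is in hand, the $T$-part of any element of $\U_Y$ is forced to lie in the subgroup generated by the $\check\alpha(1+\mathfrak{M}^\ast)$, and a rank-one argument pins down the exact exponent.

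Finally, the statement for $T^1 \cap \U_Y$ follows immediately by intersecting with $T^1 = \check{}\,$-image of $1 + \mathfrak{M}$-type elements, i.e. $T^1 \cap \U_Y = (T^0 \cap \U_Y) \cap T^1$, and since $\check\alpha(1 + \mathfrak{M}^e) \cap T^1 = \check\alpha(1 + \mathfrak{M}^{\max\{1,e\}})$ (the coroot map being injective on $1 + \mathfrak{M}$ for $p$ odd, or more carefully using that $T^1$ is the pro-$p$ part), we obtain the asserted formula with the $\max\{1, \cdot\}$ correction. This last step is routine bookkeeping.
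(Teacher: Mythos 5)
Your proposal follows essentially the same route as the paper: the easy inclusion $\supseteq$ via equation \eqref{coroot} pushed through $\varphi_\alpha$, a reduction of the hard inclusion to rank one, and an $\mathrm{SL}_2$ computation. The one step you describe only heuristically (the "Iwahori-type factorization for $\U_Y$" forcing the torus part to be generated by opposite-pair contributions) is exactly what the paper pins down by citing \cite[Prop.~6.4.17]{BT1}, which says $T^0 \cap \U_Y$ is generated by the rank-one intersections $T^0 \cap \U_Y^{(\alpha)}$; note also that the object appearing there is $\langle \U_{\alpha,f_Y(\alpha)},\U_{-\alpha,f_Y(-\alpha)}\rangle \cap T$ rather than the commutator $[\U_{\alpha,f_Y(\alpha)},\U_{-\alpha,f_Y(-\alpha)}]\cap T$ as you write, and the reverse inclusion in the rank-one case still needs the short Iwahori-decomposition case analysis given in the paper rather than following from \eqref{coroot} alone.
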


\begin{proof}
According to \cite[\S 6.4.2, Prop. 6.4.17]{BT1}, the group $T^0 \cap \U_Y$ is generated by $T^0 \cap \U_Y^{(\alpha)}$ for $\alpha \in \Phi^+$, where $\U_Y^{(\alpha)}$ is the subgroup generated by $\U_{\alpha, f_Y(\alpha)}$ and $\U_{-\alpha,f_Y(-\alpha)}$.  We claim that $T^0 \cap \U_Y^{(\alpha)} = \check{\alpha}(1 + \mathfrak{M}^{\lceil f_Y(\alpha)\rceil + \lceil f_Y(-\alpha)\rceil})$.  By pulling back via $\varphi_\alpha$, we are reduced to proving
\begin{equation}
\label{eqn:Torusgenpullback}
 T_{\textnormal{SL}_2}^0 \cap \left\langle\begin{pmatrix}1 & \mathfrak{M}^{\lceil f_Y(\alpha) \rceil} \\ 0 & 1\end{pmatrix},~\begin{pmatrix}1 & 0 \\ \mathfrak{M}^{\lceil f_Y(-\alpha) \rceil} & 1\end{pmatrix}\right\rangle = \begin{pmatrix} 1 + \mathfrak{M}^{\lceil f_Y(\alpha) \rceil + \lceil f_Y(-\alpha) \rceil} \\ 0 & (1 + \mathfrak{M}^{\lceil f_Y(\alpha) \rceil + \lceil f_Y(-\alpha) \rceil})^{-1} \end{pmatrix} 
 \end{equation}
Equation \eqref{coroot} implies that the right-hand side is contained in the left-hand side.  To prove the reverse inclusion, we may conjugate by diagonal elements of $\textnormal{GL}_2(\mathfrak{F})$ and assume $\lceil f_Y(\alpha) \rceil = 0$. If $\lceil f_Y(-\alpha) \rceil = 0$, then we have
\[ \left\langle\begin{pmatrix}1 & \mathfrak{O} \\ 0 & 1\end{pmatrix},~\begin{pmatrix}1 & 0 \\ \mathfrak{O} & 1\end{pmatrix}\right\rangle = \textnormal{SL}_2(\mathfrak{O}),\]
and equality \eqref{eqn:Torusgenpullback} follows in this case.  Thus, let us assume $\lceil f_Y(-\alpha) \rceil \geq 1$.  We have
\[\left\langle\begin{pmatrix}1 & \mathfrak{O} \\ 0 & 1\end{pmatrix},~\begin{pmatrix}1 & 0 \\ \mathfrak{M}^{\lceil f_Y(-\alpha) \rceil} & 1\end{pmatrix}\right\rangle = \begin{pmatrix} 1 + \mathfrak{M}^{\lceil f_Y(-\alpha) \rceil} & \mathfrak{O} \\ \mathfrak{M}^{\lceil f_Y(-\alpha) \rceil} & 1 + \mathfrak{M}^{\lceil f_Y(-\alpha) \rceil}\end{pmatrix};\]
indeed, the group on the right-hand side is contained in group on the left-hand side by using the Iwahori decomposition (and the inclusion in \eqref{eqn:Torusgenpullback} already shown).  Intersecting with $T_{\textnormal{SL}_2}^0$ gives the claim and finishes the proof.
\end{proof}

We may now apply the previous results to calculate Frattini quotients of various parahoric subgroups.

\begin{corollary}
\label{cor:frattini-K}
The group $\Phi(K)$ is given by
\[ \Phi(K) = \big\langle x_\alpha(\mathfrak{O}),~ (T^1)^p,~ \mathbf{T}(\mathbb{F}_q),~ \check{\alpha}(1 + \mathfrak{M})\big\rangle_{\alpha \in \Phi^+}. \]
Thus, we get 
\[ K_\Phi \cong T^1/\big\langle (T^1)^p,~ \check{\alpha}(1 + \mathfrak{M})\big\rangle_{\alpha \in \Phi^+}. \]
In particular, if $\mathbf{G}$ is semisimple and simply connected, we have $\Phi(K) = K$ and $K_\Phi  = 1$.
\end{corollary}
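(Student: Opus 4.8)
The plan is to apply the two preceding results—Lemma \ref{lem:comm-PY} and Corollary \ref{cor:frattini-PY} together with Lemma \ref{lem:UYintT}—to the special subset $Y$ of $\mathscr{A}$ that cuts out the hyperspecial parahoric $K$. First I would observe that for $Y = \{x_0\}$, the hyperspecial vertex, the concave function $f_Y$ is identically $0$ on $\Phi$, so $\lceil f_Y(\alpha)\rceil = 0$ for all roots $\alpha$, and hence $\U_{\alpha, f_Y(\alpha)} = x_\alpha(\mathfrak{M}^0) = x_\alpha(\mathfrak{O})$; consequently $\P_Y = K$ and $\U_Y = \langle x_\alpha(\mathfrak{O})\rangle_{\alpha \in \Phi}$. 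Plugging this into Corollary \ref{cor:frattini-PY} immediately gives $\Phi(K) = \U_Y \mathbf{T}(\mathbb{F}_q)(T^1)^p$, and by Lemma \ref{lem:UYintT} (with $\lceil f_Y(\pm\alpha)\rceil = 0$, so the exponent $\max\{1, 0 + 0\} = 1$) we get $T^1 \cap \U_Y = \langle \check{\alpha}(1 + \mathfrak{M})\rangle_{\alpha \in \Phi^+}$. Assembling these yields the displayed formula $\Phi(K) = \langle x_\alpha(\mathfrak{O}),\, (T^1)^p,\, \mathbf{T}(\mathbb{F}_q),\, \check{\alpha}(1 + \mathfrak{M})\rangle_{\alpha \in \Phi^+}$, and then $K_\Phi = \P_Y/[\P_Y,\P_Y]\P_Y^p \cong T^1/\langle T^1 \cap \U_Y,\, (T^1)^p\rangle = T^1/\langle (T^1)^p,\, \check{\alpha}(1+\mathfrak{M})\rangle_{\alpha \in \Phi^+}$, again directly from Corollary \ref{cor:frattini-PY}.

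For the final sentence, when $\mathbf{G}$ is semisimple and simply connected, the coroots $\check{\alpha}$ for $\alpha \in \Phi^+$ generate the full cocharacter lattice $X_*(\mathbf{T})$ (this is precisely the simply connected condition), so the subgroups $\check{\alpha}(1+\mathfrak{M})$ generate all of $T^1 = \mathbf{T}(1 + \mathfrak{M})$. Hence $\langle (T^1)^p,\, \check{\alpha}(1+\mathfrak{M})\rangle_{\alpha \in \Phi^+} = T^1$, which forces $K_\Phi = 1$, and therefore $\Phi(K) = K$ (a profinite group with trivial Frattini quotient has no proper open subgroup containing its Frattini subgroup, i.e.\ the Frattini subgroup is everything; more directly, $K/\Phi(K) = K_\Phi = 1$).

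I expect the argument to be essentially a bookkeeping exercise: the substantive work has already been carried out in Lemmas \ref{lem:comm-PY} and \ref{lem:UYintT} and Corollary \ref{cor:frattini-PY}. The one point requiring a small amount of care is the identification of $\P_Y$ with $K$ for $Y = \{x_0\}$—this is the statement that the hyperspecial parahoric is generated by the integral root subgroups $x_\alpha(\mathfrak{O})$ together with $T^0$, which is part of the Bruhat–Tits description recalled in Section \ref{subsec:bt} (and in \cite{Sch-St}, \cite{BT1}), so I would simply cite it. The only genuine subtlety is the claim that coroots generate the cocharacter lattice when $\mathbf{G}$ is simply connected; since the paper has already fixed the root datum notation, I would phrase this as: the simply connected hypothesis means $X_*(\mathbf{T}) = \bbZ\check{\Phi}$, whence the $\check{\alpha}(1+\mathfrak{M})$ topologically generate $T^1$. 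No serious obstacle is anticipated.
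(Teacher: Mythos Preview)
Your proposal is correct and follows exactly the paper's approach: the paper's proof is a one-line citation of Corollary~\ref{cor:frattini-PY} and Lemma~\ref{lem:UYintT} together with the observation that $K = \P_{x_0}$ and $f_{x_0}(\alpha) = 0$ for all $\alpha \in \Phi$, which is precisely what you spell out. Your treatment of the simply connected case (coroots span $X_*(\mathbf{T})$, so the $\check{\alpha}(1+\mathfrak{M})$ generate $T^1$) is the argument the paper uses explicitly in the proofs of the neighboring Corollaries~\ref{cor:frattini-longest} and~\ref{cor:frattini-Jw}.
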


\begin{proof}
This follows from Corollary \ref{cor:frattini-PY} and Lemma \ref{lem:UYintT}, noting that $K = \P_{x_0}$, so that $f_{x_0}(\alpha) = 0$ for all $\alpha \in \Phi$.  
\end{proof}

Next, we slightly generalize the above lemma.  Let $\lambda \in X_*(T)$ be a cocharacter.  Then, by \cite[Prop. 6.2.10(iii)]{BT1} we have $\lambda(\pi)K\lambda(\pi)^{-1} = \lambda(\pi)\P_{x_0}\lambda(\pi)^{-1} = \P_{\lambda(\pi)\cdot x_0}$.  Thus, we get
\[ K_{\lambda(\pi)} = K \cap \lambda(\pi)K\lambda(\pi)^{-1} = \P_{x_0} \cap \P_{\lambda(\pi)\cdot x_0} = \P_{\{x_0,~ \lambda(\pi)\cdot x_0\}}. \]

\begin{corollary}
\label{cor:frattini-Klambda}
Let $\lambda \in X_*(T)$.  The group $\Phi(K_{\lambda(\pi)})$ is given by
\[ \Phi(K_{\lambda(\pi)}) = \big\langle x_{\alpha}(\mathfrak{M}^{\max\{0, \langle \lambda, \alpha\rangle\}}),~(T^1)^p,~ \mathbf{T}(\mathbb{F}_q),~\check{\alpha}(1 + \mathfrak{M}^{|\langle\lambda, \alpha\rangle|})\big\rangle_{\alpha \in \Phi^+} \]
 (where we define $1 + \mathfrak{M}^0 := \mathfrak{O}^\times$).  Thus, we get 
\[ (K_{\lambda(\pi)})_\Phi \cong T^1/\big\langle (T^1)^p,~ \check{\alpha}(1 + \mathfrak{M}^{\max\{1, |\langle\lambda, \alpha\rangle|\}})\big\rangle_{\alpha \in \Phi^+}. \]
\end{corollary}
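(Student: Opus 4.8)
The plan is to reduce the statement to the Frattini-quotient computation for the groups $\P_Y$ carried out in Corollary~\ref{cor:frattini-PY} and Lemma~\ref{lem:UYintT}. Indeed, as recorded immediately before the statement, for $\lambda\in X_*(T)$ one has $K_{\lambda(\pi)}=K\cap\lambda(\pi)K\lambda(\pi)^{-1}=\P_{x_0}\cap\P_{\lambda(\pi)\cdot x_0}=\P_{Y}$ with $Y:=\{x_0,\ \lambda(\pi)\cdot x_0\}$, so it remains only to make the concave function $f_Y$ explicit and to feed it into those two results.

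First I would compute $f_Y$. Using the identification of $\mathscr{A}^{\textnormal{ext}}$ with $X_*(T)\otimes_{\mathbb Z}\mathbb R$ in which $x_0$ is the origin, the characterization $\langle\nu(t),\chi\rangle=-\val_{\mathfrak F}(\chi(t))$ gives $\nu(\lambda(\pi))=-\lambda$, hence $\lambda(\pi)\cdot x_0=-\lambda$ (and roots vanish on $X_*(Z)\otimes\mathbb R$, so this suffices even though $Y$ is taken in the semisimple apartment). Therefore, for $\alpha\in\Phi$,
\[ f_Y(\alpha)=-\inf_{y\in Y}\langle\alpha,y\rangle=-\min\{0,\ -\langle\lambda,\alpha\rangle\}=\max\{0,\ \langle\lambda,\alpha\rangle\}, \]
which is already a non-negative integer, so $\lceil f_Y(\alpha)\rceil=\max\{0,\langle\lambda,\alpha\rangle\}$ and $\U_{\alpha,f_Y(\alpha)}=x_\alpha(\mathfrak M^{\max\{0,\langle\lambda,\alpha\rangle\}})$. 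Moreover, for $\alpha\in\Phi^+$,
\[ \lceil f_Y(\alpha)\rceil+\lceil f_Y(-\alpha)\rceil=\max\{0,\langle\lambda,\alpha\rangle\}+\max\{0,-\langle\lambda,\alpha\rangle\}=|\langle\lambda,\alpha\rangle|. \]

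With these identities in hand, Corollary~\ref{cor:frattini-PY} gives $\Phi(K_{\lambda(\pi)})=\Phi(\P_Y)=\U_Y\,\mathbf{T}(\mathbb F_q)\,(T^1)^p$; rewriting $\U_Y=\langle x_\alpha(\mathfrak M^{\lceil f_Y(\alpha)\rceil})\rangle_{\alpha\in\Phi}$ via the computation above and invoking Lemma~\ref{lem:UYintT} to identify the torus part yields the first displayed formula. For the second formula, one combines the isomorphism $(\P_Y)_\Phi\cong T^1/\langle T^1\cap\U_Y,\ (T^1)^p\rangle$ of Corollary~\ref{cor:frattini-PY} with the equality $T^1\cap\U_Y=\langle\check\alpha(1+\mathfrak M^{\max\{1,\,\lceil f_Y(\alpha)\rceil+\lceil f_Y(-\alpha)\rceil\}})\rangle_{\alpha\in\Phi^+}=\langle\check\alpha(1+\mathfrak M^{\max\{1,|\langle\lambda,\alpha\rangle|\}})\rangle_{\alpha\in\Phi^+}$ from Lemma~\ref{lem:UYintT}, using the convention $1+\mathfrak M^0:=\mathfrak O^\times$ in the degenerate case $\langle\lambda,\alpha\rangle=0$. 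There is no serious obstacle here beyond keeping the sign and normalization conventions straight in the computation of $f_Y$ and handling that degenerate case, since all of the group-theoretic content has already been isolated in Lemma~\ref{lem:comm-PY}, Corollary~\ref{cor:frattini-PY}, and Lemma~\ref{lem:UYintT}.
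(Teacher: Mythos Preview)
Your proposal is correct and follows exactly the paper's approach: apply Corollary~\ref{cor:frattini-PY} and Lemma~\ref{lem:UYintT} to $\P_Y$ with $Y=\{x_0,\lambda(\pi)\cdot x_0\}$, after verifying the key identity $\lceil f_Y(\alpha)\rceil+\lceil f_Y(-\alpha)\rceil=|\langle\lambda,\alpha\rangle|$. Your version supplies more detail than the paper's (in particular the explicit computation of $f_Y$ via the normalization $\nu(\lambda(\pi))=-\lambda$), but the structure is identical.
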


\begin{proof}
This follows from Corollary \ref{cor:frattini-PY} and Lemma \ref{lem:UYintT}, using the equality $K_{\lambda(\pi)} = \P_{\{x_0,~ \lambda(\pi)\cdot x_0\}}$ and the fact that 
\[ \lceil f_{\{x_0, \lambda(\pi)\cdot x_0\}}(\alpha)\rceil + \lceil f_{\{x_0, \lambda(\pi)\cdot x_0\}}(-\alpha)\rceil = |\langle \lambda, \alpha\rangle|. \]
\end{proof}

\begin{corollary}
\label{cor:frattini-longest}
Suppose $\mathbf{G}$ is almost simple and simply connected, and let $\beta \in \Phi^+$ denote the highest root. Then the group $\Phi(K_{\check{\beta}(\pi)})$ is given by 
\[ \Phi(K_{\check{\beta}(\pi)}) = \big\langle x_{\alpha}(\mathfrak{M}^{\max\{0, \langle\check{\beta},\alpha\rangle\}}),~(T^1)^p,~ \mathbf{T}(\mathbb{F}_q),~\check{\alpha}(1 + \mathfrak{M}^{|\langle\check{\beta}, \alpha\rangle|})\big\rangle_{\alpha \in \Phi^+}. \]
Thus, we get
\[ (K_{\check{\beta}(\pi)})_\Phi  =  \begin{cases} \check{\beta}(1 + \mathfrak{M})/\check{\beta}(1 + \mathfrak{M}^2) & \textnormal{if $\Phi$ is of type $A_1$,} \\ 1 & \textnormal{otherwise}.\end{cases} \]
\end{corollary}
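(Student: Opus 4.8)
The plan is to deduce both assertions from Corollary~\ref{cor:frattini-Klambda} applied to the cocharacter $\lambda = \check\beta$. The displayed formula for $\Phi(K_{\check\beta(\pi)})$ is then simply that corollary's description of $\Phi(K_{\lambda(\pi)})$ in this case, so no separate argument is needed for it; all the work goes into identifying the resulting Frattini quotient
\[ (K_{\check\beta(\pi)})_\Phi \;\cong\; T^1\big/\big\langle (T^1)^p,\ \check\alpha\big(1 + \mathfrak{M}^{\max\{1,\,|\langle\check\beta,\alpha\rangle|\}}\big)\big\rangle_{\alpha\in\Phi^+}. \]

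The one structural input I would invoke is the classical fact that, for an irreducible root system, the highest root $\beta$ is a \emph{long} root. From this I would extract the arithmetic statement actually used: for every root $\alpha$ not proportional to $\beta$ one has $\langle\check\beta,\alpha\rangle \in \{0,\pm1\}$, while $\langle\check\beta,\pm\beta\rangle = \pm 2$. Indeed, if $\alpha$ is long this is Cauchy--Schwarz among roots of equal length (equality forcing $\alpha = \pm\beta$), and if $\alpha$ is short then $|\langle\check\beta,\alpha\rangle| = 2|(\beta,\alpha)|/(\beta,\beta) \le 2|\alpha|/|\beta| < 2$; in either case the value is an integer of absolute value $<2$.

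Next I would split into cases. If $\Phi$ is not of type $A_1$, then $\beta$ is not simple (the highest root is simple only in rank one), so each simple root $\gamma$ lies in $\Phi^+$ and is not proportional to $\beta$; hence the exponent $\max\{1,|\langle\check\beta,\gamma\rangle|\}$ equals $1$ and each subgroup $\check\gamma(1+\mathfrak{M})$ occurs among the generators of the denominator above. Since $\mathbf{G}$ is simply connected, the simple coroots $\{\check\gamma\}_{\gamma\in\Pi}$ form a $\bbZ$-basis of $X_*(T)$, so $T^1 = \prod_{\gamma\in\Pi}\check\gamma(1+\mathfrak{M})$; therefore the denominator is already all of $T^1$ and $(K_{\check\beta(\pi)})_\Phi = 1$. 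If instead $\Phi$ is of type $A_1$, then $\Phi^+ = \Pi = \{\beta\}$ and $T^1 = \check\beta(1+\mathfrak{M})$, and since $\langle\check\beta,\beta\rangle = 2$ the only generator besides $(T^1)^p$ is $\check\beta(1+\mathfrak{M}^2)$. It then remains to check the routine inclusion $(T^1)^p = \check\beta\big((1+\mathfrak{M})^p\big) \subseteq \check\beta(1+\mathfrak{M}^2)$, which amounts to expanding $(1+x)^p$ for $x\in\mathfrak{M}$ and using $p\in\mathfrak{M}$; this yields $(K_{\check\beta(\pi)})_\Phi = \check\beta(1+\mathfrak{M})/\check\beta(1+\mathfrak{M}^2)$.

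I do not anticipate a genuine obstacle: the argument is short, and the only point demanding care is the root-system input in the second paragraph — that $\beta$ is long and that no root other than $\pm\beta$ pairs with $\check\beta$ to $\pm2$. This is classical but should be stated precisely (alternatively one can verify it by a quick inspection of the irreducible types), and it is the step that actually uses the hypothesis that $\mathbf{G}$ is almost simple.
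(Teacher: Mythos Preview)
Your proposal is correct and follows essentially the same approach as the paper: both invoke Corollary~\ref{cor:frattini-Klambda} with $\lambda=\check\beta$, use that $|\langle\check\beta,\alpha\rangle|\le 1$ for $\alpha\in\Phi^+\smallsetminus\{\beta\}$ (the paper cites Bourbaki, you give a direct Cauchy--Schwarz argument), and conclude via the simply connected hypothesis that the $\check\gamma(1+\mathfrak{M})$ for $\gamma\in\Pi$ already generate $T^1$; the $A_1$ case and the inclusion $(1+\mathfrak{M})^p\subseteq 1+\mathfrak{M}^2$ are handled identically.
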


\begin{proof}
By \cite[Ch. VI, \S 1.8, Prop. 18(iv)]{Bki-LA}, if $\alpha \in \Phi^+$ is not proportional to $\beta$, then we have $\langle \check{\beta}, \alpha\rangle \in \{0,1\}$, which implies $\max\{1, |\langle \check{\beta}, \alpha\rangle|\} = 1$. Therefore, the previous corollary implies
\begin{eqnarray*}
(K_{\check{\beta}(\pi)})_\Phi & \cong & T^1/\big\langle (T^1)^p,~ \check{\alpha}(1 + \mathfrak{M}^{\max\{1, |\langle\check{\beta},\alpha\rangle|\}})\big\rangle_{\alpha \in \Phi^+}\\
 & = & T^1/\big\langle (T^1)^p,~ \check{\beta}(1 + \mathfrak{M}^2),~\check{\alpha}(1 + \mathfrak{M})\big\rangle_{\alpha \in \Phi^+ \smallsetminus\{ \beta\}}\\
\end{eqnarray*}
If $\beta \in \Pi$, then $\Phi$ is of type $A_1$, and we obtain $(K_{\check{\beta}(\pi)})_\Phi = T^1/((T^1)^p, \check{\beta}(1 + \mathfrak{M}^2))$.  On the other hand, if $\beta \not\in \Pi$, then $\Phi$ is not of type $A_1$.  By writing $\check{\beta} = \sum_{\gamma \in \Pi}m_\gamma \check{\gamma}$ as a sum of simple coroots, we have (for $x\in \mathfrak{O}$)
\[ \check{\beta}(1 + \pi x) = \prod_{\gamma \in \Pi}\check{\gamma}(1 + \pi x)^{m_\gamma} \in \langle (T^1)^p, \check{\alpha}(1 + \mathfrak{M})\rangle_{\alpha \in \Phi^+ \smallsetminus \{\beta\}}, \]
which shows $(K_{\check{\beta}(\pi)})_\Phi = T^1/\big\langle (T^1)^p,~ \check{\alpha}(1 + \mathfrak{M})\big\rangle_{\alpha \in \Phi^+}.$ Finally, since $\mathbf{G}$ is simply connected, we have $X_*(T) = \bigoplus_{\gamma \in \Pi}\bbZ\check{\gamma}$.  Therefore, $T^1$ is generated by $\check{\gamma}(1 + \mathfrak{M})$ for $\gamma \in \Pi$, which concludes the proof when $\Phi$ is not of type $A_1$.  An application of the binomial theorem shows that $(1 + \mathfrak{M})^p \subseteq 1 + \mathfrak{M}^{\min\{e + 1, p\}} \subseteq 1 + \mathfrak{M}^2$, where $e$ denotes the ramification index of $\mathfrak{F}$ over $\mathbb{Q}_p$.  This gives the result when $\Phi$ is of type $A_1$. 
\end{proof}

Next, we consider the Iwahori subgroup $J$, which is defined as $\P_{C}$.  Given $w \in W$, \cite[Prop. 6.2.10(iii)]{BT1} once again implies
\[ J_w = J \cap wJw^{-1} = \P_C \cap w\P_C w^{-1} =  \P_C \cap \P_{w\cdot C} = \P_{C \cup w\cdot C}.\]

\begin{corollary}
\label{cor:frattini-Jw}
Let $w \in W$.  Then the group $\Phi(J_w)$ is given by
\[ \Phi(J_w) = \big\langle U_{\alpha, f_{C \cup w\cdot C}(\alpha)},~ (T^1)^p,~\mathbf{T}(\mathbb{F}_q),~ \check{\alpha}(1 + \mathfrak{M}^{\lceil f_{C \cup w\cdot C}(\alpha)\rceil + \lceil f_{C \cup w\cdot C}(-\alpha)\rceil}) \big\rangle_{\alpha \in \Phi}. \]
Thus, we get 
\[ (J_w)_{\Phi} \cong T^1/\big\langle (T^1)^p,  \check{\alpha}(1 + \mathfrak{M}^{\lceil f_{C \cup w\cdot C}(\alpha)\rceil + \lceil f_{C \cup w\cdot C}(-\alpha)\rceil}) \big\rangle_{\alpha \in \Phi}.\]
In particular, when $w = 1$ and $\mathbf{G}$ is semisimple and simply connected, we get
\[J_\Phi = 1 \ .\]
\end{corollary}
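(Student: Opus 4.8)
The plan is to identify $J_w$ with one of the groups $\P_Y$ studied earlier in this appendix and then invoke Corollary~\ref{cor:frattini-PY} and Lemma~\ref{lem:UYintT}. Since the Iwahori subgroup is $J = \P_C$ by definition, \cite[Prop.~6.2.10(iii)]{BT1} gives $J_w = J \cap wJw^{-1} = \P_C \cap \P_{w\cdot C} = \P_{C \cup w\cdot C}$, so we take $Y = C \cup w\cdot C$. First I would apply Corollary~\ref{cor:frattini-PY} to obtain $\Phi(J_w) = \U_Y\,\mathbf{T}(\mathbb{F}_q)\,(T^1)^p$ and $(J_w)_\Phi \cong T^1/\langle T^1 \cap \U_Y,\ (T^1)^p\rangle$, and then substitute the explicit description of $T^1 \cap \U_Y$ from Lemma~\ref{lem:UYintT}. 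Since $\U_Y = \langle x_\alpha(\mathfrak{M}^{\lceil f_Y(\alpha)\rceil})\rangle_{\alpha\in\Phi}$ and the exponent $\lceil f_Y(\alpha)\rceil + \lceil f_Y(-\alpha)\rceil$ is symmetric under $\alpha \mapsto -\alpha$ (while $\check{(-\alpha)}(u) = \check\alpha(u)^{-1}$), the subgroup generated over $\Phi^+$ agrees with the one generated over all of $\Phi$; this yields the two displayed formulas. The one point to verify is that the $\max\{1,\,\cdot\,\}$ occurring in Lemma~\ref{lem:UYintT} may be dropped here: because $C \subseteq C \cup w\cdot C$ forces $f_{C\cup w\cdot C} \geq f_C$, and because $\lceil f_C(\alpha)\rceil + \lceil f_C(-\alpha)\rceil = 1$ for every $\alpha$ (the Iwahori's root subgroups being $x_\alpha(\mathfrak{O})$ for $\alpha \in \Phi^+$ and $x_\alpha(\mathfrak{M})$ for $\alpha \in \Phi^-$), the exponent for $J_w$ is always $\geq 1$, so the maximum is automatically attained.

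For the special case $w = 1$, I would simply specialize: $J = J_1 = \P_C$, and by the computation just recalled, $\lceil f_C(\alpha)\rceil + \lceil f_C(-\alpha)\rceil = 1$ for every $\alpha \in \Phi$, so the general formula collapses to
\[ J_\Phi \cong T^1/\big\langle (T^1)^p,\ \check\alpha(1 + \mathfrak{M})\big\rangle_{\alpha\in\Phi^+}. \]
Finally, when $\mathbf{G}$ is semisimple and simply connected we have $X_*(T) = \bigoplus_{\gamma\in\Pi}\bbZ\check\gamma$, so $T^1$ is topologically generated by the elements $\check\gamma(1 + \mathfrak{M})$ with $\gamma \in \Pi \subseteq \Phi^+$. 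Each of these is already trivial in the quotient above, whence $J_\Phi = 1$. (This mirrors the concluding step of the proof of Corollary~\ref{cor:frattini-longest}.)

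There is no real obstacle here: the statement is a direct specialization of Corollary~\ref{cor:frattini-PY} and Lemma~\ref{lem:UYintT} once $J_w$ is written as $\P_{C\cup w\cdot C}$. The only mildly delicate point is the bookkeeping with the concave function $f_{C \cup w\cdot C}$ — in particular checking that its relevant exponents dominate those of $f_C$, so that the $\max\{1,\,\cdot\,\}$ in Lemma~\ref{lem:UYintT} is harmless — together with recalling the standard fact that for the Iwahori these exponents all equal $1$.
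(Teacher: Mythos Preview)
Your proposal is correct and follows essentially the same route as the paper: identify $J_w = \P_{C\cup w\cdot C}$, apply Corollary~\ref{cor:frattini-PY} and Lemma~\ref{lem:UYintT}, and note that $\lceil f_{C\cup w\cdot C}(\alpha)\rceil + \lceil f_{C\cup w\cdot C}(-\alpha)\rceil \geq 1$ so the $\max\{1,\cdot\}$ is redundant. You supply a bit more detail than the paper (the monotonicity $f_{C\cup w\cdot C}\geq f_C$ and the $\Phi$ versus $\Phi^+$ indexing), but the argument is the same.
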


\begin{proof}
This follows from Corollary \ref{cor:frattini-PY} and Lemma \ref{lem:UYintT}, using the fact that $\lceil f_{C \cup w\cdot C}(\alpha)\rceil + \lceil f_{C \cup w\cdot C}(-\alpha)\rceil \geq 1$.  The last claim follows from $\lceil f_{C}(\alpha)\rceil + \lceil f_{C}(-\alpha)\rceil = 1$ (and the fact that the coroots $\check{\alpha}$ span the cocharacter lattice, as in the previous proof).  
\end{proof}

\begin{remark}
\label{rem:H1parahoric}
Suppose $Y$ is a non-empty subset of $\mathscr{A}$.  We have
\[ H^1(\P_Y,k) = \Hom(\P_Y,k) = \Hom(\P_Y/(\P_Y)_\Phi,k),\]
and Corollary \ref{cor:frattini-PY} shows that any such homomorphism is determined by its restriction to $T^0$.  In particular, this applies to the groups $K, K_{\lambda(\pi)}$ and $J_w$ considered above. 
\end{remark}

\section{\for{toc}{On mod $p$ orientation characters (by K. Kozio{\l} and D. Schwein)}\except{toc}{On mod $p$ orientation characters}}
\label{app:orientation}

\begin{center}by \textsc{Karol Kozio{\l} and David Schwein}\footnote{We thank Loren Spice for several useful comments.}\end{center}

Suppose $\mathfrak{F}$ is a finite extension of $\mathbb{Q}_p$ and $G$ the group of $\mathfrak{F}$-points of a connected reductive $\mathfrak{F}$-group.  In this appendix, we prove that the mod $p$ orientation character of $G$ is trivial, giving a different proof of a result of Schneider--Sorensen.

\subsection{Introduction}

We maintain the same notation as in the body of the article.  Thus, we let $\mathfrak{F}$ denote a finite extension of $\bbQ_p$ and $\mathbf{G}$ a connected reductive group over $\mathfrak{F}$, with $G := \mathbf{G}(\mathfrak{F})$ its group of $\mathfrak{F}$-rational points.  We will often conflate algebraic groups over $\mathfrak{F}$ with their groups of $\overline{\mathfrak{F}}$-points.

Given a point $x$ in the semisimple Bruhat--Tits building of $G$ and a real number $r > 0$, we denote by $G_{x,r}$ the depth-$r$ Moy--Prasad subgroup.  (We note that the assumption $r > 0$ guarantees that $G_{x,r}$ is pro-$p$.)  We will at various points make the assumption that $G_{x,r}$ is torsion-free\footnote{This condition is satisfied (for all $x$ and all $r > 0$) if $p$ is sufficiently large relative to the rank of $\mathbf{G}$ and the absolute ramification index of $\mathfrak{F}$; for an explicit bound (at least when $\mathbf{G}$ is semisimple), see \cite[Prop. 12.1]{totaro:eulerchar}.}.  This assumption guarantees that the cohomological dimension of $G_{x,r}$ is finite, equal to $d := \dim_{\bbQ_p}(G)$ (see \cite[Cor. (1)]{serre:cohdim}).

Our goal will be to prove the following:

\begin{theorem}\label{mainthm:appendix}
Let $x$ be a point in the semisimple Bruhat--Tits building of $G$ and $r > 0$.  Assume $G_{x,r}$ is torsion-free, and suppose $g \in G$ satisfies $g\cdot x = x$.  Then the adjoint action of $g$ on $H^d(G_{x,r},\bbF_p)$ is trivial.
\end{theorem}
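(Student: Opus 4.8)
The statement concerns the action of the stabilizer of $x$ on the top cohomology $H^d(G_{x,r},\bbF_p)$, where $d = \dim_{\bbQ_p}(G)$.  Since $G_{x,r}$ is a torsion-free pro-$p$ Poincar\'e group of dimension $d$, the space $H^d(G_{x,r},\bbF_p)$ is one-dimensional over $\bbF_p$, so the adjoint action of $g$ (which normalizes $G_{x,r}$ because $g\cdot x = x$) is given by a scalar in $\bbF_p^\times$; call it $\chi(g)$.  As $g$ ranges over the stabilizer of $x$ this defines a homomorphism to $\bbF_p^\times$, and the claim is that it is trivial.  The first reduction I would make is to pass to a normal subgroup: replacing $g$ by a power, it suffices to treat $g$ of $p$-power order (trivial, since then $\chi(g)$ is a $p$-power root of unity in $\bbF_p^\times$, hence $1$) or $g$ of order prime to $p$; and by Bruhat--Tits theory the prime-to-$p$ part of the stabilizer of $x$ lies in a compact subgroup whose reductive quotient governs the action.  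Concretely, for $r$ small I would instead pass from $G_{x,r}$ to a smaller Moy--Prasad layer $G_{x,r'}$ with $r' \gg 0$, using Corollary \ref{cor:cores-isom}-type corestriction isomorphisms (or rather their analogue internal to this appendix) to identify $H^d$ of the two groups compatibly with the $g$-action; this lets me assume $r$ is as large as I wish.

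The core of the argument should be a filtration/comparison with an associated graded Lie algebra.  When $r \gg 0$ and $G_{x,r}$ is torsion-free and uniform (or at least $p$-valued in the sense of Lazard), the mod-$p$ cohomology ring $H^*(G_{x,r},\bbF_p)$ is an exterior algebra on $H^1(G_{x,r},\bbF_p) = \Hom(G_{x,r}/\Phi(G_{x,r}),\bbF_p)$, and the top exterior power $H^d$ is $\Lambda^d$ of this degree-one space.  The action of $g$ on $H^d$ is therefore the determinant of the action of $\mathrm{Ad}(g)$ on $G_{x,r}/\Phi(G_{x,r}) \otimes \bbF_p$, which via the Moy--Prasad isomorphism is the associated graded piece $\mathfrak{g}_{x,r}/\mathfrak{g}_{x,r^+} \otimes \bbF_p$.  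So the whole question reduces to: the determinant of $\mathrm{Ad}(g)$ acting on the $\bbF_q$-vector space $\mathfrak{g}_{x,r}/\mathfrak{g}_{x,r^+}$ (a direct sum of a Cartan piece and root-space pieces, with multiplicities governed by the concave function $f$ at $x$) is $1$ in $\bbF_p^\times$.  This in turn I would reduce to showing that $\det(\mathrm{Ad}(g)) = 1$ on $\mathfrak{g}$ itself — i.e. $g$ lies in the kernel of the character $G \to \mathfrak{F}^\times$, $g \mapsto \det(\mathrm{Ad}(g))$ — which is classical: $\mathrm{Ad}(g)$ preserves the Killing form up to scalar and on the semisimple part has determinant $1$, while the central torus contributes a unit whose image in $\bbF_q^\times$ on each graded layer cancels between $\alpha$ and $-\alpha$ contributions.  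Summing the determinants over the graded layers and using that $g$ fixes $x$ (so permutes affine roots vanishing at $x$ in sign-symmetric pairs) gives the cancellation mod $p$.

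The main obstacle is the passage from the general torsion-free $G_{x,r}$ to a layer that is genuinely uniform/$p$-valued so that the exterior-algebra description of $H^*$ is available: for small $p$ relative to the ramification, $G_{x,r}$ need not be uniform even when torsion-free, and one must either invoke Lazard's theory of $p$-valued groups (where $H^*(G_{x,r},\bbF_p)$ is still the cohomology of the associated graded $\bbF_p$-Lie algebra, an exterior algebra in top degree) or replace $G_{x,r}$ by a deep enough layer and transfer via corestriction.  I expect this to be handled exactly as in the body of the paper: Lemma \ref{tracecor}-style corestriction isomorphisms in top degree are $g$-equivariant (since $g$ normalizes all the groups in sight), so $\chi(g)$ is independent of $r$, and for $r$ sufficiently large $G_{x,r}$ is uniform by the standard Moy--Prasad estimates.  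A secondary technical point is bookkeeping the multiplicities in $\mathfrak{g}_{x,r}/\mathfrak{g}_{x,r^+}$: one must check that for each affine root $\psi = (\alpha,h)$ contributing, the opposite affine root $(-\alpha,-h)$ contributes with the same multiplicity after translating by $r$, which follows from $f_x(\alpha) + f_x(-\alpha) \in \bbZ_{\geq 0}$ together with the fact that $g$ fixes $x$ hence preserves the value of affine roots at $x$.  Once these are in place, the determinant computation is routine and the theorem follows.
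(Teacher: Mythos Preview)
Your overall strategy matches the paper's: pass via corestriction to a sufficiently deep Moy--Prasad layer $G_{x,r'}$ that is uniform, use the exterior-algebra description $H^d(G_{x,r'},\bbF_p)\cong\Lambda^d H^1(G_{x,r'},\bbF_p)$ so that the $g$-action is the determinant of $\Ad(g)$ on the Frattini quotient, identify that quotient with a Lie-algebra graded piece, and then argue that $\det\Ad(g)=1$.

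Two points where the paper's execution differs from (and simplifies) your sketch.  First, before anything else the paper Weil-restricts to $\bbQ_p$; this makes the Moy--Prasad jumps integral and gives the clean identity $\mathfrak{g}_{x,r+1}=p\,\mathfrak{g}_{x,r}$, so that the Frattini quotient of $G_{x,r}$ is literally $\mathfrak{g}_{x,r}\otimes_{\bbZ_p}\bbF_p$, a full-rank $\bbF_p$-space of dimension~$d$.  Your identification with $\mathfrak{g}_{x,r}/\mathfrak{g}_{x,r^+}\otimes\bbF_p$ over a general~$\mathfrak{F}$ is not quite right (when $\mathfrak{F}/\bbQ_p$ is ramified the Frattini subgroup need not be the next jump), and Weil restriction is the cheapest fix.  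Second, for the determinant step the paper avoids your root-space bookkeeping entirely: since $\mathfrak{g}_{x,r}$ is a free $\bbZ_p$-lattice, $\det_{\bbF_p}\Ad(g)$ is the reduction of $\det_{\bbZ_p}\Ad(g)$, which one computes after $\otimes\,\overline{\bbQ}_p$ on all of $\boldsymbol{\mathfrak g}$.  Writing $g=zg'$ with $z\in\mathbf{Z}$ and $g'\in\mathbf{G}^{\mathrm{der}}$, the center acts trivially on $\boldsymbol{\mathfrak g}$ (so your remark about the central torus ``contributing a unit'' is off --- it contributes nothing), while $\det\Ad(g')=1$ simply because $\mathbf{G}^{\mathrm{der}}$ is perfect.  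This replaces your $\alpha\leftrightarrow -\alpha$ cancellation argument, which as stated is imprecise since $g$ need not lie in a torus and hence need not act on root spaces by scalars.  Finally, your opening reduction to $p$-power versus prime-to-$p$ order of $g$ is unnecessary and is not used.
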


\subsection{Preparation}

In order to prove the theorem, we make a few reductions and preparations.  Firstly, replacing $\mathbf{G}$ by the Weil restriction $\textnormal{Res}_{\mathfrak{F}/\bbQ_p}(\mathbf{G})$, we may assume $\mathfrak{F} = \bbQ_p$.  The compatibility of Moy--Prasad subgroups is given by \cite[Eqn. (A.1)]{aubertplymen}, whose proof works for an arbitrary (not necessarily Galois) finite separable extension $E/F$\footnote{More precisely, the proofs in \cite[App. A]{aubertplymen} as written work when $E/F$ is totally ramified and not necessarily Galois.  Factoring an arbitrary finite separable extension into a tower of an unramified extension and a totally ramified extension gives the result in general.}.

Next, we recall some properties of Moy--Prasad filtrations \cite{moyprasad2} which we will need.  For each point $x$ in the semisimple Bruhat--Tits building and each real number $r > 0$, we let $G_{x,r}$ denote the depth-$r$ Moy--Prasad subgroup.  Further, we let $\boldsymbol{\mathfrak{g}}$ denote the Lie algebra of $\mathbf{G}$, and let $\mathfrak{g}_{x,r}$ denote the depth-$r$ Moy--Prasad Lie sublattice.  It is a free $\bbZ_p$-module whose $\bbQ_p$-span is $\boldsymbol{\mathfrak{g}}(\bbQ_p)$.  These filtrations satisfy the following properties (see \cite[\S\S~3.2, 3.3]{moyprasad2} or \cite[\S\S~ 13.2, 13.3]{kalethaprasad}):
\begin{itemize}
\item If $r > 0$, then $G_{x,r}$ is an open pro-$p$ subgroup of $G$.
\item If $r \leq s$, then $G_{x,s}$ is a normal subgroup of $G_{x,r}$.
\item The set $\{G_{x,r}\}_{r > 0}$ gives a neighborhood basis of the identity.
\item For $g \in G$, we have $\Ad(g)(G_{x,r}) = G_{g\cdot x,r}$ and $\Ad(g)(\mathfrak{g}_{x,r}) = \mathfrak{g}_{g \cdot x,r}$.  In particular, if $g\cdot x = x$, then $\Ad(g)$ preserves $G_{x,r}$ and $\mathfrak{g}_{x,r}$.  
\item We have $p\mathfrak{g}_{x,r} = \mathfrak{g}_{x,r + 1}$.
\end{itemize}

Suppose $r > 0$ is such that $G_{x,r}$ is torsion-free.  By \cite[Cor. (1)]{serre:cohdim} and \cite[Thm. V.2.5.8]{lazard}, $G_{x,r}$ is a Poincar\'e group of dimension $d$.  In particular, we have $H^d(G_{x,r},\bbF_p) \cong \bbF_p$ (see \emph{op. cit.,} \S V.2.5.7), and Theorem \ref{mainthm:appendix} is trivially true for $p = 2$.  Therefore, we may assume that $p$ is odd in what follows.

\begin{lemma}\label{moy-prasad-isom}
Suppose $r \geq 1$.  Then there exists an isomorphism
\[ G_{x,r}/G_{x,r + 1} \stackrel{\sim}{\longrightarrow} \mathfrak{g}_{x,r}/\mathfrak{g}_{x,r + 1} = \mathfrak{g}_{x,r}\otimes_{\bbZ_p} \bbF_p. \]
Moreover, if $g\in G$ satisfies $g\cdot x = x$, then this isomorphism is equivariant for the adjoint action of $g$.
\end{lemma}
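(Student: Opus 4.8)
The plan is to build the isomorphism $G_{x,r}/G_{x,r+1} \xrightarrow{\sim} \mathfrak{g}_{x,r}/\mathfrak{g}_{x,r+1}$ directly from the standard structure theory of Moy--Prasad filtrations, then check $\Ad(g)$-equivariance on generators. First I would reduce to the case $r \geq 1$ as hypothesized (so that $G_{x,r}$ is pro-$p$ and $G_{x,r}/G_{x,r+1}$ is a finite elementary abelian $p$-group), and recall that the Moy--Prasad filtration admits the familiar $\mathbb{Z}_p$-module ``Moy--Prasad isomorphism'' $G_{x,r}/G_{x,r+1} \cong \mathfrak{g}_{x,r}/\mathfrak{g}_{x,r+1}$ for $r>0$ (see \cite[\S\S 13.2, 13.4]{kalethaprasad} or \cite[\S 3.2]{moyprasad2}); the right-hand side is $\mathfrak{g}_{x,r}\otimes_{\mathbb{Z}_p}\mathbb{F}_p$ since $p\mathfrak{g}_{x,r}=\mathfrak{g}_{x,r+1}$ as noted in the preparation. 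The cleanest way to present the map explicitly is via the concrete description of $G_{x,r}$ in terms of the pieces coming from a maximal (maximally split) torus $T$ with $x$ in its apartment: $G_{x,r}$ is topologically generated by the torus part $T_r$ and the affine root group pieces $U_{\psi}$ with $\psi(x) \geq r$, and likewise $\mathfrak{g}_{x,r}$ decomposes as the sum of the corresponding Lie algebra pieces. On each root group piece the exponential/Moy--Prasad parametrization $x_\alpha : \mathfrak{F} \xrightarrow{\sim} \EuScript{U}_\alpha$ (already introduced in Subsection \ref{pro-p-hecke-alg}) induces on graded pieces an $\mathbb{F}_p$-linear isomorphism onto the Lie algebra graded piece; on the torus part the truncated logarithm $1 + \pi^{\lceil r\rceil}\mathfrak{O} \to \pi^{\lceil r\rceil}\mathfrak{O}$ does the same. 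Assembling these yields the desired isomorphism, and one checks it is independent of the chosen ordering of generators because the relevant commutators land in $G_{x,r+1}$ (the filtration is a filtration by a ``concave function'' in the sense of Bruhat--Tits, so $[G_{x,r},G_{x,r}]\subseteq G_{x,2r}\subseteq G_{x,r+1}$ when $r\geq 1$).

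Second, for the equivariance: given $g$ with $g\cdot x = x$, the bullet points in the preparation give $\Ad(g)(G_{x,r}) = G_{x,r}$, $\Ad(g)(G_{x,r+1})=G_{x,r+1}$, $\Ad(g)(\mathfrak{g}_{x,r})=\mathfrak{g}_{x,r}$ and $\Ad(g)(\mathfrak{g}_{x,r+1})=\mathfrak{g}_{x,r+1}$, so conjugation by $g$ and the adjoint action on the Lie algebra both descend to the graded pieces. I would verify compatibility on the generating pieces: for the root group part, $g\, x_\alpha(u)\, g^{-1}$ expands (using the bialgebra/Chevalley structure and the fact that higher-order correction terms lie in deeper filtration steps) to $x_{?}(\text{linear in } u)$ modulo $G_{x,r+1}$, matching $\Ad(g)$ acting linearly on $\mathfrak{g}_{\alpha,\text{gr}}$; for the torus part, conjugation is trivial on $T$ if $g$ normalizes $T$ and otherwise is handled by the same transport-of-structure argument. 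In fact the most economical route is to observe that $\Ad(g)$ restricted to the pair $(G_{x,r}, \mathfrak{g}_{x,r})$ is an automorphism of filtered objects, and the Moy--Prasad isomorphism is canonical (it does not depend on auxiliary choices such as $T$, up to the identifications already made), hence automatically intertwines any such automorphism — this is the standard ``functoriality in $G$'' of the Moy--Prasad construction, which I would cite from \cite{kalethaprasad} or \cite{moyprasad2} rather than reprove.

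The main obstacle is pinning down exactly how canonical the Moy--Prasad isomorphism is, since in the literature it is sometimes constructed with auxiliary choices (a pinning, a choice of torus, an exponential map), and equivariance only follows immediately if one uses a construction that is genuinely functorial in $G$. I expect to resolve this by appealing to the version in \cite[Ch. 13]{kalethaprasad}, where the graded-piece isomorphism is built canonically from the smooth affine group scheme $\mathbf{G}_{x,0}$ and its congruence filtration, and the action of any $g$ fixing $x$ is realized as an automorphism of that group scheme; the adjoint action on $\mathfrak{g}_{x,r}/\mathfrak{g}_{x,r+1}$ is then the differential of that automorphism, and compatibility is a formal consequence. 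If one instead prefers a hands-on argument, the only real check is that the ``correction terms'' in conjugation formulas for Chevalley generators are of strictly higher depth, which follows from the axioms of a valued root datum; this is routine but notation-heavy, so I would keep it to a sentence and a citation.
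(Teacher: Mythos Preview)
Your proposal lands in the right place: the paper's proof is essentially a citation to Kaletha--Prasad (specifically \cite[Thm.~13.5.1]{kalethaprasad}, obtained via the dilatation theory of \cite[Prop.~A.5.19]{kalethaprasad}), which is exactly the resolution you propose in your final paragraph for the canonicity/equivariance issue. So at the level of what actually proves the lemma, you and the paper agree.

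One caution about your hands-on sketch: you invoke the root-group parametrizations $x_\alpha$ from Subsection~\ref{pro-p-hecke-alg}, but that subsection is written under the standing hypothesis that $\mathbf{G}$ is $\mathfrak{F}$-split, whereas this appendix treats a general connected reductive $\mathbf{G}$ (after Weil restriction to $\bbQ_p$, possibly wild). The paper explicitly flags that in the wild case the Moy--Prasad filtration ``has many deficiencies,'' and it is precisely to handle this that it routes through dilatation rather than through a root-group-by-root-group description. Your instinct that the hands-on argument would be ``routine but notation-heavy'' is correct in the tame case (where the paper cites \cite[Cor.~2.4]{yu:sc}), but it does not obviously go through in general; the dilatation-based functoriality is what buys you the statement uniformly. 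Since you already identify this as the main obstacle and propose the correct fix, your proof is fine---just drop the split-specific references and lead with the Kaletha--Prasad citation.
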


\begin{proof}
The claimed isomorphism is known as the \emph{Moy--Prasad isomorphism}
and is well-known when $\mathbf{G}$ splits over a tamely ramified extension
\cite[Cor.~2.4]{yu:sc}.
When $\mathbf{G}$ is wild, the Moy--Prasad filtration
has many deficiencies \cite[\S\S~ 0.4, 4.6]{yu:models}.
Nonetheless, something can still be said in general.
The theory of dilatation,
a close relative of the Moy--Prasad filtration,
constructs a congruence filtration~$(\mathbf{H}_n)_{n\geq0}$
on any flat affine group scheme~$\mathbf{H}$ of finite type over~$\mathbb{Z}_p$.
The dilatation $\mathbf{H}_n$ is again
a flat, affine, finite-type $\mathbb{Z}_p$-group scheme
and $\mathbf{H}_n(\mathbb{Z}_p) = \ker(\mathbf{H}(\mathbb{Z}_p) \longrightarrow \mathbf{H}(\bbZ/p^n\bbZ))$.
In this setting one can prove that when $\mathbf{H}$ is in addition
smooth and with connected generic fiber,
there is a functorial isomorphism of $\bbZ/p^n\bbZ$-group schemes
\[
\mathbf{H}_{n,\bbZ/p^n\bbZ}
\stackrel{\sim}{\longrightarrow}
\boldsymbol{\mathfrak{h}}_{n,\bbZ/p^n\bbZ},
\]
where $\boldsymbol{\mathfrak{h}}_n$ is the Lie algebra of $\mathbf{H}_n$
and the subscript $\bbZ/p^n\bbZ$ denotes base change.
Kaletha and Prasad explain this general result
on dilatation in \cite[Prop.~A.5.19]{kalethaprasad}
and use it to prove an
even stronger version of the claimed isomorphism
(\cite[Thm.~13.5.1]{kalethaprasad}).
\end{proof}

\begin{corollary}\label{h1-cor}
Suppose $p$ is odd, $r \geq 1$, and $G_{x,r}$ is torsion-free.  We then have
\[H^1(G_{x,r},\bbF_p) \cong \Hom_{\bbF_p}(\mathfrak{g}_{x,r}/\mathfrak{g}_{x,r + 1},\bbF_p) = (\mathfrak{g}_{x,r}/\mathfrak{g}_{x,r + 1})^\vee.\]
\end{corollary}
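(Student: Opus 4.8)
The plan is to combine the Moy--Prasad isomorphism of Lemma \ref{moy-prasad-isom} with the standard identification of the first cohomology of a pro-$p$ group with the $\bbF_p$-linear dual of its Frattini quotient.  First I would invoke the fact that for $p$ odd, $r \geq 1$, and $G_{x,r}$ torsion-free, the group $G_{x,r}$ is a uniform (in particular, torsion-free powerful) pro-$p$ group: indeed, by \cite[\S III.3.2.7, Eqn. (3.2.7.5)]{lazard} a torsion-free Moy--Prasad subgroup of depth $r \geq 1$ (with $p$ odd) satisfies $[G_{x,r},G_{x,r}] \subseteq G_{x,r}^p$, which together with the properties of the Moy--Prasad filtration recalled above gives $\Phi(G_{x,r}) = G_{x,r}^p = G_{x,r+1}$.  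Hence the Frattini quotient $(G_{x,r})_\Phi$ coincides with $G_{x,r}/G_{x,r+1}$.

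Second I would use the general identity $H^1(\mathcal{G},\bbF_p) = \Hom(\mathcal{G},\bbF_p) = \Hom(\mathcal{G}/\Phi(\mathcal{G}),\bbF_p)$ for any pro-$p$ group $\mathcal{G}$ (a homomorphism into the abelian $p$-torsion group $\bbF_p$ necessarily kills commutators and $p$-th powers).  Applying this to $\mathcal{G} = G_{x,r}$ and substituting $\Phi(G_{x,r}) = G_{x,r+1}$ from the previous step yields
\[
H^1(G_{x,r},\bbF_p) \cong \Hom_{\bbF_p}(G_{x,r}/G_{x,r+1},\bbF_p).
\]
Finally I would transport this through the Moy--Prasad isomorphism $G_{x,r}/G_{x,r+1} \stackrel{\sim}{\longrightarrow} \mathfrak{g}_{x,r}/\mathfrak{g}_{x,r+1}$ of Lemma \ref{moy-prasad-isom}, together with the equality $\mathfrak{g}_{x,r+1} = p\mathfrak{g}_{x,r}$ (so that $\mathfrak{g}_{x,r}/\mathfrak{g}_{x,r+1} = \mathfrak{g}_{x,r}\otimes_{\bbZ_p}\bbF_p$), to obtain the desired isomorphism $H^1(G_{x,r},\bbF_p) \cong (\mathfrak{g}_{x,r}/\mathfrak{g}_{x,r+1})^\vee$.

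The only real subtlety — and the step I would be most careful about — is the first one: justifying $\Phi(G_{x,r}) = G_{x,r+1}$.  One inclusion ($G_{x,r}^p \subseteq \Phi(G_{x,r})$ and $G_{x,r}^p = G_{x,r+1}$) is immediate from the filtration axioms and the fact that $p\mathfrak{g}_{x,r} = \mathfrak{g}_{x,r+1}$ transported through the Moy--Prasad isomorphism; the reverse inclusion $[G_{x,r},G_{x,r}] \subseteq G_{x,r+1}$ is exactly the statement that $G_{x,r}$ is powerful, which is where the torsion-freeness hypothesis (equivalently, the condition on $p$ relative to the ramification and rank, via \cite{lazard}) is used.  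Everything else is formal.  If one prefers to avoid citing powerfulness directly, an alternative is to observe that the Moy--Prasad isomorphism is a group isomorphism onto the \emph{additive} group $\mathfrak{g}_{x,r}/\mathfrak{g}_{x,r+1}$, which is elementary abelian, so $G_{x,r}/G_{x,r+1}$ is itself elementary abelian and $G_{x,r+1} \supseteq \Phi(G_{x,r})$; combined with the reverse inclusion this again gives equality.
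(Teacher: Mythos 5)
Your route and the paper's diverge at exactly the step you flag as delicate, and there is a genuine gap there.  You want to prove $\Phi(G_{x,r}) = G_{x,r+1}$ directly and then read off $H^1$.  One inclusion, $\Phi(G_{x,r}) \subseteq G_{x,r+1}$, does follow cleanly from the Moy--Prasad isomorphism (Lemma \ref{moy-prasad-isom}): since $\mathfrak{g}_{x,r}/\mathfrak{g}_{x,r+1}$ is elementary abelian, so is $G_{x,r}/G_{x,r+1}$, and the universal property of the Frattini quotient gives the containment.  But the reverse inclusion $G_{x,r+1} \subseteq \Phi(G_{x,r})$ — which you reduce to $G_{x,r}^p = G_{x,r+1}$ — is not established by anything you cite.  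The Moy--Prasad axiom is $p\mathfrak{g}_{x,r} = \mathfrak{g}_{x,r+1}$ on the Lie algebra; the corresponding group-level statement only gives $G_{x,r}^p \subseteq G_{x,r+1}$, not equality.  The Lazard reference \cite[\S III.3.2.7, Eqn. (3.2.7.5)]{lazard} is invoked in this paper only to guarantee \emph{torsion-freeness} of $I$; it does not say that a torsion-free $G_{x,r}$ with $r\geq 1$ is powerful.  Worse, within the paper's own logical structure, uniformity of $G_{x,r}$ is Corollary \ref{uniform-cor}, whose proof explicitly cites ``the previous corollary'' — i.e.\ Corollary \ref{h1-cor} itself — together with \cite[Prop.\ 1.10, Rmk.\ 1.11]{klopschsnopce}.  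So appealing to uniformity here is circular.  Your proposed ``alternative'' doesn't fix this: it re-derives the easy inclusion $\Phi(G_{x,r}) \subseteq G_{x,r+1}$ and then just asserts ``combined with the reverse inclusion'', which is the very thing left unproved.

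The paper avoids this issue entirely.  It only uses the Moy--Prasad isomorphism for the easy direction, producing a surjection $(G_{x,r})_\Phi \longtwoheadrightarrow G_{x,r}/G_{x,r+1} \cong \mathfrak{g}_{x,r}/\mathfrak{g}_{x,r+1}$ and hence (dualizing) an injection of $(\mathfrak{g}_{x,r}/\mathfrak{g}_{x,r+1})^\vee$ into $H^1(G_{x,r},\bbF_p)$.  This shows $\dim_{\bbF_p} H^1(G_{x,r},\bbF_p) \geq d$.  The other direction comes not from structure theory of $G_{x,r}$ but from a general bound: by \cite[Prop.\ 1.2]{klopsch}, the minimal number of topological generators of a compact $p$-adic analytic group is at most its dimension, so $\dim_{\bbF_p} H^1(G_{x,r},\bbF_p) \leq d$.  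The two bounds together force the injection to be an isomorphism; the equality $\Phi(G_{x,r}) = G_{x,r+1}$ then falls out as a \emph{consequence}, rather than being an input.  If you want to salvage your direct approach, you would need an independent (non-circular) proof that $G_{x,r+1} \subseteq \Phi(G_{x,r})$, which is exactly the content the paper deliberately routes around.
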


\begin{proof}
Since $G_{x,r}$ acts trivially on $\bbF_p$, we have 
\[H^1(G_{x,r},\bbF_p) = \Hom^{\textnormal{cts}}(G_{x,r},\bbF_p) = \Hom\big((G_{x,r})_{\Phi},\bbF_p\big),\]
where $(G_{x,r})_{\Phi} := G_{x,r}/\overline{G_{x,r}^p[G_{x,r}, G_{x,r}]}$ denotes the Frattini quotient.  The group $\mathfrak{g}_{x,r}/\mathfrak{g}_{x,r + 1}$ is abelian and $p$-torsion, and the above lemma implies that the same is true for $G_{x,r}/G_{x,r + 1}$.  The universal property of the Frattini quotient then gives a surjective map
\[(G_{x,r})_{\Phi} \longtwoheadrightarrow G_{x,r}/G_{x,r + 1} \stackrel{\sim}{\longrightarrow} \mathfrak{g}_{x,r}/\mathfrak{g}_{x,r + 1}.\]
By dualizing, we obtain
\begin{equation}\label{frattini-inj}
\Hom_{\bbF_p}(\mathfrak{g}_{x,r}/\mathfrak{g}_{x,r + 1},\bbF_p) \longhookrightarrow \Hom\big((G_{x,r})_{\Phi},\bbF_p\big) = H^1(G_{x,r},\bbF_p),
\end{equation}
which implies $\dim_{\bbF_p}(H^1(G_{x,r},\bbF_p)) \geq \dim_{\bbF_p}(\mathfrak{g}_{x,r}/\mathfrak{g}_{x,r + 1}) = \dim_{\bbQ_p}(G) = d$.  On the other hand, \cite[Prop. 1.2]{klopsch} implies that $\dim_{\bbF_p}(H^1(G_{x,r},\bbF_p)) \leq d$.  Thus, the injection \eqref{frattini-inj} is an isomorphism.
\end{proof}

\begin{corollary}\label{uniform-cor}
\hfill
\begin{enumerate}
\item Suppose $p \geq 5$, $r \geq 1$, and $G_{x,r}$ is torsion-free.  Then $G_{x,r}$ is a uniform pro-$p$ group.  
\item Suppose $p = 3$, $r \gg 0$ is sufficiently large, and $G_{x,r}$ is torsion-free.  Then $G_{x,r}$ is a uniform pro-$3$ group.
\end{enumerate}
\end{corollary}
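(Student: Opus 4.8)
The plan is to reduce everything to showing that $G_{x,r}$ is \emph{powerful}, and to extract powerfulness from the two basic features of the Moy--Prasad filtration recorded in the Preparation section. Recall the standard characterization: a topologically finitely generated pro-$p$ group is uniform precisely when it is powerful and torsion-free (this is built into Lazard's theory of $p$-valued groups, \cite{lazard}); and for $p$ odd ``powerful'' means $\overline{[\mathcal{G},\mathcal{G}]}\subseteq\overline{\mathcal{G}^p}$. Now $G_{x,r}$ is a compact $p$-adic analytic group, hence topologically finitely generated, and it is torsion-free by hypothesis, so the whole statement comes down to the single inclusion $\overline{[G_{x,r},G_{x,r}]}\subseteq\overline{G_{x,r}^p}$.

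For this I would combine two estimates. \textbf{(a)} The commutator estimate $[G_{x,s},G_{x,t}]\subseteq G_{x,s+t}$ for all $s,t>0$, which is part of the defining properties of the Moy--Prasad filtration (\cite[\S 3.2]{moyprasad2}). \textbf{(b)} The $p$-th power estimate $G_{x,s+1}\subseteq\overline{G_{x,s}^p}$. To prove (b) I would use that $p\mathfrak{g}_{x,s}=\mathfrak{g}_{x,s+1}$: when $\mathbf{G}$ splits over a tamely ramified extension the $p$-adic exponential is a bijection $\mathfrak{g}_{x,s}\xrightarrow{\sim}G_{x,s}$, and since $\exp(pX)=\exp(X)^p$ this gives $G_{x,s+1}=\exp(p\mathfrak{g}_{x,s})\subseteq G_{x,s}^p$ directly; in the wild case one instead leans on the strong dilatation isomorphism (\cite[Thm. 13.5.1]{kalethaprasad}, as in the proof of Lemma \ref{moy-prasad-isom}), which still yields a logarithm identifying $G_{x,s}$ with $\mathfrak{g}_{x,s}$ in the appropriate depth range. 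Alternatively, purely on graded pieces: Lemma \ref{moy-prasad-isom} identifies the $p$-th power map $G_{x,s}/G_{x,s+1}\to G_{x,s+1}/G_{x,s+2}$ with multiplication by $p$ on $\mathfrak{g}_{x,s}/\mathfrak{g}_{x,s+1}\to\mathfrak{g}_{x,s+1}/\mathfrak{g}_{x,s+2}$, which is surjective, so $G_{x,s+1}\subseteq\overline{G_{x,s}^p}\,G_{x,s+2}$; iterating down the filtration and using that $\overline{G_{x,s}^p}$ is closed, that $G_{x,s+1}^p\subseteq G_{x,s}^p$, and that $\bigcap_n G_{x,n}=1$ upgrades this to $G_{x,s+1}\subseteq\overline{G_{x,s}^p}$.

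With (a) and (b) in hand, part (i) is immediate: since $r\geq 1$ we have $2r\geq r+1$, so
\[ \overline{[G_{x,r},G_{x,r}]}\;\subseteq\;G_{x,2r}\;\subseteq\;G_{x,r+1}\;\subseteq\;\overline{G_{x,r}^p}, \]
hence $G_{x,r}$ is powerful and therefore uniform. For part (ii) the same three inclusions will do; the only point is that for $p=3$ the input (b) — and, in the possibly wildly ramified situation produced by the reduction to $\bbQ_p$ via Weil restriction, the full logarithm underlying Lemma \ref{moy-prasad-isom} rather than just its graded version — is available only once the depth $r$ is large enough, which is exactly why one passes to $r\gg 0$. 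For such $r$, (a) and (b) hold verbatim and the displayed chain again forces powerfulness, hence uniformity.

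The step I expect to be the main obstacle is (b): controlling $p$-th powers, i.e. showing they exhaust $G_{x,r+1}$ modulo deeper Moy--Prasad subgroups. For $p\geq 5$ in the tame case this is routine via $\exp$/$\log$, but in general it is precisely the place where $p\mathfrak{g}_{x,r}=\mathfrak{g}_{x,r+1}$, the Moy--Prasad isomorphism, and — for small residue characteristic — the dilatation machinery behind Lemma \ref{moy-prasad-isom} must be invoked carefully, and it is the reason the case $p=3$ is asserted only at large depth.
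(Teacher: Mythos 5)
Your approach diverges from the paper's: the paper reads off uniformity from a black-box criterion of Klopsch--Snopce after using Corollary \ref{h1-cor} to check that the minimal number of generators of $G_{x,r}$ equals $\dim_{\bbQ_p}(G)$, whereas you try to verify powerfulness directly from commutator and $p$-th power estimates on the Moy--Prasad filtration. That is a reasonable strategy, but it hinges entirely on your estimate (b), namely $G_{x,s+1}\subseteq\overline{G_{x,s}^p}$, and this is where the argument has a genuine gap.

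Your justification of (b) asserts that, via the isomorphisms of Lemma \ref{moy-prasad-isom}, the $p$-th power map $G_{x,s}/G_{x,s+1}\to G_{x,s+1}/G_{x,s+2}$ is identified with multiplication by $p$ from $\mathfrak{g}_{x,s}/\mathfrak{g}_{x,s+1}$ to $\mathfrak{g}_{x,s+1}/\mathfrak{g}_{x,s+2}$. Lemma \ref{moy-prasad-isom} does not give this: it produces an isomorphism $G_{x,s}/G_{x,s+1}\cong\mathfrak{g}_{x,s}/\mathfrak{g}_{x,s+1}$ at each fixed depth $s$, but says nothing about how the $p$-th power map connects two consecutive depths, and the $\bbZ/p^n\bbZ$-group-scheme isomorphism underlying it (\cite[Thm.~13.5.1]{kalethaprasad}) is not an integral logarithm compatible with taking $p$-th powers in any obvious way. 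The $\exp$/$\log$ argument you offer works when $\exp$ converges on $\mathfrak{g}_{x,s}$, but the reduction to $\bbQ_p$ via Weil restriction performed earlier in the appendix can introduce wild ramification, which is precisely where this convergence and bijectivity can fail. Finally, your sketch, if it were complete, would prove part (1) uniformly for all odd $p$ and $r\geq 1$, so it cannot by itself account for why $p=3$ requires $r\gg 0$; you flag this but offer no mechanism. The paper's route avoids all of these issues: the $p\geq 5$ case follows from Corollary \ref{h1-cor} and \cite[Prop.~1.10, Rmk.~1.11]{klopschsnopce}, while for $p=3$ the extra step of locating a uniform open normal subgroup $H\subseteq G_{x,0+}$ via \cite[Cor.~4.3]{DDMS} and applying \cite[Cor.~1.7]{klopschsnopce} is what forces $r\gg 0$ (so that $G_{x,r}\subseteq H$), a distinction your argument does not see.
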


\begin{proof}
For $p \geq 5$, this follows from the previous corollary and \cite[Prop. 1.10, Rmk. 1.11]{klopschsnopce}.  Assume $p = 3$.  By \cite[Cor. 4.3]{DDMS}, the group $G_{x,0+} = \bigcup_{s > 0}G_{x,s}$ possesses an open normal subgroup $H$ which is uniform, and in particular saturable.  Since the Moy--Prasad subgroups give a neighborhood basis of the identity, there exists $r_0 > 0$ such that $G_{x,r_0} \subseteq H$.  Then, for any $r \geq \max\{r_0,1\}$ we have $G_{x,r} \subseteq H$, and the result follows from the previous corollary and \cite[Cor. 1.7]{klopschsnopce}.
\end{proof}

\subsection{Proof}

We now prove Theorem \ref{mainthm:appendix}, given the reductions in the previous section.  Thus, we assume that $p$ is odd, and $\mathbf{G}$ is defined over $\bbQ_p$.  We fix a point $x$ in the semisimple Bruhat--Tits building and a real number $r > 0$, and assume $G_{x,r}$ is torsion-free.  Finally, we suppose $g\in G$ satisfies $g\cdot x = x$.

By Corollary \ref{uniform-cor}, there exists a real number $r'\geq \max\{r, 1\}$ for which the group $G_{x,r'}$ is uniform.  By \cite[pf. of Prop. 30]{serre:galoiscoh}, the corestriction map
\[\textnormal{cores}^{G_{x,r'}}_{G_{x,r}}:H^d(G_{x,r'},\bbF_p) \longrightarrow H^d(G_{x,r},\bbF_p)\]
is an isomorphism, which is furthermore equivariant for the adjoint action of $g$.  Therefore, replacing $G_{x,r}$ by $G_{x,r'}$, we may further assume that $G_{x,r}$ is uniform.

Since $G_{x,r}$ is uniform, \cite[Thm. 5.1.5]{symondsweigel} and Corollary \ref{h1-cor} imply that we have $\Ad(g)$-equivariant isomorphisms of $\bbF_p$-vector spaces
\[H^d(G_{x,r},\bbF_p) \cong \sideset{_{}^{}}{_{\bbF_p}^d}{\bigwedge} H^1(G_{x,r},\bbF_p) \cong \sideset{_{}^{}}{_{\bbF_p}^d}{\bigwedge} (\mathfrak{g}_{x,r}/\mathfrak{g}_{x,r + 1})^\vee.\]
Therefore, by dualizing, it suffices to prove that $\Ad(g)$ acts trivially on
\[\sideset{_{}^{}}{_{\bbF_p}^d}{\bigwedge} \mathfrak{g}_{x,r}/\mathfrak{g}_{x,r + 1} \cong \sideset{_{}^{}}{_{\bbF_p}^d}{\bigwedge} \big(\mathfrak{g}_{x,r} \otimes_{\bbZ_p} \bbF_p\big) \cong \left(\sideset{_{}^{}}{_{\bbZ_p}^d}{\bigwedge} \mathfrak{g}_{x,r}\right)\otimes_{\bbZ_p} \bbF_p.\]
Using the above isomorphisms, it suffices to show that $\bigwedge^d\textnormal{Ad}(g)$ acts trivially on $\bigwedge_{\bbZ_p}^d \mathfrak{g}_{x,r}$.  The latter is a free $\bbZ_p$-module of rank $1$ since $\mathfrak{g}_{x,r}$ is a free $\bbZ_p$-module of rank $d$, and we are reduced to showing that $\det_{\bbZ_p}(\Ad(g)) = 1$ for the adjoint action of $g$ on $\mathfrak{g}_{x,r}$ (note that the determinant is defined as $\mathfrak{g}_{x,r}$ is free of finite rank).

In order to prove that the determinant of the map $\Ad(g): \mathfrak{g}_{x,r} \longrightarrow \mathfrak{g}_{x,r}$ is equal to 1, it suffices to do so after tensoring by $\overline{\mathbb{Q}}_p$:
\[\Ad(g): \boldsymbol{\mathfrak{g}} = \mathfrak{g}_{x,r}\otimes_{\bbZ_p}\overline{\mathbb{Q}}_p \longrightarrow \boldsymbol{\mathfrak{g}} = \mathfrak{g}_{x,r}\otimes_{\bbZ_p}\overline{\mathbb{Q}}_p.\]
Recall that the group $\mathbf{G}$ has an almost-direct product decomposition $\mathbf{G} = \mathbf{Z}\mathbf{G}^{\textnormal{der}}$, where $\mathbf{Z}$ denotes the connected center of $\mathbf{G}$ and $\mathbf{G}^{\textnormal{der}}$ denotes the derived subgroup (see \cite[Cor. 8.1.6(i)]{springer}).  Let us write $g = zg'$ with $z\in \mathbf{Z}$ and $g' \in \mathbf{G}^{\textnormal{der}}$.  As linear maps on $\boldsymbol{\mathfrak{g}}$, we have 
\[\Ad(g) = \Ad(z) \circ \Ad(g') = \Ad(g'),\]
since $z$ is central.  Therefore, we get
\[\sideset{_{}^{}}{_{\overline{\mathbb{Q}}_p}^{}}\det(\Ad(g)) = \sideset{_{}^{}}{_{\overline{\mathbb{Q}}_p}^{}}\det(\Ad(g')) = 1,\]
since $\mathbf{G}^{\textnormal{der}}$ is generated by commutators.  This concludes the proof of Theorem \ref{mainthm:appendix}.

\subsection{Orientation character}

We once again make no assumption on $p$, and suppose that the pro-$p$ Iwahori subgroup $I$ of $G$ is torsion-free.  We define the mod $p$ orientation character $\xi:G \longrightarrow \bbF_p^\times$ as follows.  Let $\bbF_p$ denote the trivial $G$-representation.  For $g \in G$, we set $I_g := I \cap gIg^{-1}$, and let $\xi(g)$ denote the scalar defined by the sequence of isomorphisms
\[\bbF_p \xrightarrow{\textnormal{tr}^{-1}_{I_{g^{-1}}}} H^d(I_{g^{-1}},\bbF_p) \stackrel{g_*}{\longrightarrow} H^d(I_{g},\bbF_p) \xrightarrow{\textnormal{tr}_{I_g}} \bbF_p.\]
Here $\textnormal{tr}_{K}:H^d(K,\bbF_p) \stackrel{\sim}{\longrightarrow} \bbF_p$ is the trace map, defined in \cite[\S I.4.5, Prop. 30(b)]{serre:galoiscoh} for any torsion-free open pro-$p$ subgroup of $G$.  It is known that the association $g \longmapsto \xi(g)$ gives a homomorphism, which is trivial on $I$ (\cite[Lem. 2.6]{schneidersorensen} and \cite[Lem. 4.11, Def. 4.12]{koziol:functorial}).

The following result was first proved by Schneider--Sorensen (\cite[Lem. 2.10]{schneidersorensen}, see also \cite[Cor. 5.2]{kohlhaase}).

\begin{theorem}
The character $\xi: G \longrightarrow \bbF_p^\times$ is trivial.
\end{theorem}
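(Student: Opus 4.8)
The plan is to deduce the triviality of the orientation character $\xi$ directly from Theorem \ref{mainthm:appendix}, essentially by unwinding the definition of $\xi$ and comparing it with the adjoint-action statement proved in the appendix. First I would recall that the trace maps $\textnormal{tr}_K$ for torsion-free open pro-$p$ subgroups are compatible with corestriction, in the sense made precise by the same argument as in Lemma \ref{tracecor} (or step (7) of \cite[\S I.4.5, Prop. 30]{serre:galoiscoh}): for $K' \subseteq K$ an inclusion of such groups, $\textnormal{tr}_K \circ \textnormal{cores}^{K'}_K = \textnormal{tr}_{K'}$. This lets me ``move the trace map around'' freely among the various subgroups $I_g$, $I_{g^{-1}}$ and their intersections.

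Next I would fix $g \in G$ and seek a point $x$ in the semisimple building and a radius $r > 0$ such that $G_{x,r}$ is torsion-free, $G_{x,r} \subseteq I_g \cap I_{g^{-1}}$, and, crucially, $g\cdot x = x$. Here I would use the Bruhat decomposition with respect to $I$ exactly as in the proof of Lemma \ref{trivial-action-dualizing}: it suffices to treat $g \in I$ and $g \in N_G(S)$, and in the latter case one may further assume either that $g$ stabilizes the chamber $C$ or that $g$ lifts a simple affine reflection $s_A$. In all these cases there is a facet $F \subseteq \overline{C}$ fixed by $g$; choosing $x \in F$ we get $g \cdot x = x$, and for $r \gg 0$ the Moy--Prasad subgroup $G_{x,r}$ is torsion-free and contained in $I$ (hence in $I_g$ since $g$ fixes $x$ so $gG_{x,r}g^{-1} = G_{g\cdot x,r} = G_{x,r} \subseteq I$, and likewise in $I_{g^{-1}}$). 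This is the same geometric input already used in the body of the paper.

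With such an $x$ and $r$ in hand, I would factor the definition of $\xi(g)$ through $H^d(G_{x,r},\bbF_p)$: using the corestriction-compatibility of the trace maps, the composite
\[
\bbF_p \xrightarrow{\textnormal{tr}^{-1}_{I_{g^{-1}}}} H^d(I_{g^{-1}},\bbF_p) \xrightarrow{g_*} H^d(I_g,\bbF_p) \xrightarrow{\textnormal{tr}_{I_g}} \bbF_p
\]
equals the composite obtained by replacing $I_{g^{-1}}$ and $I_g$ with $G_{x,r}$ throughout (note $g_*$ is well-defined on $H^d(G_{x,r},\bbF_p)$ precisely because $g$ normalizes $G_{x,r}$), and this new composite is simply $\textnormal{tr}_{G_{x,r}} \circ g_* \circ \textnormal{tr}^{-1}_{G_{x,r}}$, i.e. the scalar by which $g_* = \textnormal{Ad}(g)$ acts on the one-dimensional space $H^d(G_{x,r},\bbF_p)$. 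By Theorem \ref{mainthm:appendix}, this scalar is $1$. Since $g$ was arbitrary, $\xi \equiv 1$.

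The only genuinely delicate point is verifying that the factorization through $H^d(G_{x,r},\bbF_p)$ is compatible with the $g$-action — that is, that the square relating $g_*$ on the $I_g$-level to $g_*$ on the $G_{x,r}$-level commutes; this is exactly the content of Corollary \ref{cor-conjugationcorestriction} (with $\bbF_p$ in place of $k$), which says conjugation by $g$ commutes with corestriction in top degree, and it is where Theorem \ref{mainthm:appendix} is really being invoked. Everything else — the Bruhat-decomposition reduction, the existence of a fixed facet, and the transitivity of corestriction-trace — is routine and already appears in the excerpt. So the main obstacle is purely bookkeeping: assembling the commutative diagram of corestrictions, conjugations, and trace maps correctly, after which the result is immediate. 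I would present this diagram explicitly, mirroring the diagram in the proof of Corollary \ref{cor-conjugationcorestriction}, and then read off $\xi(g) = 1$.
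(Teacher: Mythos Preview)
Your approach is essentially the same as the paper's: reduce via the Bruhat decomposition to elements fixing a facet of $\overline{C}$, pass to a sufficiently deep Moy--Prasad subgroup $G_{x,r}$ at a fixed point $x$, factor the defining composite for $\xi(g)$ through $H^d(G_{x,r},\bbF_p)$ using trace--corestriction compatibility, and then invoke Theorem~\ref{mainthm:appendix}. The paper presents exactly this diagram.

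One correction: you cite Corollary~\ref{cor-conjugationcorestriction} for the commutativity of the square relating $g_*$ on the $I_{g^{\pm 1}}$-level to $g_*$ on the $G_{x,r}$-level, and say this is ``where Theorem~\ref{mainthm:appendix} is really being invoked.'' That is circular --- Corollary~\ref{cor-conjugationcorestriction} is deduced \emph{from} Proposition~\ref{prop-chiGtrivial}, which is the same statement you are proving. Fortunately you do not need that corollary here: the square you want (with $\cores^{G_{x,r}}_{I_{g^{-1}}}$ and $\cores^{G_{x,r}}_{I_g}$ as vertical maps) commutes by plain functoriality of corestriction with respect to the conjugation isomorphism $g:(G_{x,r}\subseteq I_{g^{-1}})\to (G_{x,r}\subseteq I_g)$, a standard fact (e.g.\ \cite[Prop.~1.5.4]{NSW}). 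Theorem~\ref{mainthm:appendix} enters only afterwards, to identify the bottom arrow $g_*$ on $H^d(G_{x,r},\bbF_p)$ with the identity. With that fix your argument is correct and matches the paper.
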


\begin{proof}
We fix an apartment of the semisimple Bruhat--Tits building which contains $C$, the chamber corresponding to $I$, and let $\mathbf{S}$ denote the associated maximal $\mathfrak{F}$-split torus.  Recall that the group $G$ has a Bruhat factorization
\[G = I N I,\]
where $N$ denotes the group of $\mathfrak{F}$-points of the normalizer of $\mathbf{S}$ (see \cite[Prop. 3.35]{vigneras:hecke1}).  Since $\xi$ is trivial on $I$, it suffices to prove that it is also trivial on $N$.

The group $N$ acts by simplicial automorphisms on the apartment associated to $\mathbf{S}$, and we define $N_{C}$ to be the subgroup of $N$ which stabilizes $C$.  By equations (53) of \cite{vigneras:hecke1}, the group $N$ is generated by $N_{C}$ and representatives of affine reflections in the walls of $C$.  It therefore suffices to prove that $\xi$ is trivial on $N_C$ and on each such affine reflection.

Suppose first that $g \in N_C$, so that $g$ normalizes $I$ and $I_g = I_{g^{-1}} = I$.  Let $x$ denote the barycenter of $C$, so that $g \cdot x = x$ and $I = G_{x,r}$ for some sufficiently small $r > 0$.  By Theorem \ref{mainthm:appendix}, we have that 
\[g_* = \Ad(g): \textnormal{H}^d(I,\bbF_p) \longrightarrow \textnormal{H}^d(I,\bbF_p)\]
is the identity map.  By definition of $\xi$, we get $\xi(g) = 1$.

Suppose now that $g$ is a representative of an affine reflection in a wall of $C$.  Let $x \in \overline{C}$ denote a point on this wall, so that $g\cdot x = x$.  Since $I_g \cap I_{g^{-1}}$ is open in $G$, we may choose $r > 0$ such that $G_{x,r} \subseteq I_g \cap I_{g^{-1}}$.  By canonicity of the trace map (cf. \cite[Lem. 4.9]{koziol:functorial}) and functoriality of the corestriction map, the following diagram is commutative:
\begin{center}
\begin{tikzcd}[column sep = huge, row sep = huge]
\bbF_p   \ar[r,"\textnormal{tr}^{-1}_{I_{g^{-1}}}"]  \ar[dr, "\textnormal{tr}^{-1}_{G_{x,r}}"']&  H^d(I_{g^{-1}},\bbF_p) \ar[r, "g_*"] & H^d(I_{g},\bbF_p) \ar[r,"\textnormal{tr}_{I_{g}}"] & \bbF_p \\
 &  H^d(G_{x,r},\bbF_p) \ar[u, "\textnormal{cores}^{G_{x,r}}_{I_{g^{-1}}}"'] \ar[r, "\Ad(g)"] & H^d(G_{x,r},\bbF_p) \ar[u, "\textnormal{cores}^{G_{x,r}}_{I_{g}}"] \ar[ur, "\textnormal{tr}_{G_{x,r}}"'] & 
\end{tikzcd}
\end{center}
(Recall that by \cite[pf. of Prop. 30]{serre:galoiscoh}, the displayed corestriction maps are isomorphisms.)  By Theorem \ref{mainthm:appendix}, the bottom horizontal arrow is the identity map, which implies $\xi(g) = 1$.  
\end{proof}

\bibliographystyle{amsalpha}
\bibliography{refs}

\providecommand{\bysame}{\leavevmode\hbox to3em{\hrulefill}\thinspace}
\providecommand{\MR}{\relax\ifhmode\unskip\space\fi MR }
\providecommand{\MRhref}[2]{%
  \href{http://www.ams.org/mathscinet-getitem?mr=#1}{#2}
}
\providecommand{\href}[2]{#2}
\begin{thebibliography}{DdSMS99}

\bibitem[AP22]{aubertplymen}
Anne-Marie Aubert and Roger Plymen, \emph{Comparison of the depths on both
  sides of the local {L}anglands correspondence for {W}eil-restricted groups},
  J. Number Theory \textbf{233} (2022), 24--58, With an appendix by Jessica
  Fintzen. \MR{4356843}

\bibitem[AS23]{ArSch}
Konstantin Ardakov and Peter Schneider, \emph{Stability in the category of
  smooth mod-p representations of $\textrm{SL}_2(\mathbb{Q}p)$}, 2023.

\bibitem[Bod21]{bodon:thesis}
Emanuele Bodon, \emph{On the {S}tructure of the {P}ro-p {I}wahori-{H}ecke
  {E}xt-{A}lgebra}, ProQuest LLC, Ann Arbor, MI, 2021, Thesis
  (Ph.D.)--Westfaelische Wilhelms-Universitaet Muenster (Germany). \MR{4465040}

\bibitem[Bou81]{Bki-LA}
Nicolas Bourbaki, \emph{\'{E}l\'{e}ments de math\'{e}matique}, Masson, Paris,
  1981, Groupes et alg\`ebres de Lie. Chapitres 4, 5 et 6. [Lie groups and Lie
  algebras. Chapters 4, 5 and 6]. \MR{647314}

\bibitem[BT72]{BT1}
F.~Bruhat and J.~Tits, \emph{Groupes r\'{e}ductifs sur un corps local}, Inst.
  Hautes \'{E}tudes Sci. Publ. Math. (1972), no.~41, 5--251. \MR{327923}

\bibitem[BT84]{BT2}
\bysame, \emph{Groupes r\'{e}ductifs sur un corps local. {II}. {S}ch\'{e}mas en
  groupes. {E}xistence d'une donn\'{e}e radicielle valu\'{e}e}, Inst. Hautes
  \'{E}tudes Sci. Publ. Math. (1984), no.~60, 197--376. \MR{756316}

\bibitem[Car85]{Carter}
Roger~W. Carter, \emph{Finite groups of {L}ie type}, Pure and Applied
  Mathematics (New York), John Wiley \& Sons, Inc., New York, 1985, Conjugacy
  classes and complex characters, A Wiley-Interscience Publication. \MR{794307}

\bibitem[Cas21]{Cass}
Robert Cass, \emph{Central elements in affine mod {$p$} {H}ecke algebras via
  perverse {$\Bbb {F}_p$}-sheaves}, Compos. Math. \textbf{157} (2021), no.~10,
  2215--2241. \MR{4311557}

\bibitem[Dat99]{Dat}
J.-F. Dat, \emph{Caract\`eres \`a valeurs dans le centre de {B}ernstein}, J.
  Reine Angew. Math. \textbf{508} (1999), 61--83. \MR{1676870}

\bibitem[DdSMS99]{DDMS}
J.~D. Dixon, M.~P.~F. du~Sautoy, A.~Mann, and D.~Segal, \emph{Analytic
  pro-{$p$} groups}, second ed., Cambridge Studies in Advanced Mathematics,
  vol.~61, Cambridge University Press, Cambridge, 1999. \MR{1720368}

\bibitem[Eve91]{evens}
Leonard Evens, \emph{The cohomology of groups}, Oxford Mathematical Monographs,
  The Clarendon Press, Oxford University Press, New York, 1991, Oxford Science
  Publications. \MR{1144017}

\bibitem[Gai01]{Gaitsgory}
D.~Gaitsgory, \emph{Construction of central elements in the affine {H}ecke
  algebra via nearby cycles}, Invent. Math. \textbf{144} (2001), no.~2,
  253--280. \MR{1826370}

\bibitem[Geh22]{Gehr}
Lennart Gehrmann, \emph{Gelfand's trick for the spherical derived {H}ecke
  algebra}, Int. Math. Res. Not. IMRN (2022), no.~18, 13841--13856.
  \MR{4485946}

\bibitem[Hai01]{Haines}
Thomas~J. Haines, \emph{The combinatorics of {B}ernstein functions}, Trans.
  Amer. Math. Soc. \textbf{353} (2001), no.~3, 1251--1278. \MR{1804418}

\bibitem[Her11]{Her}
Florian Herzig, \emph{A {S}atake isomorphism in characteristic {$p$}}, Compos.
  Math. \textbf{147} (2011), no.~1, 263--283. \MR{2771132}

\bibitem[HV15]{henniartvigneras:satake}
Guy Henniart and Marie-France Vign\'{e}ras, \emph{A {S}atake isomorphism for
  representations modulo {$p$} of reductive groups over local fields}, J. Reine
  Angew. Math. \textbf{701} (2015), 33--75. \MR{3331726}

\bibitem[Klo11]{klopsch}
Benjamin Klopsch, \emph{On the rank of compact {$p$}-adic {L}ie groups}, Arch.
  Math. (Basel) \textbf{96} (2011), no.~4, 321--333. \MR{2794088}

\bibitem[Koh17]{kohlhaase}
Jan Kohlhaase, \emph{Smooth duality in natural characteristic}, Adv. Math.
  \textbf{317} (2017), 1--49. \MR{3682662}

\bibitem[Koz21]{koziol:functorial}
Karol Kozio\l, \emph{Functorial properties of pro-{$p$}-{I}wahori cohomology},
  J. Lond. Math. Soc. (2) \textbf{104} (2021), no.~4, 1572--1614. \MR{4339945}

\bibitem[KP23]{kalethaprasad}
Tasho Kaletha and Gopal Prasad, \emph{Bruhat-{T}its theory---a new approach},
  New Mathematical Monographs, vol.~44, Cambridge University Press, Cambridge,
  2023. \MR{4520154}

\bibitem[KS14]{klopschsnopce}
Benjamin Klopsch and Ilir Snopce, \emph{A characterization of uniform pro-{$p$}
  groups}, Q. J. Math. \textbf{65} (2014), no.~4, 1277--1291. \MR{3285771}

\bibitem[Laz65]{lazard}
Michel Lazard, \emph{Groupes analytiques {$p$}-adiques}, Inst. Hautes
  \'{E}tudes Sci. Publ. Math. (1965), no.~26, 389--603. \MR{209286}

\bibitem[Lus89]{Lu}
George Lusztig, \emph{Affine {H}ecke algebras and their graded version}, J.
  Amer. Math. Soc. \textbf{2} (1989), no.~3, 599--635. \MR{991016}

\bibitem[MP96]{moyprasad2}
Allen Moy and Gopal Prasad, \emph{Jacquet functors and unrefined minimal
  {$K$}-types}, Comment. Math. Helv. \textbf{71} (1996), no.~1, 98--121.
  \MR{1371680}

\bibitem[NSW08]{NSW}
J\"{u}rgen Neukirch, Alexander Schmidt, and Kay Wingberg, \emph{Cohomology of
  number fields}, second ed., Grundlehren der mathematischen Wissenschaften
  [Fundamental Principles of Mathematical Sciences], vol. 323, Springer-Verlag,
  Berlin, 2008. \MR{2392026}

\bibitem[Oll14]{Ollcompa}
Rachel Ollivier, \emph{Compatibility between {S}atake and {B}ernstein
  isomorphisms in characteristic {$p$}}, Algebra Number Theory \textbf{8}
  (2014), no.~5, 1071--1111. \MR{3263136}

\bibitem[OS14]{OS1}
Rachel Ollivier and Peter Schneider, \emph{Pro-{$p$} {I}wahori-{H}ecke algebras
  are {G}orenstein}, J. Inst. Math. Jussieu \textbf{13} (2014), no.~4,
  753--809. \MR{3249689}

\bibitem[OS18]{OS2}
\bysame, \emph{A canonical torsion theory for pro-{$p$} {I}wahori-{H}ecke
  modules}, Adv. Math. \textbf{327} (2018), 52--127. \MR{3761991}

\bibitem[OS19]{Ext}
\bysame, \emph{The modular pro-{$p$} {I}wahori-{H}ecke {E}xt-algebra},
  Representations of reductive groups, Proc. Sympos. Pure Math., vol. 101,
  Amer. Math. Soc., Providence, RI, 2019, pp.~255--308. \MR{3930021}

\bibitem[OS22]{OS4}
\bysame, \emph{On the pro-{$p$} {I}wahori {H}ecke ext-algebra of {${\rm
  SL}_2(\Bbb Q_p)$}}, M\'{e}m. Soc. Math. Fr. (N.S.) (2022), no.~175, vi + 98.
  \MR{4572036}

\bibitem[PV21]{PVen}
Kartik Prasanna and Akshay Venkatesh, \emph{Automorphic cohomology, motivic
  cohomology, and the adjoint {$L$}-function}, Ast\'{e}risque (2021), no.~428,
  viii+132. \MR{4372499}

\bibitem[Ron19]{ron}
Niccolò Ronchetti, \emph{A satake homomorphism for the $\bmod \, p$ derived
  hecke algebra}, 2019.

\bibitem[Sch15]{SDGA}
Peter Schneider, \emph{Smooth representations and {H}ecke modules in
  characteristic {$p$}}, Pacific J. Math. \textbf{279} (2015), no.~1-2,
  447--464. \MR{3437786}

\bibitem[Ser65]{serre:cohdim}
Jean-Pierre Serre, \emph{Sur la dimension cohomologique des groupes profinis},
  Topology \textbf{3} (1965), 413--420. \MR{180619}

\bibitem[Ser02]{serre:galoiscoh}
\bysame, \emph{Galois cohomology}, english ed., Springer Monographs in
  Mathematics, Springer-Verlag, Berlin, 2002, Translated from the French by
  Patrick Ion and revised by the author. \MR{1867431}

\bibitem[Spr09]{springer}
T.~A. Springer, \emph{Linear algebraic groups}, second ed., Modern Birkh\"auser
  Classics, Birkh\"auser Boston, Inc., Boston, MA, 2009. \MR{2458469
  (2009i:20089)}

\bibitem[SS97]{Sch-St}
Peter Schneider and Ulrich Stuhler, \emph{Representation theory and sheaves on
  the {B}ruhat-{T}its building}, Inst. Hautes \'{E}tudes Sci. Publ. Math.
  (1997), no.~85, 97--191. \MR{1471867}

\bibitem[SS23]{schneidersorensen}
Peter Schneider and Claus Sorensen, \emph{Duals and admissibility in natural
  characteristic}, Represent. Theory \textbf{27} (2023), 30--50. \MR{4561086}

\bibitem[SW00]{symondsweigel}
Peter Symonds and Thomas Weigel, \emph{Cohomology of {$p$}-adic analytic
  groups}, New horizons in pro-{$p$} groups, Progr. Math., vol. 184,
  Birkh\"auser Boston, Boston, MA, 2000, pp.~349--410. \MR{1765127}

\bibitem[Tit79]{Tits}
J.~Tits, \emph{Reductive groups over local fields}, Automorphic forms,
  representations and {$L$}-functions ({P}roc. {S}ympos. {P}ure {M}ath.,
  {O}regon {S}tate {U}niv., {C}orvallis, {O}re., 1977), {P}art 1, Proc. Sympos.
  Pure Math., XXXIII, Amer. Math. Soc., Providence, RI, 1979, pp.~29--69.
  \MR{546588}

\bibitem[Tot99]{totaro:eulerchar}
Burt Totaro, \emph{Euler characteristics for {$p$}-adic {L}ie groups}, Inst.
  Hautes \'{E}tudes Sci. Publ. Math. (1999), no.~90, 169--225 (2001).
  \MR{1813226}

\bibitem[Ven19]{Ven}
Akshay Venkatesh, \emph{Derived {H}ecke algebra and cohomology of arithmetic
  groups}, Forum Math. Pi \textbf{7} (2019), e7, 119. \MR{4061961}

\bibitem[Vig96]{vigneras:book}
Marie-France Vign\'{e}ras, \emph{Repr\'{e}sentations {$l$}-modulaires d'un
  groupe r\'{e}ductif {$p$}-adique avec {$l\ne p$}}, Progress in Mathematics,
  vol. 137, Birkh\"{a}user Boston, Inc., Boston, MA, 1996. \MR{1395151}

\bibitem[Vig16]{vigneras:hecke1}
\bysame, \emph{The pro-{$p$}-{I}wahori {H}ecke algebra of a reductive
  {$p$}-adic group {I}}, Compos. Math. \textbf{152} (2016), no.~4, 693--753.
  \MR{3484112}

\bibitem[Yu01]{yu:sc}
Jiu-Kang Yu, \emph{Construction of tame supercuspidal representations}, Journal
  of the {A}merican Mathematical Society \textbf{14} (2001), no.~3, 579--622.
  \MR{1824988}

\bibitem[Yu02]{yu:models}
\bysame, \emph{Smooth models associated to concave functions in {B}ruhat-{T}its
  theory}, preprint, December 2002, version 1.3, published as \cite{yu15}.

\bibitem[Yu15]{yu15}
\bysame, \emph{Smooth models associated to concave functions in {B}ruhat-{T}its
  theory}, Autour des sch\'{e}mas en groupes. {V}ol. {III}, Panor. Synth\`eses,
  vol.~47, Soc. Math. France, Paris, 2015, pp.~227--258. \MR{3525846}

\end{thebibliography}

\end{document}